\newcommand{\vp}{\varphi}
\newcommand{\CalG}{\mathcal{G}}
\newcommand{\CalS}{\mathcal{S}}
\newcommand{\gap}{\text{\rm gap}}
\newcommand{\Span}{\text{\rm span}}
\newcommand{\blockdiag}{\text{\rm blockdiag}}
\newcommand{\trans}{\text{\rm tr}}
\newcommand{\diag}{\text{\rm diag}}
\newcommand{\RR}{{\mathbb R}}
\newcommand{\CC}{{\mathbb C}}
\newcommand{\EE }{{\mathbb E}}
\newcommand{\mez}{{\frac{1}{2}}}
\newcommand\cA{{\cal  A}}
\newcommand\cB{{\cal  B}}
\newcommand\cU{{\cal  U}}
\newcommand\cH{{\cal  H}}
\newcommand\cV{{\cal  V}}
\newcommand\cW{{\cal  W}}
\newcommand\cC{{\cal  C}}
\newcommand\cG{{\cal  G}}
\newcommand\cK{{\cal  K}}
\newcommand\cL{{\cal  L}}
\newcommand\cN{{\cal  N}}
\newcommand\cE{{\cal  E}}
\newcommand\cF{{\cal  F}}
\newcommand\cP{{\cal  P}}
\newcommand\cO{{\cal O}}
\newcommand\cM{{\mathcal M}}
\newcommand\cS{{\mathcal S}}
\newcommand\Up{{\Upsilon}}
\newcommand\mrp{\mathrm{p}}
\newcommand{\rmp}{{\mathrm{p } }}
\newcommand{\bR}{\mathbb{R}}
\newcommand{\bL}{\mathbb{L}}
\newcommand{\bE}{\mathbb{E}}
\newcommand{\bN}{\mathbb{N}}
\newcommand{\bD}{\mathbb{D}}
\newcommand{\bC}{\mathbb{C}}
\newcommand{\oA}{\overline{A}}
\newcommand{\oB}{\overline{B}}
\newcommand{\ua}{\underline{a}}
\newcommand{\up}{{\underline p}}
\newcommand{\uu}{{\underline u}}
\newcommand{\uug}{{\underline g}}
\newcommand{\uzeta}{\underline \zeta}
\newcommand{\unu}{{\underline \nu}}
\def\eps{\varepsilon }
\def\D{\partial }
\newcommand\adots{\mathinner{\mkern2mu\raise1pt\hbox{.}
\mkern3mu\raise4pt\hbox{.}\mkern1mu\raise7pt\hbox{.}}}
\newcommand{\Id}{{\rm Id }}
\newcommand{\re}{{\rm Re }\, }
\newcommand{\rank}{{\rm rank }}
\renewcommand{\div}{{\rm div}}
\newcommand{\curl}{{\rm curl}}
\newcommand{\na}{{\nabla}}
\newtheorem{theo}{Theorem}[section]
\newtheorem{prop}[theo]{Proposition}
\newtheorem{cor}[theo]{Corollary}
\newtheorem{lem}[theo]{Lemma}
\newtheorem{defi}[theo]{Definition}
\newtheorem{ass}[theo]{Assumption}
\newtheorem{asss}[theo]{Assumptions}
\newtheorem{exam}[theo]{Example}
\newtheorem{rem}[theo]{Remark}
\newtheorem{rems}[theo]{Remarks}
\newtheorem{exams}[theo]{Examples}
\newtheorem{nota}[theo]{Notations}
\numberwithin{equation}{section}
\title{Existence and stability of noncharacteristic
boundary-layers for the compressible Navier-Stokes and viscous MHD
equations}
\author{\sc \small
Olivier Gues\thanks{LATP, Universit\'e de Provence;
gues@cmi.univ-mrs.fr.
Research of O.G. was partially supported by European network
HYKE, HPRN-CT-2002-00282.},
Guy M\'etivier\thanks{MAB, Universit\'e de Bordeaux I;
metivier@math.u-bordeaux.fr.
Research of G.M. was partially supported by European
network HYKE,  HPRN-CT-2002-00282. },
Mark Williams\thanks{
University of North Carolina;
williams@email.unc.edu.
Research of M.W. was partially supported by
NSF grants number DMS-0070684 and DMS-0401252.},
Kevin Zumbrun\thanks{Indiana University;
kzumbrun@indiana.edu: K.Z. thanks the Universities of
Bordeaux I and Provence for their hospitality during visits
in which this work was partially carried out.
Research of K.Z. was partially supported by
NSF grants number DMS-0070765 and DMS-0300487. } }
\begin{document}

\maketitle

\begin{abstract}
For a general class of hyperbolic-parabolic systems including the
compressible Navier-Stokes and compressible MHD equations,  we prove
existence and stability of noncharacteristic viscous boundary layers
for a variety of boundary conditions including classical
Navier-Stokes boundary conditions. Our first main result, using the
abstract framework established by the authors in the companion work
\cite{GMWZ6}, is to show that existence and stability of arbitrary
amplitude exact boundary-layer solutions follow from a uniform
spectral stability condition on layer profiles that is expressible
in terms of an Evans function (uniform Evans stability).  Whenever
this condition holds we give a rigorous description of the small
viscosity limit as the solution of a hyperbolic problem with
``residual" boundary conditions. Our second is to show that uniform
Evans stability for small-amplitude layers is equivalent to Evans
stability of the limiting constant layer, which in turn can be
checked by a linear-algebraic computation. Finally, for a class of
symmetric-dissipative systems including the physical examples
mentioned above, we carry out energy estimates showing that constant
(and thus small-amplitude) layers always satisfy uniform Evans
stability.
This yields existence of small-amplitude multi-dimensional boundary
layers for the compressible Navier-Stokes and MHD equations. For
both equations these appear to be the first such results in the
compressible case.
\end{abstract}



\tableofcontents

\section{Introduction}\label{intro}
\emph{} \qquad In this paper, we study existence and stability of
noncharacteristic viscous boundary layers of hyperbolic--parabolic
systems of the type arising in fluid and magnetohydrodynamics (MHD).
In a companion paper \cite{GMWZ6}, we have shown under mild
structural assumptions that for such layers, maximal linearized
stability estimates, transversality of layer profiles, and
satisfaction of the uniform Lopatinski condition by the associated
residual hyperbolic system all follow from a uniform spectral
stability condition on layer profiles that is expressible in terms
of an Evans function (uniform Evans stability, Definition
\ref{hfevans}). Here we use these abstract results to obtain
existence and stability in interesting physical applications. Our
structural hypotheses are general enough to allow van der Waals
equations of state.

In contrast to our previous joint papers, our main concern here is
to give an analytic verification of the uniform Evans hypothesis for
small amplitude layers (after constructing such layers), and to give
explicit calculations for important physical systems (isentropic
Navier-Stokes, full Navier-Stokes, and viscous MHD). For the
isentropic Navier-Stokes equations we are also able to construct
transversal \emph{large} amplitude layers, and to check maximal
dissipativity of the associated residual hyperbolic boundary
conditions.

The main results of this paper are as follows: (i) assuming the
uniform Lopatinski condition and transversality of layer profiles
(Definitions \ref{vv8} and \ref{deftrans}), we construct arbitrarily
high-order approximate boundary-layer solutions matching an inner
boundary-layer profile to an outer hyperbolic solution; (ii)
assuming uniform Evans stability, we use results of
\cite{GMWZ4,GMWZ6} to show existence and stability of exact
boundary-layer solutions close to the approximate solutions, and
consequently we obtain convergence of viscous solutions to solutions
of the residual hyperbolic problem in the small viscosity limit;
(iii) we show that uniform Evans stability of small-amplitude
boundary layers is equivalent to uniform Evans stability of the
associated limiting constant layer; and (iv), we use (iii) to verify
the uniform Evans condition for small amplitude layers for a class
of {\it symmetric-dissipative systems} that includes the above
physical examples as well as the class introduced by Rousset
\cite{R3}.

In connection with (iii) and (iv) above, we prove existence of
small-amplitude layer profiles  for a variety of boundary
conditions, including mixed Dirichlet-Neumann conditions. These
profiles appear in the leading term of the approximate solutions
referred to in (i).

The above results yield existence and stability of multi-dimensional
small-amplitude noncharacteristic boundary-layer solutions of the
compressible Navier-Stokes and viscous MHD equations. In both cases
these appear to be the first such results. For large-amplitude
layers, the questions of existence and stability are reduced to
verification of the uniform Evans condition. Efficient numerical
methods for such verification are presented, for example, in
\cite{CHNZ,HLyZ2,HLyZ2}.

Although most of the important physical examples can be written in
conservative form, the general theory is presented for the more
general nonconservative case in sections \ref{existsec},
\ref{limits}, \ref{stabsec} and the appendices. As we noted in
\cite{GMWZ6,GMWZ7}, the theory is clearer and in many ways simpler
in the more general setting.

For results on stability of boundary layers for the incompressible
Navier-Stokes equations we refer to \cite{TW,IS}.

 \subsection{Equations and assumptions}\label{equations}

Consider as in \cite{GMWZ6} a
quasilinear hyperbolic--parabolic system
\begin{equation}
\label{visceq} \cL_\eps(u):=
 A_0(u)u_t  + \sum_{j=1}^d   A_j(u) \D_{j} (u)    -
 \eps \sum_{j,k= 1}^d \D_{j} \big( B_{jk}(u) \D_{k} u \big) = 0,
\end{equation}
on $[-T,T]\times\Omega$, where $\Omega\subset \RR^d$ is an open set.
We assume the block structure
\begin{equation}
  \label{struc1}
  A_0(u)  =  \begin{pmatrix}A_0^{11}&0 \\A_0^{21}&A_0 ^{22}\end{pmatrix},
  \quad
  B_{jk} (u) =\begin{pmatrix}0&0 \\0 &B_{jk}^{22}\end{pmatrix},
  \end{equation}
a corresponding splitting
\begin{equation}
  \label{struc0}
   u = (u^1, u^2) \in \RR^{N-N'} \times \RR^{N'},
  \end{equation}
and decoupled boundary conditions
 \begin{equation}
\label{viscbcd}
\left\{\begin{array}{l}
\Upsilon_1   (u^1 )_{ | x \in \partial \Omega  }   = g_1(t,x) , \\
\Upsilon_2   (u^2 )_{ | x \in \partial \Omega }   = g_2(t,x) , \\
\Upsilon_3  (u,   \D_T u^2,   \D_\nu u^2 )_{ | x \in \partial\Omega
} = 0 ,
\end{array}\right.
\end{equation}
where $\D_T$ and $\D_\nu$ denote tangential and
inward normal derivatives
with respect to $\partial \Omega$ and $\Upsilon_3  (u, \D_T u^2,
\D_\nu u^2 ) =
  K_\nu  \D_\nu u^2 +  \sum_{j=1}^p  K_j (u)  V_j u^2 ,
$ where the $V_j(x)$ are smooth vector fields tangent to
$\partial\Omega$ and  $K_\nu$ is constant.
Unless otherwise noted we take $\Omega$ bounded with smooth (that
is, $C^k$ for $k$ large) boundary, but our results apply with no
essential change to other situations such as the case where
$\partial\Omega$ coincides with a half-space outside a compact set.


Setting  $\eps = 0$ in \eqref{visceq} we obtain $\cL_0$, a
first-order operator assumed to hyperbolic. The parameter $\eps$
plays the role of a non-dimensional viscosity and for $\eps
> 0$, the system is assumed to be parabolic or at least partially
parabolic.  Classical examples are the Navier-Stokes equations of
gas dynamics and the equations of magneto-hydrodynamics (MHD).
%
%

We set
\begin{align}\label{y2}
\oA_j=A^{-1}_0A_j,\quad \oB_{jk}=A^{-1}_0B_{jk},
\end{align}
\begin{align}\label{y3}
\oA (u, \xi) = \sum^d_{j=1} \xi_j \oA_j (u) \quad \text{ and
}\quad \oB(u,\xi)=\sum^d_{j,k=1}\xi_j\xi_k\oB_{jk}(u),
\end{align}
and systematically use the notation $M^{\alpha \beta} $ for the
sub-blocks of a matrix $M$ corresponding to the splitting $u =
(u^1, u^2)$.
Note that
  \begin{equation}
  \label{struc2}
  \oB_{j, k}(u):= A_0(u)^{-1}
  B_{jk} (u) =\begin{pmatrix}0&0 \\0 &\oB_{jk}^{22}(u),
  \end{pmatrix},
  \end{equation}
so it is natural to define the \emph{high-frequency principal part}
of \eqref{visceq} by
\begin{equation}
\label{princpart}
\left\{ \begin{array}{l}
\D_tu^1+   \overline A^{11}(u, \D)u^1=0,\\
     \D_t  u^2 - \eps \overline B^{22} (u, \D) u^2=0.
\end{array}\right.
\end{equation}

Our main structural assumptions are modeled on the fundamental
example of the Navier-Stokes equations with general, possibly van
der Waals type equation of state. For applications it is important
to allow the states assumed by solutions of \eqref{visceq} when
$\eps>0$ or when $\eps=0$ to vary in overlapping but not necessarily
identical regions of state space. Moreover, these regions must be
allowed to depend on $(t,x)$. These considerations motivate the
following definition of $\cU$, $\cU_\partial$, and $\cU^*$.

For some $T>0$ let $\cO(t,x)$ be a continuous set-valued function
from $[-T,T]\times \Omega$ to open sets in $\mathbb{R}^N$, and
define graphs
\begin{align}\label{C1}
\begin{split}
&\cU=\{(t,x,\cO(t,x)):(t,x)\in[-T,T]\times\Omega\}\\
&\cU_\partial=\{(t,x_0,\cO(t,x_0)):(t,x_0)\in[-T,T]\times\partial\Omega\}.
\end{split}
\end{align}
For $(t,x_0)\in[-T,T]\times\partial\Omega$ let $\cO^*(t,x_0)$ be
another continuous open-set-valued function satisfying
$\cO^*(t,x_0)\supset\cO(t,x_0)$ and define
\begin{align}\label{C2}
\cU^*=\{(t,x_0,\cO^*(t,x_0)):(t,x_0)\in[-T,T]\times\partial\Omega\}.
\end{align}
Observe that we have
\begin{align}\label{C3}
\cU_\partial\subset\cU\cap\cU^*,
\end{align}
but neither $\cU$ nor $\cU^*$ is a subset of the other.   For
elements of $\cU$ define
\begin{align}\label{C4}
\pi(t,x,\cO(t,x))=\cO(t,x)
\end{align}
and define $\pi$ similarly for elements of $\cU_\partial$ and
$\cU^*$.  Finally, denote by $\pi\cU$ (resp. $\pi\cU^*$,
$\pi\cU_\partial$) the union of the open sets obtained by applying
$\pi$ to elements of $\cU$ (resp. $\cU^*$, $\cU_\partial$).

 The
set $\pi\cU$ is the ``hyperbolic set" where solutions of the
inviscid equation $\cL_0(u)=0$ take their values; $\pi\cU^*$ is the
set where boundary layer solutions $u^\eps$ of the viscous
equations, restricted to a small neighborhood of the boundary, take
their values.  In particular, the layer profiles (Definition
\ref{layprof}) take values in $\pi\cU^*$. The set
$\pi\cU_\partial\subset \pi\cU\cap\pi\cU^*$ is the set of profile
endstates where matching of the two types of solutions occurs; more
precisely, it contains the limits as $z\to \infty$ of layer profiles
(see \eqref{endstate}), or equivalently, the boundary values of
solutions of the associated residual hyperbolic problem (see
\ref{vv3}).


\begin{asss}\label{Hs}
\textup{}

  \textup{(H1)} The matrices  $A_j$ and $B_{jk}$ are $C^\infty$ $N \times N$ real matrix-valued funtions  of
  the variable $u \in \pi\cU \cup \pi\cU^* \subset \RR^N$.  Moreover, for all
   $u \in \pi\cU\cup\pi\cU^*$, $\det A_0(u)\ne 0$.

\textup{(H2)}  There is $c > 0$ such that for all $u \in
\pi\cU\cup\pi\cU^*$ and $\xi \in \RR^d$, the eigenvalues of
$\oB^{22}(u, \xi)$  satisfy $\re \mu \ge c \vert \xi \vert^2 $.

\textup{(H3)}
For all $u \in \pi\cU\cup\pi\cU^*$ and $\xi \in
\RR^d\backslash\{0\}$, the eigenvalues of $\overline A^{11}(u, \xi)
$  are real, semi-simple and of constant multiplicity. Moreover, for
$u\in\pi\cU^*$, $\det \overline A^{11}(u, \nu)\ne 0 $, with the
eigenvalues of the normal matrix $ \overline A^{11}(u, \nu)\ne 0 $
all positive (inflow) or all negative (outflow), where $\nu$ denotes
the inward normal to $\partial \Omega$.

\textup{(H4)}  For all $u\in \pi\cU$ and $\xi\in
\RR^d\setminus\{0\}$, the eigenvalues of $\overline A (u, \xi)$ are
real, semisimple, and of constant multiplicity. Moreover, for
$u\in\pi\cU_\partial$, $\det \overline A (u, \nu) \ne 0 $, with
number of positive (negative) eigenvalues of $\overline A (u, \nu)$
independent of $u$.

\textup{(H5)}  There is $c > 0$ such that for $u\in \pi\cU$ and $\xi
\in \RR^d $, the eigenvalues of $ i \oA (u, \xi)  + \oB (u, \xi ) $
satisfy $\re \mu \ge  c   \frac{ \vert \xi \vert^2 }{ 1 + \vert
\xi\vert^2} $.

\end{asss}

\begin{rem}
\label{remhyp}

1.)In Hypothesis (H4) the statement ``for $u\in\pi\cU_\partial$,
$\det \overline A (u, \nu) \ne 0 $" should be interpreted as
asserting that for $(t,x_0,u)\in\cU_\partial$, we have
$\det\overline{A}(u,\nu(x_0))\neq 0$. A similar remark applies to
(H3) and to later statements of this sort.

2.) Hypothesis (H4) is a  hyperbolicity condition on the inviscid
equation $\cL_0(u)=0$, while \textup{(H2),(H4)} implies
hyperbolic--parabolicity of the viscous equation $\cL_\eps(u)=0$
when $\eps>0$.  (H3) is a hyperbolicity condition on the first
equation in \eqref{princpart}. The conditions on the normal matrices
in \textup{(H3)--(H4)} mean that the boundary is
\emph{noncharacteristic} for both the inviscid and the viscous
equations. Hypothesis \textup{(H5)} is a dissipativity condition
reflecting genuine coupling of hyperbolic and parabolic parts for
$u\in \pi\cU$.

3.)Later we will occasionally drop the $\pi$ on $\pi\cU$ in
statements like $u\in\cU$ below.

\end{rem}

Symmetry plays an important role in applications such as those to
the Navier-Stokes and MHD equations considered here. In particular,
(H5) holds always when the  conditions in the following two
definitions are satisfied \cite{KaS1, KaS2}.

\begin{defi}
\label{defsymm} The system $\eqref{visceq}$ is said to be symmetric
dissipative if there exists a real matrix $S(u)$, which depends
smoothly on $u \in \pi\cU $, such that for all $u \in \pi\cU$ and
all $\xi \in \RR^d \backslash\{0\}$,
the matrix $S (u) A_0(u) $ is symmetric definite positive and
block-diagonal,
$S(u)  A (u, \xi) $ is symmetric, and the symmetric matrix
 $\re  S(u)  B(u, \xi) $ is nonnegative with kernel of dimension $N-N'$.
\end{defi}

Given that $A_0$ and the matrices $B_{jk}$ have the structure
\eqref{struc1}, observe that we have the equivalences: \medbreak

$SA_0$  block diagonal $\Leftrightarrow$ $S$ lower block triangular
$\Leftrightarrow SB$  block diagonal.\footnote{ The block-diagonal
assumption repairs a minor omission in \cite{GMWZ6}; this is needed
to conclude (H5) from symmetric dissipativity plus the genuine
coupling condition. }

\begin{defi}
\label{gcoupling}
A symmetric--dissipative system satisfies the genuine coupling
condition if for all $u \in \pi\cU$ and all $\xi \in \RR^d
\backslash\{0\}$, no eigenvector of $\sum_j \overline A_j \xi_j$
lies in the kernel of $\sum_{j,k} \overline B_{jk}\xi_j \xi_k$.
\end{defi}

The  constant multiplicity condition in Hypothesis (H4) holds for
the compressible Navier--Stokes equations whenever $\oA(u,\xi)$ is
hyperbolic. We are able to treat symmetric-dissipative systems like
the equations of viscous MHD, for which the constant multiplicity
condition fails, under the following relaxed hypothesis. \medbreak

\textbf{Hypothesis H4$'.$ } For all $u\in \pi\cU$ and $\xi\in
\RR^d\setminus\{0\}$, the eigenvalues of $\overline A (u, \xi)$ are
real and are either semisimple and of constant multiplicity or are
totally nonglancing in the sense of \cite{GMWZ6}, Definition 4.3.
Moreover, for $u\in\pi\cU_\partial$ we have $\det \overline A (u,
\nu) \ne 0 $, with the number of positive (negative) eigenvalues of
$\overline A (u, \nu)$ independent of $u$.

\begin{rem}\label{altstructure}
The condition of constant multiplicity in (H3) can
probably be dropped for symmetric--dissipative systems. For
sufficiently small-amplitude boundary layers, we expect that the
condition in (H3) that eigenvalues have a common
sign may be dropped as well;
see Remark \ref{realrmks}.2.
\end{rem}

\begin{nota}\label{defNi}
 \textup{With assumptions as above, $N_+$ (constant)  denotes the number of positive eigenvalues of
 $\overline A_\nu(u):=\overline A(u,\nu)$ for
$u \in \pi\cU_\partial$ and $N^1_+$   the number of positive
eigenvalues of $\overline A^{11}_\nu(u):= \overline A^{11}(u,\nu)$
for $u \in \pi\cU^*$. We also set $N_b = N' + N^1_+ $. }
\end{nota}

  As indicated by block structure \eqref{princpart}, $N_b$  is the correct
number of boundary conditions for the well posedness of
\eqref{visceq}, for solutions with values in $\cU^*\cup\cU$, with
$N'$ boundary conditions for $u^2$  and $N^1_+ $ boundary conditions
for $u^1$. On the other hand, $N_+$ is the correct number of
boundary conditions for the inviscid equation for solutions with
values in $\cU$.

\begin{ass}
\label{assbc}
\textup{(H6)}$ \Upsilon_1 $, $\Upsilon_2$ and $\Upsilon_3$  are  smooth functions of their
arguments  with values in  $\RR^{N^1_+}$,  $\RR^{N'  - N'' }$ and $\RR^{N''}$
respectively, where $N'' \in \{ 0, 1, \ldots, N'\}$.
Moreover, $K_N$ has maximal rank $N''  $ and for all
$u \in \cU^*$ the Jacobian matrices
$\Upsilon'_1(u^1)$  and $\Upsilon'_2(u^2)$ have  maximal rank $N^1_+$
and $N'- N''$ respectively.

\end{ass}

\begin{exams}\label{basicexams}
 \textup{Hypotheses (H1)--(H5) are satisfied by the compressible
Navier--Stokes equations, provided the normal velocity of the fluid
 is nonvanishing on $\cU^*$ and the normal
characteristic speeds (eigenvalues of $\overline A(u,\nu)$) are
nonvanishing on $\cU_\partial$.   We have $N^1_+=1$ or $0$ according
as normal velocity is positive with respect to inward normal
(inflow) or negative (outflow).  Recalling that $\cU^*$ is the set
where boundary layer solutions of the viscous equations, restricted
to a small neighborhood of the boundary, take their values, we see
that these velocity restrictions correspond to having a porous
boundary through which fluid is pumped in or out, in contrast to the
characteristic, no-flux boundary conditions encountered at a solid
material interface for which normal velocity is set to zero.}

\textup{Hypotheses (H1)-(H5), with  (H4) replaced by (H4$'$),  are
satisfied  by the viscous MHD equations with ideal gas equation of
state under similar velocity restrictions on the plasma. See Remark
\ref{yy1} and the discussion following it for the precise velocity
requirements and for examples of boundary conditions for viscous MHD
and the corresponding reduced hyperbolic problem.}

 \textup{We note that our structural hypotheses are sometimes satisfied
 under much weaker
assumptions on the equation of state, which may be of van der Waals
type on $\cU^*$ and just thermodynamically stable on $\cU$; see
\cite{GMWZ4,Z3}.}

\textup{Boundary conditions of the type we consider for the
compressible Navier-Stokes equations are of considerable practical
importance. Recall the equations are
\begin{equation}
\label{NSeq}
\left\{ \begin{aligned}
 & \D_t \rho +  \div (\rho u) = 0
 \\
 &\D_t(\rho  u) + \div(\rho u^tu)+ \na p =
\eps \mu \Delta u + \eps(\mu+\eta) \nabla \div u
 \\
 &
 \D_t(\rho E) + \div\big( (\rho E  +p)u\big)=
\kappa \Delta T +
\eps \mu \div\big( (u\cdot \nabla) u\big) \\
&
\qquad \qquad \qquad \qquad
\qquad \qquad
+ \eps(\mu+\eta) \nabla(u\cdot \div u)
 \end{aligned}\right.
\end{equation}
where $\rho$ denotes density, $u$ velocity, $e$ specific internal
energy, $E=e+\frac{|u|^2}{2}$ specific total energy, $p=p(\rho, e)$
pressure, and $T=T(\rho, e)$ temperature.    Take the unknowns to be
$(\rho,u,T)$, and consider the problem on the exterior
$\Omega=\alpha^{c}$ of a bounded set $\alpha$ with smooth boundary,
with no-slip {\it suction-type} boundary conditions on the velocity,
$$
 u_T|_{\partial\Omega}=0, \quad  u_\nu|_{\partial \Omega}= V(x)< 0,
$$
and either prescribed or insulative boundary conditions on
the temperature,
$$
T|_{\partial \Omega}= T_{wall}(x)
\quad \hbox{\rm or }\quad
\D_\nu T|_{\partial \Omega}= 0.
$$
This corresponds to the situation of an airfoil with microscopic
holes through which gas is pumped from the surrounding flow, the
microscopic suction imposing a fixed normal velocity while the
macroscopic surface imposes standard temperature conditions as in
flow past a (nonporous) plate. This configuration was suggested by
Prandtl and tested experimentally by G.I. Taylor as a means to
reduce drag by stabilizing laminar flow; see \cite{S,Br}. It was
implemented in the NASA F-16XL experimental aircraft program in the
1990's with reported 25\% reduction in drag at supersonic speeds
\cite{Br}.\footnote{ See also NASA site
http://www.dfrc.nasa.gov/Gallery/photo/F-16XL2/index.html}}

\end{exams}

\begin{rem}\label{disconnected}
At the expense of further bookkeeping,
we could equally well define
separate $\cU_{\partial, j}\subset \cU^*_j$ for each connected
component $(\partial\Omega)_j$ of the boundary $\partial\Omega$,
each with distinct values
of $N_+$, $N^1_+$.
where now the sets $\cU_{\partial,j}$ are to be thought
of as the sets of possible boundary values for the hyperbolic problem at
each $(\partial\Omega)_j$.
This would allow, for example, the situation that fluid is pumped in
through one boundary and out another
as considered in \cite{TW} for the incompressible case.
\end{rem}

We follow hypotheses (H1)--(H6), in some cases with (H4$'$) in place
of (H4), throughout the paper; {\it unless otherwise indicated, they
are assumed in all statements and propositions that follow}.

\subsection{Layer profiles, transversality, and $\cC$ manifolds}\label{layerprofiles}

To match solutions of the inviscid problem approaching a constant
value $\underline u$ at $x_0\in \partial \Omega$ to
solutions satisfying the hyperbolic--parabolic boundary conditions, one looks
for  exact solutions  of \eqref{visceq} \eqref{viscbcd}
on the half-space $(x-x_0)\cdot \nu(x_0)\ge 0$ tangent to $\partial
\Omega$ at $x_0$
of the form
\begin{equation}
\label{layprof} u_\eps(t,x) = w \Big( \frac{(x-x_0)\cdot \nu
}{\eps} \Big)   ,
\end{equation}
$\nu(x_0)$ the inward unit normal to $\partial \Omega$ at $x_0$,
such that
\begin{equation}
\label{endstate}
 \lim_{z \to + \infty} w(z) = \underline  u \,.
\end{equation}
Solutions are called \emph{layer profiles}.

The equation for $w$ reads
\begin{equation} \label{layeq}
\left\{ \begin{aligned} &  A_\nu (w) \D_z w   - \D_z \big(
B_{\nu}(w)
  \D_z w  \big) = 0, \quad   z \ge 0, \\
&  \Upsilon (w, 0, \D_z w^2) _{ | z = 0 }  =
(g_1(t,x_0),g_2(t,x_0),0),
  \end{aligned}  \right.
\end{equation}
where $A_\nu(u)=\sum_{j=1}^d A_j(u)\nu_j$ and $B_{\nu}(u):=\sum
B_{jk}(u)\nu_j\nu_k$.   The natural limiting boundary conditions for
the inviscid problem
should be satisfied precisely by those states $\uu$ that are the
endstates of layer profiles.  This leads us to define
\begin{align}\label{vv41}
\cC(t,x_0)=\{\uu:\text{ there is a layer profile }w \in
C^\infty(\overline \RR_+ ; \cU^*)\text{  satisfying
\eqref{endstate}, \eqref{layeq} }.
\end{align}


The profile equation  \eqref{layeq} can be written as a first order
system for $ U = ( w, \D_z w^2)$, which is nonsingular if and only
if $A^{11}_\nu $ is invertible, (H3):
\begin{equation}
\label{layeq2}
\begin{aligned}
   \D_z w^1   &  =   -  ( A_\nu^{11} )^{-1} A^{12}_\nu  w^3  , \\
   \D_z w^2   &  =   w ^3   , \\
 \D_z \big( B^{22}_{\nu}   w ^3 )  & =
 \big( A_\nu^{22} - A_\nu^{21} ( A_\nu^{11})^{-1} A_\nu^{12} \big) w^3 ,
\end{aligned}
\end{equation}
and the matrices are evaluated at $w = (w^1, w^2)$.
%


Consider now the linearized equations of \eqref{layeq} about $w(z)$,
written as a first-order system
\begin{equation}
\label{linlayeq}
 \D_z \dot W- \CalG_\nu(z)\dot W  = 0 , \qquad   z \ge 0,
\end{equation}
\begin{equation}
\label{linlaybc}
 \Gamma_\nu   \dot W_{ | z = 0 }  =  0
\end{equation}
in $\dot W= (\dot w^1, \dot w^2, \dot w^3)$, where
\begin{equation}\label{Gform}
\CalG_\nu(+\infty):= \lim_{z\to +\infty}\CalG_\nu(z)=
\begin{pmatrix}
0 & 0 & -  ( A_\nu^{11} )^{-1} A^{12}_\nu \\
0 & 0 & I\\
0 & 0 & ( B^{22}_{\nu})^{-1} ( A_\nu^{22} - A_\nu^{21} (
A_\nu^{11})^{-1} A_\nu^{12} )
\end{pmatrix}(\underline u)
\end{equation}
and (note decoupling between $u^1$ and $(u^2,u^3):=(u^2, \D_\nu
u^2)$ variables)
\begin{equation}\label{Gammaeq}
 \Gamma_\nu U =  \big(   \Gamma_1  u^1  ,  \Gamma_2 u^2
         ,    K_\nu u^3 \big) .
\end{equation}

\begin{lem}[\cite{GMWZ6,MaZ3}]\label{count}
Let $N^2_-$ denote the number of stable eigenvalues $\Re \mu <0$ of
$\CalG_\nu(+\infty)$, $N^2_+$ the number of unstable eigenvalues
$\Re \mu
>0$, $\CalS$ the subspace of solutions of \eqref{linlayeq} that approach finite limits as $z\to\infty$,
and $\CalS_0$ the subspace of solutions of \eqref{linlayeq} that
decay to $0$. Then,

(i) $N^2_-+N^2_+=N'$ and
\begin{equation}\label{numberbc}
N_++N^2_-=N_b:=N'+N^1_+,
\end{equation}

(ii) profile $w(\cdot)$ decays exponentially to its limit $\uu$ as
$z\to +\infty$ in all derivatives, and

(iii) $\dim \CalS=N+N^2_-$ and $\dim \CalS_0=N^2_-$.
\end{lem}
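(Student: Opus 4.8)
\emph{Part (i), spectral count.} The first step is to read off the spectrum of the constant matrix $\CalG_\nu(+\infty)$ from \eqref{Gform}. Its first two block-columns vanish, so $\ker\CalG_\nu(+\infty)=\{(\dot w^1,\dot w^2,0)\}$ has dimension $N$; moreover, by (H3) the block $A^{11}_\nu(\uu)$ is invertible, by (H1) and (H4) $\det A_\nu(\uu)=\det A_0(\uu)\,\det\oA_\nu(\uu)\neq0$ so its Schur complement $A^{22}_\nu-A^{21}_\nu(A^{11}_\nu)^{-1}A^{12}_\nu$ is invertible at $\uu$, and by (H2) the block $B^{22}_\nu(\uu)$ is invertible. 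Hence the lower-right block $M:=(B^{22}_\nu)^{-1}\big(A^{22}_\nu-A^{21}_\nu(A^{11}_\nu)^{-1}A^{12}_\nu\big)(\uu)$ of $\CalG_\nu(+\infty)$ is invertible, from which one sees that $0$ is a semisimple eigenvalue of $\CalG_\nu(+\infty)$ of algebraic multiplicity exactly $N$, and that the remaining $N'$ eigenvalues are precisely those of $M$, i.e.\ the nonzero roots $\mu$ of $\det\big(\oA_\nu(\uu)-\mu\,\oB_\nu(\uu)\big)=0$ (using $A_\nu-\mu B_\nu=A_0(\oA_\nu-\mu\oB_\nu)$, a polynomial of degree $N'$ in $\mu$ with nonvanishing constant term). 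Evaluating (H5) at $\uu$ with $\xi=r\nu$, $r\neq0$, makes $ir\,\oA_\nu+r^2\oB_\nu=r(i\oA_\nu+r\oB_\nu)$ invertible, so $\det(\oA_\nu-\mu\oB_\nu)$ has no root on the imaginary axis; with (H4) covering $r=0$, the $N'$ eigenvalues of $M$ split as $N^2_-$ with $\Re\mu<0$ and $N^2_+$ with $\Re\mu>0$, giving $N^2_-+N^2_+=N'$.

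\emph{Part (i), dimensional count.} It remains to prove $N_++N^2_-=N_b$. I would argue this as in \cite{GMWZ6,MaZ3}: perturb $\CalG_\nu(+\infty)$ to the coefficient matrix of the resolvent (Evans) equation at tangential frequency $0$ and small spectral parameter $\delta>0$, where the splitting is hyperbolic with stable subspace of the fixed dimension $N_b$ (consistent splitting), and track the $N$ central modes as $\delta\to0^+$: the $N-N'$ modes slaved to $u^1$ split according to the sign of the eigenvalues of $\oA^{11}_\nu$ (real and all of one sign, by (H3)), contributing $N^1_+$ to the stable side; the $N'$ modes carried by the parabolic variable $u^2$ are governed by the residual hyperbolic matrix $\oA_\nu$ and contribute $N^2_+$ to the stable side; while the $N^2_\pm$ noncentral modes stay put. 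Equating, $N^2_-+N^1_++N^2_+=N_b$, hence $N_b=N'+N^1_+$. (Equivalently one may use a continuity argument, deforming $(\oA_\nu,\oB_\nu)$ through pairs with $\oA_\nu$ invertible hyperbolic, $\oA^{11}_\nu$ sign-definite, and $\det(\oA_\nu-\mu\oB_\nu)$ free of imaginary roots, to the model $\oA_\nu=\blockdiag(\sigma I_{N-N'},D)$, $\oB^{22}_\nu=I$ with $\sigma=\pm1$ and $D$ real diagonal, for which $M=D$ and the count is obvious.) This is the one genuinely substantive step — it is exactly what makes the boundary-layer boundary-value problem well-determined; the remaining ingredients are routine.

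\emph{Part (ii).} Written as the first-order system \eqref{layeq2}, the profile equation has equilibrium set $\{w^3=0\}$, an $N$-dimensional manifold whose tangent space at $\uu$ equals the $0$-eigenspace of $\CalG_\nu(+\infty)$; by Part (i) that eigenvalue is semisimple and the remaining spectrum is off the imaginary axis, so near $\uu$ this equilibrium manifold is a normally hyperbolic center manifold for \eqref{layeq2}. A layer profile is, by definition, a trajectory converging to $\uu$; since the center--stable manifold of $\uu$ is foliated over the center manifold by the stable fibers of its equilibria, such a trajectory lies in the fiber $W^s(\uu)$, so $w(z)-\uu\to0$ exponentially at the rate set by the spectral gap of $\CalG_\nu(+\infty)$ (stable manifold theorem). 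Differentiating \eqref{layeq2} and using the exponential decay of $w-\uu$ in its smooth bounded coefficients then yields exponential decay of $\D_z^k w$ for every $k$.

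\emph{Part (iii).} By Part (ii), $\CalG_\nu(z)\to\CalG_\nu(+\infty)$ exponentially as $z\to+\infty$, so a conjugation lemma (see \cite{MaZ3}) provides a bounded, boundedly invertible change of variables transforming \eqref{linlayeq} near $z=+\infty$ into $\D_z V=\CalG_\nu(+\infty)V$; hence $\CalS$ and $\CalS_0$ have the same dimensions as the corresponding solution spaces of the constant system. The solutions decaying to $0$ form the stable subspace of $\CalG_\nu(+\infty)$, of dimension $N^2_-$, so $\dim\CalS_0=N^2_-$; and, since $0$ is semisimple (Part (i)) so that its generalized eigenspace consists of constant solutions, the solutions with a finite limit form the center--stable subspace, of dimension $N+N^2_-$, giving $\dim\CalS=N+N^2_-$.
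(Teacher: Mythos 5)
The paper itself does not prove this lemma---it simply cites \cite{GMWZ6}, Lemma 2.12---so a self-contained argument is welcome. Your spectral computation for $\CalG_\nu(+\infty)$ (semisimple zero eigenvalue of multiplicity $N$, remaining spectrum equal to that of the invertible matrix $M$, no imaginary eigenvalues by (H5) applied with $\xi=s\nu$) is correct, and parts (ii) and (iii) are fine: (ii) is exactly the center-manifold argument of Remark \ref{CS}, and (iii) follows from the conjugation lemma together with the constant-coefficient analysis as you say.

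The gap is in \eqref{numberbc}, which you rightly identify as the one substantive step. In the low-frequency perturbation the $N$ central modes are governed by the \emph{full} matrix $\oA_\nu$: at $\zeta=(0,\delta,0)$ the small roots of $\det(\delta A_0+\mu A_\nu-\mu^2B_\nu)=0$ satisfy $\mu_j\approx-\delta/a_j$ with $a_j$ the eigenvalues of $\oA_\nu$, so the central contribution to the stable count is $N_+$, not the blockwise ``$N^1_++N^2_+$'' you assert. The claim that the $u^2$-carried central modes contribute exactly $N^2_+$ is the identity $N_+=N^1_++N^2_+$ in disguise (given $N^2_++N^2_-=N'$ it is equivalent to \eqref{numberbc}), so your primary argument assumes what is to be proved. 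The fallback homotopy also fails as stated, because the invariants you propose to preserve---invertibility of $\oA_\nu$ and $\oA^{11}_\nu$ and absence of imaginary roots of $\det(\oA_\nu-\mu\oB_\nu)$---are not sufficient. Indeed, take $N=2$, $N'=1$, $A_0=I$,
\begin{equation*}
A_\nu=\begin{pmatrix}-1&15\\-1&10\end{pmatrix},\qquad B_\nu=\begin{pmatrix}0&0\\0&1\end{pmatrix}.
\end{equation*}
Then $\det A_\nu=5$ with two positive real eigenvalues ($N_+=2$), $A^{11}_\nu=-1<0$ ($N^1_+=0$), and $\det(A_\nu-\mu B_\nu)=\mu+5$ ($N^2_-=1$, no imaginary roots), yet $N_++N^2_-=3\neq 1=N'+N^1_+$. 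This pair satisfies all the nonvanishing conditions of your homotopy class (and even the genuine coupling condition) but violates (H5): the Kawashima coefficient $\ell^\top B_\nu r/\ell^\top r$ is negative at the eigenvalue $(9-\sqrt{61})/2$ of $A_\nu$. Hence no admissible path connects it to your model, and the identity genuinely requires the full strength of (H5), which your argument never invokes beyond excluding imaginary roots. The correct route is the consistent-splitting argument of \cite{GMWZ6,MaZ3}: (H5) guarantees that the limiting coefficient matrix of the resolvent system has no purely imaginary eigenvalues for $\gamma>0$, so $\dim\EE^-$ is constant there; it equals $N'+N^1_+$ by a direct computation in the large-$\gamma$ regime, where the system decouples into its hyperbolic and parabolic principal parts, and it equals $N_++N^2_-$ by the low-frequency perturbation described above.
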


\begin{proof} A proof is given in  \cite{GMWZ6}, Lemma 2.12.  \end{proof}

  \begin{defi}
  \label{deftrans}
  The profile $w$ is said to be \emph{transversal} if

  \quad i)  there is no nontrivial  solution
  $ \dot w \in \cS_0$ which satisfies the boundary conditions
  $  \Gamma_\nu (\dot w, \D_z \dot w^2)_{| z = 0} = 0$,

  \quad ii)  the mapping $\dot w \mapsto \Gamma_\nu (\dot w, \D_z \dot w^2)_{| z = 0}$
  from $\cS$ to $\CC^{N_b}$  has  rank $N_b$.
  \end{defi}

Definition \ref{deftrans}(i) and (ii) corresponds to the geometric
conditions that the level set $\{W:\Upsilon(W)=\Upsilon(w(0),0,
\partial_z w^2(0))\}$ have transversal intersections in phase space
$W=(w,\partial_z w^2)$ at $ W_0:=(w(0), \partial_z w^2(0)) $ with
the stable and center--stable manifolds, respectively, of
$W_\infty:=(\uu,0)$; see Lemma \ref{geo} below.

The following assumption is the starting point for our construction
of exact boundary layer solutions to \eqref{visceq}.

\begin{ass}\label{C5}
Fix a choice of $(g_1,g_2)$ as in \eqref{viscbcd}.  For
$\cU_\partial$ as in \eqref{C1} we are given a smooth manifold $\cC$
defined as the graph
\begin{align}\label{C6}
\cC=\{(t,x_0,\cC(t,x_0)):(t,x_0)\in[-T,T]\times\partial\Omega\}\subset\cU_\partial,
\end{align}
where each $\cC(t,x_0)$, defined as in \eqref{vv41}, is now assumed
to be a smooth manifold of dimension $N-N_+$ . In addition we are
given a smooth function
\begin{align}\label{C8}
W:[0,\infty)\times\cC\to \pi\cU^*
\end{align}
such that for all $(t,x_0,q)\in\cC$, $W(z,t,x_0,q)$ is a transversal
layer profile satisfying \eqref{layeq} with $\nu=\nu(t,x_0)$ and
converging to $q$
as $z\to\infty$ at an exponential rate that can be
taken uniform on compact subsets of $\cC$.
\end{ass}

This  assumption is hard to check in general. However, in
Proposition \ref{C30} we show that for a large class of problems
including the Navier-Stokes and MHD equations with various boundary
conditions,  Assumption \ref{C5} is always satisfied for
small-amplitude profiles.
In Proposition \ref{C22prop} we give necessary and sufficient
conditions on boundary operators of the form \eqref{viscbcd} in
order for Assumption \ref{C5} to hold in the small-amplitude case.
These boundary conditions include the standard
noncharacteristic boundary
conditions for the Navier-Stokes and viscous MHD equations. In
Proposition \ref{redBC} we give a \emph{local} construction of a
$\cC$ manifold with associated profiles near a given, possibly large
amplitude, transversal profile.
In Proposition \ref{largeC}
we give a global construction of a
$\cC$ manifold with associated profiles near a given family
of large-amplitude profiles, assuming such a family exists
(not clear in general).

As regards the failure of Assumption \ref{C5}, Proposition \ref{C30}
implies, for example, the following nontransversality result.

\begin{prop}\label{C8a}
Whenever $\mathrm{rank}\Up_3=N''>N^2_-$, small-amplitude profiles
are not transversal. Equivalently, whenever the number of scalar
Dirichlet conditions for the parabolic problem, $(N'-N'')+N^1_+$, is
strictly less than the number of scalar boundary conditions for the
residual hyperbolic problem, $N_+$ (see \eqref{vv3}),
small-amplitude profiles are not transversal.

\end{prop}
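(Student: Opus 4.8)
The plan is to run the argument through Proposition \ref{C30}, which reduces the transversality question for small-amplitude profiles to the corresponding question for the limiting constant layer $w\equiv\uu$. For that constant layer the coefficient in \eqref{linlayeq} is the constant matrix $\CalG_\nu(+\infty)$ of \eqref{Gform}, and the transversality requirement of Definition \ref{deftrans}(ii) becomes: the linear map $\dot W\mapsto\Gamma_\nu\dot W(0)$, restricted to the center--stable subspace of $\CalG_\nu(+\infty)$ (which is $\cS|_{z=0}$, of dimension $N+N^2_-$ by Lemma \ref{count}), is onto $\CC^{N_b}$. I would show this fails exactly when $N''>N^2_-$ by a direct dimension count, and the rest is arithmetic.

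First I would unpack the structure of the center--stable subspace. With respect to the splitting $(\dot w^1,\dot w^2)\oplus\dot w^3$, \eqref{Gform} is block--upper--triangular with vanishing $(\dot w^1,\dot w^2)$-diagonal block and $\dot w^3$-diagonal block $C:=\big((B_\nu^{22})^{-1}(A_\nu^{22}-A_\nu^{21}(A_\nu^{11})^{-1}A_\nu^{12})\big)(\uu)$. By Lemma \ref{count}, $\CalG_\nu(+\infty)$ has $0$ as a semisimple eigenvalue and exactly $N^2_\pm$ eigenvalues of strictly positive, resp.\ negative, real part; hence $C$ is invertible, its stable subspace $E_s^C$ has dimension $N^2_-$, the $0$-eigenspace of $\CalG_\nu(+\infty)$ is exactly $E_c=\{(\dot w^1,\dot w^2,0)\}$ of dimension $N$, and the stable generalized eigenspace $E_s$ (dimension $N^2_-$) projects isomorphically onto $E_s^C$ under $\dot W\mapsto\dot w^3$. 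Thus every element of $\cS|_{z=0}=E_c\oplus E_s$ has the form $(\alpha+L_1 v,\ \beta+L_2 v,\ v)$ with $\alpha\in\CC^{N-N'}$, $\beta\in\CC^{N'}$ arbitrary, $v\in E_s^C$, and $L_1,L_2$ fixed linear maps.

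Next I would compute the rank. By \eqref{Gammaeq}, $\Gamma_\nu(\dot w^1,\dot w^2,\dot w^3)=(\Gamma_1\dot w^1,\ \Gamma_2\dot w^2,\ K_\nu\dot w^3)$, where by (H6) $\Gamma_1=\Upsilon_1'$ is onto $\CC^{N^1_+}$, $\Gamma_2=\Upsilon_2'$ is onto $\CC^{N'-N''}$, and $K_\nu$ has rank $N''$. Applying $\Gamma_\nu$ to the parametrization above and using that $\alpha,\beta$ are free, for each fixed $v$ the first two slots sweep out all of $\CC^{N^1_+}\times\CC^{N'-N''}$; hence the image of the boundary map on $\cS$ is exactly $\CC^{N^1_+}\times\CC^{N'-N''}\times K_\nu(E_s^C)$, so
\[
\rank\big(\dot W\mapsto\Gamma_\nu\dot W(0)\big)=N^1_++(N'-N'')+\dim K_\nu(E_s^C)\le N^1_++(N'-N'')+\min(N'',N^2_-).
\]
If $N''>N^2_-$ this is at most $N^1_++(N'-N'')+N^2_-<N^1_++N'=N_b$, so the map is not onto $\CC^{N_b}$, Definition \ref{deftrans}(ii) fails, and the profile is not transversal. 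For the ``Equivalently'' clause, \eqref{numberbc} gives $N_+=N_b-N^2_-=(N'+N^1_+)-N^2_-$, so $(N'-N'')+N^1_+<N_+$ if and only if $N''>N^2_-$.

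The one genuinely delicate step is legitimizing the reduction to the constant layer, i.e.\ the invocation of Proposition \ref{C30}. A naive perturbation of the computation above does not close: $\rank$ is only lower semicontinuous, and a priori the stable and center directions of the non-autonomous operator $\CalG_\nu(z)$ could contribute extra $\dot w^3(0)$-directions to $\cS$ as the amplitude varies, which could raise $\dim K_\nu(\,\cdot\,)$ back up to $N''$. This is exactly the point handled inside Proposition \ref{C30}, via the center-manifold reduction, which pins the $\dot w^3(0)$-content of $\cS$ to an $N^2_-$-dimensional subspace (the stable subspace of the $C(z)$-dynamics at $z=0$) for small amplitude; granting that, the count above carries over verbatim to small-amplitude profiles.
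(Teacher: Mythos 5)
Your computation for the \emph{constant} layer is correct and is in substance the paper's own argument: the paper parametrizes all small-amplitude profiles by $\Phi(z,\nu,q,a)$ (Proposition \ref{C14a}) and reads transversality off the derivative matrix \eqref{C22} of $\Psi$, whose $\Upsilon_3$-row is $(0,0,K_{\unu})$ acting on $a\in\bE_-(G_{\unu}(\up))$, a space of dimension $N^2_-$; hence \eqref{C23}(ii) cannot hold when $N''>N^2_-$ (Corollary \ref{smalltrans}, part 2). Your spectral decomposition of $\CalG_\nu(+\infty)$ and the identification of the $\dot w^3(0)$-content of $\cS$ with $\bE_-(G_{\unu}(\up))$ is the same count in different clothing, and your arithmetic for the ``equivalently'' clause is exactly \eqref{C28}.

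The gap is at the step you yourself flag: passing from the constant layer to nearby non-constant profiles. The citation is wrong — Proposition \ref{C30} is a positive result that \emph{assumes} transversality on all of $B$ and builds the global $\cC$-manifold; neither its statement nor its proof ``pins the $\dot w^3(0)$-content of $\cS$ to an $N^2_-$-dimensional subspace'' for non-constant small-amplitude profiles. Moreover that claim is itself doubtful: $\cS|_{z=0}$ is the tangent space $T_{W_0}\cW^{cs}$ to the center--stable manifold, whose $\dot w^3$-projection equals $\bE_-(G_{\unu}(\up))$ \emph{at rest points}, but at a point with $a\neq 0$ the $q$-derivative of $\alpha(\nu,q;a)$ contributes directions recording the rotation of the subspaces $\bE_-(G_\nu(q))$ with $q$; these need not lie in $\bE_-(G_\nu(q))$, so the projection can exceed $N^2_-$ dimensions, and rank semicontinuity then works against you. (The paper's own explicit statement, Corollary \ref{smalltrans}, is likewise confined to constant layers.) The clean way to close the gap is to note that for $N''>N^2_-$ the profile boundary condition $K_\nu w^2_z(0)=0$, combined with $w^2_z(0)=\alpha(\nu,q;a)+O(|a|^2)$ and $\alpha(\nu,q;a)\in\bE_-(G_\nu(q))$, forces $a=0$ whenever $K_\nu$ is injective on $\bE_-(G_\nu(q))$ — so that generically the only small-amplitude profiles \emph{are} constant layers — and otherwise to run your rank count on the full derivative $\Psi_{q,a}(\nu,q,a,p)$ of Proposition \ref{C22prop} with explicit control of the $O(|a|)$ entries in the $\Upsilon_3$-row, rather than by a soft continuity appeal.
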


\subsection{Inviscid solutions, residual boundary conditions, and the uniform Lopatinski condition.}

 Under suitable assumptions we will study the small
viscosity limit of solutions to
\begin{align}\label{vv2}
\begin{split}
&\cL_\eps(u):=
 A_0(u)u_t  + \sum_{j=1}^d   A_j(u) \D_{j} u    -
 \eps \sum_{j,k= 1}^d \D_{j} \big( B_{jk}(u) \D_{k} u \big) = 0,\\
&\Up(u,\partial_Tu^2,\partial_\nu u^2)=(g_1,g_2,0)\text{ on
}[0,T_0]\times \partial\Omega
\end{split}
\end{align}
and demonstrate convergence to a solution $u^0(t,x)$ of the inviscid
hyperbolic problem:
\begin{align}\label{vv3}
\begin{split}
&\cL_0(u^0)=0\text{ on }[0,T_0]\times\Omega\\
&u^0(t,x_0)\in \cC(t,x_0)\text{ for }(t,x_0)\in
[0,T_0]\times\partial\Omega,
\end{split}
\end{align}
where $\cC(t,x_0)$ is the endstate manifold defined in Assumption
\ref{C5} (see also Prop. \ref{C30}).

\begin{defi}\label{vv4}
We refer to the  boundary condition in \eqref{vv3} as the
\emph{residual hyperbolic boundary condition}.
\end{defi}

For $(t,x_0)\in [0,T_0]\times\partial\Omega$ we freeze a state
$p:=u^0(t,x_0)$ and, working in coordinates where the boundary is
$x_d=0$, we define
\begin{align}\label{vv5}
H(p,\zeta):=-A_d(p)^{-1}\left((i\tau+\gamma)A_0(p)+\sum^{d-1}_{j=1}i\eta_j
A_j(p)\right).
\end{align}
Here we have suppressed the dependence of the frozen $A_j$ on
spatial coordinates in the notation.   Let
\begin{align}\label{vv6}
\psi:\bR^N\to\bR^{N_+}
\end{align}
be a defining function for $\cC(t,x_0)$ near $p$, i.e.,
$\cC(t,x_0)=\{u:\, \psi(u)=0\}$,  with $\nabla \psi $ full rank $N_+$.
Then, the residual boundary condition \eqref{vv3} may be expressed,
locally to $p$, as $\Upsilon_{res}(u):=\psi(u)$, hence the
\emph{linearized residual boundary condition}
at $p$ takes the form
\begin{align}\label{vv7}
\Gamma_{res}(p)\dot u=0 \Leftrightarrow \psi'(p)\dot u=0
\Leftrightarrow\dot u \in T_p\cC(t,x_0).
\end{align}

\begin{rem}\label{vvv1}
Suppose $w(z)$ is a solution of \eqref{layeq} converging to
$p=u^0(t,x_0)\in\cC(t,x_0)$ as $z\to\infty$. Let us write the
linearized equations of \eqref{layeq} around $w(z)$ as
\begin{align}\label{vvv2}
\bL(t,x_0,z,\partial_ z)\dot w=0,\;\;\Gamma_{\nu(x_0)}(\dot w,\dot
w^2_z)=0.
\end{align}
 Observe that the tangent space $T_p\cC(t,x_0)$ may be
characterized as the set of limits at $z=\infty$ of solutions to
\eqref{vvv2}. This follows readily from the definition of
$\cC(t,x_0)$; see \cite{Met4}, Prop. 5.5.5.

\end{rem}

\begin{defi}\label{vv8}
1)  The inviscid problem \eqref{vv3} satisfies the \emph{uniform
Lopatinski condition at $p=u(t,x_0)$} provided there exists $C>0$
such that for all $\zeta$ with $\gamma>0$
\begin{align}\label{vv9}
|D_{Lop}(p,\zeta)|:=|\det\left(\bE^-(H(p,\zeta)),\ker\Gamma_{res}(p)\right)|\geq
C.
\end{align}

2) The inviscid problem \eqref{vv3} satisfies the \emph{uniform
Lopatinski condition} provided \eqref{vv9} holds with a constant
that can be chosen independently of $(t,x_0)\in [0,T_0]\times
\partial\Omega$.
\end{defi}

Here by a determinant of subspaces we mean the determinant of the
matrix with subspaces replaced by smoothly chosen bases of column
vectors, specifying $D_{Lop}$ up to a smooth nonvanishing factor;
for example, if the bases are taken to be orthonormal, then
$|D_{Lop}|$ is independent of the choice of bases.  We refer to
\cite{GMWZ7}, section 4.1 for equivalent formulations and further
discussion of the uniform Lopatinski condition.

\begin{theo}\label{vv10}

Given a smooth manifold $\cC$ as in Assumption \ref{C5}, consider
the hyperbolic problem \eqref{vv3}

(i) under hypotheses (H1)-(H5), or alternatively,

(ii) assuming (H1)-(H5), except that (H4) is replaced by (H4') in
the symmetric-dissipative case.

Let $s>\frac{d}{2}+1$ and suppose that we are given initial data
$v^0(x)\in H^{s+1}(\Omega)$ at $t=0$ satisfying corner compatibility
conditions to order $s-1$ for \eqref{vv3}.  Suppose also that the
uniform Lopatinski condition is satisfied at all points
$x_0\in\partial\Omega$, $t=0$. Then there exists a $T_0>0$ and a
function $u^0(t,x)\in H^s([0,T_0]\times \Omega)$ satisfying
\eqref{vv3} with
\begin{align}\label{vv11}
u^0_{|t=0}=v^0,
\end{align}
and so that the uniform Lopatinski condition holds on
$[0,T_0]\times\partial\Omega$.
\end{theo}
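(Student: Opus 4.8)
The plan is to treat Theorem~\ref{vv10} as a local-in-time existence result for a quasilinear symmetric hyperbolic initial-boundary-value problem with a maximally dissipative or uniformly Lopatinski-stable boundary condition, and to reduce it to the standard theory of such problems (as in Kato, Majda, Rauch--Massey, or the treatment in \cite{GMWZ7}). First I would observe that $\cL_0(u)=0$ is, under (H4) (or (H4$'$) in the symmetric-dissipative case), a symmetrizable hyperbolic system: in case (i) constant multiplicity of the eigenvalues of $\overline A(u,\xi)$ gives a smooth symbolic symmetrizer, while in case (ii) symmetric dissipativity provides an explicit symmetrizer $S(u)$ with $S(u)A_0(u)$ symmetric positive definite. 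After left-multiplying \eqref{vv3} by $S(u)$ (or the constructed symmetrizer), the system is in symmetric hyperbolic form with a smooth positive definite coefficient of $u_t$. Since $\pi\cU_\partial\subset\pi\cU$ is the relevant state space for the hyperbolic problem and the normal matrix $\overline A(u,\nu)$ is invertible there with a fixed number $N_+$ of positive eigenvalues (noncharacteristic boundary), the boundary is uniformly noncharacteristic along a neighborhood of the data. The number of scalar boundary conditions supplied by the residual condition $u^0(t,x_0)\in\cC(t,x_0)$ is exactly $N_+=\mathrm{codim}\,\cC(t,x_0)$, which matches the count required by \eqref{numberbc}, so the problem is neither over- nor under-determined.

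Next I would set up the iteration. Linearizing about a sequence $u^{(n)}$, one gets at each step a linear symmetric hyperbolic system with variable coefficients and the linearized residual boundary condition $\dot u\in T_p\cC(t,x_0)$, i.e. $\psi'(p)\dot u=0$. The key structural input is the uniform Lopatinski condition \eqref{vv9}, assumed at $t=0$ on all of $\partial\Omega$; by continuity of $\bE^-(H(p,\zeta))$ and of $\ker\Gamma_{res}(p)$ in $p$ (and compactness of $\partial\Omega$ and of the relevant $\zeta$-sphere after homogeneity), this condition persists for $(t,x_0)$ in a short time interval and for states near $v^0|_{\partial\Omega}$. Standard a priori estimates for uniformly Lopatinski (in particular, for maximally dissipative) boundary conditions then give $H^s$ energy estimates for the linearized problems with constants uniform over the iteration, provided the corner compatibility conditions to order $s-1$ are imposed so that the iterates lie in $H^s$ up to $t=0$. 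Here $s>\frac d2+1$ guarantees $H^s\hookrightarrow C^1$, which is what is needed to close the quasilinear estimates (Moser/Gagliardo--Nirenberg tame estimates on the nonlinear coefficients $A_j(u)$, and on the nonlinear defining function $\psi$ of $\cC$). One obtains a fixed point $u^0\in H^s([0,T_0]\times\Omega)$ on a possibly shorter time interval $[0,T_0]$, with $u^0|_{t=0}=v^0$ by construction of the compatibility data, and with $u^0(t,x_0)\in\cC(t,x_0)$ since the nonlinear boundary constraint is preserved under the iteration (each iterate satisfies it exactly, being defined via the $\cC$-valued projection, or via solving the ODE flow along the boundary characteristics). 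Finally, the last assertion — that the uniform Lopatinski condition holds on all of $[0,T_0]\times\partial\Omega$, not just at $t=0$ — follows by shrinking $T_0$ if necessary so that $u^0$ stays within the neighborhood of $v^0|_{\partial\Omega}$ on which \eqref{vv9} was seen to hold.

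The main obstacle is case (ii): when (H4) is replaced by (H4$'$), the eigenvalues of $\overline A(u,\xi)$ need not have constant multiplicity — branches may be totally nonglancing, as in viscous MHD — so one cannot invoke the classical Friedrichs symmetrizable theory off the shelf for the a priori estimates and the well-posedness of the linearized problems. Here I would lean on the microlocal symmetrizer construction of \cite{GMWZ6} (and the companion analyses in \cite{GMWZ7,Met4}) adapted to the totally nonglancing case: one builds Kreiss-type symmetrizers block by block near each characteristic mode, using the uniform Lopatinski condition to handle the boundary-frequency contributions, and the nonglancing hypothesis to control the modes where the normal symbol fails to be diagonalizable. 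A secondary technical point is the smooth dependence of $\bE^-(H(p,\zeta))$ on $(p,\zeta)$ down to $\gamma=0$ and up to glancing frequencies, needed so that the Lopatinski determinant $D_{Lop}$ is continuous and the lower bound propagates; this is exactly the kind of continuity analysis carried out in \cite{GMWZ7}, which I would cite rather than redo. Modulo these points, which are established in the cited companion papers, the theorem is a routine application of the energy method for noncharacteristic hyperbolic IBVPs, so the proof can be kept short.
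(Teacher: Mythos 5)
Your proposal is correct and follows essentially the same route as the paper, which proves Theorem \ref{vv10} purely by citation: \cite{CP} Chapter 7 and \cite{Met2} for the constant-multiplicity case (i), and \cite{MZ2} for the variable-multiplicity case (ii). Your sketch (symmetrizable quasilinear IBVP, Kreiss-type symmetrizers under uniform Lopatinski, iteration closed by $H^s\hookrightarrow C^1$ with compatibility conditions, persistence of Lopatinski by continuity and shrinking $T_0$, and the MZ2-style block analysis for totally nonglancing modes) is precisely the content of those references, so there is nothing to correct.
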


\begin{proof}

We refer to \cite{CP}, Chapter 7 for a discussion of corner
compatibility conditions.   For the proof in the case of constant
multiplicity (i.e., when (H4) holds) see \cite{CP}, Chapter 7 and
\cite{Met2}.  For the case of variable multiplicities (including
situations more general than those considered here) see \cite{MZ2}.

\end{proof}

\subsection{Approximate solutions to the viscous problem}\label{multi-scale}

\textbf{}
 \quad The first step in constructing exact solutions to the viscous
problem \eqref{vv2} that converge to a given solution $u^0$ of the
hyperbolic problem \eqref{vv3} in the small viscosity limit is to
construct high order approximate solutions of \eqref{vv2} with that
property.  Following the approach of \cite{GMWZ4,GMWZ7} we construct
approximate solutions using a WKB expansion as described in the
following result.  A more precise statement of Proposition
\ref{approxsoln} and the proof are given in Appendix
\ref{approximate}.

Let
\begin{equation}\label{ZN}
(x_0,   z): \Omega \to \partial \Omega \times \RR^+
\end{equation}
be a smooth map defined for $d(x,\partial \Omega)\le r$, $r>0$
sufficiently small, such that
$$
(x_0, z)^{-1}(x_0,z)= x_0 + z\nu(x_0),
$$
where $\nu(x_0)$ is the inward normal to $\partial \Omega$ at $x_0$,
i.e., $(x_0, z)$ are normal coordinates and $\nabla z=\nu$ on
$\partial\Omega$. Let $\chi(x)$ be a smooth cutoff function
vanishing for $d(x,\partial \Omega)\ge 2r$ and identically one for
$d(x,\partial \Omega)\le r$.

\begin{prop}[Approximate solutions] \label{approxsoln}
Suppose we are given a $\cC$ manifold and associated transversal
profiles $W(z,t,x_0,q)$ as in Assumption \ref{C5}, and also a
solution $u^0\in H^{s_0}$ to the inviscid problem \eqref{vv3} as
described in Theorem \ref{vv10}.  (In particular, the uniform
Lopatinski condition is satisfied on $[0,T_0]\times\partial\Omega$).
Fix positive integers $M$ and $s$ with $M\geq s \geq 1$.  Provided
$s_0$ is sufficiently large relative to $M$ and $s$ (see
\eqref{B28}), there exists an approximate solution $u^\eps_a(t,x)$
to the viscous problem \eqref{vv2} on $[0,T_0]\times\Omega$ of the
form
\begin{align}\label{vv20}
\begin{split}
&u^\eps_a(t,x)=\sum_{0\le j\le M} \eps^j\cU^j(t,x, \frac{z}{\epsilon})+\epsilon^{M+1}u^{M}(t,x),\\
&\mathcal{U}^j(t,x,\frac{z}{\epsilon})=
\chi(x)\left(W^j(\frac{z}{\eps},t,x_0)-W^j(+\infty,t,x_0)\right) +
u^j(t,x).
\end{split}
\end{align}
Here $u^0$ is the given inviscid solution, the leading profile $W^0$
is given by
\begin{align}\label{vv21}
W^0(Z,t,x_0):=W(Z,t,x_0,u^0(t,x_0))
\end{align}
for $W(Z,t,x_0,q)$ as in Assumption \ref{C5}, and the higher
profiles $W^j(Z,t,x_0)$ converge exponentially to their limits as
$Z\to+\infty$.   The approximate solution $u^\eps_a$ satisfies
\begin{align}\label{vv22}
\begin{split}
&\|\cL_\eps(u_a)\|_{H^s([0,T_0]\times\Omega)} \le C\eps^M\\
& \Upsilon(u_a, \D_T u^2_a, \D_\nu u^2_a)=(g_1,g_2,0) \text{ on
}[0,T_0]\times \partial \Omega
\end{split}
\end{align}
for $(g_1,g_2,0)$ as in the original viscous problem \eqref{vv2}.

\end{prop}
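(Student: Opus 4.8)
Proof proposal (sketch of the argument for Appendix \ref{approximate}).

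The plan is to construct $u^\eps_a$ by a matched asymptotic (WKB) expansion, following the scheme of \cite{GMWZ4,GMWZ7}: an \emph{outer} expansion $\sum_j\eps^j u^j(t,x)$ solving the inviscid system \eqref{vv3} and its linearizations, an \emph{inner} boundary-layer expansion $\sum_j\eps^j W^j(z/\eps,t,x_0)$ solving the profile system \eqref{layeq} and its linearizations, the two tied by the matching conditions $W^j(+\infty,t,x_0)=u^j(t,x_0)$. First I would substitute \eqref{vv20} --- with $\chi\equiv1$ near $\partial\Omega$ --- into $\cL_\eps$, pass to normal coordinates $(x_0,z)$, set $Z=z/\eps$, and collect powers of $\eps$. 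Because $\partial_z=\eps^{-1}\partial_Z+\partial_x$, both the convection term and the $\eps$-weighted diffusion term contribute at order $\eps^{-1}$ near $\partial\Omega$, so that the leading-order inner equations are the profile equations \eqref{layeq}; the transversal solution with endstate $q=u^0(t,x_0)\in\cC(t,x_0)$ is furnished by Assumption \ref{C5}, which is precisely \eqref{vv21}. Away from $\partial\Omega$ (where $\chi\equiv0$) the cascade is $\cL_0(u^0)=0$, the given solution, together with linearized hyperbolic equations $\cL_0'(u^0)u^j=R^j$, $j\ge1$, with $R^j$ determined by $u^0,\dots,u^{j-1}$ and their derivatives.

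At order $\eps^j$, $j\ge1$, the inner equations assemble into an inhomogeneous version of the linearized profile system \eqref{linlayeq}--\eqref{linlaybc}, with exponentially decaying source built from $W^0,\dots,W^{j-1}$, $u^0,\dots,u^{j-1}$ and their slow derivatives, and with a boundary condition coupling the $j$-th profile correction to $u^j|_{\partial\Omega}$. The crux is the simultaneous solvability of the coupled inner/outer system at each order. By Lemma \ref{count}, $\dim\cS_0=N^2_-$ and $N_++N^2_-=N_b$; transversality of $W^0$ (Definition \ref{deftrans}) says precisely that the boundary map $\Gamma_\nu$ has rank $N_b$ on $\cS$ and is injective on $\cS_0$, which is exactly what is needed to solve the decaying profile problem for its $N^2_-$ free constants, leaving $N_+$ residual scalar conditions on $u^j|_{\partial\Omega}$. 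By Remark \ref{vvv1} these have the form $\Gamma_{res}(p)u^j|_{\partial\Omega}=(\text{lower order})$, i.e.\ are compatible with the linearized residual hyperbolic condition $\dot u\in T_p\cC(t,x_0)$, so the linearized hyperbolic IBVP for $u^j$ is solvable --- well posed because the uniform Lopatinski condition holds on all of $[0,T_0]\times\partial\Omega$ by Theorem \ref{vv10}. Its boundary trace then fixes $W^j(+\infty)$ and closes the inductive step. Each step costs a fixed number of derivatives, which forces $u^0\in H^{s_0}$ with $s_0$ large relative to $M$ and $s$; the precise requirement is \eqref{B28}.

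For the error estimate: the corrections $W^0,\dots,W^M$ and $u^1,\dots,u^M$ are chosen to annihilate, order by order, all terms up to $\eps^M$ in the formal expansion of $\cL_\eps(u_a)$, so that, up to an $O(\eps^\infty)$ error coming from the $\chi$-derivatives --- which are supported where $z\ge r$, hence $z/\eps\ge r/\eps$, and are controlled by the exponential decay of the profiles ($W^0$ by Lemma \ref{count}(ii) and the $W^j$ by its linearized analogues, all guaranteed by Assumption \ref{C5}) --- $\cL_\eps(u_a)$ reduces to a finite sum of profile-type terms carrying a high power of $\eps$. The standard estimate of such terms in $H^s$, accounting for the $\eps^{-1}$ factors produced by normal differentiation (this is where $M\ge s$ enters), then yields $\|\cL_\eps(u_a)\|_{H^s([0,T_0]\times\Omega)}\le C\eps^M$. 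Finally, the additional corrector $\eps^{M+1}u^{M}$ in \eqref{vv20} is tuned, using the maximal-rank hypotheses (H6) on $\Upsilon_1'$, $\Upsilon_2'$ and $K_\nu$, so that the boundary conditions in \eqref{vv22} are satisfied with exact equality.

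I expect the principal obstacle to be the order-$j$ solvability just described: one must verify that the mixed boundary operator $\Upsilon$, suitably decomposed, \emph{simultaneously} pins down the free constants of the decaying profile correction (transversality) and returns exactly the right residual datum for the outer hyperbolic problem, landing on the appropriate affine translate of $T_p\cC(t,x_0)$ so that the Lopatinski-based well-posedness of Theorem \ref{vv10} applies. The numerology $N_++N^2_-=N_b$ of Lemma \ref{count} is what makes this accounting close, and the identification in Remark \ref{vvv1} of $T_p\cC(t,x_0)$ with limits of solutions of the linearized profile equations is what makes it consistent; carrying this out while tracking the Sobolev indices through the cascade is the technical heart of the construction, and is done in detail in Appendix \ref{approximate}.
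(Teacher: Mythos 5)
Your proposal is correct and follows essentially the same route as the paper's Appendix \ref{approximate}: the same slow/fast cascade, the same use of transversality (via Lemma \ref{count} and the identification of $T_p\cC(t,x_0)$ in Remark \ref{vvv1}) to split each order-$j$ profile problem into a decaying particular solution, a boundary-fitting piece in $\cS$, and a piece whose endstate solves the Lopatinski-well-posed linearized hyperbolic problem, plus the final $O(\eps^{M+1})$ boundary corrector and the exponential-decay treatment of the cutoff terms. The only detail you omit is the minor technical step of cutting off the forcing in time near $t=-T_0$ so that the linearized hyperbolic problems for the $u^j_c$ become forward well-posed initial-boundary value problems.
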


\begin{exams}\label{existexams}

\textup{The combination of Theorem \ref{vv10} (for inviscid
solutions $u^0$), Proposition \ref{C30} (for $\cC$ manifolds and
transversal profiles), and Corollary \ref{symmstab} (for
satisfaction of the uniform Evans condition which implies
transversality and uniform Lopatinski) provides us with a variety of
examples, involving both Dirichlet and Neumann conditions for the
Navier Stokes and viscous MHD equations and including all cases
mentioned in
Examples \ref{basicexams}, where the hypotheses of Proposition
\ref{approxsoln} are satisfied for small-amplitude profiles.}

\textup{The large-amplitude case is more difficult.  In section
\ref{Isentropic} we construct $\cC$ manifolds with associated
large-amplitude profiles for the isentropic Navier-Stokes equations.
Transversality of those profiles is verified in Proposition
\ref{ac1} and maximal dissipativity of the residual hyperbolic
problem, which implies the uniform Lopatinski condition, is verified
in section \ref{maxlop}. Thus, here again the hypotheses of
Proposition \ref{approxsoln} are satisfied.}

\textup{Large-amplitude $\cC$ manifolds and transversal profiles are
constructed \emph{locally} near the endstate of a \emph{given}
transversal profile in Proposition \ref{redBC}. In problems with
rotational symmetry it may be possible to promote locally
constructed large-amplitude $\cC$ manifolds to global ones by using
that symmetry.  For large-amplitude layers there are now efficient
numerical methods  available for verification of the uniform Evans
condition. \cite{CHNZ,HLyZ2,HLyZ2}}
\end{exams}

\subsection{The Evans condition, stability and transversality, and the small viscosity limit}\label{evanssec}
\emph{} \quad We refer to the approximate solution $u^\eps_a$
described in Proposition \ref{approxsoln}
as an approximate boundary-layer solution. Under a suitable Evans
condition (Definition \ref{vv50}), we will produce a nearby
\emph{exact} boundary-layer solution $u^\eps$ of \eqref{visceq},
\eqref{viscbcd} of the form
\begin{align}\label{vv43a}
u^\eps=u^\eps_a+v^\eps,
\end{align}
where $v^\eps$ satisfies the ``error problem"
\begin{equation}\label{errorproblem}
\cL_\eps(u_a+v)-\cL_\eps(u_a)= -\cL_\eps(u_a),
\end{equation}
derived from the problems satisfied by $u^\eps$ and $u^\eps_a$, with
boundary data
\begin{equation}\label{errordata}
 \big(\Upsilon(u_a+v,\partial_{T,\nu}(u^2_a+v^2))-\Upsilon (u_a,\partial_{T,\nu}u^2_a)\big)|_{x\in
\partial \Omega}=0
\end{equation}
and forcing term $-\cL_\eps(u_a)$ small of order $\eps^M$ in $H^{s}$
as described in \eqref{vv22}, with $M$ and $s$ large. Here
$v_{|t=0}$ satisfies high order corner compatibility conditions
depending on $u_a$.

Evidently, the problem of estimating convergence error
$\|v^\eps\|_{H^{s}}$ in terms of truncation error
\eqref{errorproblem} amounts to determining $H^{s}\to H^{s}$
stability estimates for \eqref{errorproblem}--\eqref{errordata},
for which the main obstacle is rapid variation of coefficients
in the boundary-layer region.
We focus now on this region, and stability
of associated layer-profiles, i.e., the ``microscopic''
stability problem.

 Fix a point $(t,x_0)$, $x_0\in\partial\Omega$ and
consider again the viscous problem \eqref{vv2}.   We work in local
spatial coordinates $(y,x_d)$  where $x_0$ is $(0,0)$ and $\partial
\Omega$ is given by $x_d=0$.  Consider a planar layer profile
\begin{align}\label{vv43}
u^\eps(t,y,x_d)=w(x_d/\eps)
\end{align}
as in \eqref{layprof}, which is an exact solution to \eqref{vv2} on
$x_d\geq 0$ when the coefficients and boundary data $(g_1,g_2,0)$
are frozen at $(t,x_0)$. Without loss of generality we take
$\eps=1$, set $z=x_d$, and write the linearized equations of
\eqref{vv2} about $w$:
\begin{equation}
\label{linq} \cL'_{w} \dot u  = \dot f , \quad
 \Upsilon' (  \dot u,  \D_y \dot u^2,  \D_z \dot u^2)  _{|
x = 0 } = \dot g.
\end{equation}
Here $\Upsilon'$ is the differential of $\Upsilon $ at $(w(0), 0,
\D_z w^2(0))$ and $\cL'_{w}$ is a differential operator with
coefficients that are smooth functions of $x_d$.

Performing a  Laplace-Fourier  transform of \eqref{linq} in $(t,
y)$, with frequency variables denoted by $\gamma + i  \tau$ and $
\eta$ respectively, yields the family of ordinary differential
systems
 \begin{equation}
\label{linq3} L( z,   \gamma+ i   \tau, i    \eta, \D_z)    u = f,
\quad \Upsilon'  (   u,  i \eta   u^2, \D_z   u^2 ) _{| z = 0} = g,
\end{equation}
\begin{equation}
\label{linL} L      = -  \cB (z)   \D^2 _z   +  \cA (z, \zeta) \D_z
+ \cM (z, \zeta),
\end{equation}
with in particular, $\cB(z) = B_{d,d} (w(z))$ and $\cA^{11} (z,
\zeta) = A^{11}_d(w(z))$.  The matrices $\cA (z, \zeta)$, $\cM (z,
\zeta)$ are written out explicitly in \eqref{coefflin}.

The problem \eqref{linq3} may be written as a first order system
 \begin{equation}
\label{linq4} \D_z U  - \cG(z, \zeta) U = F  , \quad   \Gamma
(\zeta)  U _{| z = 0} = G,
\end{equation}
where $U =  ( u , \D_z u^2)=(u^1,u^2,u^3) \in \CC^{N+ N'}$ and
$\zeta=(\tau,\gamma, \eta)$. The components of $\cG(z,\zeta)$ are
given explicitly in \eqref{cG} and we have
\begin{align}\label{pp1}
\Gamma(\zeta)U=(\Gamma^1u^1,\Gamma^2u^2,\Gamma^3(\zeta)u^3)
\end{align}
with
\begin{align}\label{pp2}
\begin{split}
&\Gamma^1u^1=\Up_1'(w^1(0))u^1,\;\Gamma^2u^2=\Up_2'(w^2(0))u^2,\;\\
&\Gamma^3(\eta)U=K_du^3+K_T(w(0))i\eta u^2.
\end{split}
\end{align}
Observe that when $\zeta=0$ \eqref{linq4} coincides with
\eqref{linlayeq}--\eqref{linlaybc} in the case when $\nu=(0,1)$.

A necessary condition for stability of the inhomogeneous equations
\eqref{linq4} is stability of the homogeneous case $F=0$, $G=0$,
i.e., nonexistence for $\gamma \ge 0$, $\zeta\ne 0$ of solutions $U$
decaying as $z\to +\infty$ and satisfying  $\Gamma(\zeta) U(0)=0$.
These may be detected by vanishing of the {\it Evans function}
\begin{equation}\label{evans1}
D(\zeta):=
\det_{N+N'} (\EE^-(\zeta),\ker \Gamma(\zeta)),
\end{equation}
where
$\EE^-(\zeta)$ is the subspace of initial data at $z=0$ for which
the solution of $\D_z U  - \cG(z, \zeta) U = 0$ decays at
$z=+\infty$.   For high frequencies $|\zeta|\geq R>0$ we also define
in \eqref{rescEvf} a rescaled Evans function $D^{sc}(\zeta)$.

Given a $\cC$ manifold and associated layer profiles $W(Z,t,x_0,q)$
as in Assumption \ref{C5}, along with an inviscid solution $u^0$ as
in Theorem \ref{vv10}, we  define in the same way $D(t,x_0,\zeta)$
and $D^{sc}(t,x_0,\zeta)$ for \emph{every} $(t,x_0)\in [0,T_0]\times
\partial\Omega$ using the associated profile $w(Z)=W(Z,t,x_0,u(t,x_0))$.

\begin{defi}\label{vv50}
1.)  We say that the \emph{uniform Evans condition is satisfied by
the profile $w(z)$} provided there exist positive constants $R$, $C$
such that
\begin{align}\label{vv51}
\begin{split}
&|D(\zeta)|\geq C \text{ for }0<|\zeta|\leq R,\;\gamma\geq
0\text{ and }\\
&|D^{sc}(\zeta)|\geq C \text{ for }|\zeta|\geq R,\;\gamma\geq 0.
\end{split}
\end{align}

2.)We say that the \emph{uniform Evans condition holds on
$[0,T_0]\times \partial\Omega$} provided there exist positive
constants $R$, $C$ such that the estimates \eqref{vv51} hold for the
Evans functions $D(t,x_0,\zeta)$ and $D^{sc}(t,x_0,\zeta)$ defined
above, uniformly for all $(t,x_0)\in [0,T_0]\times
\partial\Omega$.

\end{defi}

In order to understand the behavior of $D(\zeta)$ near $\zeta=0$ it
is helpful to introduce polar coordinates
\begin{align}\label{ab1}
\zeta=\rho\hat\zeta,\;\rho=|\zeta|, \text{ for }\zeta\neq 0,
\gamma\geq 0
\end{align}
and to write
\begin{align}\label{ab2}
D(\zeta)=D(\hat \zeta,\rho),\;\;\;
\bE^-(\zeta)=\bE^-(\hat\zeta,\rho)\text{ for }\zeta\neq 0,\gamma\geq
0.
\end{align}
It is shown in \cite{MZ3}, Theorem 3.3 and \cite{GMWZ6}, Remark 2.31
that the spaces $\bE^-(\zeta)=\bE^-(\hat\zeta,\rho)$ have continuous
extensions to $\rho=0$ under our structural hypotheses (and, more
generally, when there exist $K-$families of symmetrizers for the
linearized viscous problem.)   Hence, under our assumptions on
$\Gamma$,  $D(\hat\zeta,\rho)$ extends continuously to $\rho=0$ for
$\hat\zeta$ with $\hat\gamma\geq 0$.   This continuity allows us to
rephrase the low frequency uniform Evans condition equivalently as
\begin{align}\label{ab3}
D(\hat\zeta,0)\neq 0 \text{ for }|\hat\zeta|=1,\;\;\hat \gamma\geq
0.
\end{align}

The following elementary result allows us to verify Evans conditions
by proving trace estimates.

\begin{lem}[\cite{GMWZ6}, Lemma 2.19]\label{ZZZ}
Suppose that $\bE\subset\bC^n$ and $\Gamma:\bC^n\to\bC^m$, with
$\mathrm{rank} \;\Gamma=\mathrm{dim}\bE=m$.  If
$|\det(\bE,\ker\Gamma)|\geq c>0$, then there is $C$, which depends
only on $c$ and $|\Gamma^*(\Gamma\Gamma^*)^{-1}|$ such that
\begin{align}\label{ZZ}
|U|\leq C|\Gamma U|\text{ for all }U\in\bE.
\end{align}
Conversely, if this estimate is satisfied then
$|\det(\bE,\ker\Gamma)|\geq c>0$, where $c>0$ depends only on $C$
and $|\Gamma|$.

\end{lem}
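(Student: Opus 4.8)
The plan is to prove both directions of the equivalence by elementary linear algebra, exploiting the geometric meaning of the quantity $\det(\bE,\ker\Gamma)$ as the (co)sine of an angle between subspaces of complementary dimension. First I would set up notation: write $n = \dim\bC^n$, $m = \dim\bE = \mathrm{rank}\,\Gamma$, so that $\dim\ker\Gamma = n-m$ and $\bE$ and $\ker\Gamma$ have complementary dimensions in $\bC^n$; hence $|\det(\bE,\ker\Gamma)| \geq c>0$ is exactly the statement that $\bC^n = \bE \oplus \ker\Gamma$ with a quantitative transversality bound. Throughout I use orthonormal bases, so that the determinant is independent of the choice of bases up to a unimodular factor, as noted after Definition \ref{vv8}.

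For the forward direction, suppose $|\det(\bE,\ker\Gamma)| \geq c>0$ and let $U\in\bE$. Decompose $U$ orthogonally as $U = U_1 + U_2$ where $U_1 \perp \ker\Gamma$ (i.e. $U_1 \in (\ker\Gamma)^\perp = \range\Gamma^*$) and $U_2 \in \ker\Gamma$. Then $\Gamma U = \Gamma U_1$, and since $\Gamma$ restricted to $\range\Gamma^*$ is injective with inverse bounded by $|\Gamma^*(\Gamma\Gamma^*)^{-1}|$, we get $|U_1| \leq |\Gamma^*(\Gamma\Gamma^*)^{-1}|\,|\Gamma U|$. It remains to bound $|U|$ by a constant times $|U_1|$. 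This is where the transversality bound $c$ enters: the angle bound says precisely that the component of any unit vector of $\bE$ along $(\ker\Gamma)^\perp$ is at least some $c' = c'(c) > 0$ in norm (here one relates $c'$ to $c$ via the fact that $|\det(\bE,\ker\Gamma)|$ equals the product of the sines of the principal angles between $\bE$ and $(\ker\Gamma)^\perp$, or more simply by a compactness/continuity argument on the unit sphere of $\bE$). Hence $|U_1| \geq c'|U|$, and combining gives $|U| \leq (c')^{-1}|\Gamma^*(\Gamma\Gamma^*)^{-1}|\,|\Gamma U|$, which is \eqref{ZZ} with $C$ depending only on $c$ and $|\Gamma^*(\Gamma\Gamma^*)^{-1}|$.

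For the converse, suppose \eqref{ZZ} holds: $|U| \leq C|\Gamma U|$ for all $U\in\bE$. First, this immediately forces $\bE \cap \ker\Gamma = \{0\}$, and since $\dim\bE = m = n - \dim\ker\Gamma$, we get $\bC^n = \bE\oplus\ker\Gamma$, so $\det(\bE,\ker\Gamma)\neq 0$. For the quantitative lower bound, pick orthonormal bases and estimate: for a unit vector $U\in\bE$, its orthogonal projection onto $(\ker\Gamma)^\perp$ has norm at least $|\Gamma U|/|\Gamma| \geq 1/(C|\Gamma|)$ using \eqref{ZZ}. Thus every unit vector of $\bE$ makes a uniformly bounded angle with $\ker\Gamma$, so the principal angles between $\bE$ and $(\ker\Gamma)^\perp$ are all bounded below, and the determinant, being the product of $m$ sines of such angles, is bounded below by a constant $c$ depending only on $C$ and $|\Gamma|$.

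The routine-but-essential technical point — the only place requiring slight care — is the passage between the algebraic quantity $|\det(\bE,\ker\Gamma)|$ and the geometric angle bound; this is standard (e.g. via the CS decomposition or principal angles), and I would either cite it or dispatch it with a one-line compactness argument on the unit sphere of $\bE$, noting that all norms and determinants in finite dimensions are continuous and the relevant sets are compact. No genuine obstacle arises here; the lemma is, as the paper says, elementary, and the content is entirely the explicit dependence of constants, which the above bookkeeping makes transparent.
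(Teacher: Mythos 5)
The paper itself gives no proof of this lemma --- it is quoted from \cite{GMWZ6}, Lemma 2.19 --- so there is nothing to compare line by line; judged on its own, your argument is correct and is the standard one. Both directions work: in the forward direction the decomposition $U=U_1+U_2$ with $U_1=\Gamma^*(\Gamma\Gamma^*)^{-1}\Gamma U$ the orthogonal projection onto $(\ker\Gamma)^\perp$ gives exactly the stated dependence of $C$, and the quantitative transversality step is clean because $|\det(\bE,\ker\Gamma)|=\prod_i\sin\theta_i\le\min_i\sin\theta_i$ (principal angles between $\bE$ and $\ker\Gamma$), so one may simply take $c'=c$; the converse likewise gives $c=(C|\Gamma|)^{-m}$.

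One caution: your proposed ``one-line compactness argument on the unit sphere of $\bE$'' is not an adequate substitute for the principal-angle computation, because for a fixed pair of transversal subspaces compactness only yields \emph{some} $c'>0$, not a $c'$ depending solely on $c$ --- and uniformity in the subspaces is the entire point of the lemma as it is used (e.g.\ with $\bE=\EE^-(\zeta)$ varying over a noncompact frequency set). If you want to avoid principal angles, the compactness argument must be run on the (compact) set of pairs of subspaces in the product of Grassmannians with $|\det|\ge c$, not on the unit sphere of a single $\bE$. Also, the parenthetical identification of $|\det(\bE,\ker\Gamma)|$ with the product of sines of the principal angles between $\bE$ and $(\ker\Gamma)^\perp$ should read $\ker\Gamma$, not its orthocomplement; this is cosmetic and does not affect the argument.
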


\begin{rem}\label{Evansest}
By Lemma \ref{ZZZ}, the uniform Evans condition $|D(\zeta)|\ge C>0$
on some subset $S$ of frequencies is equivalent to
\begin{equation}\label{altuni}
|U|\leq  C|\Gamma U| \, \, \hbox{\rm for all }\, U\in \EE^-(\zeta)
\end{equation}
for some  constant $C>0$ independent of $\zeta\in S$.
\end{rem}

We now recall two results from \cite{GMWZ6}.  The first extends an
earlier result of Rousset \cite{R2} and shows that only the low
frequency Evans condition is needed for the construction of high
order approximate viscous solutions $u^\eps_a$.

\begin{lem}[\cite{GMWZ6} Theorem. 2.28]\label{Rousset}
Assume (H1)-(H6) (with (H4$'$) replacing (H4) in the
symmetric-dissipative case), and consider a layer profile $w(z)\to
p$ as $z\to\infty$. The uniform Evans condition holds for low
frequencies, that is, there exist positive constants $r$, $c$ such
that
\begin{align}\label{vv60}
|D(\zeta)|\geq c \text{ for }|\zeta|\leq r,
\end{align}
if and only if $w$ is  transversal and the uniform Lopatinski
condition (Definition \ref{vv8}) holds at $p$ for the residual
hyperbolic problem \eqref{vv5}-\eqref{vv7}.
\end{lem}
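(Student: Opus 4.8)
The plan is to reduce the equivalence to elementary linear algebra at the frequency origin $\rho=0$. Fix $(t,x_0)$, set $p:=\uu$ (the endstate of $w$), and pass to polar coordinates $\zeta=\rho\hat\zeta$ as in \eqref{ab1}--\eqref{ab2}. First I would observe that the continuity of $\bE^-(\hat\zeta,\rho)$ up to $\rho=0$ recalled after \eqref{ab2}, together with compactness of the hemisphere $\mathcal K:=\{|\hat\zeta|=1,\ \hat\gamma\ge0\}$, makes the low-frequency bound \eqref{vv60} equivalent to its pointwise form \eqref{ab3}, i.e.\ $D(\hat\zeta,0)\ne0$ for all $\hat\zeta\in\mathcal K$. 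Dually, $\bE^-(H(p,\hat\zeta))$ is homogeneous of degree $0$ in $\hat\zeta$ and extends continuously to $\hat\gamma=0$ (classical hyperbolic theory under (H4); the totally nonglancing extension of \cite{GMWZ6} under (H4$'$)), so by Lemma \ref{ZZZ} the uniform Lopatinski condition at $p$ is equivalent to $D_{Lop}(p,\hat\zeta)\ne0$ for all $\hat\zeta\in\mathcal K$. Since transversality of $w$ does not involve $\hat\zeta$, it then suffices to prove: $(a)$ if $D(\hat\zeta_0,0)\ne0$ for one $\hat\zeta_0\in\mathcal K$ then $w$ is transversal; and $(b)$ when $w$ is transversal, $D(\hat\zeta,0)\ne0\iff D_{Lop}(p,\hat\zeta)\ne0$ for each $\hat\zeta\in\mathcal K$. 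The uniform constants then follow from compactness of $\mathcal K$; and if $w$ is not transversal both sides of the stated equivalence are false, by the contrapositive of $(a)$.

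Next I would assemble the two structural facts used at $\rho=0$. By \eqref{pp1}--\eqref{pp2}, at $\rho=0$ the boundary operator $\Gamma(\hat\zeta,0)$ is exactly the layer operator $\Gamma_\nu$ of \eqref{Gammaeq}, so $D(\hat\zeta,0)=\det_{N+N'}(\bE^-(\hat\zeta,0),\ker\Gamma_\nu)$. And the construction of the $\rho=0$ limit in \cite{MZ3,GMWZ6} --- a slow-fast decomposition of the coefficient $\cG(z,\rho\hat\zeta)$ of \eqref{linq4}, the fast block carrying a uniform spectral gap and hence the space $\cS_0$ of exponentially decaying layer modes, the slow block governed to leading order by $\rho H(p,\hat\zeta)$ --- gives the limiting identity
\begin{equation}\label{SKstruct}
\bE^-(\hat\zeta,0)=\ell^{-1}\!\big(\bE^-(H(p,\hat\zeta))\big),
\end{equation}
where, identifying a solution of \eqref{linlayeq} with its value at $z=0$, $\ell:\cS\to\ker\CalG_\nu(+\infty)\cong\CC^N$ sends a solution to its limit at $z=+\infty$ (a vector $(v,0)$ by \eqref{Gform}, identified with $v\in\CC^N$). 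From Lemma \ref{count}, $\ker\ell=\cS_0$, $\dim\cS=N+N^2_-$, $\dim\bE^-(H(p,\hat\zeta))=N_+$ and $N^2_-+N_+=N_b$, so both sides of \eqref{SKstruct} have dimension $N_b$ and $\dim\bE^-(\hat\zeta,0)+\dim\ker\Gamma_\nu=N+N'$. Finally, by Remark \ref{vvv1} and \eqref{vv7}, $\ker\Gamma_{res}(p)=T_p\cC(t,x_0)=\ell(\cS\cap\ker\Gamma_\nu)$, hence $D_{Lop}(p,\hat\zeta)=\det(\bE^-(H(p,\hat\zeta)),\,\ell(\cS\cap\ker\Gamma_\nu))$.

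The linear-algebra step is then routine. Since the relevant subspaces have complementary dimension, $D(\hat\zeta,0)\ne0\iff\bE^-(\hat\zeta,0)\cap\ker\Gamma_\nu=\{0\}$, and by \eqref{SKstruct} this intersection is $\{U\in\cS\cap\ker\Gamma_\nu:\ell(U)\in\bE^-(H(p,\hat\zeta))\}$. For $(a)$: $\cS_0\subset\bE^-(\hat\zeta_0,0)$, so $D(\hat\zeta_0,0)\ne0$ forces $\cS_0\cap\ker\Gamma_\nu=\{0\}$, which is Definition \ref{deftrans}(i); granting (i), if Definition \ref{deftrans}(ii) failed then $\dim(\cS\cap\ker\Gamma_\nu)>\dim\cS-N_b=N-N_+$, so (as $\ell$ is injective on $\cS\cap\ker\Gamma_\nu$) its image would meet the $N_+$-dimensional $\bE^-(H(p,\hat\zeta_0))$ nontrivially in $\CC^N$, producing a nonzero element of $\bE^-(\hat\zeta_0,0)\cap\ker\Gamma_\nu$, a contradiction; thus $w$ is transversal. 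For $(b)$: transversality gives $\cS_0\cap\ker\Gamma_\nu=\{0\}$ and $\ell$ injective on $\cS\cap\ker\Gamma_\nu$, so for $U\in\bE^-(\hat\zeta,0)\cap\ker\Gamma_\nu$ either $\ell(U)=0$ and $U\in\cS_0\cap\ker\Gamma_\nu=\{0\}$, or $0\ne\ell(U)\in\bE^-(H(p,\hat\zeta))\cap\ell(\cS\cap\ker\Gamma_\nu)=\bE^-(H(p,\hat\zeta))\cap\ker\Gamma_{res}(p)$; hence $\bE^-(\hat\zeta,0)\cap\ker\Gamma_\nu=\{0\}\iff\bE^-(H(p,\hat\zeta))\cap\ker\Gamma_{res}(p)=\{0\}$, i.e.\ $D(\hat\zeta,0)\ne0\iff D_{Lop}(p,\hat\zeta)\ne0$. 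Combining $(a)$ and $(b)$ with the first-step reductions proves the lemma.

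The step I expect to be the main obstacle is the limiting identity \eqref{SKstruct}; everything else is bookkeeping with Lemma \ref{count} and the dimension identity \eqref{numberbc}. Establishing \eqref{SKstruct} --- isolating the slow block of \eqref{linq4} near $\rho=0$, showing it is governed to leading order by $H(p,\hat\zeta)$ while the fast block keeps a uniform spectral gap, so that in the limit the decaying solutions fill out exactly $\ell^{-1}\bE^-(H(p,\hat\zeta))$ --- is the tracking/conjugation analysis of \cite{MZ3}, Theorem 3.3 and \cite{GMWZ6}, Remark 2.31, available under our hypotheses (in particular the existence of $K$-families of symmetrizers).
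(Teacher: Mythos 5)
The paper does not prove this lemma at all: it is imported verbatim from the companion work as [GMWZ6, Theorem 2.28], so there is no in-paper argument to compare against. Your reconstruction is the standard proof of such "Zumbrun--Serre/Rousset"-type statements, and it is correct. The compactness reduction to pointwise nonvanishing at $\rho=0$ is exactly what the paper itself asserts around \eqref{ab3}; the observation $\Gamma(\hat\zeta,0)=\Gamma_\nu$ is immediate from \eqref{pp1}--\eqref{pp2}; and the linear algebra in your steps $(a)$ and $(b)$ closes up correctly using Lemma \ref{count} and \eqref{numberbc}: $\cS_0=\ker\ell\subset\ell^{-1}(\bE^-(H))$ yields Definition \ref{deftrans}(i), the codimension count yields (ii), and injectivity of $\ell$ on $\cS\cap\ker\Gamma_\nu$ converts trivial intersection upstairs into trivial intersection downstairs and back. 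You have also correctly isolated the one genuinely analytic ingredient, namely the identity $\bE^-(\hat\zeta,0)=\ell^{-1}\bigl(\bE^-(H(p,\hat\zeta))\bigr)$: bare continuity of $\bE^-(\hat\zeta,\rho)$ at $\rho=0$ (which is what the paper quotes from [MZ3, Thm.\ 3.3]) is not by itself sufficient --- one needs this structural description of the limit, i.e.\ that the limiting space lies in the center--stable space $\cS$ of the layer ODE and is cut out there by requiring the limit at $z=+\infty$ to lie in $\bE^-(H(p,\hat\zeta))$ --- and that is precisely what the slow--fast conjugation/$K$-family symmetrizer analysis of [MZ3]/[GMWZ6] supplies. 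So your proof is complete modulo that citation, which is the same logical status the paper assigns to the whole lemma. One point to make explicit: the identification $\ker\Gamma_{res}(p)=\ell(\cS\cap\ker\Gamma_\nu)$, and the fact that this space has the complementary dimension $N-N_+$ needed for $D_{Lop}$ to be a genuine determinant, both presuppose transversality; you invoke it only in step $(b)$ where transversality is assumed, which is consistent, but one should say that when transversality fails the right-hand side of the equivalence fails outright rather than through any computation with $D_{Lop}$.
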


The next result shows that the full uniform Evans condition implies
maximal linearized stability estimates for the viscous problem.

\begin{prop}[\cite{GMWZ6}, Theorems 3.9 and 7.2]\label{planstab}
Assume (H1)-(H6) (with (H4$'$) replacing (H4)  in the symmetric
dissipative case) and consider the  problem
\eqref{linq} defined by linearization around a layer profile $w(z)$.
If the uniform Evans condition is satisfied by $w(z)$ (Definition
\ref{vv50}), then solutions to \eqref{linq} satisfy the maximal
stability estimates \eqref{maxest2} and \eqref{maxesthf}.
\end{prop}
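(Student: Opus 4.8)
The plan is to reduce the physical-space estimates to a frequency-parametrized ODE estimate on the half-line and then run a Kreiss-type symmetrizer argument; this is the route taken in \cite{GMWZ6}, Theorems 3.9 and 7.2, so in principle the proof is simply a citation, but here is its structure.

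First I would apply the Laplace--Fourier transform in $(t,y)$, which turns \eqref{linq} into the $\zeta$-parametrized first-order boundary value problem \eqref{linq4}. By Plancherel, the maximal estimates \eqref{maxest2} and \eqref{maxesthf} are equivalent to resolvent-type bounds for \eqref{linq4} that are \emph{uniform} in $\zeta=(\tau,\gamma,\eta)$ over $\gamma\ge 0$: one must control $\|U\|_{L^2_z}$, the boundary trace $|U(0)|$, and the appropriate $z$-derivative norms of the parabolic component $u^3$, in terms of $\|F\|$ and $|G|$ with $\zeta$-independent constants.

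Next I would split frequency space into a low zone $|\zeta|\le r$, a medium zone $r\le|\zeta|\le R$, and a high zone $|\zeta|\ge R$, the last handled after the parabolic rescaling that produces the rescaled coefficient matrix and the rescaled Evans function $D^{sc}$. On each zone the engine is a smooth Hermitian symmetrizer $\cS(z,\zeta)$ adapted to the block structure of $\cG$ --- the hyperbolic $u^1$ block governed by $\oA^{11}$ and the parabolic $(u^2,u^3)$ block governed by $\oB^{22}$ --- built from the stable/unstable splitting of $\cG_\nu(\pm\infty)$ (via exponential dichotomies and Gap-Lemma conjugators, using that $w$, hence the coefficients, converge exponentially), from a Kreiss--Ralston-type construction near glancing frequencies, and, in the parabolic directions, from the ``$K$-families of symmetrizers'' of \cite{MZ3}, which together with (H2) and (H5) yield the parabolic smoothing gains at the correct order in $\zeta$. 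Integrating $\D_z\langle\cS U,U\rangle$ from $0$ to $+\infty$ produces an interior bound, schematically $\|U\|^2 \lesssim |\langle\cS(0)U(0),U(0)\rangle| + \|F\|^2$, where $\cS$ is arranged so that $\langle\cS(0)\cdot,\cdot\rangle$ dominates $|U(0)|^2$ on a complement of the stable subspace $\bE^-(\zeta)$ while being only weakly positive on $\bE^-(\zeta)$. This is exactly where the Evans hypothesis enters: by Lemma \ref{ZZZ} and Remark \ref{Evansest}, the uniform Evans condition (the low-frequency bound on $D$ together with the high-frequency bound on $D^{sc}$) is equivalent to the trace estimate $|U|\le C|\Gamma(\zeta)U|$ for all $U\in\bE^-(\zeta)$; combined with the boundary relation $\Gamma(\zeta)U(0)=G$ this controls the stable part of $U(0)$ by $|G|$, the symmetrizer controls the complementary part by the interior data, and feeding this back closes the estimate. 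Re-summing the zones and inverting the transform yields \eqref{maxest2} and \eqref{maxesthf}.

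The hard part is the symmetrizer construction itself: it must be simultaneously smooth across the boundaries between the three frequency zones and across glancing frequencies, compatible with the viscous block structure so that the parabolic derivative gains appear with the right powers of $\zeta$, and valid in the singular limit $\rho=|\zeta|\to 0$, where one must use the continuous extension of $\bE^-(\hat\zeta,\rho)$ to $\rho=0$ and the reformulation \eqref{ab3} of the low-frequency Evans condition. In the symmetric-dissipative case with (H4$'$) in place of (H4) one must in addition handle the totally nonglancing variable-multiplicity modes of \cite{GMWZ6}, Definition 4.3, by constructing a separate symmetrizer block for them. All of this is carried out in \cite{GMWZ6}.
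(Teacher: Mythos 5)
Your proposal is correct and follows essentially the same route as the paper: the statement is quoted from \cite{GMWZ6} (Theorems 3.9 and 7.2), and the paper's own account of the proof is precisely the Laplace--Fourier reduction to \eqref{linq4} followed by Kreiss-type symmetrizers in the low-, medium-, and high-frequency regimes, with the Evans condition entering through the trace estimate of Lemma \ref{ZZZ} (see also Proposition \ref{gensymm} and Remark \ref{dd15} in Appendix B for the high-frequency symmetrizer mechanism). Your outline of the frequency splitting, the role of the $K$-families of \cite{MZ3}, and the treatment of totally nonglancing modes under (H4$'$) matches the intended argument.
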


Proposition \ref{planstab} is proved by constructing smooth
Kreiss-type symmetrizers for the Laplace--Fourier transformed
equations \eqref{linq4} in the low-, medium-, and high-frequency regimes.
This method of proof is just as important as the result, since the
same symmetrizer construction may be used to obtain maximal
stability estimates for the linearized equations about general
(nonplanar) solutions of the form \eqref{vv20}.  The procedure,
which is used in \cite{MZ1,GMWZ3,GMWZ4}, is to freeze slow variables
in the original linearized viscous problem, take the Laplace-Fourier
transform to obtain ODEs depending on frequency like \eqref{linq4},
construct symmetrizers for the transformed problem, and then
quantize those symbols to produce paradifferential operator
symmetrizers for the original (unfrozen) problem.
The resulting linear estimates can be used to
prove convergence of a nonlinear iteration scheme that yields
existence of an exact solution to the viscous problem \eqref{vv2}
that is close to the approximate solution $u^\eps_a$,
as stated in the following theorem.

Collecting these observations and combining with Lemma \ref{Rousset}
and Theorem \ref{approxsoln}, we obtain the
following main result, which
reduces the problem of proving existence and nonlinear stability of
boundary-layer solutions to verification of the uniform Evans
condition.

\begin{theo}\label{existence}
Consider the viscous problem \eqref{vv2} under assumptions (H1)-(H6)
(or with (H4$'$) replacing (H4) in the symmetric-dissipative case).
Given an inviscid solution $u^0\in H^{s_0}([0,T_0]\times \Omega)$ as
in Theorem \ref{vv10}, suppose that the uniform Evans condition
holds on $[0,T_0]\times
\partial\Omega$ (Defn. \ref{vv50}).  Suppose the constants $k$, $M$, and $s_0$ satisfy
\begin{align}\label{o21}
k>\frac{d}{2}+4,\;\; M>k+2,\;\; s_0>k+\frac{7}{2}+2M+\frac{d+1}{2}.
\end{align}
Then there exists $\eps_0>0$, an approximate solution $u^\eps_a$ as
in \eqref{vv20} satisfying \eqref{vv22}, and an exact solution
$u^\eps$  of \eqref{vv2} such that for $0<\eps\leq \eps_0$
\begin{align}\label{o24}
\begin{split}
&\|u^\eps-u^\eps_a\|_{W^{1,\infty}([0,T_0]\times\Omega)}\le C\eps^{M-k}, \\
&\|u-u^0\|_{L^2(\Omega \times [0,T])}\le C\eps^{1/2},\\
&u^\eps-u^0=O(\eps) \text{ in
}L^\infty_{loc}([0,T_0]\times\Omega^\circ)
\end{split}
\end{align}
where $\Omega^\circ$ denotes the interior of $\Omega$. Moreover, the
linearized equations about either $u^\eps_a$ or $u^\eps$ satisfy
maximal stability estimates.
\end{theo}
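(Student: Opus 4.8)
The plan is to assemble the pieces already developed in the preceding sections in the natural order: first build the approximate solution, then set up the error problem, then invoke the linearized stability estimates, and finally close a nonlinear iteration. Concretely, I would proceed as follows. First, since the uniform Evans condition holds on $[0,T_0]\times\partial\Omega$, Lemma~\ref{Rousset} gives that every associated profile $w(z)=W(z,t,x_0,u^0(t,x_0))$ is transversal and the uniform Lopatinski condition holds along the boundary; this puts us in the hypotheses of Proposition~\ref{approxsoln}, so for any prescribed $M$ and $s$ with $s_0$ large enough relative to $M$ (this is where the constraints \eqref{o21} enter—the WKB cascade loses a fixed number of derivatives at each of the $M$ stages, and the trace/compatibility bookkeeping costs the extra $k+\frac72+\frac{d+1}{2}$) we obtain $u^\eps_a$ of the form \eqref{vv20} satisfying \eqref{vv22}, namely $\|\cL_\eps(u_a)\|_{H^s}\le C\eps^M$ with exact boundary data $(g_1,g_2,0)$.

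Next I would write the exact solution as $u^\eps=u^\eps_a+v^\eps$ and reduce to the error problem \eqref{errorproblem}--\eqref{errordata}: $v^\eps$ solves $\cL_\eps(u_a+v)-\cL_\eps(u_a)=-\cL_\eps(u_a)$ with homogeneous (linearized-in-$v$) boundary conditions and with $v_{|t=0}$ chosen to satisfy the high-order corner compatibility conditions forced by $u_a$. The linearization of $\cL_\eps$ about $u^\eps_a$ is, after freezing slow variables and Laplace--Fourier transforming in $(t,y)$, exactly the family \eqref{linq4} studied in Proposition~\ref{planstab}; since the uniform Evans condition holds uniformly in $(t,x_0)$, that proposition supplies uniform low-, medium-, and high-frequency Kreiss-type symmetrizers, which upon paradifferential quantization (the MZ1/GMWZ3/GMWZ4 procedure recalled in the text) yield maximal linearized stability estimates \eqref{maxest2}, \eqref{maxesthf} for the variable-coefficient operator linearized about $u^\eps_a$, with constants uniform in $\eps$. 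The key technical point here is uniformity: the coefficients of the linearized operator vary rapidly across the $O(\eps)$ boundary layer, so the symmetrizers must be built from the frozen-profile symbols at each boundary point and then glued, and the medium-frequency regime requires the genuine-coupling/dissipativity structure (H5) to dominate the off-diagonal coupling.

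With the maximal linear estimates in hand, I would set up a fixed-point / Nash--Moser-type iteration for $v^\eps$ in the appropriate $\eps$-weighted Sobolev space: the forcing $-\cL_\eps(u_a)$ is $O(\eps^M)$ in $H^s$, the nonlinear remainder $\cL_\eps(u_a+v)-\cL_\eps(u_a)-\cL'_{u_a}v$ is quadratic in $v$ in the relevant norms, and the linear estimate controls $v$ by the forcing with a loss of a fixed number $k$ of derivatives; choosing $M>k+2$ and $s_0$ as in \eqref{o21} gives a contraction for $\eps\le\eps_0$ and produces $v^\eps$ with $\|v^\eps\|_{W^{1,\infty}}\le C\eps^{M-k}$, hence the first line of \eqref{o24}. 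The remaining two estimates in \eqref{o24} then follow from the structure of $u^\eps_a$ in \eqref{vv20}: the boundary-layer correction $\chi(x)(W^0(z/\eps)-W^0(+\infty))$ has $L^2(\Omega\times[0,T])$ norm $O(\eps^{1/2})$ (its $L^\infty$ norm is $O(1)$ but its support has thickness $O(\eps)$) and is $O(\eps)$ uniformly on compact subsets of $\Omega^\circ$, while the higher-order terms $\eps^j\cU^j$ and $\eps^{M+1}u^M$ are of strictly lower order; combining with $\|u^\eps-u^\eps_a\|_{L^\infty}=o(1)$ gives $\|u^\eps-u^0\|_{L^2}\le C\eps^{1/2}$ and $u^\eps-u^0=O(\eps)$ in $L^\infty_{loc}(\Omega^\circ)$. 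Finally, the maximal stability of the linearization about $u^\eps$ (as opposed to $u^\eps_a$) follows because $u^\eps$ and $u^\eps_a$ are $W^{1,\infty}$-close by $O(\eps^{M-k})$, so the two linearized operators differ by a small perturbation that the uniform symmetrizer estimate absorbs.

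The main obstacle is the uniform linearized stability estimate about the \emph{nonplanar} $u^\eps_a$ with its rapidly varying boundary-layer profile: one must show that the frozen-coefficient symmetrizer construction of Proposition~\ref{planstab}, which is genuinely $(t,x_0)$-dependent through the profile, can be quantized into a bounded paradifferential operator producing a coercive estimate with $\eps$-independent constants, and that the errors introduced by commutators and by freezing the slow variables are controlled—this is precisely the step that forces $s_0$ so large in \eqref{o21}. Everything else—the WKB construction, the reduction to the error problem, and the extraction of \eqref{o24} from \eqref{vv20}—is comparatively routine given the cited companion results \cite{GMWZ4,GMWZ6}.
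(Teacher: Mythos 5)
Your proposal follows essentially the same route as the paper, which simply invokes the iteration scheme of Theorem 6.18 of \cite{GMWZ4} (simplified by the absence of the free boundary $\psi$) together with the symmetrizers of Proposition \ref{planstab} to cover the (H4$'$) case; your sketch of approximate solution, error problem, quantized symmetrizer estimates, and contraction is a faithful expansion of that citation, and your derivation of the $L^2$ and $L^\infty_{loc}$ bounds from the structure of $u^\eps_a$ matches the intended argument. The only point the paper flags that you gloss over is that in the partially parabolic case the iteration must be designed carefully because the linearized estimates give weaker control on the hyperbolic component $u^1$ than on $u^2$, so the scheme is not a naive fixed point in a single norm.
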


\begin{proof}
The proof is by the same iteration scheme as the corresponding
result for shocks, Theorem 6.18 in \cite{GMWZ4}, except that it is
simpler because the function $\psi$ defining the free transmission
boundary in \cite{GMWZ4} is absent in the present fixed boundary
problem.   In the partially parabolic case the iteration scheme,
which is explained in \cite{GMWZ4} section 6.1.1, must be designed
carefully, because the linearized estimates give weaker control over
the ``hyperbolic component" $u^1$ than the ``parabolic component"
$u^2$. Higher $W^{s,\infty}$ norms of $u^\eps-u^\eps_a$ can be
estimated by increasing $k$, $M$, and $s_0$.

The proof of Theorem 6.18 in \cite{GMWZ4} used the constant
multiplicity assumption (H4) to construct symmetrizers. The proof of
our Proposition \ref{planstab} yields symmetrizers under the weaker
assumption (H4$'$) when (H4) fails in the symmetric-dissipative
case.  Those symmetrizers are then used exactly as in the constant
multiplicity case.

\end{proof}

\subsection{Verification of the Evans condition for small-amplitude layers}\label{verification}

\qquad For large-amplitude boundary-layers, the Evans condition may
be checked numerically; see \cite{CHNZ,HLyZ1} in the one-dimensional
case, \cite{HLyZ2} in the multi-dimensional shock case. For
small-amplitude layers, it may be verified analytically in several
interesting cases. In particular, we will show that the uniform
Evans condition  {\it always} holds for small-amplitude layers of
symmetric--dissipative systems with Dirichlet boundary conditions,
and {\it never} (by Proposition \ref{smalltrans}(ii) together with
Lemma \ref{Rousset}) for constant layers of systems with mixed
Dirichlet--Neumann conditions for which the number $N_+$ of incoming
hyperbolic characteristics on $\cU$ exceeds the number of Dirichlet
conditions imposed on $\cU^*$, or, equivalently, the number of
Neumann conditions exceeds $N_b-N_+=N^2_-$.

Our main spectral stability result is the following Theorem
established in Section \ref{limits}, which implies that Evans
stability of small-amplitude layers $w(z)$ is equivalent to Evans
stability of the constant-layer limit $w\equiv w(\infty)$, a {\it
linear-algebraic condition} that can in principle be computed by
hand. This is in sharp contrast to the shock wave case, for which
the small-amplitude limit is a complicated singular-perturbation
problem \cite{Met3,PZ,FS1,FS2}.

\begin{defi}[Small amplitude profiles]\label{cd1}
With $\cU_\partial$ as in \eqref{C1} define
\begin{align}\label{cb1}
\cU_{\partial,\nu}=\{(u,\nu(t,x_0)):(t,x_0,u)\in\cU_\partial\}.
\end{align}
For $\eps>0$ and any compact set $D\subset\cU_{\partial,\nu}$, the
set of \emph{$\eps$-amplitude profiles associated to $D$} is the set
of functions $w(z)=w(z,u,\nu)$  for which there exist $(\uu,\unu)\in
D$ such that:

a)\;$A_\nu(w)\partial_zw-\partial_z(B_\nu(w)\partial_zw)=0$  on
$z\geq 0$,

b)\;$w(z,u,\nu)\to u$ as $z\to \infty$,

c)\; $\|(w,w^2_z)-(\uu,0)\|_{L^\infty(0,\infty)}\leq \eps$,
$|\nu-\unu|\leq\eps$.

\noindent When $\eps$ is small we refer to such profiles as
\emph{small amplitude profiles}.
\end{defi}

\begin{rem}\label{gh1}
Observe that \emph{small amplitude profiles}  are defined without
specifying any boundary condition at $z=0$.    We define the Evans
function for such a $w(z,u,\nu)$ using the same formula as before
\eqref{evans1}, where $\bE^-(\zeta)$ and $\Gamma(\zeta)$ are now
defined using linearization of \eqref{vv2} around $w(z,u,\nu)$.

\end{rem}

\begin{theo}\label{smallred}

For any compact subset $D\subset \cU_{\partial,\nu}$ there exists an
$\eps>0$ such that the uniform Evans condition is satisfied for the
set of $\eps$-amplitude profiles associated to $D$ (Definition
\ref{cd1}) if and only if it is satisfied for the set of constant
layers $\{w(z,u,\nu):w=u\text{ for all } z, (u,\nu)\in D\}$.
\end{theo}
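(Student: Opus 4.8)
The plan is to prove the two directions separately, one of which is immediate. Since $\partial_z w\equiv0$ trivially solves Definition \ref{cd1}(a) and $\|(u,0)-(u,0)\|=0\le\eps$, each constant layer $w(z,u,\nu)\equiv u$ with $(u,\nu)\in D$ is a $0$-amplitude, hence $\eps$-amplitude, profile associated to $D$ (take $(\uu,\unu)=(u,\nu)$); thus the constant layers form a subset of the $\eps$-amplitude profiles, and a uniform Evans bound for the latter set restricts to one for the former. The entire content is therefore the converse: assuming the uniform Evans condition holds for the constant layers over $D$, with constants $R_0,C_0>0$, we must produce $\eps>0$ and $R,c>0$ so that $|D(\zeta)|\ge c$ for $0<|\zeta|\le R$, $\gamma\ge0$ and $|D^{sc}(\zeta)|\ge c$ for $|\zeta|\ge R$, $\gamma\ge0$, \emph{simultaneously} for every $\eps$-amplitude profile associated to $D$. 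The method is a perturbation-plus-compactness argument carried out separately in the low-, medium-, and high-frequency regimes: given an $\eps$-amplitude profile $w$ with parameters $(u,\nu)$ and a companion $(\uu,\unu)\in D$ as in Definition \ref{cd1}(c), we compare the Evans function of $w$ with that of the constant layer $\equiv\uu$ (direction $\unu$), show the difference is $O(\eps)$ uniformly in the frequency variable and in the choice of profile, and then transfer the constant-layer lower bounds.

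\emph{Low frequencies.} By \cite{MZ3}, Theorem 3.3 and \cite{GMWZ6}, Remark 2.31, under (H1)--(H5) the stable bundle $\bE^-(\hat\zeta,\rho)$ extends continuously to $\rho=0$ for $\hat\gamma\ge0$, hence so does $D(\hat\zeta,\rho)$; by the conjugation (gap) lemma of \cite{MZ3} this extension and the constants in the attendant estimates depend continuously on the coefficient matrix $\cG(z,\zeta)$ of \eqref{linq4}, uniformly as long as the amplitude and the rate of convergence of $\cG(z,\zeta)$ to its limit $\cG(+\infty,\zeta)$ are controlled. For $\eps$-amplitude profiles this convergence is uniform: by Lemma \ref{count}(ii) each profile decays exponentially to its endstate in all derivatives, and the rate is governed by the spectral gap of $\CalG_\nu(+\infty)$ (see \eqref{Gform}), which is bounded below uniformly on the compact set $D$, while the prefactor is controlled by invariant-manifold estimates uniform over $D$ and over $\eps$-amplitude profiles. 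Since moreover $\|(w,\partial_z w^2)-(\uu,0)\|_{L^\infty}\le\eps$ and $|\nu-\unu|\le\eps$, the coefficient $\cG(z,\zeta)$ and the boundary symbol $\Gamma(\zeta)$ of \eqref{linq4} (the latter built from the smooth maps $\Upsilon_j'$ and $K_\nu$ evaluated at $(w(0),0,\partial_z w^2(0))$ and at $\nu$, all of maximal rank by (H6)) lie within $O(\eps)$ of their counterparts for the constant layer $\equiv\uu$, uniformly in $z$ and on $\{|\hat\zeta|=1,\ \hat\gamma\ge0\}\times[0,1]$. Therefore $D(\hat\zeta,\rho)$ for $w$ differs by $O(\eps)$ from $D(\hat\zeta,\rho)$ for that constant layer, uniformly. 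By \eqref{ab3} the constant-layer hypothesis says $|D(\hat\zeta,0)|\ge C_0$ for $|\hat\zeta|=1$, $\hat\gamma\ge0$; so for $\eps$ small, $|D(\hat\zeta,0)|\ge C_0/2$ for every $\eps$-amplitude profile, and by uniform continuity in $\rho$ there are $r>0$ and $c>0$, profile-independent, with $|D(\zeta)|\ge c$ for $0<|\zeta|\le r$, $\gamma\ge0$. (One could alternatively route this step through Lemma \ref{Rousset}, since transversality and the uniform Lopatinski condition for the residual hyperbolic problem are both continuous in the data and hold uniformly over $D$ when they hold for the constant layers.)

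\emph{Medium and high frequencies.} On $r\le|\zeta|\le R_0$, $\gamma\ge0$, the consistent splitting of the hyperbolic--parabolic symbol ((H1)--(H5), in particular (H5) for $\zeta\ne0$, $\gamma\ge0$) gives $\cG(z,\zeta)$ a uniform exponential dichotomy with no center directions, of which $\bE^-(\zeta)$ is the stable subspace; by roughness of dichotomies it depends continuously on $\cG(z,\zeta)$ with uniform constants, hence is $O(\eps)$-close to that of the constant layer $\equiv\uu$, and compactness of the medium-frequency set together with that of $D$ yields $|D(\zeta)|\ge c$ there, uniformly over $\eps$-amplitude profiles for $\eps$ small. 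On $|\zeta|\ge R_0$ one uses the rescaled Evans function $D^{sc}(\zeta)$ of Definition \ref{vv50}: after the parabolic rescaling the transformed system is, for $R_0$ large, a regular perturbation of the limiting problem governed by the high-frequency principal part \eqref{princpart}, the rescaled frequency ranges over a compact set, and the profile enters only through coefficients frozen within $O(\eps)$ of those for $w\equiv\uu$ (plus the exponentially small, uniformly controlled profile variation); so $D^{sc}(\zeta)$ for $w$ is $O(\eps)$-close to $D^{sc}(\zeta)$ for the constant layer, uniformly for $|\zeta|\ge R_0$, $\gamma\ge0$, and the constant-layer bound transfers. Patching the regimes with $R:=R_0$, the low- and medium-frequency estimates give $|D(\zeta)|\ge c$ on $0<|\zeta|\le R$, $\gamma\ge0$, and the high-frequency estimate gives $|D^{sc}(\zeta)|\ge c$ on $|\zeta|\ge R$, $\gamma\ge0$, all uniformly over $\eps$-amplitude profiles once $\eps$ is chosen small enough to serve the finitely many perturbation estimates; this proves the converse and hence the equivalence.

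The step I expect to be the main obstacle is the assertion that $\bE^-(\zeta)$ and its singular limits as $\rho\to0$ and $|\zeta|\to\infty$ depend continuously on the profile \emph{uniformly} over the compact set $D$ and over all $\eps$-amplitude profiles: this is where the conjugation/gap lemma of \cite{MZ3,GMWZ6} must be combined with the uniform exponential decay of small-amplitude profiles, the latter being available precisely because linearizing the profile ODE at a constant state produces a rest point with a spectral gap bounded below uniformly on $D$ (and no center modes beyond the structural zeros of $\CalG_\nu(+\infty)$). Everything else reduces to a routine perturbation estimate for the coefficients $\cG$ and the boundary symbol $\Gamma$, which are $O(\eps)$-close to their constant-layer counterparts thanks to Definition \ref{cd1}(c) and the smoothness of $A_j$, $B_{jk}$, $\Upsilon_j$, $K_\nu$.
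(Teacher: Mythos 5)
Your proposal is correct and follows essentially the same route as the paper's proof: the trivial inclusion of constant layers among $\eps$-amplitude profiles, compactness of $D$ to localize near a single constant layer, continuity of $\bE^-(\zeta)$ in the profile via the conjugation lemma (with the $\rho\to 0$ limit handled by the $K$-family symmetrizer result of \cite{MZ3}), and reduction of the high-frequency regime to a compact set of rescaled frequencies where the relevant subspaces depend only on $w(0)$. The only cosmetic difference is that the paper routes the high-frequency step explicitly through Theorem \ref{aa3}/Corollary \ref{hfcrit} (and parametrizes nearby profiles by $\Phi(z,\nu,q,a)$ from Proposition \ref{C14a}), whereas you invoke the same reduction somewhat implicitly as a "regular perturbation after rescaling."
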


For any bounded set of frequencies the equivalence in the above
theorem can be checked by showing that the Evans function depends
continuously on parameters, including parameters that measure
deviation of profiles from constant layers.  Thus, the main
difficulty is to deal with the unbounded high frequency region. Here
our strategy, carried out in Theorem \ref{aa3}, is to show that high
frequency Evans stability for \eqref{linq4} (the second condition in
\eqref{vv51}) is equivalent to high frequency Evans stability of the
\emph{quasihomogeneous, decoupled, frozen coefficient} system
\eqref{linprinc}. By virtue of the quasihomogeneity and frozen
coefficients, the latter condition can be checked by relatively
simple energy estimates for a \emph{compact} set of frequencies,
namely the parabolic unit sphere.

As a corollary we obtain the following result, established by energy
estimates in Section \ref{stabsec}, which establishes uniform Evans
stability for small amplitude layers in a variety of situations.

\begin{cor}\label{symmstab}
(a)  In the strictly parabolic case ($N=N'$) the uniform Evans
condition is satisfied for sufficiently small-amplitude layers (in
the sense of Definition \ref{cd1}) of symmetric--dissipative systems
with pure Dirichlet boundary conditions,
$$
\rank \Up^3=0,
$$
or with pure Neumann boundary conditions when $\rank \Up^3=N=N^2_-$.

(b)In the partially parabolic case ($N'<N$), the uniform Evans
condition is satisfied for sufficiently small-amplitude layers of
symmetric--dissipative systems with pure Dirichlet boundary
conditions or with mixed boundary conditions when  $\mathrm{rank}
\Up^3=N'=N^2_-$ and $\overline{A}^{11}_\nu$ is \emph{totally
outgoing} ($N^1_+=0$).

(c)In the partially parabolic case when $\mathrm{rank}
\Up^3=N'=N^2_-$ and $\overline{A}^{11}_\nu$ is \emph{totally
incoming} ($N^1_+=N-N'$), Evans stability fails in general  even for
small amplitude profiles (see Example \ref{cegrmk}).

(d)The uniform Evans condition fails for sufficiently small
amplitude solutions with mixed boundary conditions whenever
$\mathrm{rank}\Up^3>N^2_-$; see Corollary \ref{smalltrans}.

\end{cor}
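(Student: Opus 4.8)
We only indicate the strategy; the plan is to reduce to \emph{constant} layers and then to a finite-dimensional linear-algebra-plus-energy computation, the latter being carried out in detail in Section~\ref{stabsec}. By Theorem~\ref{smallred}, for a compact $D\subset\cU_{\partial,\nu}$ the uniform Evans condition holds for the associated $\eps$-amplitude layers (for $\eps$ small) if and only if it holds for the constant layers $\{w\equiv\uu:(\uu,\unu)\in D\}$; so it suffices to prove (a),(b), and to refute (c),(d), for constant layers. For a constant layer the transformed system \eqref{linq4} has coefficients independent of $z$, so $\EE^-(\zeta)$ is the stable subspace of the frozen matrix $\cG(\zeta)$ and $\Gamma(\zeta)$ is \eqref{pp1}--\eqref{pp2} evaluated at $\uu$. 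Recalling the two halves of \eqref{vv51}, and using that $\EE^-(\hat\zeta,\rho)$ extends continuously to $\rho=0$ under (H1)--(H5), the part on $0<|\zeta|\le R$ amounts to nonvanishing of $D(\hat\zeta,\rho)$ on the \emph{compact} set $\{|\hat\zeta|=1,\ \hat\gamma\ge0,\ 0\le\rho\le R\}$, which we settle in the two pieces $\rho=0$ and $0<\rho\le R$; the part on $|\zeta|\ge R$ is handled separately via $D^{sc}$.

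\emph{The boundary point $\rho=0$.} By Lemma~\ref{Rousset}, $D(\hat\zeta,0)\ne0$ for all admissible $\hat\zeta$ exactly when the constant layer is transversal and the residual hyperbolic problem \eqref{vv5}--\eqref{vv7} at $p=\uu$ satisfies the uniform Lopatinski condition. For the boundary conditions of (a),(b) both are finite-dimensional facts: transversality of constant layers is the positive half of the transversality dichotomy for small-amplitude profiles (cf.\ Proposition~\ref{smalltrans}), and since the residual hyperbolic operator carries the Friedrichs symmetrizer $S(\uu)A_0(\uu)$ of symmetric dissipativity (positive and block-diagonal), the reduced boundary condition produced by pure Dirichlet data, or by the mixed data of (b) with $N^1_+=0$, is of maximally dissipative type for the corresponding $S A_\nu$-energy, whence the Lopatinski determinant \eqref{vv9} does not vanish. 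In case (d), by contrast, $\rank\Up^3=N''>N^2_-$ forces, by Proposition~\ref{C8a} (cf.\ Corollary~\ref{smalltrans}), nontransversality of small-amplitude (a fortiori constant) profiles, so by Lemma~\ref{Rousset} the low-frequency Evans condition already fails and (d) is proved.

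\emph{The ranges $0<\rho\le R$ and $|\zeta|\ge R$.} Here we run an energy estimate with the symmetrizer $S=S(\uu)$ of Definition~\ref{defsymm}. Let $u$ be a decaying solution of the homogeneous transformed equation \eqref{linq3}; pairing with $S\bar u$ in $L^2(\overline\RR_+)$, taking real parts, integrating by parts once in $z$, and using the block-diagonal positivity of $SA_0$, symmetry of the $SA_j$, and the ellipticity $\re\sum\xi_j\xi_k SB_{jk}\ge c|\xi|^2$ of the parabolic block in \emph{all} directions (which lets one absorb the normal--tangential cross terms), one obtains a bound of the schematic form
\begin{equation}\label{sketchenergy}
\gamma\,\|u\|_{L^2}^2+|\eta|^2\,\|u^2\|_{L^2}^2+\|\partial_z u^2\|_{L^2}^2\ \lesssim\ \re\,\langle\,\mathcal B_\partial(\zeta)\,U(0),\,U(0)\,\rangle,
\end{equation}
where $\mathcal B_\partial(\zeta)$ is a bilinear form on the trace, built from $SA_d$ and $SB_{dd}$, involving only $u^1(0),u^2(0),\partial_z u^2(0)=u^3(0)$. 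The hyperbolic block $u^1$ is controlled by a companion estimate from Hypothesis~(H3): since $\overline A^{11}_\nu$ is invertible with eigenvalues of a single sign, the $u^1$-equation of the first-order system is a transport ODE whose decaying solution is slaved to $u^2$ when $\overline A^{11}_\nu$ is totally outgoing ($N^1_+=0$), and whose stable modes need $N^1_+$ Dirichlet-type conditions otherwise; in the configurations of (a),(b) this gives $|u^1(0)|\lesssim|u^2(0)|+|\Gamma^1 u^1(0)|$ up to quantities absorbed into \eqref{sketchenergy}. Finally the boundary form: for pure Dirichlet data $\mathcal B_\partial$ vanishes on $\ker\Gamma(\zeta)$; for the Neumann, resp.\ mixed, data of (a),(b) the condition $\Up_3=0$ expresses $\partial_z u^2(0)$ in terms of $u^2(0)$ through the then-square $K_\nu$, turning $\mathcal B_\partial$ into a form bounded by $C(1+|\eta|)|u^2(0)|^2$ that a trace inequality absorbs; in all these cases one concludes $u\equiv0$, i.e.\ $D(\hat\zeta,\rho)\ne0$, for $0<\rho\le R$. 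For $|\zeta|\ge R$ we invoke Theorem~\ref{aa3}: high-frequency uniform Evans stability of \eqref{linq4} about the constant layer is equivalent to that of the quasihomogeneous, decoupled, frozen-coefficient system \eqref{linprinc}, which for a constant layer splits into a first-order part governed by $\overline A^{11}_\nu$ and a strictly parabolic part governed by $\overline B^{22}_\nu$; by quasihomogeneity this need only be checked on the compact parabolic unit sphere, where the parabolic part is settled by the same energy estimate (now using $\re\overline B^{22}_\nu>0$) and the first-order part is vacuous ($N^1_+=0$) or trivially controlled by its Dirichlet data. This proves (a) and (b). For (c), when $\overline A^{11}_\nu$ is totally incoming ($N^1_+=N-N'$) and $\rank\Up^3=N'=N^2_-$ the mixed condition fails to constrain the $N-N'$ incoming hyperbolic modes in the way needed, and even for a constant layer the Evans determinant can vanish for suitable parameters; this is exhibited by the explicit computation of Example~\ref{cegrmk}.

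The main obstacle is the second step in the partially parabolic case: the symmetrizer controls only the parabolic component $u^2$, so the hyperbolic block $u^1$ must be estimated by a separate one-sided argument, and this argument is compatible with the boundary data only when $N^1_+=0$ (exactly the restriction appearing in (b)) or when enough Dirichlet conditions are present. Subordinate difficulties are the absorption of the normal--tangential viscous cross terms on the half-line, which forces one to use the full ellipticity of the $u^2$-block rather than merely $\re B_{dd}>0$, and the closing of the high-frequency estimate after the parabolic rescaling, for which one leans on the decoupling of Theorem~\ref{aa3}; the precise matching of the various mixed boundary conditions to the algebra of $\EE^-(\zeta)$ near $\rho=0$ is what decides the dichotomy (a)--(d).
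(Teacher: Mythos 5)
Your proposal is correct and rests on the same pillars as the paper's proof: reduction to constant layers via Theorem~\ref{smallred}, reduction of the high-frequency condition to the frozen, decoupled parabolic system via Theorem~\ref{aa3} (Corollary~\ref{hfcrit} and Proposition~\ref{symmhf}), symmetrizer-based energy/trace estimates for the remaining frequencies, and the same references (Corollary~\ref{smalltrans} with Lemma~\ref{Rousset}, and Example~\ref{cegrmk}) for the negative statements (c),(d). Where you diverge is in the bounded-frequency region: the paper proves quantitative trace estimates such as $|u_1(0)|+|u_2'(0)|\le C\Lambda|g|$ uniformly over $|\zeta|\le R$ and converts them into the uniform Evans bound via Lemma~\ref{ZZZ} and Remark~\ref{homgen}, whereas you split off the point $\rho=0$ (handled by Lemma~\ref{Rousset} through transversality plus maximal dissipativity/triviality of the Lopatinski condition when $N_+=0$) and then argue by compactness and continuity of $D(\hat\zeta,\rho)$ that pointwise nonvanishing on $0<\rho\le R$ suffices. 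This is legitimate and is in fact the simplification the paper itself records in Remark~\ref{realrmks}(3) for the Dirichlet case; it buys you a cheaper argument at the cost of invoking Proposition~\ref{realmaxdiss} (or the vacuousness of the Lopatinski condition) at $\rho=0$. One caveat: at frequencies with $\gamma=0$ (and, for your identity \eqref{sketchenergy}, also $\eta=0$) the elements of the limiting subspace $\EE^-(\zeta)$ need not decay, so "conclude $u\equiv0$" is not quite the right endgame there; you must run the energy argument as a trace \emph{estimate} $|U(0)|\le C|\Gamma U(0)|$ for $U\in\EE^-(\zeta)$ with constants uniform in $\gamma>0$ and then pass to the limit using the continuity of $\EE^-(\hat\zeta,\rho)$ — which is exactly how the paper closes this point, and which your compactness framework accommodates with no further ideas needed.
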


Combining Theorem \ref{existence}, Proposition \ref{C30},
and Corollary \ref{symmstab},
we obtain the following result asserting existence and stability of
small-amplitude boundary-layer solutions for symmetric--dissipative
systems with various types of boundary conditions.
Suppose we are given a smooth, global assignment of states
\begin{align}\label{ww1}
(t,x_0,p(t,x_0))\in\cU_\partial\text{ for all }(t,x_0)\in
[-T,T]\times \partial\Omega
\end{align}
satisfying the viscous boundary condition \eqref{viscbcd}:
\begin{align}\label{ww2}
(\Up_1(p^1(t,x_0)),\Up_2(p^2(t,x_0),\Up_3(p(t,x_0),0,0))=(g_1(t,x_0),g_2(t,x_0),0)
\end{align}
for each $(t,x_0)$. Note that the image of $p$ is compact by
compactness of $\partial\Omega$.

\begin{theo}\label{mainresult}
Consider a symmetric-dissipative system \eqref{vv2} with boundary
conditions of the type described in Corollary \ref{symmstab} (a),(b)
under hypotheses (H1)-(H6), but with (H4$'$) in place of (H4). Given
a smooth global assignment of states $p(t,x_0)$ as in
\eqref{ww1}--\eqref{ww2}, there exists a $\cC$ manifold satisfying
Assumption \eqref{C5} with $p(t,x_0)\in\cC(t,x_0)\subset
\pi\cU_\partial$ for all $(t,x_0)$, and associated small amplitude
profiles $W(z,t,x_0,q)$ satisfying the uniform Evans condition on
$[-T,T]\times\partial\Omega$. The manifold $\cC$ defines a residual
hyperbolic boundary condition as in \eqref{vv3}.

Given initial data $v^0$ satisfying appropriate corner compatibility
conditions for the hyperbolic problem \eqref{vv3}, there exists an
inviscid solution $u^0$ as in Theorem \ref{vv10}, an
approximate solution $u^\eps_a$ as in Proposition \ref{vv20}, and an
exact boundary layer solution $u^\eps$ satisfying all the
conclusions of Theorem \ref{existence} for constants $s_0$, $k$, $M$
as described there.
\end{theo}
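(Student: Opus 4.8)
The plan is to chain together the results assembled in the previous subsections; the only genuine issue is the order in which the objects must be produced, since the manifold $\cC$ is needed to pose the residual hyperbolic problem \eqref{vv3}, the uniform Evans condition (hence uniform Lopatinski) is needed to solve that problem, and the Evans condition along the resulting inviscid solution $u^0$ is needed to invoke Theorem \ref{existence}. First I would construct the $\cC$ manifold: since the system is symmetric--dissipative, the boundary operators are of the type in Corollary \ref{symmstab}(a) or (b), and the assignment $p(t,x_0)$ satisfies \eqref{ww2} (i.e.\ the viscous boundary condition \eqref{viscbcd}), Proposition \ref{C30} applies and produces a smooth manifold $\cC$ as in Assumption \ref{C5}, with $p(t,x_0)\in\cC(t,x_0)$, each fibre $\cC(t,x_0)$ a submanifold of $\pi\cU_\partial$ of dimension $N-N_+$, together with a smooth family of transversal layer profiles $W(z,t,x_0,q)$ converging to $q$ at an exponential rate uniform on compact subsets. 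By shrinking the set-valued function $\cO^*$ (hence $\cU^*$) about $\cO$ one may take all these profiles, and the fibres $\cC(t,x_0)$, as small as needed; in particular, for a prescribed $\eps>0$, they are all $\eps$-amplitude profiles (Definition \ref{cd1}) associated to the set $D:=\{(q,\nu(t,x_0)):(t,x_0,q)\in\cC\}\subset\cU_{\partial,\nu}$, which is compact because $\partial\Omega$ is.

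Next I would verify the uniform Evans condition for these profiles and extract its low-frequency consequences. By Theorem \ref{smallred} there is $\eps_D>0$ such that, for $\eps\le\eps_D$, the uniform Evans condition holds for the $\eps$-amplitude profiles associated to $D$ precisely when it holds for the constant layers over $D$; and Corollary \ref{symmstab}(a) (strictly parabolic case) or (b) (partially parabolic case with the stated Dirichlet or mixed boundary conditions) guarantees the latter. Fixing such an $\eps$ in the previous step, every profile $W(z,t,x_0,q)$, $(t,x_0,q)\in\cC$, satisfies \eqref{vv51} uniformly; applying Lemma \ref{Rousset} to the low-frequency half of this condition, each profile is transversal and the residual hyperbolic problem satisfies the uniform Lopatinski condition (Definition \ref{vv8}) at every endstate $q\in\cC(t,x_0)$, in particular at $t=0$ for all $x_0\in\partial\Omega$.

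With that in hand I would solve the inviscid problem and close the loop. Feeding the given data $v^0\in H^{s_0+1}(\Omega)$ (with order-$(s_0-1)$ corner compatibility conditions for \eqref{vv3}) into Theorem \ref{vv10}(ii) --- the (H4$'$) version valid in the symmetric--dissipative case --- yields $T_0>0$ and $u^0\in H^{s_0}([0,T_0]\times\Omega)$ with $u^0|_{t=0}=v^0$, $u^0(t,x_0)\in\cC(t,x_0)$, and the uniform Lopatinski condition holding on $[0,T_0]\times\partial\Omega$. Since each fibre $\cC(t,x_0)$ is a small neighborhood of $p(t,x_0)$, the profile $w(z)=W(z,t,x_0,u^0(t,x_0))$ attached to this inviscid solution is again an $\eps$-amplitude profile associated to $D$, so the Evans functions $D(t,x_0,\zeta)$, $D^{sc}(t,x_0,\zeta)$ obey \eqref{vv51} uniformly on $[0,T_0]\times\partial\Omega$, i.e.\ the uniform Evans condition of Definition \ref{vv50} holds along $u^0$. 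Choosing $k$, $M$, $s_0$ to satisfy \eqref{o21} (which dictates the required regularity of $v^0$), Theorem \ref{existence} then provides $\eps_0>0$, an approximate solution $u^\eps_a$ of the form \eqref{vv20} satisfying \eqref{vv22}, and an exact boundary-layer solution $u^\eps$ of \eqref{vv2} obeying \eqref{o24} together with the maximal stability estimates about $u^\eps_a$ and $u^\eps$ --- exactly the asserted conclusions, the residual hyperbolic boundary condition being the one carried by $\cC$ as in \eqref{vv3}.

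The hard part is not any single estimate: the substance lives in Proposition \ref{C30}, Theorem \ref{smallred}/Corollary \ref{symmstab}, and Theorem \ref{existence}, all quoted here. What requires care is the uniformity bookkeeping --- the amplitude parameter $\eps$ (equivalently the size of $\cU^*$) must be chosen small enough \emph{uniformly} over the compact set $D$, so that the small-amplitude Evans analysis applies simultaneously to every profile attached to $\cC$; and one must then confirm that the inviscid solution $u^0$ --- whose existence time $T_0$ is itself produced only \emph{after} $\cC$ has been fixed --- still takes values in that small regime, so that the uniform Evans condition of Definition \ref{vv50} is genuinely available along $u^0$, as Theorem \ref{existence} demands. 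One should also check that the regularity of $v^0$ assumed in Theorem \ref{mainresult} is consistent with the choice of $s_0$ forced by \eqref{o21}.
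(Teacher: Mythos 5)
Your proposal is correct and matches the paper's intended argument: the paper states Theorem \ref{mainresult} without a separate proof, presenting it as the direct combination of Proposition \ref{C30}, Theorem \ref{smallred} together with Corollary \ref{symmstab}, Lemma \ref{Rousset}, Theorem \ref{vv10}, and Theorem \ref{existence}, which is exactly the chain you assemble (with the uniformity/compactness bookkeeping made explicit). The only presentational point is that the transversality hypothesis of Proposition \ref{C30} at the points of $B$ should be verified up front --- via Corollary \ref{smalltrans}, or equivalently via Corollary \ref{symmstab} plus Lemma \ref{Rousset} applied to the constant layers --- rather than only after the manifold is built, but this is a matter of ordering, not substance.
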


\subsection{Application to \emph{identifying} small viscosity limits. }

The exact viscous solutions in Theorems \ref{existence} and
\ref{mainresult} are chosen to satisfy high-order corner
compatibility conditions at $t=0$ that depend on the approximate
solution $u^\eps_a$; the construction of $u^\eps_a$ depends in turn
on having a $\cC$ manifold with associated layer profiles and an
inviscid solution $u^0$ to start with.    In this section we show
how Theorem \ref{existence} can sometimes be used together with
Corollary \ref{symmstab} and our results on $\cC$ manifolds in
section \ref{existsec} to identify small viscosity limits of
solutions to viscous boundary problems like \eqref{vv2}, \emph{even
when neither the  $\cC$ manifold nor the inviscid solution is given
in advance}.

Consider the viscous problem \eqref{vv2} on a half-space
$\Omega=\{x\in\bR^d:x_d\geq 0\}$:
\begin{align}\label{vvvv2}
\begin{split}
&\cL_\eps(u):=
 A_0(u)u_t  + \sum_{j=1}^d   A_j(u) \D_{j} u    -
 \eps \sum_{j,k= 1}^d \D_{j} \big( B_{jk}(u) \D_{k} u \big) = 0,\\
&\Up(u,\partial_Tu^2,\partial_\nu u^2)=(g_1,g_2,0)\text{ on
}[-T,T]\times \partial\Omega,\\
&u=\uu\in\pi\cU_\partial \text{ in }t<0,
\end{split}
\end{align}
where $\uu$ is a constant state and the (nonconstant) boundary data
$(g^1(t,x_0),g^2(t,x_0),0)$ is $C^\infty$, equal in $t<0$ to the
constant
\begin{align}\label{vw3}
\Up(\uu,\partial_T\uu,\partial_\nu\uu)=\Up(\uu,0,0):=(\uug,0),
\end{align}
and also equal to $(\uug,0)$ outside a compact set in
$[-T,T]\times\Omega$. We suppose that \eqref{vv2} is a
symmetric-dissipative system satisfying (H1)-(H6) with (H4$'$) in
place of (H4), and that the boundary conditions are of the type
described in Corollary \ref{symmstab}(a)(b).

Set $\unu=(0,1)$ and note that the constant profile $w(z)\equiv \uu$
satisfies the uniform Evans condition by Corollary \ref{symmstab};
so in particular, it is transversal.  Now apply Proposition
\ref{redBC} to find  neighborhoods $\cO\subset\bR^N$ of $\uu$ and
$\mathbb{O}\subset\bR^{N_b-N''}$ of $\uug$,  smoothly varying
manifolds $\cC_{\unu,g}\subset \cO$ for $g\in\mathbb{O}$, and
transversal small-amplitude profiles
\begin{align}\label{vw4}
W_{\unu,g}(\cdot,q):[0,\infty)\times \cC_{\unu,g}\to \pi\cU_*
\end{align}
such that for each $q\in \cC_{\unu,g}$, $w=W_{\unu,g}(\cdot,q)$
satisfies
\begin{align}\label{vw5}
\begin{split}
&A_\unu(w)\partial_zw-\partial_z(B_\unu(w)\partial_zw)=0\text{ on
}z\geq 0\\
&\Up(w,0,w^2_z)(0)=(g,0)\\
&w(z)\to q\text{ as }z\to\infty.
\end{split}
\end{align}

Provided $T_0$ is sufficiently small, our assumptions on the
boundary data imply that
\begin{align}\label{vw6}
(g^1(t,x_0),g^2(t,x_0))\in\mathbb{O}\text{ for all }(t,x_0)\in
 [-T_0,T_0]\times\partial\Omega.
\end{align}
If we now define
\begin{align}\label{vw7}
\begin{split}
&\cC(t,x_0):=\cC_{\unu,g(t,x_0)}\text{ and }\\
&W(z,t,x_0,q):=W_{\unu,g(t,x_0)}(z,q)\text{ for }(t,x_0)\in
[-T_0,T_0]\times\partial\Omega,
\end{split}
\end{align}
the manifolds $\cC(t,x_0)$ and profiles $W(z,t,x_0,q)$ satisfy the
conditions of Assumption \ref{C5}. Moreover, by Corollary
\ref{symmstab} the uniform Evans condition holds on
$[-T_0,T_0]\times\partial\Omega$.

Next, recalling Lemma \ref{Rousset} we apply Theorem \ref{vv10} to
construct the unique solution $u^0(t,x)\in
H^{s_0}([-T_0,T_0]\times\Omega)$ to the inviscid hyperbolic problem
\begin{align}\label{vw8}
\begin{split}
&\cL_0(u^0)=0\text{ on }[-T_0,T_0]\times\Omega\\
&u^0(t,x_0)\in\cC(t,x_0)\text{ on }[-T_0,T_0]\times \partial\Omega,\\
&u^0=\uu\text{ in }t<0.
\end{split}
\end{align}
on a possibly shorter time interval. From \eqref{vw3} and
\eqref{vw7} we see that
\begin{align}\label{vw9}
\cC(t,x_0)=\cC_{\unu,\uug} \text{ in }t<0,
\end{align}
so corner compatibility conditions are satisfied in \eqref{vw8} to
infinite order.

We now apply Theorem \ref{existence} to obtain  approximate and
exact solutions $u^\eps_a$ and $u^\eps$ to the viscous problem
\eqref{vvvv2} satisfying the estimates \eqref{o24} for constants
$\eps_0$, $M$, $k$, and $s_0$ as in that Theorem.   Note that
$u^\eps$, $u^\eps_a$, and $u^0$ are all equal to $\uu$ in $t<0$.
\emph{Finally notice that smooth solutions $u^\eps$ to the initial
boundary value problem \eqref{vvvv2} are uniquely determined from
the start.}  They must therefore equal the solutions obtained from
Theorem \ref{existence} by the above procedure.  The estimates
involving $u^0$ in \eqref{o24} now allow us to identify the unique
$u^0$ solution to \eqref{vw8} as the small viscosity limit of the
$u^\eps$.

\subsection{Discussion and open problems}

Theorem \ref{mainresult} generalizes to the case of ``real'', or
partially parabolic viscosity, the small-amplitude results obtained
in \cite{GG} for Laplacian second-order terms, and to
multi-dimensions those obtained in \cite{R3} for one-dimensional
symmetric--dissipative systems. It includes as a physical
application existence of small-amplitude boundary-layer solutions
for the equations of compressible gas dynamics with specified in- or
outflow velocity, temperature, and, in the inflow case, specified
density or pressure, a result analogous to those obtained in
\cite{TW} for the incompressible case. It includes also the
corresponding result for the compressible MHD equations with
specified inflow or outflow velocity, temperature, magnetic field,
and, in the inflow case, density or pressure, for parameter regimes
satisfying the structural conditions described in Remark \ref{yy1}.
See Sections \ref{Isentropic}, \ref{full}, and \ref{MHDsec} for
physical examples. For a general physical survey of boundary-layer
behavior, see \cite{S}.

Together with pointwise Green function analyses \cite{YZ,NZ} showing
that uniform Evans stability is sufficient for long-time (i.e.,
time-asymptotic) stability of planar boundary-layer profiles,
Theorem \ref{smallred} yields also long-time stability of
small-amplitude planar profiles with sharp pointwise rates of
decay, sharpening previous results obtained by Rousset \cite{R3}
by energy methods.

Theorem \ref{existence} generalizes to the case of real viscosity
the large-amplitude results obtained in \cite{MZ1} and \cite{GR} for
strictly parabolic viscosities in the multi- and one-dimensional
case, respectively, giving a sharp criterion for existence and
stability of boundary-layer solutions in the small-viscosity limit.
Determination of Evans stability in the large-amplitude case is an
outstanding open problem. (The uniform Evans condition may fail in
general for large-amplitude layers, as demonstrated in \cite{SZ}.)
Numerical testing of the Evans condition for large-amplitude layers
in multi-dimensions would be an interesting direction for further
investigation; see \cite{CHNZ,HLyZ1} for the one-dimensional case,
\cite{HLyZ2} for the multi-dimensional shock case.

We note that small-amplitude stability for symmetric--dissipative
systems might be provable for variable-multiplicity systems under
weaker structural assumptions than those of Theorem
\ref{mainresult}, which are tailored for large-amplitude layers, by
direct energy estimates as in \cite{GG,R3} rather than by first
passing to the constant-layer limit.  This approach becomes quite
complicated in the multidimensional case, but would yield existence
without hard-to-verify structural conditions, and would apply to
some physical cases such as MHD for parameter regimes different from
that described in Remark \ref{yy1}.

 Finally, we discuss the meaning
of the somewhat unexpected instability result of Propositions
\ref{C8a} and Corollary \ref{symmstab}, parts (c) and (d).   In
cases where $N''>N^2_-$, or equivalently, when the number of scalar
Dirichlet conditions imposed in the viscous problem ($N'-N''+N^1_+$)
is strictly less than the number of scalar boundary conditions for
the residual hyperbolic problem ($N_+$), this seems to indicate a
failure of our basic Ansatz, which assumes that the residual
hyperbolic problems should involve only \emph{Dirichlet} boundary
conditions. For example, in the extreme case $N=N'$ (strict
parabolicity), $N_+=N=N_b$ (so $N^2_-=0$ and all characteristics are
incoming), with full Neumann boundary conditions $\D_\nu u(0)=0$ on
$\partial \Omega$ (so $N''=N$), work in progress indicates that
solutions converge in the small-viscosity limit to solutions of the
inviscid problem $\cL_0(u^0)=0$ with \emph{Neumann boundary
conditions} $\D_\nu u(0)=0$ on $\partial \Omega$ , and with no
intervening boundary layer. In other cases where $N''>N^2_-$, we
conjecture that the correct model for limiting behavior is a
residual hyperbolic problem with mixed Dirichlet--Neumann conditions
of appropriate ranks. We plan to address this issue in a future
work.

\medskip
{\bf Plan of the paper.} In Section \ref{existsec} we investigate
the existence and transversality of boundary layers, in particular
in the small-amplitude limit, and construct both small and large
amplitude $\cC$-manifolds. In Section \ref{limits}, we review the
construction of the Evans function, and examine its high-frequency
and small-amplitude limits, establishing the key reduction of
Theorem \ref{smallred} for small-amplitude profiles. In Section
\ref{stabsec}, we establish Corollary \ref{symmstab} for symmetric
dissipative systems using energy estimates. In Section
\ref{maxdiss}, we digress slightly to show maximal dissipativity of
hyperbolic boundary conditions associated with small-amplitude
layers of symmetric dissipative systems when full Dirichlet
conditions are imposed in the viscous problem ($N''=0$).  In
Sections \ref{Isentropic}--\ref{MHDsec}, we carry out explicit
computations for the example systems of isentropic gas dynamics,
full gas dynamics, and MHD. The construction of approximate
solutions is presented in Appendix \ref{approximate}. The tracking
lemma and its connection to construction of high-frequency
symmetrizers are given in Appendix \ref{trackapp}.

\begin{nota}
We do not distinguish between $u^2$ and $u_2$, $J^*$ and $J_*$,
etc.. Sometimes, especially when other subscripts or superscripts
are involved, one choice is more convenient than the other (e.g.,
$u^2_z$).

\end{nota}

\section{Existence of $\cC$-manifolds and layer profiles}\label{existsec}

In this section we will show that it is possible to construct smooth
$\cC$-manifolds as in Assumption \ref{C5}, globally defined on
$\partial\Omega$ with corresponding global smooth families of
\emph{small-amplitude} layer profiles.  We will also show that it is
possible to give a local verification of Assumption \ref{C5} if one
starts with a given, possibly large amplitude, transversal layer
profile.

\subsection{Global $\cC$-manifolds and families of profiles in the
small-amplitude case}\label{globalC}

Here we verify Assumption \ref{C5} in the small-amplitude case for a
variety of boundary conditions of the form \eqref{viscbcd}.
We begin by defining a family of constant layers by giving a smooth presciption
of states $p(t,x_0)\in\mathbb{R}^N$ with
\begin{align}\label{C9}
(t,x_0,p(t,x_0))\in\cU_\partial\text{ for all }(t,x_0)\in
[-T,T]\times\partial\Omega.
\end{align}
We shall not try to show that such smooth global prescriptions are
always possible under structural Assumption \ref{Hs}, but they
clearly exist for the Navier-Stokes and MHD equations (see sections
\ref{Isentropic}, \ref{full}, and \ref{MHDsec}), where one has a
simple characterization of the domains of hyperbolicity and
non-characteristicity in terms of physical quantities like pressure
and velocity.   The prescription \eqref{C9} determines a choice of
boundary data in \eqref{viscbcd} or \eqref{layeq}, namely
\begin{align}\label{C10}
(g_1(t,x_0),g_2(t,x_0),0):=(\Up_1(p^1(t,x_0)),\Up_2(p^2(t,x_0)),0).
\end{align}

\begin{rem}\label{smoothtrivial}
When $\Omega$ is a half-space, so that $\nu$ is constant,
prescription \eqref{C9} is trivially constructed,
consisting of a single state $p$.
\end{rem}

Given $p(t,x_0)$ as in \eqref{C9}, define the compact set
\begin{align}\label{C11}
B:=\{(\nu(x_0),p(t,x_0)):(t,x_0)\in
[-T,T]\times\partial\Omega\}\subset S^{d-1}\times\pi\cU_\partial.
\end{align}
Fix $(\underline{\nu},\underline{p})\in B$.  The next Proposition,
which is a slight modification of Proposition 5.3.5 in \cite{Met4},
characterizes all possible ``small-amplitude" solutions $w(z)$ of
\begin{align}\label{C12}
\begin{split}
&A_\nu(w)\partial_zw-\partial_z(B_\nu(w)\partial_zw)=0\text{ on }z\geq 0,\\
&w(z)\to q\text{ as }z\to \infty
\end{split}
\end{align}
for $(\nu,q)$ near $(\underline{\nu},\underline{p})$.   Define the
$N'\times N'$ matrix
\begin{align}\label{C13}
G_\nu(q):=(B^{22}_{\nu})^{-1} \left( A_\nu^{22} - A_\nu^{21} (
A_\nu^{11})^{-1} A_\nu^{12}\right)(q)
\end{align}
and let $\bE_\mp(G_\nu(q))$ denote the generalized eigenspace of
$G_\nu(q)$ associated to eigenvalues $\mu$ with $\pm\Re\mu<0$.
Denote by $\Pi_{\nu\pm}(q)$ the projections associated to the
decomposition
\begin{align}\label{C14b}
\bR^{N'}=\bE_+(G_\nu(q))\oplus\bE_-(G_\nu(q)),
\end{align}
and fix isomorphisms $\alpha(\nu,q;a)$ linear in
$a\in\bE_-(G_\nu(\up))$ and $C^\infty$ in $(\nu,q)$:
\begin{align}\label{C14}
\alpha(\nu,q;a):\bE_-(G_\nu(\up))\to\bE_-(G_\nu(q))
\end{align}
such that $\alpha(\nu,\up;a)=a$.

\begin{prop}\label{C14a}
There exists a neighborhood $\omega\subset S^{d-1}\times\bR^N$ of
$(\underline{\nu},\underline{p})$ and constants $R>0$, $r>0$ such
that for $(\nu,q)\in\omega$, all solutions $w$ of \eqref{C12}
satisfying
\begin{align}\label{C15a}
\|(w, w^2_z)-(\up, 0)\|_{L^\infty[0,\infty]}\leq R,
\end{align}
are parametrized by a $C^\infty$ function $w=\Phi(z,\nu,q,a)$ on
$[0,\infty)\times \omega^*$, where $\omega^*$ is the set of
$(\nu,q,a)$ with $(\nu,q)\in\omega$ and $a\in\bE_-(G_\nu(\up))$ with
$|a|\leq r$.  The function $\Phi(z,\nu,q,a)$ is the unique solution
of \eqref{C12} satisfying the boundary condition
\begin{align}\label{C15b}
\Pi_{\nu-}w^2_z(0)=\alpha(\nu,q;a),
\end{align}
and $\Phi^2$ has the expansion
\begin{align}\label{C16}
\Phi^2(z,\nu,q,a)=q^2+e^{zG_\nu(q)}G_\nu^{-1}(q)\alpha(\nu,q;a)+O(|a|^2)
\end{align}
uniformly with respect to $(z,q,\nu)$. Moreover, there exist
positive constants $\delta$ and $C$ such that for all $z\in
[0,\infty)$ and $(q,\nu,a)\in\omega^*$:
\begin{align}\label{C17}
|\partial_z\Phi^2(z,\nu,q,a)|+|\Phi(z,\nu,q,a)-q|\leq Ce^{-\delta
z}.
\end{align}
We also denote by $\Phi(z,\nu,q,a)$ the maximal extension of $\Phi$
to $z<0$ as a solution of \eqref{C12}.
\end{prop}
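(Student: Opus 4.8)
The plan is to reduce \eqref{C12} to a first-order ODE system and apply center-stable manifold theory near the equilibrium $(\up,0)$. First I would write \eqref{C12} in the first-order form \eqref{layeq2}, i.e. with $U=(w^1,w^2,w^3):=(w^1,w^2,B_\nu^{22}w^2_z)$, obtaining a system $\partial_z U=\mathcal{F}(U,\nu,q)$ (after using (H3) to invert $A_\nu^{11}$) whose equilibria for fixed $(\nu,q)$ with $w^2=q^2$ are exactly the points $(q^1,q^2,0)$. Linearizing at $(\up,0)$ with $\nu=\unu$, $q=\up$ gives precisely the matrix $\CalG_\nu(+\infty)$ of \eqref{Gform}: its spectrum consists of the $N'$ eigenvalues of $G_\unu(\up)=(B_\unu^{22})^{-1}(A_\unu^{22}-A_\unu^{21}(A_\unu^{11})^{-1}A_\unu^{12})$ together with a zero eigenvalue of geometric and algebraic multiplicity $N$ coming from the first two block rows (the $w^1$ and $w^2$ directions decouple at the linear level). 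Noncharacteristicity plus (H2) guarantees that $G_\unu(\up)$ has no purely imaginary eigenvalues, so the linearization is hyperbolic away from this $N$-dimensional center coming from the manifold of equilibria itself; the stable subspace has dimension $N^2_-=\dim\bE_-(G_\unu(\up))$ and the center-stable subspace has dimension $N+N^2_-$, matching Lemma \ref{count}(iii).

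Next I would invoke the (parametrized, smooth) center-stable manifold theorem at $(\up,0)$: there is a locally invariant $C^\infty$ manifold $\mathcal{W}^{cs}$ of dimension $N+N^2_-$, depending smoothly on $(\nu,q)$, containing all solutions that stay in a small ball for all $z\ge0$; since the only recurrent behavior on the center is the $N$-parameter family of equilibria $(q^1,q^2,0)$ and each such equilibrium has a local stable manifold of dimension $N^2_-$, the solutions of \eqref{C12} satisfying \eqref{C15a} and converging to $q$ are exactly those lying on the stable manifold $\mathcal{W}^s(q)$ of the specific equilibrium $(q^1,q^2,0)$. That stable manifold is $N^2_-$-dimensional and is transverse to the hyperplane $\{w^2=q^2\}$ in the $w^3$-direction — more precisely its tangent space at $(q^1,q^2,0)$ is spanned by the $w^3$-component lying in $\bE_-(G_\nu(q))$ together with the induced $w^1$-direction via the first equation of \eqref{layeq2} — so $a\mapsto \Pi_{\nu-}w^2_z(0)$ provides a legitimate $C^\infty$ chart on $\mathcal{W}^s(q)$, linear to leading order. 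This gives the parametrization $w=\Phi(z,\nu,q,a)$, the uniqueness statement under boundary condition \eqref{C15b}, and (since $\mathcal{W}^s(q)$ is graph-like over $\bE_-(G_\nu(q))$ in the $w^3$-slot) smoothness jointly in $(z,\nu,q,a)$.

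For the expansion \eqref{C16}: on $\mathcal{W}^s(q)$ the $w^2$-equation reads $\partial_z w^2=(B_\nu^{22})^{-1}w^3$ and $w^3(z)$ solves a system whose linearization is $G_\nu(q)$ acting on $\bE_-(G_\nu(q))$; solving the linear part with initial value $\alpha(\nu,q;a)$ for $B_\nu^{22}w^2_z(0)$ — here I use $\alpha$ as the chosen smooth isomorphism and that $w^3=B_\nu^{22}w^2_z$ — gives $w^3(z)=e^{zG_\nu(q)}\alpha(\nu,q;a)+O(|a|^2)$ and hence, integrating $\partial_z w^2$, $\Phi^2(z,\nu,q,a)=q^2+e^{zG_\nu(q)}G_\nu^{-1}(q)\alpha(\nu,q;a)+O(|a|^2)$ uniformly in $z$ (the integral $\int_z^\infty e^{sG_\nu(q)}\,ds=-G_\nu(q)^{-1}e^{zG_\nu(q)}$ converges since $\bE_-$ is the stable subspace). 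The exponential decay estimate \eqref{C17} is then the standard quantitative statement of the stable manifold theorem: trajectories on $\mathcal{W}^s(q)$ decay like $e^{-\delta z}$ for any $\delta$ less than the spectral gap of $G_\nu(q)|_{\bE_-}$, with constant uniform on the compact parameter set $\bar\omega^*$. The maximal backward extension is automatic since $\Phi$ solves an ODE.

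The main obstacle is a careful treatment of the center directions: because the equilibrium set is an $N$-dimensional manifold (all of $(q^1,q^2,0)$ are stationary and they also vary with the parameter $q$), one must be sure that "bounded and convergent to $q$" really picks out the stable fiber over that one equilibrium and nothing extra from the center manifold, and that no solutions satisfying \eqref{C15a} can drift along the center to a \emph{different} equilibrium — this is where noncharacteristicity (no neutral modes other than the trivial ones) and a Lyapunov/foliation argument over the equilibrium manifold do the work, exactly as in Proposition 5.3.5 of \cite{Met4}, whose statement and proof I would follow with the minor modification of carrying the extra parameter $\nu$ and the choice of $\alpha$.
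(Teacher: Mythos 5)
Your proof is correct, and it is precisely the alternative route that the paper itself sketches in Remark \ref{CS} rather than the route of the paper's official proof. The paper's Step 1 establishes the parametrization $w=\Phi(z,\nu,q,a)$ by a contraction-mapping/integral-equation argument (imported from Proposition 2.2 and Appendix A of \cite{GMWZ7}) under the stronger smallness condition \eqref{stronger} on $\|w^2_z\|_{L^1}$ and $\|w^2_z\|_{L^\infty}$, and only then, in Step 2, invokes the center--stable manifold of $(\up,0)$ to upgrade from \eqref{stronger} to the pure $L^\infty$ condition \eqref{C15a}. You instead run the whole argument through parametrized invariant-manifold theory: the dimension count showing the center manifold consists entirely of equilibria, the foliation of $\cW^{cs}$ by stable fibers, and the chart $a\mapsto\Pi_{\nu-}w^2_z(0)$ on each fiber. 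What the paper's contraction-mapping step buys is explicit quantitative control tailored to the partially parabolic structure (the paper notes it corrects a gap in \cite{Met4}, Prop.\ 5.3.5, and extends it to partial viscosity); what your route buys is that the decay estimate \eqref{C17}, the foliation, and smooth dependence on $(\nu,q,a)$ all come for free from the Stable Manifold Theorem, at the cost of having to handle the $N$-dimensional center carefully --- which you do correctly, since the triviality of the center dynamics rules out any drift between equilibria. Two small points of hygiene: your identification of the spectrum of $\CalG_\nu(+\infty)$ (zero of algebraic multiplicity $N$, plus the spectrum of $G_\nu$, which misses the imaginary axis by Lemma \ref{count}) is exactly right and is the crux of the dimension count; but note that in the paper's first-order system \eqref{layeq2} the third variable is $w^3=\D_zw^2$, not $B^{22}_\nu\D_zw^2$, and \eqref{C15b} prescribes $\Pi_{\nu-}w^2_z(0)$ --- your writeup mixes the two conventions, which introduces a spurious factor of $B^{22}_\nu$ in the intermediate formula for $w^3(z)$, although the final expansion \eqref{C16} comes out the same either way after the cancellation you implicitly perform.
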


\begin{proof}
\textbf{1. }First we claim there exists a neighborhood
$\omega'\subset S^{d-1}\times\bR^N$ of
$(\underline{\nu},\underline{p})$ and constants $R'>0$, $r'>0$ such
that for $(\nu,q)\in\omega'$, all solutions $w$ of \eqref{C12}
satisfying
\begin{equation}\label{stronger}
\|w^2_z\|_{L^1[0,\infty]}\leq R'\text{ and }
\|w^2_z\|_{L^\infty[0,\infty]}\leq R',
\end{equation}
are parametrized by a $C^\infty$ function $w=\Phi(z,\nu,q,a)$ on
$[0,\infty)\times \omega'_*$, where $\omega'_*$ is the set of
$(\nu,q,a)$ with $(\nu,q)\in\omega'$ and $a\in\bE_-(G_\nu(\up))$
with $|a|\leq r'$.  This may be established by a contraction mapping
argument identical to that given in Proposition 2.2 and Appendix A
(both) of \cite{GMWZ7}. This argument corrects a minor error in
\cite{Met4}, Proposition 5.3.5 and extends that Proposition to the
case of partial viscosity.

\textbf{2. }For $\nu$ near $\unu$, after shrinking $\omega'$ and
$r'$ if necessary and renaming as $\omega$ and $r$, the map
$(q,a)\to(\Phi,\Phi^2_z)(0,\nu,q,a)$ defined for $(\nu,q)\in\omega$,
$|a|\leq r$, defines a diffeomorphism onto the local center--stable
manifold of $(\underline p,0)$ for \eqref{C12} considered as a
first-order system \eqref{layeq2}.  On the other hand for $\nu$ near
$\unu$ all solutions of \eqref{C12} for which
\begin{align}\label{ef2}
\|(w, w^2_z)-(\up, 0)\|_{L^\infty[0,\infty)}\text{ is small}
\end{align}
 lie on that center-stable manifold.  Thus, for $R$ small
enough the assertion in the Proposition holds.
\end{proof}

\begin{rem}\label{CS}
Alternatively, the result of Proposition \ref{C14a}
may be obtained directly by invariant manifold
theory, working with the first-order system \eqref{layeq2}.
For, noting that any constant function is an equilibrium
of the system, and recalling that equilibria lie on any
center manifold, we find by a dimensional count
that the center manifold of the system consists entirely
of equilibria, and the center--stable manifold is foliated
by the union of stable manifolds through each equilibrium
(constant state).  Thus, the only profiles satisfying \eqref{C15a}
are those lying on stable manifolds of rest points $q$, whence
we obtain both the parametrization by $\Phi$ and the decay estimate
\eqref{C17} by an application of the Stable Manifold Theorem.
\end{rem}

The next Proposition gives necessary and sufficient conditions on
the boundary conditions $\Up(w,0,\partial_z w^2)$ in \eqref{layeq}
for the \emph{local} (with respect to $(\nu,p)$) existence of
transversal profiles and corresponding $\cC$ manifolds.  The local
objects will then be patched together using an argument based on
local uniqueness.

For $(\nu,q,a,p)$ near $(\underline{\nu},\up,0,\up)$, define
\begin{align}\label{C18}
\Psi(\nu,q,a,p):=\Up(\Phi(0,\nu,q,a),0,\partial_z\Phi^2(0,\nu,q,a))-(\Up_1(p^1),\Up_2(p^2),0)
\end{align}
Observe that
$\Psi(\underline{\nu},\up,0,\up)=0$
and that every solution of \eqref{C12} which also satisfies
\begin{align}\label{C20}
\Up(w,0,w^2_z)(0)=(\Up_1(p^1),\Up_2(p^2),0)
\end{align}
corresponds to a solution of
$\Psi(\nu,q,a,p)=0$ and vice versa.
Using \eqref{layeq2} and the expansion \eqref{C16}, we readily
compute the $N_b\times(N+N')$ derivative matrix
\begin{align}\label{C22}
\begin{split}
&\Psi_{q^1,q^2,a}(\underline{\nu},\up,0,\up)=\begin{pmatrix}\Up_1'(\up^1)&0&\Up_1'(\up^1)\left(-(A^{11}_{\underline{\nu}})^{-1}(\up)A^{12}_{\underline{\nu}}(\up)G^{-1}_{\underline{\nu}}(\up)\right)\\
0&\Up_2'(\up^2)&\Up_2'(\up^2)G_{\underline{\nu}}^{-1}(\up)\\0&0&K_{\underline{\nu}}\end{pmatrix},
\end{split}
\end{align}
where, for example, the matrix entries in the third column, reading
down, have sizes $N^+_1\times N'$, $(N'-N'')\times N'$, and
$N''\times N'$ respectively.

\begin{prop}\label{C22prop}
(a) For $B$ as in \eqref{C11}, let $(\unu,\up)\in B$.  The constant
layer $\Phi(z,\underline{\nu},\up,0)=\up$ is transversal if and only
if
\begin{align}\label{C23}
\begin{split}
&(i)\text{the $N_b\times N'$ third column of \eqref{C22} is
injective on
}\bE_-(G_{\underline{\nu}}(\up)),\text{ and }\\
&(ii)\text{ if }N''>0, K_{\underline{\nu}}\text{ is of full rank
}N''\text{ on }\bE_-(G_{\underline{\nu}}(\up)).
\end{split}
\end{align}
(b)   Suppose \eqref{C23} holds.  There is a neighborhood
$\omega\subset S^{d-1}\times \pi\cU_\partial$ of $(\unu,\up)$ and
for each $(\nu,p)\in\omega$, there is a manifold $\cC_{\nu,p}$ of
dimension $N-N_+$ and a smooth map
\begin{align}\label{C24}
w_{\nu,p}:[0,\infty)\times \cC_{\nu,p}\to\pi\cU_\partial,
\end{align}
such that for each $q\in\cC_{\nu,p}$, $w_{\nu,p}(\cdot,q)$ satisfies
\eqref{C12},\eqref{C20} and converges at an exponential rate to $q$
as $z\to\infty$.  Moreover, the manifolds $\cC_{\nu,p}$ vary
smoothly with $(\nu,p)\in\omega$.

(c)The endstate-manifolds $\cC_{\nu,p}$ and profiles
$w_{\nu,p}(\cdot,q)$ are uniquely determined by this construction
for $(q,\nu,p)$ near $(\up,\unu,\up)$.  More precisely, when
$(\nu,p)$ lies in charts centered at two distinct base points
$(\unu_k,\up_k)\in B$, $k=1,2$, the corresponding manifolds
$\cC_{\nu,p}^k$ are the same near
$p\in\cC_{\nu,p}^1\cap\cC_{\nu,p}^2$ and for each
$q\in\cC_{\nu,p}^1\cap\cC_{\nu,p}^2$, the profiles
$w_{\nu,p}^k(\cdot,q)$ constructed in the separate charts coincide.
\end{prop}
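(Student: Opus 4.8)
The plan is to reduce everything to part (a) and then read off (b) and (c) from Proposition \ref{C14a} together with the implicit function theorem. The content of (a) is to translate the two abstract transversality conditions of Definition \ref{deftrans}, which concern the solution spaces $\cS$ and $\cS_0$ of the profile equation linearized about the \emph{constant} layer $w\equiv\up$, into the explicit finite-dimensional rank conditions \eqref{C23} on the derivative matrix \eqref{C22}.

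Here is how I would carry out (a). Since the layer is constant, the linearized profile equation is the constant-coefficient system $\D_z\dot W=\CalG_{\unu}(+\infty)\dot W$ from \eqref{Gform}; because $G_{\unu}(\up)$ is invertible (by (H2)--(H4)), the center subspace of $\CalG_{\unu}(+\infty)$ is exactly its kernel, of dimension $N$, and its stable subspace has dimension $N^2_-=\dim\bE_-(G_{\unu}(\up))$. By Lemma \ref{count}, $\cS$ and $\cS_0$ have dimensions $N+N^2_-$ and $N^2_-$, and evaluation at $z=0$ identifies them with the tangent spaces at $(\up,0)$ to the center--stable and stable manifolds of $(\up,0)$, which by Step~2 of the proof of Proposition \ref{C14a} are parametrized respectively by $(q,a)\mapsto(\Phi,\Phi^2_z)(0,\unu,q,a)$ and $a\mapsto(\Phi,\Phi^2_z)(0,\unu,\up,a)$. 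Differentiating these parametrizations at $(q,a)=(\up,0)$ gives bases of $\cS$ and $\cS_0$, and, because the differential of the boundary operator at the profile trace $(\up,0,0)$ agrees with $\Gamma_\nu$ of \eqref{Gammaeq} (the tangential terms $K_j(w)V_ju^2$ contribute nothing, a planar profile being annihilated by vector fields tangent to $\partial\Omega$), the matrix of the boundary map $\dot w\mapsto\Gamma_\nu(\dot w,\D_z\dot w^2)_{|z=0}$ in these bases is precisely \eqref{C22}, with the third ($a$-)column block restricted to $\bE_-(G_{\unu}(\up))$; the needed $a$-derivatives of $\Phi^1$ and of $\D_z\Phi^2$ at the constant layer come from \eqref{layeq2} and the expansion \eqref{C16} just as the paper indicates. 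Transversality (i) is then injectivity of this map on $\cS_0$, i.e. injectivity of the third column block on $\bE_-(G_{\unu}(\up))$, which is \eqref{C23}(i). Transversality (ii) is surjectivity of this map from $\cS$ onto $\CC^{N_b}$, i.e. surjectivity of the whole matrix \eqref{C22}; since $\Up_1'(\up^1)$ and $\Up_2'(\up^2)$ have maximal ranks $N^1_+$ and $N'-N''$ by (H6), the lower block--triangular shape of \eqref{C22} lets one solve successively for the $a$-, $q^2$-, and $q^1$-components, so this surjectivity is equivalent to $K_{\unu}$ mapping $\bE_-(G_{\unu}(\up))$ onto $\RR^{N''}$, which is \eqref{C23}(ii) when $N''>0$ and is automatic when $N''=0$. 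Thus the constant layer is transversal iff \eqref{C23} holds.

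For (b) I would apply the implicit function theorem to $\Psi$ of \eqref{C18}. By the computation above, \eqref{C23}(ii) together with (H6) says exactly that $D_{q^1,q^2,a}\Psi$ at $(\unu,\up,0,\up)$ has maximal rank $N_b$ on $\RR^{N-N'}\times\RR^{N'}\times\bE_-(G_{\unu}(\up))$; hence for $(\nu,p)$ in a neighborhood $\omega$ of $(\unu,\up)$ the set $\cM_{\nu,p}=\{(q,a):\Psi(\nu,q,a,p)=0,\ |a|\le r\}$ is a smooth manifold of dimension $N+N^2_--N_b=N-N_+$ (using \eqref{numberbc}), depending smoothly on $(\nu,p)$ and containing $(p,0)$ (the constant layer $\Phi(\cdot,\nu,p,0)\equiv p$ satisfies \eqref{C20} by the choice \eqref{C10} of boundary data). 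By \eqref{C23}(i) the projection $(q,a)\mapsto q$ restricted to $\cM_{\nu,p}$ is an immersion at $(p,0)$, hence --- shrinking $\omega$ and $r$ --- an embedding, so $\cC_{\nu,p}:=\{q:(q,a)\in\cM_{\nu,p}\text{ for some }a\}$ is a smooth manifold of dimension $N-N_+$ varying smoothly with $(\nu,p)$, each $q\in\cC_{\nu,p}$ has a unique $a=a(\nu,p,q)$ depending smoothly on its arguments, and $w_{\nu,p}(\cdot,q):=\Phi(\cdot,\nu,q,a(\nu,p,q))$ solves \eqref{C12} by Proposition \ref{C14a}, satisfies \eqref{C20} since $\Psi=0$, and converges to $q$ exponentially by \eqref{C17}.

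For (c) I would invoke the exhaustiveness in Proposition \ref{C14a}: \emph{every} solution of \eqref{C12} with $\|(w,w^2_z)-(\up,0)\|_{L^\infty}\le R$ equals $\Phi(z,\nu,q,a)$ for a \emph{unique} $(q,a)$, and such a solution has limit exactly $q$; therefore it additionally satisfies \eqref{C20} iff $(q,a)\in\cM_{\nu,p}$. Hence $\cC_{\nu,p}$ is intrinsically the set of endstates of small-amplitude profiles solving \eqref{C12}--\eqref{C20}, and $w_{\nu,p}(\cdot,q)$ is intrinsically the unique such profile with limit $q$; neither depends on which base point $(\unu_k,\up_k)\in B$ was used to build the chart, even though the auxiliary objects $\Phi$ and $\alpha$ of Proposition \ref{C14a} do, so on overlaps (after shrinking charts so that the constants $R$, $r$ are compatible) the $\cC^k_{\nu,p}$ and the $w^k_{\nu,p}(\cdot,q)$ coincide. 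The main obstacle, I expect, is the bookkeeping in (a): setting up cleanly the dictionary between the abstract transversality conditions and \eqref{C22}, and in particular tracking how (H6) collapses the surjectivity requirement on \eqref{C22} down to the single condition \eqref{C23}(ii); once that dictionary is in place, (b) and (c) are routine.
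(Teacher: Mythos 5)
Your proposal is correct and follows essentially the same route as the paper: translate the transversality conditions of Definition \ref{deftrans} into injectivity of $\Psi_a$ on $\bE_-(G_{\unu}(\up))$ and surjectivity of $\Psi_{q,a}$, read these off from the block-triangular matrix \eqref{C22} using (H6), then obtain (b) from the implicit function theorem applied to $\Psi$ and (c) from the chart-independent characterization of small-amplitude profiles via Proposition \ref{C14a}. The only difference is that you derive the dictionary between Definition \ref{deftrans} and the rank conditions on $\Psi_a$, $\Psi_{q,a}$ by identifying $\cS_0$ and $\cS$ with the tangent spaces of the stable and center--stable manifolds, whereas the paper simply cites \cite{Met4}, Prop.\ 5.5.3 for this equivalence.
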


\begin{proof}
\textbf{(a). }   The first transversality condition in Definition
\ref{deftrans} is equivalent to injectivity of
$\Psi_a(\underline{\nu},\up,0,\up)$ on $\bE_-(G_{\unu}(\up))$, while
the second transversality condition there is equivalent to
surjectivity of
\begin{align}\label{C25}
\Psi_{q,a}(\unu,\up,0,\up):\bR^{N}\times
\bE_-(G_{\unu}(\up))\to\bR^{N_b}
\end{align}
(for more detail see \cite{Met4}, Prop. 5.5.3).  Condition (i) in
\eqref{C23} is equivalent to the first of these conditions, and in
view of Assumption \ref{assbc} condition (ii) is equivalent to the
second.

\textbf{(b). }Since $\Psi_a(\underline{\nu},\up,0,\up)$ has rank
$N^2_-$ on $\bE_-(G_{\unu}(\up))$ and $\Psi_{q,a}$ as in \eqref{C25}
has rank $N_b=N_++N^2_-$, the
Implicit Function Theorem implies that
there exist smooth functions $q(q_-,\nu,p)$, $a(q_-,\nu,p)$, where
$q_-\in\bR^{N-N_+}$ is a vector consisting of $N-N_+$ of the
coordinates of $q$, such that the solutions of $\Psi(\nu,q,a,p)=0$
near $(\unu,\up,0,\up)$ are given precisely by
\begin{align}\label{C26}
(\nu,q(q_-,\nu,p),a(q_-,\nu,p),p)\text{ for }(q_-,\nu,p)\text{ near
}(\up_-,\unu,\up).
\end{align}
For each $(\nu,p)$ the manifold $\cC_{\nu,p}$ is defined by
$q=q(q_-,\nu,p)$ and the profiles are given by
\begin{align}\label{C27}
w_{\nu,p}(z,q)=\Phi(z,\nu,q(q_-,\nu,p),a(q_-,\nu,p)).
\end{align}

\textbf{(c). }Suppose $(\nu,p)$ lies in charts centered at two
different base points $(\unu_k,\up_k)\in B$, $k=1,2$.  Let
$\cC_{\nu,p}^k$ and $w_{\nu,p}^k$ denote the corresponding manifolds
and profiles.  The properties of the functions $\Phi$ and $\Psi$
described in Proposition \ref{C14a} and the discussion following
\ref{C18} show that each of $\cC_{\nu,p}^k$, $k=1,2$ coincides near
$p$ with the set of $q$ such that there exists a $w(z)$ satisfying
\eqref{C12}, \eqref{C20}, and
\begin{align}\label{ef3}
\|(w,w^2_z)-(q,0)\|_{L^\infty(0,\infty]} \text{ is small }.
\end{align}
This description is chart-independent so the manifolds must agree
near $p$.

Suppose $q\in\cC_{\nu,p}^1\cap\cC_{\nu,p}^2$.  Working in chart 2
and using the properties of the functions $\Phi$ and $\Psi$ just
referred to, we conclude that \emph{any} small amplitude profile
satisfying  \eqref{C12}, \eqref{C20}, and \eqref{ef3} must be given
by $w^2_{\nu,p}(z,q)$. In particular, we must have
$w^1_{\nu,p}(z,q)=w^2_{\nu,p}(z,q)$.

\end{proof}

\begin{cor}\label{smalltrans}
\textbf{1. }For pure Dirichlet conditions ($N''=\rank
\Upsilon_3=0$), sufficiently small-amplitude layers are transversal,
and likewise for mixed Dirichlet--Neumann conditions in the extreme
case $\rank \Upsilon_3=N^2_-=N'$.

\textbf{2. }When the number of Neumann boundary conditions $\rank
\Upsilon_3$ exceeds $N^2_-$, constant layers are non-transversal.
Equivalently, a necessary condition for transversality of constant
layers is that the number of (scalar) Dirichlet conditions for the
parabolic problem, $(N'-N'')+N^1_+$, is greater than or equal to the
number of Dirichlet conditions for the residual hyperbolic problem,
$N_+$.
\end{cor}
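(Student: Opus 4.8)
The plan is to read everything off Proposition \ref{C22prop}(a), which already reduces transversality of the constant layer $\Phi(z,\unu,\up,0)\equiv\up$ to the two linear-algebraic conditions \eqref{C23} on the derivative matrix \eqref{C22}. The only background fact I need in addition is the dimension count $\dim\bE_-(G_{\unu}(\up))=N^2_-$: by the block form \eqref{Gform} the nonzero eigenvalues of $\CalG_\nu(+\infty)$ are exactly those of the Schur-complement matrix $G_\nu$, so $N^2_-$ (the number of stable eigenvalues of $\CalG_\nu(+\infty)$, cf. Lemma \ref{count}) counts the stable eigenvalues of $G_{\unu}(\up)$; moreover Lemma \ref{count}(i) shows $G_{\unu}(\up)$ has no purely imaginary eigenvalue, hence is invertible.

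For Part 1, I would first treat pure Dirichlet conditions, $N''=0$. Then condition \eqref{C23}(ii) is vacuous, and the third column of \eqref{C22} has no $K_{\unu}$-row; its bottom block is $\Up_2'(\up^2)G_{\unu}^{-1}(\up)$. By Assumption \ref{assbc} with $N''=0$, the Jacobian $\Up_2'(\up^2)$ is a square $N'\times N'$ matrix of maximal rank $N'$, hence invertible, and $G_{\unu}(\up)$ is invertible; so this bottom block is invertible, whence the whole column is injective on $\bR^{N'}$ and a fortiori on $\bE_-(G_{\unu}(\up))$, giving \eqref{C23}(i). In the extreme case $\rank\Up_3=N''=N^2_-=N'$ one has $\bE_-(G_{\unu}(\up))=\bR^{N'}$, while by Assumption \ref{assbc} the bottom block $K_{\unu}$ of the third column of \eqref{C22} is $N'\times N'$ of maximal rank $N''=N'$, hence invertible; this yields both \eqref{C23}(ii) and, through the bottom block, \eqref{C23}(i). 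In both cases the constant layer is transversal, and since the transversality conditions of Definition \ref{deftrans} are maximal-rank (hence open) conditions — equivalently, in the language of Proposition \ref{C22prop} and \cite{Met4}, Prop. 5.5.3, they say that $\Psi_a$ has rank $N^2_-$ on $\bE_-(G_\nu(\up))$ and $\Psi_{q,a}$ has rank $N_b$, and these persist near $(\unu,\up,0,\up)$ — for $R$ small every $R$-amplitude layer of Proposition \ref{C14a} is transversal as well.

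For Part 2, suppose $N''=\rank\Up_3>N^2_-$. Then $N''>0$, so \eqref{C23}(ii) applies and requires $K_{\unu}$ to have rank $N''$ on $\bE_-(G_{\unu}(\up))$; but this space has dimension $N^2_-<N''$, so no linear map out of it can have rank $N''$. Hence \eqref{C23} fails and, by Proposition \ref{C22prop}(a), the constant layer is not transversal. For the equivalent reformulation, transversality of a constant layer forces $N''\le N^2_-$; using Lemma \ref{count}(i) in the form $N^2_-=N_b-N_+=(N'+N^1_+)-N_+$, this is the same as $N_+\le (N'-N'')+N^1_+$, i.e., the number $(N'-N'')+N^1_+$ of scalar Dirichlet conditions imposed in the parabolic problem is at least the number $N_+$ of scalar boundary conditions for the residual hyperbolic problem.

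I expect no real obstacle here: the content is entirely packaged in Proposition \ref{C22prop} and in the dimension bookkeeping above. The one point that deserves a sentence of care is the passage from the constant layer to \emph{all} sufficiently small-amplitude layers in Part 1; this is handled by the openness of the maximal-rank transversality conditions, which is already implicit in the Implicit-Function-Theorem argument proving Proposition \ref{C22prop}(b).
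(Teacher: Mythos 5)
Your proposal is correct and follows essentially the same route as the paper: both read the result off Proposition \ref{C22prop}(a) by checking conditions \eqref{C23} for the constant layer (invertibility of $\Up_2'(\up^2)G_{\unu}^{-1}(\up)$ in the Dirichlet case, invertibility of $K_{\unu}$ on $\bE_-(G_{\unu}(\up))=\bR^{N'}$ in the extreme mixed case, and the dimension obstruction $\dim\bE_-(G_{\unu}(\up))=N^2_-<N''$ for Part 2), followed by the same arithmetic $N_b=N''+(N'-N'')+N^1_+=N_++N^2_-$. Your added remarks — justifying $\dim\bE_-(G_{\unu}(\up))=N^2_-$ via the block structure of $\CalG_\nu(+\infty)$ and making explicit the openness argument that upgrades transversality of the constant layer to transversality of all sufficiently small-amplitude layers — are correct and fill in details the paper leaves implicit.
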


\begin{proof}
When $N''=0$, the condition \eqref{C23}(i) holds since
$\Up_2'(\up^2)G_{\underline{\nu}}^{-1}(\up)$ is an invertible
$N'\times N'$ matrix.   When $N''=N'=N^2_-$, Assumption \ref{assbc}
together with the fact that $\dim\bE_-(G_{\unu}(\up))=N^2_-$ imply
that $K_\nu$ is invertible on $\bE_-(G_{\unu}(\up))$. Thus, both
conditions in \eqref{C23} hold. When $N''>N^2_-$, it is impossible
for \eqref{C23}(ii) to hold.   The final assertion follows by noting
\begin{align}\label{C28}
\begin{split}
&N_b=N''+(N'-N'')+N^1_+=N_++N^2_-\text{ so }\\
&N''\leq N^2_-\Leftrightarrow (N'-N'')+N^1_+\geq N_+.
\end{split}
\end{align}
\end{proof}

\begin{prop}\label{C30}
Given a smooth global assignment of states
\begin{align}\label{C31}
(t,x_0,p(t,x_0))\in\cU_\partial\text{ for all }(t,x_0)\in
[-T,T]\times\partial\Omega,
\end{align}
suppose that transversality (equivalently, condition \eqref{C23} of Proposition \ref{C22prop}) holds at
every point of
\begin{align}\label{C32}
B:=\{(\nu(x_0),p(t,x_0)):(t,x_0)\in
[-T,T]\times\partial\Omega\}\subset S^{d-1}\times\pi\cU_\partial.
\end{align}
Then for boundary data
\begin{align}\label{C33}
(g_1(t,x_0),g_2(t,x_0),0):=\left(\Up_1(p^1(t,x_0)),\Up_2(p^2(t,x_0)),0\right),
\end{align}
there exists a global $\cC$ manifold and family of profiles $W$
satisfying the conditions of Assumption \ref{C5}.
\end{prop}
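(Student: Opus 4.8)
The plan is to glue together the local constructions furnished by Proposition \ref{C22prop}, using compactness of $B$ to pass from local charts to a global object and the local-uniqueness statement of Proposition \ref{C22prop}(c) to ensure that the pieces agree on overlaps. No genuinely new analysis is needed here; the content is a careful patching argument together with a compactness extraction of uniform constants.

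First I would invoke the hypothesis that condition \eqref{C23} holds at every point of $B$, so that Proposition \ref{C22prop}(b) supplies, for each $b=(\unu,\up)\in B$, a neighbourhood $\omega_b\subset S^{d-1}\times\pi\cU_\partial$ of $b$, a smoothly varying family of manifolds $\cC_{\nu,p}$ of dimension $N-N_+$, and smooth profile maps $w_{\nu,p}(\cdot,q)$ for $(\nu,p)\in\omega_b$ satisfying \eqref{C12}, \eqref{C20} and decaying exponentially to $q$. Since $B$ is the continuous image of the compact set $[-T,T]\times\partial\Omega$ it is compact, so finitely many $\omega_1,\dots,\omega_n$ cover $B$; the preimages under $(t,x_0)\mapsto(\nu(x_0),p(t,x_0))$ then form a finite open cover of $[-T,T]\times\partial\Omega$, so locally in $(t,x_0)$ one can always work in a single chart. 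For $(t,x_0)$ with $(\nu(x_0),p(t,x_0))\in\omega_k$ I would set
\[
\cC(t,x_0):=\cC_{\nu(x_0),p(t,x_0)},\qquad W(z,t,x_0,q):=w_{\nu(x_0),p(t,x_0)}(z,q),
\]
each computed in such a chart and each shrunk, if necessary, to a fixed small neighbourhood of $p(t,x_0)$ inside $\cC_{\nu(x_0),p(t,x_0)}$. By Proposition \ref{C22prop}(c) these objects do not depend on which chart is used when several contain $(\nu(x_0),p(t,x_0))$, so $\cC(t,x_0)$ and $W(z,t,x_0,\cdot)$ are globally well defined.

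It then remains to check the clauses of Assumption \ref{C5}. Smoothness of the graph $\cC$ and of $W$ on $[0,\infty)\times\cC$ follows because in each chart they are obtained by composing the smooth assignment $(t,x_0)\mapsto(\nu(x_0),p(t,x_0))$ --- smooth since $\partial\Omega$ and $p$ are smooth --- with the smoothly varying $\cC_{\nu,p}$ and with $w_{\nu,p}(z,q)$. Each fibre $\cC(t,x_0)$ has dimension $N-N_+$ and lies in $\pi\cU_\partial\subset\pi\cU^*$ by \eqref{C3}; by construction $W(z,t,x_0,q)$ solves \eqref{layeq} with $\nu=\nu(t,x_0)$ and boundary data $(g_1(t,x_0),g_2(t,x_0),0)$ as in \eqref{C33}, and tends to $q$ as $z\to\infty$. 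For transversality I would note that Proposition \ref{C22prop}(a) gives transversality of the constant layers $\Phi(z,\unu,\up,0)\equiv\up$ at the base points, and that transversality (Definition \ref{deftrans}) is the assertion that two linear maps built from the profile --- those entering \eqref{C22} and \eqref{C25} --- have maximal rank; since these depend continuously on the underlying profile, transversality is an open condition, so after one further shrinking of all the $\omega_k$ every profile $W(z,t,x_0,q)$ in the family is transversal. Finally, the estimate \eqref{C17} of Proposition \ref{C14a}, applied in each of the finitely many charts, yields constants $C,\delta>0$ with $|\partial_zW^2|+|W-q|\le Ce^{-\delta z}$ uniformly in $(t,x_0)$, hence in particular uniformly on compact subsets of $\cC$.

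The main points requiring care, as opposed to routine bookkeeping, are: (a) verifying that the local-uniqueness statement Proposition \ref{C22prop}(c) is strong enough to glue the locally defined fibre manifolds and profiles into a single smooth object --- here one uses that each $\cC_{\nu,p}$ contains the endstate $p(t,x_0)$ and that the two chart constructions agree near that point, so the germs coincide and the total space is smooth; and (b) arranging, by a single extra shrinking of all the $\omega_k$ after the finite subcover has been fixed, that transversality holds for the entire family of small-amplitude profiles, not merely for the constant layers at the base points, and that all constants (the decay rate $\delta$, and later the Lopatinski/Evans bounds) can be taken uniform over $[-T,T]\times\partial\Omega$. Both follow from compactness of $B$, but the argument should be organized so that these uniformities are manifest.
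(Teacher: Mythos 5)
Your proposal is correct and follows essentially the same route as the paper's proof: cover the compact set $B$ by finitely many charts from Proposition \ref{C22prop}(b), glue using the local uniqueness of part (c), and define $\cC(t,x_0):=\cC_{\nu(x_0),p(t,x_0)}$, $W(z,t,x_0,q):=w_{\nu(x_0),p(t,x_0)}(z,q)$. Your extra remarks on openness of transversality and on extracting uniform decay constants from the finite subcover are details the paper leaves implicit, and they are handled correctly.
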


\begin{proof}
Use the compactness of $B$ to cover $B$ with a finite number of open
patches centered at basepoints $(\unu,\up)_k$.  Carry out the
construction of Proposition \ref{C22prop}, part (b), in each patch and
use the local uniqueness described in part (c) to define  manifolds
$\cC_{\nu,p}$ and profiles $w_{\nu,p}(\cdot,q)$  for all $(\nu,p)$
in the covering and smoothly varying with $(\nu,p)$. Then the
conditions of Assumption \ref{C5} are satisfied by taking
\begin{align}\label{C34}
\begin{split}
&\cC(t,x_0):=\cC_{\nu(x_0),p(t,x_0)}\\
&W(z,t,x_0,q):=w_{\nu(x_0),p(t,x_0)}(z,q).
\end{split}
\end{align}
\end{proof}

\begin{rem}\label{C35}
\textbf{1. }Part (a) of Proposition \ref{C22prop} can be used to produce
many examples of transversal constant layers involving mixed
Dirichlet-Neumann conditions for any triple $(N'',N^2_-,N')$
satisfying $0\leq N''\leq N^2_-\leq N'$. Together with Proposition
\ref{C30}, this yields examples  where Assumption \ref{C5} is
satisfied for the Navier-Stokes and MHD systems with a variety of
boundary conditions.

\textbf{2. }  In the case $\rank \Upsilon_3=N^2_-=N'$, the matrix
$K_\nu$ is an invertible $N'\times N'$ matrix, so the $\Up_3$
boundary condition in \eqref{layeq} is equivalent to $w^2_z(0)=0$.
With \eqref{layeq2} this shows that the only solutions of the
profile ODE \eqref{layeq} are constant layers.  Since $N'-N''=0$,
$\Up_2$ is absent and we have
\begin{align}\label{C36}
\cC(t,x_0)=\{q:\Up_1(q)=g^1(t,x_0)\}.
\end{align}
Thus, the residual hyperbolic boundary conditions in this case are
the same as the Dirichlet boundary conditions for the parabolic
problem, with Neumann conditions ignored.  The leading term in the
approximate solution \eqref{vv20} is given by
$\cU^0(t,x,\frac{z}{\eps})=u^0(t,x)$, the hyperbolic solution with
no intervening boundary layer.

\end{rem}

\subsection{Local $\cC$-manifold associated to a given transversal profile}
\begin{prop}\label{redBC}
Suppose that $w$ is a given, not necessarily small amplitude,
transversal layer profile satisfying
\begin{align}\label{pp8}
\begin{split}
&(a)\; A_\unu(w)\partial_zw-\partial_z(B_\unu(w)\partial_zw)=0\text{
on
}z\geq 0\\
&(b) \;\Up(w,0,w^2_z)(0)=(\uug^1,\uug^2,0):=(\uug,0)\\
&(c) \;w(z)\to p \text{ as }z\to\infty.
\end{split}
\end{align}
Then in  a neighborhood $\cO\subset\bR^N$ of $p$ and for $(\nu,g)$
near $(\unu,\uug)$, there is a smooth manifold $\cC_{\nu,g}\subset
\cO$ of dimension $N-N_+$ and a smooth map
\begin{align}\label{pp9}
W_{\nu,g}:[0,\infty)\times \cC_{\nu,g}\to\pi\cU^*,
\end{align}
such that for each $q\in\cC_{\nu,g}$, $W_{\nu,g}(\cdot,q)$ satisfies
\eqref{pp8}(a),(b) with $(\nu,g,q)$ in place of $(\unu,\uug,p)$ and
converges at an exponential rate to $q$ as $z\to\infty$.
Moreover,  these manifolds and profiles vary smoothly as $(\nu,g)$
varies near $(\unu,\uug)$.
\end{prop}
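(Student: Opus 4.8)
The plan is to attach to the given (possibly large-amplitude) profile a finite-dimensional invariant ``center--stable'' manifold carrying all nearby converging solutions of the profile ODE, and then to carve out the desired $\cC$-manifold inside it by imposing the boundary condition via the Implicit Function Theorem. The two conditions of Definition \ref{deftrans} are precisely the rank hypotheses that make the Implicit Function Theorem and a subsequent immersion criterion go through, and all dimension counts close using \eqref{numberbc}.

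First I would recast the profile equation in the first-order form \eqref{layeq2}, $\partial_z U=\mathcal F(U,\nu)$ with $U=(w,w^2_z)=(w^1,w^2,w^3)\in\RR^{N+N'}$, the given profile being a solution $U_w(z)$ with $U_w(z)\to U_\infty:=(p,0)$. As in Remark \ref{CS}, every constant $(q,0)$ is an equilibrium, the linearization of $\mathcal F(\cdot,\nu)$ at $U_\infty$ has $N$ zero eigenvalues, $N^2_-$ eigenvalues with negative and $N^2_+=N'-N^2_-$ with positive real part (read off from \eqref{Gform}, \eqref{C13}, and Lemma \ref{count}(i)), and its center manifold consists entirely of equilibria. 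Invariant manifold theory therefore produces a local center--stable manifold $W^{cs}_{\mathrm{loc}}$ of $U_\infty$, of dimension $N+N^2_-$, foliated by the stable manifolds $W^s_{\mathrm{loc}}(q)$ (of dimension $N^2_-$) of the nearby equilibria, with a smooth foliation projection $\pi^{\mathrm{loc}}_\infty\colon W^{cs}_{\mathrm{loc}}\to\{(q,0)\}$ and exponential decay along the leaves, uniform for $q$ in compact sets by the spectral gap coming from (H2). Fixing $Z$ so large that $U_w(Z)\in W^{cs}_{\mathrm{loc}}$ and using that the time-$Z$ flow $\phi_{Z,\nu}$ of $\mathcal F(\cdot,\nu)$ is a local diffeomorphism depending smoothly on $\nu$, I would set $\mathcal M_\nu:=\phi_{Z,\nu}^{-1}(W^{cs}_{\mathrm{loc}})$; near $U_w(0)$ this is a smooth $(N+N^2_-)$-dimensional submanifold of phase space whose forward trajectories all converge exponentially to equilibria, with smooth endstate map $\Pi_\nu:=\pi^{\mathrm{loc}}_\infty\circ\phi_{Z,\nu}$, and $T_{U_w(0)}\mathcal M_{\unu}$ equals $\cS$ while the tangent to the fibre $\Pi_{\unu}^{-1}(p)$ equals $\cS_0$, in the notation of Lemma \ref{count}. (This is in essence \cite{Met4}, Prop.~5.5.3, and can alternatively be obtained from the tracking lemma of Appendix \ref{trackapp}.)

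Next I would impose the boundary condition: for $U=(w,w^2_z)$ near $U_w(0)$ in $\mathcal M_\nu$ and $(\nu,g)$ near $(\unu,\uug)$, set
\[
\Theta(U,\nu,g):=\Up\big(w,0,w^2_z\big)-(g_1,g_2,0)\in\RR^{N_b},
\]
so $\Theta(U_w(0),\unu,\uug)=0$ by \eqref{pp8}(b). Since the profile depends only on the normal variable, the boundary-tangent fields $V_j$ annihilate $w^2$, so the differential of $\Theta$ in $U$ along $T_{U_w(0)}\mathcal M_{\unu}=\cS$ is exactly the map $\dot W\mapsto\Gamma_\nu(\dot w,\partial_z\dot w^2)|_{z=0}$ of Definition \ref{deftrans}(ii), which has rank $N_b$ by transversality. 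The Implicit Function Theorem, applied within $\mathcal M_\nu$ with $(\nu,g)$ as parameters, then shows that $\{\Theta(\cdot,\nu,g)=0\}$ is, for each $(\nu,g)$ near $(\unu,\uug)$, a smooth submanifold $\Sigma_{\nu,g}\subset\mathcal M_\nu$ of dimension $(N+N^2_-)-N_b=N-N_+$ through $U_w(0)$, using $N^2_-+N_+=N_b$ from \eqref{numberbc}, and $\Sigma_{\nu,g}$ varies smoothly with $(\nu,g)$.

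Finally I would project $\Sigma_{\nu,g}$ to endstates. At $U_w(0)$ one has $T\Sigma_{\nu,g}=\ker D_U\Theta$ and $\ker D_U(\Pi_\nu|_{\mathcal M_\nu})=\cS_0$, and condition (i) of Definition \ref{deftrans} says precisely that $\cS_0\cap\ker D_U\Theta=\{0\}$; hence $\Pi_\nu|_{\Sigma_{\nu,g}}$ is an immersion at $U_w(0)$, so after shrinking it is an embedding and $\cC_{\nu,g}:=\Pi_\nu(\Sigma_{\nu,g})\subset\cO$ is a smooth $(N-N_+)$-dimensional manifold depending smoothly on $(\nu,g)$; the associated profiles stay near $U_w$, hence in $\pi\cU^*$, by continuity. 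Defining $W_{\nu,g}(z,q)$ to be the $w$-component of the solution of $\partial_z U=\mathcal F(U,\nu)$ with $U(0)=(\Pi_\nu|_{\Sigma_{\nu,g}})^{-1}(q)$ then gives a smooth map satisfying \eqref{pp8}(a),(b) with $(\nu,g,q)$ in place of $(\unu,\uug,p)$ and converging to $q$ at an exponential rate, uniform for $q$ in compact sets. I expect the only genuinely non-routine step to be the construction and smoothness of the globalized manifold $\mathcal M_\nu$ together with its endstate map $\Pi_\nu$ — that is, propagating the local structure near $U_\infty$ back across the region where $U_w$ may be far from equilibrium — which is a standard flow-box plus invariant-manifold argument; once that is in hand, the hypotheses of Definition \ref{deftrans} feed directly into the Implicit Function Theorem and the immersion criterion.
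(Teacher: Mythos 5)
Your proposal is correct and follows essentially the same route as the paper: the paper's map $\Phi(z-z_0,\nu,q,a)$ from Proposition \ref{C14a} is exactly the $(q,a)$-coordinatization of your flowed-back center--stable manifold $\mathcal M_\nu$ (with $q$ labelling the equilibrium and $a$ the stable fibre), and the paper's single Implicit Function Theorem application to $\Psi(\nu,q,a,g)=0$, using that transversality makes $\Psi_a$ injective and $\Psi_{q,a}$ of rank $N_b$, combines your two steps (carving out $\Sigma_{\nu,g}$ and checking that the endstate projection is an embedding) into one, solving for $(q,a)$ as a graph over the $N-N_+$ free coordinates $q_-$. The "non-routine" globalization you flag is handled in the paper by writing $w(z)=\Phi(z-z_0,\unu,p,\ua)$ for $z_0$ large and taking the maximal backward extension of $\Phi$, which is the same flow-box argument.
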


\begin{proof}
\textbf{1. }For $\Phi(z,\nu,q,a)$ and $\alpha(\nu,q;a)$ as in
Proposition \ref{C14a},  the given profile must satisfy for some
sufficiently large $z_0$
\begin{align}\label{pp10}
w(z)=\Phi(z-z_0,\unu,p,\ua)\text{ for }\ua\text{ such that
}\Pi_{\unu,-}w^2_z(z_0)=\alpha(\unu,p;\ua).
\end{align}
This follows from the fact that for some $z_0$ sufficiently large,
the condition \eqref{C15a} of that Proposition holds with
$[0,\infty)$ replaced by $[z_0,\infty)$ (see Prop. 5.3.6 of
\cite{Met4} for details).

\textbf{2. }For $(\nu,q,a,g)$ near $(\underline{\nu},p,\ua,\uug)$,
instead of \eqref{C18} we now define
\begin{align}\label{pp11}
\Psi(\nu,q,a,g):=\Up(\Phi(0,\nu,q,a),0,\partial_z\Phi^2(0,\nu,q,a))-(g^1,g^2,0)
\end{align}
and observe that $\Psi(\unu,p,\ua,\uug)=0$. Transversality of $w$
implies that
\begin{align}\label{pp111}
\begin{split}
&\Psi_a(\unu,p,\ua,\uug):\bE_-(G_{\unu}(p))\to\bR^{N_b}\text{ and
}\\
&\Psi_{q,a}(\unu,p,\ua,\uug):\bR^N\times
\bE^-(G_\unu(p))\to\bR^{N_b}
\end{split}
\end{align}
have ranks $N^2_-$ and  $N_b=N_++N^2_-$ respectively.
The Implicit Function theorem implies that there exist smooth, locally unique, functions
$q(q_-,\nu,g)$, $a(q_-,\nu,g)$, where $q_-\in\bR^{N-N_+}$ is a
vector consisting of $N-N_+$ of the coordinates of $q$, such that
the solutions of $\Psi(\nu,q,a,g)=0$ near $(\unu,p,\ua,\uug)$ are
given precisely by
\begin{align}\label{pp13}
(\nu,q(q_-,\nu,g),a(q_-,\nu,g),g)\text{ for }(q_-,\nu,g)\text{ near
}(p_-,\unu,\uug).
\end{align}
For each $(\nu,g)$ the manifold $\cC_{\nu,g}$ is defined by
$q=q(q_-,\nu,g)$ and the profiles are given by
\begin{align}\label{C277}
W_{\nu,g}(z,q)=\Phi(z-z_0,\nu,q(q_-,\nu,g),a(q_-,\nu,g)).
\end{align}
\end{proof}

\subsection{Global $\cC$-manifold for a family of transversal
profiles}\label{globallargeC}

Similarly as in \eqref{C9} for the small-amplitude case,
assume that we are given a smooth prescription of large-amplitude
profiles, in the form of a $C^\infty$ function
\begin{align}\label{n3}
w(z,t,x_0):[0,\infty)\times [-T,T]\times
\partial\Omega\to \bR^N
\end{align}
that defines a transversal layer profile for each $(t,x_0)$:
\begin{align}\label{n1}
\begin{split}
&(a)\;A_{\nu(x_0)}(w)\partial_zw-\partial_z(B_{\nu(x_0)}\partial_z
w)=0\\
&(b)\;\Up(w,0,w^2_z)_{|z=0}=(\Up^1(w^1(0,t,x_0)),\Up^2(w^2(0,t,x_0)),0)\\
&\qquad \qquad \qquad \qquad
:=(g^1(t,x_0),g^2(t,x_0),0)\\
&(c) \;w(z)\to w(\infty,t,x_0):=q(t,x_0)\text{ as }z\to\infty.
\end{split}
\end{align}

\begin{rem}\label{lsmoothtrivial}
When $\Omega$ is a half-space, so that $\nu$ is constant,
assignment \eqref{C31} exists trivially, consisting of a single profile.
\end{rem}

\begin{cor}\label{largeC}
Given a smooth transversal family \eqref{n3},
there is a smooth  manifold $\cC$ defined as the graph
\begin{align}\label{n4}
\cC=\{(t,x_0,\cC(t,x_0)):(t,x_0)\in [-T,T]\times
\partial\Omega\}\subset \cU_\partial,
\end{align}
where $\cC(t,x_0)\subset \bR^N$ is an $N-N_+$ dimensional manifold
containing $q(t,x_0)$ and consisting of states $r$ near $q(t,x_0)$
for which there exists a transversal layer profile
\begin{align}\label{n6}
W(z,t,x_0,r):[0,\infty)\times \cC\to \pi\cU^*
\end{align}
satisfying \eqref{n1} with $q(t,x_0)$ in (c) replaced by $r$ and
\begin{align}
\|W(z,t,x_0,r)-w(z,t,x_0),
\partial_z(W^2(z,t,x_0,r)- w^2(z,t,x_0))\|_{L^\infty} \text{
small }.
\end{align}

  The
profiles $W(z,t,x_0,r)$  are $C^\infty$ in all arguments and
converge to their endstates at an exponential rate that can be taken
uniform on compact subsets of $\cC$.

\end{cor}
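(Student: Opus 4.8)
The plan is to globalize the local construction of Proposition \ref{redBC} in exactly the way Proposition \ref{C30} globalizes Proposition \ref{C22prop}: cover the parameter set $[-T,T]\times\partial\Omega$ (compact, since $\partial\Omega$ is; in the noncompact case use a locally finite family) by finitely many patches $\omega_k$ centered at base points $(t_k,x_k)$, run the local construction in each patch, and glue the pieces using local uniqueness. Concretely, for each $k$ I would apply Proposition \ref{redBC} to the transversal profile $w(\cdot,t_k,x_k)$ of the prescribed family \eqref{n3}, with $\unu=\nu(x_k)$, $\uug=(g^1(t_k,x_k),g^2(t_k,x_k))$ and $p=q(t_k,x_k)$; this produces a neighborhood $\cO_k\subset\bR^N$ of $q(t_k,x_k)$ and, for $(\nu,g)$ near $(\nu(x_k),(g^1,g^2)(t_k,x_k))$, a smooth $(N-N_+)$-dimensional manifold $\cC_{\nu,g}\subset\cO_k$ with transversal profiles $W_{\nu,g}(\cdot,r)$, $r\in\cC_{\nu,g}$, solving \eqref{n1}(a),(b) with endstate $r$. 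Choosing the $\omega_k$ small enough that $(t,x_0)\mapsto(\nu(x_0),(g^1,g^2)(t,x_0))$ maps $\omega_k$ into the range supplied by Proposition \ref{redBC} at $(t_k,x_k)$, I set
\[
\cC^{(k)}(t,x_0):=\cC_{\nu(x_0),(g^1,g^2)(t,x_0)},\qquad W^{(k)}(z,t,x_0,r):=W_{\nu(x_0),(g^1,g^2)(t,x_0)}(z,r)
\]
for $(t,x_0)\in\omega_k$; smoothness in $(t,x_0,r)$ is then immediate from the smooth dependence on $(\nu,g)$ in Proposition \ref{redBC}, and the exponential convergence $W^{(k)}(z,t,x_0,r)\to r$ with rate uniform on compacts comes from estimate \eqref{C17} via the representation of profiles through the function $\Phi$ of Proposition \ref{C14a}.

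The one step that requires real care --- and the main obstacle --- is that the pieces $\cC^{(k)}$, $W^{(k)}$ agree on overlaps. As in the proof of Proposition \ref{C22prop}(c), I would show that near $q(t,x_0)$ the manifold $\cC^{(k)}(t,x_0)$ coincides with the chart-independent set of states $r$ admitting a transversal profile $W$ that solves \eqref{n1}(a),(b) with endstate $r$ and satisfies $\|(W-w(\cdot,t,x_0),\,\partial_z(W^2-w^2(\cdot,t,x_0)))\|_{L^\infty}$ small, and that $W^{(k)}(\cdot,t,x_0,r)$ is the unique such profile attached to $r$. This rests on the invariant-manifold picture of Remark \ref{CS} for the first-order profile system \eqref{layeq2}: any solution staying $L^\infty$-close in $(W,\partial_z W^2)$ to $w(\cdot,t,x_0)$ is, for $z$ large, close to the equilibrium $(r,0)$ and hence lies on its stable manifold, so is of the form $\Phi(\,\cdot-z_1,\nu(x_0),r,a)$; imposing the boundary condition with data $(g^1,g^2)(t,x_0)$ then reduces, just as in Proposition \ref{redBC}, to the equation $\Psi=0$, whose solution set near the base data is --- by the transversality rank conditions \eqref{pp111} and the implicit function theorem --- precisely the graph defining $\cC_{\nu,g}$ and its profiles. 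The new wrinkle relative to the constant-layer case of Proposition \ref{C22prop} is that the reference profile $w(\cdot,t,x_0)$ here varies; but since \eqref{n3} is $C^\infty$ and converges to $q(t,x_0)$ at a rate uniform on compacts, $w(\cdot,t,x_0)$ is $L^\infty$-close to $w(\cdot,t_k,x_k)$ uniformly in $z$ whenever $(t,x_0)$ is near $(t_k,x_k)$, so after a further shrinking of the $\omega_k$ the smallness requirement above may be phrased interchangeably relative to $w(\cdot,t,x_0)$ or to $w(\cdot,t_k,x_k)$ --- the latter being the form in which Proposition \ref{redBC} delivers $\cC^{(k)}$. As the resulting set is independent of $k$, the germs of $\cC^{(j)}(t,x_0)$ and $\cC^{(k)}(t,x_0)$ at $q(t,x_0)$, and the corresponding profiles, coincide for $(t,x_0)\in\omega_j\cap\omega_k$.

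Gluing the $\cC^{(k)}$ and $W^{(k)}$ then yields a well-defined manifold $\cC$ as in \eqref{n4} and a $C^\infty$ family $W(z,t,x_0,r)$ with the asserted properties: $\cC(t,x_0)$ is $(N-N_+)$-dimensional and contains $q(t,x_0)$ (take $r=q(t,x_0)$, $W=w(\cdot,t,x_0)$, using \eqref{n1} and transversality of \eqref{n3}), the profiles are smooth in all arguments and tend to their endstates at an exponential rate uniform on compact subsets of $\cC$, and --- shrinking the $\cO_k$ if necessary --- $\cC\subset\cU_\partial$ and $W$ takes values in $\pi\cU^*$, using that $\pi\cU_\partial$ and $\pi\cU^*$ are open, that $q(t,x_0)\in\pi\cU_\partial$, and that the perturbations involved are small. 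I expect essentially all of the work to lie in the overlap step of the second paragraph; once the chart-independent characterization of $\cC(t,x_0)$ near $q(t,x_0)$ is in hand, the remaining bookkeeping is identical to that in Proposition \ref{C30}.
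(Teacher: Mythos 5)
Your proposal is correct and follows essentially the same route as the paper's proof: cover the compact parameter set by finitely many charts, apply the local construction of Proposition \ref{redBC} in each, and glue using a chart-independent characterization of $\cC(t,x_0)$ and its profiles near $q(t,x_0)$ coming from local uniqueness of the (center-)stable manifold of $(r,0)$ for the first-order profile system together with the implicit-function-theorem description of the solution set of $\Psi=0$. The paper phrases the overlap step via explicit parametrizations of $\cW^{s}$ and $\cW^{cs}$ along the given profile trajectory and a four-property characterization of each $W(z,t,x_0,r)$, but this is the same uniqueness argument you give in your second paragraph.
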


\begin{proof}

\textbf{1. }Cover the compact set
$B=\{(\nu(x_0),q(t,x_0)):(t,x_0)\in [-T,T]\times \partial\Omega\}$
by a finite number of charts centered at points $(\nu,q)_k$ for
which we have defined
\begin{align}
\alpha(\nu,q;a):\bE^-(G_{\nu_k}(q_k))\to\bE^-(G_{\nu}(q))
\end{align}
as in \eqref{C14}.  Fix $(t,x_0)$. As in the proof of Proposition
2.8, we have for some $k$
\begin{align}\label{na1}
w(z,t,x_0)=\Phi(z-T,\nu(x_0),q(t,x_0),\ua),\;\;\ua\in\bE^-(G_{\nu_k}(q_k))
\end{align}
for some large $T$, where $\ua$ is such that
\begin{align}\label{na2}
\Pi_{\nu(x_0),-}(q(t,x_0))\;\partial_zw^2(T,t,x_0)=\alpha(\nu(x_0),q(t,x_0),\ua)
\in\bE^-(G_{\nu(x_0)}(q(t,x_0))),
\end{align}
with $\Pi_{\nu,-}(q)$ the projection of $\bC^{N'}$ onto
$\bE^-(G_\nu(q))$ along $\bE^+(G_\nu(q))$.

\textbf{2. }Setting  $W(z)=(w(z),w^2_z(z))$ we rewrite \eqref{n1}(a)
as a first-order system:
\begin{align}\label{n99}
\partial_zW=\cH_{\nu(x_0)}(W).
\end{align}
The stable manifold of $(q(t,x_0),0)$ for \eqref{n99} \emph{near
$(w(z,t,x_0),w^2_z(z,t,x_0))$ for any fixed $z\in[0,\infty)$}, can
be parametrized:
\begin{equation}\label{na5}
\begin{aligned}
\cW^s&(q(t,x_0);\nu(x_0);z)=\\
&\{\begin{pmatrix}\Phi(z-T,\nu(x_0),q(t,x_0),a)\\\Phi^2_z(z-T,\nu(x_0),q(t,x_0),a)\end{pmatrix}
: a\in \bE^-(G_{\nu_k}(q_k))\text{ near }\ua\}
\end{aligned}
\end{equation}
Similarly, the center-stable manifold of $(q(t,x_0),0)$ near
$(w(z,t,x_0),w^2_z(z,t,x_0))$ is
\begin{align}\label{na7}
\begin{split}
&\cW^{cs}(q(t,x_0);\nu(x_0);z)=\\
&\qquad\{\begin{pmatrix}\Phi(z-T,\nu(x_0),q,a)\\\Phi^2_z(z-T,\nu(x_0),q,a)\end{pmatrix}:q\text{
 near }q(t,x_0), a\in \bE^-(G_{\nu_k}(q_k))\text{ near }\ua\}.
\end{split}
\end{align}
If $z= T$ we get the parts of the respective manifolds near
$$
(w(T,t,x_0),w^2_z(T,t,x_0)).
$$
If $z= 0$ we get the parts near $(w(0,t,x_0),w^2_z(0,t,x_0))$. (Here
we use that $\Phi(z,\nu,q,a)$ is a maximal extension to $z<0$).
\emph{Note that for any fixed $z$, these manifolds are uniquely
determined locally near $(w(z,t,x_0),w^2_z(z,t,x_0))$.}

\textbf{3. }The set
\begin{align}\label{na10}
\{(r,0):r\in\cC(t,x_0)\}
\end{align}
is, near $(w(\infty,t,x_0),0)$, the set of endstates at infinity
under the flow of \eqref{n99} of the $N-N_+$ dimensional initial
manifold
\begin{align}\label{na8}
\begin{split}
&\bC_{initial}(t,x_0):=\\
&\quad \{U=(u^1,u^2,u^3):(\Up_1(u^1),\Up_2(u^2),0)=(g^1(t,x_0),g^2(t,x_0),0)\}\cap\\
&\quad\quad\cW^{cs}(q(t,x_0);\nu(x_0);0)\subset\bR^{N+N'},
\end{split}
\end{align}
where we have evaluated \eqref{na7} at $z=0$.  As a check observe
that the intersection \eqref{na8} is transversal and has dimension
\begin{align}\label{na9}
(N+N'-N_b)+(N+N^2_-)-(N+N')=N-N_+.
\end{align}
Considering the uniqueness of $\cW^{cs} (q(t,x_0);\nu(x_0);0)$ near
$$
(w(0,t,x_0),w^2_z(0,t,x_0)),
$$
this description of $\cC(t,x_0)$ establishes that it is uniquely
determined by the local construction of Proposition 2.8 near
$w(\infty,t,x_0)$.   This allows the global $\cC$ manifold of
Corollary \ref{largeC} to be constructed by patching together local
$\cC$ manifolds.

\textbf{4. }The profiles $W(z,t,x_0,r)$ in \eqref{n6} are given by
\begin{align}\label{n100}
W(z,t,x_0,r)=W_{\nu(x_0),g(t,x_0)}(z,r)
\end{align}
for $W_{\nu,g}$ as in (2.33) of GMWZ5.   For each point $(r,0)$ in
\eqref{na10}, there is a unique point $U_0=(u_0,u^3_0)\in
\bC_{initial}(t,x_0)$ that is mapped to it under the flow of
\eqref{n99}. $w(z)=W(z,t,x_0,r)$ is uniquely characterized by the
properties that:\medbreak

a)$(w,w_z)$ satisfies \eqref{n99}

b)$\Up(w,0,w^2_z)_{|z=0}=(g^1(t,x_0),g^2(t,x_0),0)$

c)$(w,w^2_z)\to (r,0)$ as $z\to +\infty$.

d) $(w,w^2_z)(0)=U_0\in\bC_{initial}(t,x_0)$. \medbreak

 This characterization shows that the locally constructed profiles
$$
W(z,t,x_0,r)
$$
in \eqref{n100} patch together consistently.  The
local construction of Prop. 2.8 then shows that they are $C^\infty$
in all arguments.

\end{proof}

\section{The Evans function and asymptotic limits}\label{limits}

 We now study the Evans function and its high-frequency and
small-amplitude limits. We first recall the {\it conjugation lemma}
of \cite{MZ1}, which implies that a first-order system
$U'=\cG(z,\zeta,p)U$ whose coefficient matrix converges
exponentially to its limit $\cG(\infty,\zeta,p)$ as $z\to +\infty$,
may be converted by a smooth, exponentially trivial local change of
coordinates
\begin{equation}\label{b37z}
U=P(z,\zeta,p)V=(I+Q(z,\zeta,p))V
\end{equation}
to its limiting constant-coefficient equation $
V'=\cG(\infty,\zeta,p)V$.
Here, we have adjoined to the arguments $\zeta,z$ also dependence on
model parameters $p$, assumed to be at least continuous.

Let $\cG(z,\zeta,p)$ be as in \eqref{linq4}, a frequency-dependent
matrix arising from linearization around a profile $w(z)$ such that
for some positive constants $C$, $\beta$, uniform with
respect to model parameters $p$,
\begin{align}\label{b37y}
|w(z)-w(\infty)|\leq C e^{-\beta z},
\end{align}
and also $p\to (w, \partial_z w_2)(\cdot, p)$
is continuous as a function from $p$ to $L^\infty(0,+\infty)$.
Thus, also,
\begin{align}\label{b37yyy}
|\cG(z, \cdot)-\cG(\infty,\cdot)|\leq C e^{-\beta z}
\end{align}
 and $\cG(\cdot,
\zeta, p)$ is continuous as a function from $p$ to
$L^\infty(0,+\infty)$.

\begin{lem}[\cite{MZ1}, Lemma 2.6]\label{conjugation}
Let $\beta>0$ be as in \eqref{b37y}.
For all $\underline{\zeta}\in\bR^{d+1}$
and model parameters $p$,
there are a neighborhood $\omega \times P$ of $(\underline{\zeta},p)$,
a matrix
$P(z,\zeta,p)=I+Q(z,\zeta,p)$ that is $C^\infty$ on $[0,\infty)\times
\omega$ with derivatives uniformly continuous in $p$,
and positive constants $C$, $\alpha$ with $0<\alpha<\beta$
such that

(i)\;$P$ and $P^{-1}$ are $C^\infty$ and bounded with bounded
derivatives:
\begin{align}\label{Thetabds}
 |\D_z^j\D_\zeta^k Q|\le C_{jk}e^{-\alpha z},
\end{align}

(ii)\;$P(z,\zeta,p)$ satisfies
\begin{align}\label{b37g}
\partial_z P=\cG(z,\zeta,p)P-P\cG(\infty,\zeta,p)\text{ on }z\geq 0.
\end{align}
\end{lem}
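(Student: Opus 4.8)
The plan is to construct the conjugator $P(z,\zeta,p) = I + Q(z,\zeta,p)$ by setting up a fixed-point problem for $Q$ and solving it via a contraction-mapping argument on an exponentially weighted function space. First I would write out the ODE that $Q$ must satisfy: substituting $P = I + Q$ into \eqref{b37g} gives
\begin{equation}
\partial_z Q = \cG(\infty,\zeta,p) Q - Q \cG(\infty,\zeta,p) + \big(\cG(z,\zeta,p) - \cG(\infty,\zeta,p)\big)(I + Q),
\end{equation}
a linear nonautonomous ODE for the matrix $Q$ with inhomogeneity driven by the exponentially small perturbation $\Delta\cG(z) := \cG(z,\zeta,p) - \cG(\infty,\zeta,p)$, which satisfies $|\Delta\cG(z)| \le Ce^{-\beta z}$ by \eqref{b37yyy}. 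Writing $L := \cG(\infty,\zeta,p)$ and viewing the map $Q \mapsto LQ - QL$ as a linear operator $\mathrm{ad}_L$ on matrices, one converts this to an integral equation. The subtlety is that $\mathrm{ad}_L$ has eigenvalues $\mu_i - \mu_j$ where $\mu_i$ are eigenvalues of $L$, so it has both stable and unstable directions; to get a \emph{bounded} (indeed decaying) solution we must integrate the unstable part of $\mathrm{ad}_L$ backward from $z = +\infty$ and the stable/center part forward from $z = 0$, using the standard Duhamel splitting with the spectral projections of $\mathrm{ad}_L$.

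The key steps, in order, are as follows. (1) Fix $\underline\zeta$ and $p$; choose a neighborhood $\omega \times P$ on which the spectral subspaces of $\mathrm{ad}_{\cG(\infty,\zeta,p)}$ split continuously into the part with $\re(\mu_i - \mu_j) \le 0$ and the part with $\re(\mu_i - \mu_j) > 0$ (a gap of size at least some $2\delta > 0$ persists by continuity, after possibly shrinking $\omega$), with associated projections $\Pi_-(\zeta,p)$, $\Pi_+(\zeta,p)$ depending continuously, indeed smoothly in $\zeta$, on parameters. (2) Define the integral operator
\begin{equation}
(\mathcal{T}Q)(z) = \int_0^z e^{(z-s)\mathrm{ad}_L}\Pi_- \big[\Delta\cG(s)(I+Q(s))\big]\, ds - \int_z^\infty e^{(z-s)\mathrm{ad}_L}\Pi_+ \big[\Delta\cG(s)(I+Q(s))\big]\, ds,
\end{equation}
and verify that any fixed point is $C^\infty$ in $z$ and $\zeta$, satisfies the ODE, and hence \eqref{b37g}. (3) Work in the Banach space $X_\alpha$ of continuous matrix-valued functions on $[0,\infty)$ with norm $\|Q\|_\alpha = \sup_z e^{\alpha z}|Q(z)|$, for a fixed $\alpha$ with $0 < \alpha < \min(\beta, \delta)$; use the exponential decay estimates $|e^{z\,\mathrm{ad}_L}\Pi_-| \le Ce^{\delta z}$ for $z \le 0$ pushed into the convolution bounds (more precisely, on the stable/center part one gets at worst polynomial growth which is dominated by $e^{\beta s}$ from $\Delta\cG$) to show $\mathcal{T}$ maps a small ball of $X_\alpha$ into itself and is a contraction there, provided $\omega$ is shrunk so that $\sup_{z,\zeta,p}|\Delta\cG|$-type quantities are controlled — note $\Delta\cG$ is already globally exponentially small, so smallness comes from the $\int_0^\infty e^{(\alpha-\beta)s}ds$ factor, which is fine. (4) From $\|Q\|_\alpha$ small and the integral formula, bootstrap the derivative bounds \eqref{Thetabds} by differentiating $\mathcal{T}$ under the integral sign (the $\zeta$-derivatives of $e^{(z-s)\mathrm{ad}_L}\Pi_\pm$ and of $\Delta\cG$ are again exponentially controlled) and re-running the contraction in higher-order weighted spaces, and obtain uniform continuity in $p$ from the continuity of $p \mapsto \Delta\cG(\cdot,p) \in L^\infty$ together with continuity of the projections. (5) Finally, since $\|Q\|_\alpha < 1/2$ say, $P = I + Q$ is invertible with $P^{-1} = I - Q + Q^2 - \cdots$ bounded with bounded derivatives, giving (i).

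The main obstacle I anticipate is \textbf{not} the contraction estimate itself — that is routine once the weighted-space setup is in place — but rather handling the \emph{center} part of $\mathrm{ad}_L$ correctly: eigenvalues $\mu_i - \mu_j$ with $\re(\mu_i - \mu_j) = 0$ (which certainly occur, e.g. $i = j$ gives eigenvalue $0$) mean $e^{z\,\mathrm{ad}_L}$ restricted to that part is merely bounded or polynomially growing, not decaying, so one must be careful to group the center directions with the \emph{stable} directions (integrating forward from $0$) and ensure the inhomogeneity's $e^{-\beta s}$ decay beats any polynomial factor from Jordan blocks; this is why the hypothesis \eqref{b37y} of \emph{exponential} convergence of the profile, rather than mere convergence, is essential. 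A secondary bookkeeping point is ensuring all constants and neighborhoods can be chosen uniformly in the model parameter $p$, which follows because the only $p$-dependence enters through $\cG(\infty,\cdot,p)$ (finitely many eigenvalues, continuous in $p$) and through $\Delta\cG(\cdot,p)$ (continuous into $L^\infty$ by hypothesis), so the gap $\delta$ and the smallness bounds are lower/upper semicontinuous in $p$ and hence locally uniform after shrinking $P$.
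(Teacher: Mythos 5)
Your overall strategy (integral equation plus contraction in an exponentially weighted space) is the one the paper intends --- it cites \cite{MZ1} for the proof and sketches exactly this in the Remark following the lemma --- but two of your specific steps fail as written.

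First, your grouping of the center spectrum of $\mathrm{ad}_L$ is backwards. You place the eigenvalues of $\mathrm{ad}_L$ with $\re(\mu_i-\mu_j)=0$ in $\Pi_-$ and integrate that part forward from $z=0$. But $\mathrm{ad}_L$ always has the eigenvalue $0$ (take $i=j$), and for that component your formula gives $\int_0^z \Delta\cG(s)(I+Q(s))\,ds$, which converges as $z\to\infty$ to a generically \emph{nonzero} constant rather than to $0$. So $\mathcal{T}$ does not map $X_\alpha$ into itself, and no fixed point can satisfy the decay \eqref{Thetabds}. The correct splitting puts the center directions with the \emph{unstable} ones and integrates them backward from $+\infty$: the resulting term $-\int_z^\infty(\cdots)\,ds$ then inherits the $O(e^{-\beta z})$ decay directly from the source $\Delta\cG$, which is precisely why the exponential convergence \eqref{b37y} is needed. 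The danger on the center part is not, as you suggest, polynomial growth from Jordan blocks; it is the nonzero limit of the forward integral.

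Second, your contraction estimate on all of $[0,\infty)$ does not close. The Lipschitz constant of $\mathcal{T}$ on $X_\alpha$ is proportional to $C_0\cdot\int_0^\infty e^{(\alpha-\beta)s}\,ds = C_0/(\beta-\alpha)$, where $C_0$ is the \emph{amplitude} of $\Delta\cG$ at $z=0$; this is a fixed $O(1)$ quantity for a general (large-amplitude) profile, and shrinking the frequency neighborhood $\omega$ does nothing to reduce it --- ``globally exponentially decaying'' is not ``globally small.'' The standard repair, and the one the paper's Remark describes, is to run the contraction on $L^\infty[M,+\infty)$ (weighted) for $M$ large, where the relevant integrals carry a factor $e^{-\beta M}$ and hence are genuinely small, and then extend $P$ to $[0,M]$ by solving the linear Sylvester ODE $\partial_z P=\cG P-P\cG(\infty,\cdot)$ backward on the compact interval; invertibility is preserved there since $\partial_z\det P=\bigl(\trans\,\cG(z,\cdot)-\trans\,\cG(\infty,\cdot)\bigr)\det P$, so $\det P$ cannot vanish. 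With these two corrections the rest of your outline (bootstrapping derivative bounds, continuity in $p$ by continuous dependence of fixed points on parameters, and the Neumann series for $P^{-1}$) goes through.
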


Observe that $U$ satisfies \eqref{linq4} on $ z\geq 0$ if and only
if $V$ defined by $U=P(z,\zeta,p)V$ satisfies
\begin{align}\label{b37w}
\partial_zV=\cG(\infty,\zeta,p)V+P^{-1}F,\;\;\Gamma P(0,\zeta,p)V|_{z=0}=G.
\end{align}
This  implies that the decaying space $\EE^-(\zeta,p)$ as in
\eqref{evans1} is exactly the image under $P(0,\zeta,p)$ of the
stable subspace of $\cG(\infty,\zeta,p)$, denoted
$\bE^-_\infty(\zeta,p)$. Thus, by the calculation of  \cite{GMWZ6},
Lemma 2.12, $\bE^-(\zeta,p)$ has dimension $N_b=\rank\; \Gamma$ for
$\gamma\ge 0$, $\zeta\ne 0$. The Evans determinant \eqref{evans1}
\begin{align}\label{pp6}
D_p(\zeta)=\det(\bE^-(\zeta,p),\ker\Gamma(\zeta,p)),
\end{align}
now denoted with additional dependence on model parameters $p$, is
then well-defined on $\gamma\ge 0$, $\zeta\ne 0$ and depends
smoothly on $\zeta$ and continuously (in all $\zeta$-derivatives) on
$p$. We record this as a corollary, of which we shall later make
important use. For quantitative bounds estimating the modulus of
continuity in $p$, see \cite{PZ} Prop. 2.4
or Cor. C.3, \cite{HLZ}.

\begin{rem}The conjugator $P(z,\zeta,p)$ is constructed by a
fixed point argument as the solution of an integral equation. The
exponential decay \eqref{b37yyy} is needed to make the integral
equation contractive in $L^\infty[M,+\infty)$ for $M$  sufficiently
large.  The continuity of $P$ with respect to $p$ is then immediate,
by continuous dependence on parameters of fixed point solutions, a
quite general result.

\end{rem}

\begin{cor}\label{contcor}
Let $w(z,p)$ be a family of layer profiles depending continuously on
parameters $p$ in the sense that $p\to (w(\cdot,p),w^2_z(\cdot,p))$
is a continuous function from $p$ to $L^\infty[0,+\infty)$, and let
$\Gamma(\zeta,p)$ be as in \eqref{pp6}.
Then, the Evans function \eqref{pp6} depends continuously on $p$.

\end{cor}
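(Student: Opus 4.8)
The plan is to split the Evans determinant \eqref{pp6} into its two constituent subspaces --- the decaying space $\EE^-(\zeta,p)$ and $\ker\Gamma(\zeta,p)$ --- to show that each depends continuously on $p$ as a point of the appropriate Grassmannian, and then to invoke the elementary fact that the determinant of two subspaces of complementary dimensions $N_b$ and $N+N'-N_b$ in $\bC^{N+N'}$ is a continuous function of the two subspaces (choose orthonormal bases varying continuously with $p$ and take the determinant of the resulting $(N+N')\times(N+N')$ matrix). The same argument, together with the $\zeta$-regularity already built into Lemma \ref{conjugation}, yields the smoothness in $\zeta$ asserted just before the corollary, but I focus on the $p$-dependence.

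First I would note that $p\mapsto w(\infty,p)$ is continuous: since $p\mapsto(w(\cdot,p),w^2_z(\cdot,p))$ is continuous into $L^\infty[0,+\infty)$, each profile converges at $+\infty$, and $|w(z,p)-w(z,p')|\le\|w(\cdot,p)-w(\cdot,p')\|_{L^\infty}$ for every $z$, letting $z\to+\infty$ gives $|w(\infty,p)-w(\infty,p')|\le\|w(\cdot,p)-w(\cdot,p')\|_{L^\infty}$. Hence the limiting matrix $\cG(\infty,\zeta,p)$, built algebraically from $w(\infty,p)$, is $C^\infty$ in $\zeta$ and continuous in $p$, and \eqref{b37y}--\eqref{b37yyy} hold with constants uniform on parameter-compacta, so Lemma \ref{conjugation} applies. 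By the discussion following that lemma, $\EE^-(\zeta,p)=P(0,\zeta,p)\,\bE^-_\infty(\zeta,p)$, with $P(0,\zeta,p)$ being $C^\infty$ in $\zeta$ and having $\zeta$-derivatives uniformly continuous in $p$; it therefore suffices to prove that the limiting stable space $\bE^-_\infty(\zeta,p)$ depends continuously on $(\zeta,p)$ for $\gamma\ge0$, $\zeta\ne0$. For $\gamma>0$ this is elementary: $\cG(\infty,\zeta,p)$ then has a spectral gap along the imaginary axis (cf. the count in Lemma \ref{count}), so $\bE^-_\infty(\zeta,p)$ is the range of the Riesz projection $\frac{1}{2\pi i}\oint(\lambda-\cG(\infty,\zeta,p))^{-1}\,d\lambda$ over a fixed contour in the open left half-plane, a continuous function of the matrix. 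For $\gamma=0$, $\zeta\ne0$, where eigenvalues may reach the imaginary axis, the relevant fact is the continuous extension of the decaying space established in \cite{MZ3}, Theorem 3.3 and \cite{GMWZ6}, Remark 2.31 under our structural hypotheses; since that construction (via $K$-families of symmetrizers) depends continuously on the constant-coefficient limit matrix, it is continuous in $p$ as well, with quantitative moduli of continuity as in \cite{PZ}, Prop. 2.4 and \cite{HLZ}, Cor. C.3. Finally, $\Gamma(\zeta,p)$ is polynomial in $\zeta$ and, through $w(0,p)$ and $w^2_z(0,p)$, continuous in $p$, and it has constant rank $N_b$ by Assumption \ref{assbc}, so $\ker\Gamma(\zeta,p)$ also varies continuously.

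The main obstacle is the behavior at $\gamma=0$: for $\gamma>0$ the whole argument reduces to continuity of a Riesz projection composed with the conjugator $P$, both routine, whereas the continuous extension of $\bE^-_\infty$ to the face $\{\gamma=0,\ \zeta\ne0\}$ --- and the fact that this extension inherits continuity in the model parameters $p$ --- is the genuine input, and is exactly where the structural hypotheses (existence of $K$-families of symmetrizers for the linearized viscous problem) are used. Everything else is bookkeeping with the conjugation lemma and with determinants of continuously-varying subspaces of complementary dimension.
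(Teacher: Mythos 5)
Your overall route is the paper's: reduce via the conjugation lemma to continuity of $\bE^-_\infty(\zeta,p)$ and of $\ker\Gamma(\zeta,p)$, use spectral-gap perturbation theory where the gap is open, and invoke the $K$-family symmetrizer results of \cite{MZ3} where it is not. However, you have mislocated the degenerate regime, and this leaves a real piece of the argument uncovered. Under the standing hypotheses the spectral gap of $\cG(\infty,\zeta,p)$ does \emph{not} close on the face $\{\gamma=0,\ \zeta\ne0\}$: a purely imaginary eigenvalue $\mu=i\xi_d$ of $\cG(\infty,\zeta,p)$ would force $-(i\tau+\gamma)$ to be an eigenvalue of $i\oA(p,\xi)+\oB(p,\xi)$ with $\xi=(\eta,\xi_d)$, and (H5) gives $\re$ of such eigenvalues $\ge c|\xi|^2/(1+|\xi|^2)$, so $\gamma\ge0$ forces $\xi=0$ and then $\tau=\gamma=0$, i.e.\ $\zeta=0$. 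Hence your Riesz-projection argument already covers all of $\gamma\ge0$, $\zeta\ne0$ (this is exactly what the paper means by ``the limiting coefficient matrix has a spectral gap'' for $\zeta\ne0$), and your appeal to \cite{MZ3} at $\gamma=0$, $\zeta\ne0$ is unnecessary.

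The place where the gap genuinely closes, and where \cite{MZ3}, Theorem 3.3 and the $K$-families of symmetrizers are actually needed, is the limit $\rho=|\zeta|\to0$: there the hyperbolic modes' decay rates vanish and $\bE^-_\infty$ only survives as the continuous polar-coordinate extension $\bE^-_\infty(\hat\zeta,\rho,p)$ to $\rho=0$ (recall \eqref{ab2}--\eqref{ab3}). The paper's proof explicitly includes continuity in $p$ of this extension at $\rho=0$, and that case is what is used downstream (e.g.\ in Step 3 of the proof of Theorem \ref{smallred}, and in the reformulation \eqref{ab3} of the low-frequency Evans condition). Your proposal, by restricting to fixed $\zeta\ne0$ and treating $\gamma=0$ as the hard face, never addresses $\rho\to0$; as written it proves pointwise continuity in $p$ for each fixed $\zeta\ne0$ but not the continuity of $D(\hat\zeta,\rho,p)$ up to $\rho=0$ that the corollary is meant to deliver. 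The fix is simply to transplant your (correctly cited) appeal to \cite{MZ3} and the quantitative bounds of \cite{PZ}, \cite{HLZ} from the face $\gamma=0$ to the vertex $\rho=0$.
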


\begin{proof}
\textbf{1. }Set $W(z,p):=(w(z,p),w^2_z(z,p))$. Continuity in $p$, by
boundedness of $A_j$, $B_{jk}$, and derivatives, is inherited by the
coefficient matrices $\cG(\cdot,\zeta,p)$ appearing in the
linearized eigenvalue equations from continuity of $W$. Likewise,
continuity of the linearized boundary operator $\Gamma(\zeta,p)$
follows from boundedness of $\Upsilon$ and derivatives.   In view of
our rank conditions on $\Gamma(\zeta,p)$ and the continuity of
$P(0,\zeta,p)$ for all $\zeta\in\overline{\bR}^{d+1}$, we see from
the definition of the Evans function \eqref{pp6} that it is
sufficient to establish continuity of $\bE^-_\infty(\zeta,p)$ with
respect to $p$ for $\zeta\neq 0$ and continuity of
$\bE^-_\infty(\hat\zeta,\rho,p)$ with respect to $p$ at $\rho=0$
(recall \eqref{ab2},\eqref{ab3}).

\textbf{2. }For $\zeta \ne 0$, the continuity of
$\bE^-_\infty(\zeta,p)$ follows by the fact that the limiting
coefficient matrix $\cG_\infty(\zeta,p)$ has a spectral gap, whence
the stable subspace  varies continuously by standard matrix
perturbation theory \cite{Kat}.  Continuity
$\bE^-_\infty(\hat\zeta,\rho,p)$ at $\rho=0$ is more difficult to
show and follows from the existence of $K$-families of viscous
symmetrizers as in \cite{MZ3}, Theorem 3.3.

\end{proof}

\subsection{Maximal stability estimates and high-frequency
scaling}\label{maxscale}

We next recall from \cite{GMWZ6} the appropriate scaling of the
Evans function for high-frequencies $|\zeta|\ge R>0$. The maximal
stability estimate for \eqref{linq4} on this frequency domain is
(see \cite{GMWZ6})
 \begin{equation}
\label{maxesthf}
\begin{aligned}
(1+ \gamma)    \| u^1  \|_{L^2(\RR_+)}     + &\Lambda   \| u^2
\|_{L^2(\RR_+)} +  \| \D_z u^2  \|_{L^2(\RR_+)}
\\
+ (1+ \gamma)^{\frac{1}{2}}     |u^1(0)  |   + &\Lambda^\mez   |
u^2(0)| + \Lambda^{-\mez} | \D_z u^2 (0)|  \le
\\
  & C  \big(     \| f ^1  \|_{L^2(\RR_+)}    +  \Lambda^{- 1}  \| f ^2  \|_{L^2(\RR_+)}  \big)
  \\
   & +   C  \big(    (1+ \gamma)^{\frac{1}{2}}     |g^1  |   +
 \Lambda^\mez   | g^2 | + \Lambda^{-\mez} | g^3|  \big),
      \end{aligned}
\end{equation}
 where $C$ is an independent constant and $\Lambda$ is the natural parabolic weight
 \begin{equation}
\label{defLambda} \Lambda(\zeta)  = \big(  \tau^2 + \gamma^2 + |
\eta |^4 \big) ^{1/4}.
\end{equation}
Together with corresponding low-frequency estimates (see
\eqref{maxest2} below), this yields maximal spatio-temporal
stability estimates by Parseval's identity \cite{GMWZ4,GMWZ6}.

 Taking $f = 0$ in \eqref{maxesthf} yields the necessary condition
\begin{equation}
\label{stabcondhf}
\begin{aligned}
\\
  (1& + \gamma)^{\frac{1}{2}}     |u^1   |   +
 \Lambda^\mez   | u^2 )| + \Lambda^{-\mez} |u^3 |  \le
 \\
& C \big(  (1+ \gamma)^{\frac{1}{2}}     |\Gamma_1 u^1   |   +
 \Lambda^\mez   | \Gamma_2 u^2 )| + \Lambda^{-\mez} |\Gamma_3 (\zeta) (u^2, u^3) | \big)
\end{aligned}
\end{equation}
$ \forall \zeta  \in \overline \RR^{d+1}_+$, $ | \zeta | \ge R$,
$\forall U = (u^1, u^2, u^3)  \in \EE_- (\zeta)$. This can be
reformulated in terms of a  \emph{rescaled Evans function}  (see
\cite{MZ1}). Introduce  maps defined on $\CC^{N+ N'}$ and $\CC^{N_b}
$ respectively by
\begin{equation}
\label{reschf}
\begin{aligned}
J_\zeta (u^1, u^2, u^3) &  := \big((1+ \gamma)^{\frac{1}{2}}   u^1,
 \Lambda^\mez  u^2 ,   \Lambda^{-\mez} u^3 \big)  \\
 J_\zeta (g^1, g^2, g^3) & := \big((1+ \gamma)^{\frac{1}{2}}   g^1,
 \Lambda^\mez  g^2 ,   \Lambda^{-\mez} g^3 \big)  .
 \end{aligned}
\end{equation}
Note that $ J_\zeta  \Gamma (\zeta)  U =  \Gamma^{sc}(\zeta) J_\zeta
U $ with
 \begin{equation}
\label{splitGamma}
 \Gamma^{sc}  U =  \big(   \Gamma_1  u^1  ,  \Gamma_2 u^2
         ,    K_d u^3  + \Lambda^{-1} K_{T} (\eta)  u^2  \big)
\end{equation}
(note: decoupled, bounded). Thus \eqref{stabcondhf} reads
\begin{equation}
\label{stabconhfsc} \forall U \in J_\zeta \EE^- (\zeta) :\quad | U |
\le C     | J_\zeta \Gamma (\zeta) J_\zeta^{-1} U |=C|\Gamma^{sc}U|.
\end{equation}
Introducing  the \emph{rescaled Evans function}
\begin{equation}
\label{rescEvf} D^{sc}( \zeta) := \big|  \det   \big( J_{\zeta}\EE^-
(\zeta) ,  J_\zeta  \ker \Gamma(\zeta)\big) \big|=\big|\det   \big(
J_{\zeta}\EE^- (\zeta) ,  \ker \Gamma^{sc}(\zeta)\big) \big|,
\end{equation}
and using Lemma \ref{ZZZ}, we see that this stability  condition is
equivalent to the following definition.

\begin{defi}\label{hfevans}
(a)  Given a profile $w$, the linearized equation $\eqref{linq}$
satisfies  the \emph{uniform Evans condition for high frequencies}
when there are $c> 0$ and $R > 0$ such that $|D^{sc} (\zeta)| \ge c
$ for all $\zeta \in \overline \RR^{d+1}_+  $ with $ | \zeta | \ge
R$.

(b)  The linearized equation $\eqref{linq}$ satisfies  the
\emph{uniform Evans condition} when there are $c> 0$ and $R > 0$
such that
\begin{align}\label{pp7}
|D(\zeta)|\geq c\text{ for }|\zeta|\leq R\text{ and } |D^{sc}
(\zeta)| \ge c \text{ for  }| \zeta | \ge R.
\end{align}

\end{defi}

For completeness, we recall also the maximal stability estimates for
low- and medium-frequencies $|\zeta|\le R$, of
  \begin{equation}
\label{maxest2}
\begin{aligned}
\varphi    \| u  \|_{L^2(\RR_+)}     +
 \| \D_z u^2    \|_{L^2(\RR_+)}      +  | u  (0)& |  + |  \D_z u^2 (0) |
 \le
\\  &  C  \big(   \frac{1 }{\varphi }  \| f  \|_{L^2(\RR_+)}    + | g | \big),
      \end{aligned}
\end{equation}
where $\vp = (\gamma + | \zeta |^2) ^{\frac{1}{2}}$, for $\zeta \in
\overline \RR^{d+1}_+ \backslash\{0\}$, $| \zeta | \le R$. Taking
$f=0$ yields the necessary condition $ | u  (0) |  + |  \D_z u^2 (0)
|    \le        C | g | $
corresponding to the standard Evans condition \eqref{evans1}.

\begin{rem}\label{homgen}
By Theorem 3.9 of \cite{GMWZ6} the uniform Evans condition for low
and medium frequencies implies the maximal estimate \eqref{maxest2}.
Taking $f=0$ in \eqref{maxest2} and using Remark \ref{Evansest}, we
see that the following are equivalent for $|\zeta|\leq R$:

a) the uniform Evans condition for $|\zeta|\leq R$,

b) the full bounded frequency estimate \eqref{maxest2},

c) the trace estimate in \eqref{maxest2} when $f=0$.

Similarly, by Theorem 7.2 of \cite{GMWZ6} the uniform Evans
condition for high frequencies implies the maximal estimate
\eqref{maxesthf}.   Using Remark \ref{Evansest} again, we deduce the
equivalence of:

a)the uniform Evans condition for $|\zeta|\geq R$,

b) the full high frequency estimate \eqref{maxesthf},

c) the trace estimate in \eqref{maxesthf} when $f=0$.

\end{rem}

\subsection{The high-frequency limit}\label{HF}

\text{\quad}In this section we show that the rather complicated
high-frequency condition of Definition \ref{hfevans} may be reduced
to a simple and natural linear algebraic condition corresponding
roughly to well-posedness of the principal part \eqref{princpart} of
equations \eqref{visceq}, frozen at $z=0$, under boundary conditions
\eqref{viscbcd}.
Using the fact that the linearized boundary conditions are {fully
decoupled}, we prove in Theorem \ref{aa3} that satisfaction of the
uniform Evans condition for sufficiently high frequencies is
equivalent to satisfaction of the uniform Evans condition
for the {\it decoupled system}
\begin{align}\label{linprinc}
\begin{split}
&(a)\;u^1_t + \sum_j \overline A_j^{11}(w(0)) \D_j u^1=0,\\
&(b)\;u^2_t - \sum_{j,k} \overline B_{jk}^{22}(w(0)) \D_j\D_k u^2=0,
\end{split}
\end{align}
with boundary conditions $\Gamma_1$ and $(\Gamma_2, \Gamma_3)$,
respectively.

Let $\bE^-(\zeta)$ denote as before the set of initial data at $z=0$
of decaying solutions of $\partial_z U-\cG(z,\zeta)U=0$.  Our proof
of Theorem \ref{aa3} is based on showing that in the limit as
$|\zeta|\to\infty$, the space $J_\zeta\bE^-(\zeta)$ approaches, or
``tracks", the rescaled stable subspace of an appropriate frozen
coefficient problem.

Since subsystems \eqref{linprinc}(a)--(b) are quasi-homogeneous, the
uniform Evans condition is equivalent simply to nonvanishing of the
decoupled Evans functions on the unit sphere in $\gamma \ge 0$, a
linear algebraic computation.  This reduction to a compact set of
frequencies (Corollary \ref{hfcrit}) is essential in our later
verifications of high frequency stability.

To state these results precisely we write the first order systems
obtained from \eqref{linprinc}(a), (b) by Fourier-Laplace transform
as
\begin{align}\label{aa1}
\begin{split}
&(a)\partial_z u^1-\cG_1(\zeta)u^1=0,\; \Gamma_1 u^1=g^1\\
&(b) \partial_z U^*-\cG_2(\zeta)U^*=0,\; \Gamma_* U^*=(g^2,0),
\end{split}
\end{align}
where $U^*=(u^2,u^2_z)$ and $\Gamma_*=(\Gamma_2,\Gamma_3)$. Let
$e_{-,h}(\zeta)$ and $e_{-,p}(\zeta)$ denote the stable subspaces of
the matrices $\cG_1(\zeta)$ and $\cG_2(\zeta)$ respectively. Setting
\begin{align}\label{A1}
\begin{split}
&J^1_\zeta(u^1)=(1+\gamma)^{\frac{1}{2}}u^1,\;\;\Gamma_1^{sc}u^1=\Gamma_1u^1\\
&J_\zeta^*(u^2,u^3):=\left(\Lambda^{\frac{1}{2}}u^2,\Lambda^{-\frac{1}{2}}u^3\right)\\
& \Gamma_*^{sc}(\zeta)(u^2,u^3)=\left(\Gamma_2 u^2
         ,    K_d u^3  + \Lambda^{-1} K_{T} (\eta)
         u^2\right),
\end{split}
\end{align}
we define rescaled Evans functions for \eqref{aa1}(a) and (b) by
\begin{align}\label{aa2}
\begin{split}
&D^{sc}_1(\zeta)=\det_{\bC^{N-N'}}\left(J^1_\zeta
e_{-,h}(\zeta),\ker\Gamma_1\right)\;\; (=\det\left(
e_{-,h}(\zeta),\ker\Gamma_1\right) \text{ clearly}),\\
&D^{sc}_2(\zeta):=\det_{\CC^{2N'}} \Big( J_\zeta^*
e_{-,p}(\zeta),\ker \Gamma_*^{sc}(\zeta)\Big).
\end{split}
\end{align}

The main result of this section is the following theorem.

\begin{theo}\label{aa3}
Let $D^{sc}$ be the rescaled Evans function defined in
\eqref{rescEvf}.   Then \emph{frozen-coefficients stability} for
\eqref{aa1}, that is, the existence of  positive constants
$c_1$,$c_2$, $R'$ such that
\begin{align}\label{aaa3}
|D^{sc}_1(\zeta)|\geq c_1 \text{ and }|D^{sc}_2(\zeta)|\geq
c_2\text{ for all }|\zeta|\geq R',
\end{align}
is equivalent to the rescaled uniform Evans condition, that is, the
existence of positive constants $c$, $R$ such that
\begin{align}\label{aaa4}
|D^{sc}(\zeta)|\geq c \text{ for all }|\zeta|\geq R.
\end{align}

\end{theo}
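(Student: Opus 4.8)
The plan is to prove the sharper \emph{subspace} statement that, after the natural parabolic high-frequency rescaling, the weighted decaying subspace $J_\zeta\bE^-(\zeta)$ of \eqref{linq4} converges, uniformly on the compact parabolic frequency sphere, to the rescaled decaying subspace of the frozen, decoupled principal part \eqref{linprinc}; the equivalence of \eqref{aaa3} and \eqref{aaa4} then follows by a routine continuity argument for the associated Evans determinants. Concretely, writing $\zeta=\rho\hat\zeta$ with $\rho=|\zeta|\to\infty$ and $\hat\zeta$ on the parabolic unit sphere in $\gamma\ge 0$, I would rescale $z$ parabolically and rescale the unknown by $J_\zeta$, converting \eqref{linq4} into a first-order system $\D_{\check z}\check U=\check\cG(\check z,\hat\zeta,\rho)\check U$ whose decaying subspace at $\check z=0$ is exactly $J_\zeta\bE^-(\zeta)$, and doing likewise for the two subsystems \eqref{aa1}.

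\textbf{Limit of the rescaled coefficient.} Next I would check that, as $\rho\to\infty$, every term of $\cG(z,\zeta)$ that is of lower order in $\zeta$ — in particular the off-diagonal blocks coupling $u^1$ with $(u^2,u^3)$ — acquires negative powers of $\rho$, so that
\begin{align*}
\check\cG(\check z,\hat\zeta,\rho)=M_0^\rho(\check z)+o(1),\qquad M_0^\rho(\check z)=\blockdiag\big(\cG_1^{sc}(w(z_\rho(\check z)),\hat\zeta),\ \cG_2^{sc}(w(z_\rho(\check z)),\hat\zeta)\big),
\end{align*}
where the $o(1)$ is uniform in $\check z\ge 0$ and $\hat\zeta$, the $\cG_i^{sc}$ are the rescaled Fourier--Laplace symbols of \eqref{linprinc}(a),(b), and $z_\rho(\check z)\to 0$ for each fixed $\check z$ as $\rho\to\infty$. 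By (H2) the parabolic block $\cG_2^{sc}$ is uniformly elliptic, hence possesses a half-stable/half-unstable spatial splitting whose gap is bounded below uniformly over $w\in\pi\cU^*$ and over the compact sphere (no glancing occurs for the parabolic block); and by (H3) the normal matrix $\overline A^{11}_\nu$ is invertible with eigenvalues all of a single sign, so the Kreiss decaying subspace of the hyperbolic block $\cG_1^{sc}$ has dimension $N^1_+\in\{0,N-N'\}$, i.e. it equals $\{0\}$ or all of $\CC^{N-N'}$ independently of the state and of $\hat\zeta$. Thus $M_0^\rho(\check z)$ carries a uniform exponential dichotomy for all $\check z\ge 0$, with only a \emph{slow} dependence on the profile.

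\textbf{Tracking and conclusion.} The core step is the tracking (gap) lemma of Appendix \ref{trackapp}, used twice. Since $\check\cG-M_0^\rho\to 0$ in sup-norm and $M_0^\rho$ has a uniform dichotomy, the decaying subspace at $\check z=0$ of $\D_{\check z}\check U=\check\cG\check U$ is $o(1)$-close to that of the block-diagonal system $\D_{\check z}V=M_0^\rho(\check z)V$; and within each block the slowly-varying system has a uniform gap, so its decaying subspace at $\check z=0$ converges to the \emph{frozen} decaying subspace at $z=0$, namely $e_{-,h}(\hat\zeta)$ (which is in fact $\hat\zeta$-independent) for the hyperbolic block and $J_\zeta^*e_{-,p}(\zeta)$ for the parabolic one. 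Hence $J_\zeta\bE^-(\zeta)\to e_{-,h}(\hat\zeta)\oplus J_\zeta^*e_{-,p}(\zeta)$, uniformly on the parabolic sphere. Because by \eqref{splitGamma} the rescaled boundary operator is \emph{exactly} decoupled, $\ker\Gamma^{sc}(\zeta)=\ker\Gamma_1\oplus\ker\Gamma_*^{sc}(\zeta)$, expanding the determinant in \eqref{rescEvf} along the two blocks and using continuity of the determinant in its bounded, convergent column arguments gives $|D^{sc}(\zeta)|=c(\hat\zeta,\rho)\,|D^{sc}_1(\zeta)|\,|D^{sc}_2(\zeta)|$ with $c$ bounded above and below by fixed positive constants once $\rho$ is large. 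Consequently \eqref{aaa4} holds if and only if $|D^{sc}_1|$ and $|D^{sc}_2|$ are bounded below for $|\zeta|\ge R$ with $R$ large; by quasi-homogeneity of the decoupled symbols (Corollary \ref{hfcrit}) the latter is equivalent to their nonvanishing on the entire compact parabolic sphere in $\gamma\ge 0$, which in particular absorbs any discrepancy between the two thresholds $R$ and $R'$, so that \eqref{aaa3} follows, and conversely. The main obstacle is the uniformity underlying the two tracking steps: one must estimate $\check\cG-M_0^\rho$ uniformly on \emph{all} of $\check z\ge 0$ — which is why one routes through the slowly-varying intermediary $M_0^\rho$ rather than the constant limit, since $w(z_\rho(\check z))$ drifts from $w(0)$ toward $w(\infty)$ as $\check z$ grows — and verify that the principal symbols retain a genuinely uniform spectral gap over the non-compact frequency region, exactly the situation Appendix \ref{trackapp} is designed to handle.
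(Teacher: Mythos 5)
Your overall target — showing that $J_\zeta\bE^-(\zeta)$ converges to $J^1_\zeta e_{-,h}(\zeta)\oplus J^*_\zeta e_{-,p}(\zeta)$ and then factoring the determinant using the decoupled boundary operator — is exactly the paper's target (see \eqref{gg8}--\eqref{gg9}). But the route you propose to get there has a genuine gap: the claim that after the $J_\zeta$-rescaling the off-diagonal coupling blocks ``acquire negative powers of $\rho$'' is false outside the elliptic zone. Concretely, the $(1,3)$ block $\cG^{13}=-(\cA^{11})^{-1}\cA^{12}$ is order zero in $\zeta$, and conjugation by $J_\zeta$ multiplies it by $(1+\gamma)^{1/2}\Lambda^{1/2}$; likewise the $(3,1)$ block $\cG^{31}$ is first order and picks up the factor $\Lambda^{-1/2}(1+\gamma)^{-1/2}$. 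Taking $\gamma=\eta=0$, $|\tau|=|\zeta|$ gives $\Lambda\sim|\zeta|^{1/2}$ and both rescaled blocks of size $|\zeta|^{3/4}\to\infty$. So $\check\cG$ is \emph{not} a uniformly small perturbation of a block-diagonal $M_0^\rho$, and the first application of the tracking lemma has nothing to track against. This is precisely why the paper's proof in the coupling zone $\cC_\delta$ first performs the conjugation $U=\cV\hat U$ of \eqref{ee7} — built from the $O(|\zeta|)$ spectral separation between $\cG^{11}$ and $\cP^{22}$ (Lemma 7.3 of \cite{GMWZ6}) — before rescaling, and even then $\cV_s$ only tends to an upper-triangular matrix \eqref{ee15}, not the identity. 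Indeed, even in the elliptic zone the correct rescaled limit \eqref{dectG1} is the \emph{upper triangular} system \eqref{trilinprinc}, not the decoupled \eqref{linprinc}; the reduction to the decoupled Evans functions comes afterwards from the block-triangular structure of the limiting stable subspace \eqref{dd13}--\eqref{dd14} together with the decoupling of the boundary conditions, and fails for coupled boundary conditions (Remark \ref{Bprincpartrem}).

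A second, related gap is the assertion that $M_0^\rho$ ``carries a uniform exponential dichotomy.'' Knowing that the hyperbolic block's decaying subspace is $\{0\}$ or all of $\CC^{N-N'}$ does not give a gap: the eigenvalues of $\cG_1(\zeta)$ have real parts of size $O(\gamma)$ relative to $|\zeta|$ and touch the imaginary axis at $\gamma=0$, so there is no dichotomy constant uniform over $\cC_\delta$, and a second application of Proposition \ref{reduction} to the hyperbolic block is not available. The paper circumvents this by replacing gap-lemma tracking of the hyperbolic block with the weighted incoming/outgoing estimates \eqref{ee30e}--\eqref{ee30g}, in which the hyperbolic component carries the degenerate weight $(1+\gamma)^{1/2}$ and the coupling errors are absorbed as $\eps(\zeta)\|W\|_s$ in those anisotropic norms rather than being small in operator norm. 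To repair your argument you would need either to reproduce that conjugation-plus-weighted-estimate scheme, or to restrict your block-diagonal-limit argument to the elliptic zone $\cE_\delta$ (where $(1+\gamma)\sim\Lambda^2\sim|\zeta|$ and Proposition \ref{bb1} applies), which is what Appendix \ref{trackapp} does.
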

\begin{proof}

\textbf{1. Frequency zones.} There are two frequency zones to
consider. For $\delta>0$ sufficiently small we define the
\emph{elliptic zone}
\begin{align}\label{ee1}
\cE_\delta:=\{\zeta=(\tau,\gamma,\eta):\gamma\geq\delta|\zeta|,
|\eta|\geq \delta |\zeta|\}
\end{align}
and the complementary \emph{coupling zone}
\begin{align}\label{ee3}
\cC_\delta:=\{\zeta:0\leq \gamma\leq
\delta|\zeta|\}\cup\{\zeta:|\eta|\leq \delta|\zeta|\}.
\end{align}
Here we show how the discussion of \cite{GMWZ6}, section 7 can be
adapted to prove Theorem \ref{aa3} in the more difficult case where
$\zeta\in \cC_\delta$.    The elliptic case is proved in Appendix B,
which also includes a new discussion (see Proposition \ref{cc10}) of
the connections between symmetrizer estimates and tracking.

\textbf{2.  Symbols. }Let $\Gamma^m$ denote the space of
\emph{homogeneous symbols} of order $m$, that is, $C^\infty$
functions $h(z,\zeta)$ such that for all $\alpha\in\bN^{d+1}$, all
$k\in\bN$, and some $\theta>0$,  there are constants $C_{\alpha,k}$
such that for $|\zeta|\geq 1$,
\begin{align}\label{ee3a}
\begin{split}
&|\partial^\alpha_\zeta h|\leq C_{\alpha,0}|\zeta|^{m-|\alpha|},
\text{ if }k=0,\\
&|\partial^k_z\partial^\alpha_\zeta h|\leq C_{\alpha,k}e^{-\theta z}
|\zeta|^{m-|\alpha|}, \text{ if }k>0.
\end{split}
\end{align}

Let $P\Gamma^m$ denote the space of \emph{parabolic symbols} of
order $m$, that is, $C^\infty$ functions $h(z,\zeta)$ satisfying
similar estimates with $|\zeta|$ replaced by $\Lambda$.

\textbf{3. Conjugation to block diagonal form in $\cC_\delta$.}
Consider again the linearized problem \eqref{linq4}
\begin{align}\label{ee4}
\partial_z U=\cG(z,\zeta)U+F,\;\;\Gamma(\zeta)U(0)=G.
\end{align}
where the components of $\cG$ are given explicitly in \eqref{cG}.
Set
\begin{align}\label{ee4e}
\cP^{22}=\begin{pmatrix}0&|\zeta|I\\|\zeta|^{-1}\cG^{32}&\cG^{33}\end{pmatrix}.
\end{align}
In Lemma 7.3 of \cite{GMWZ6} it is shown that there exist positive
constants $c$, $R$, and $\delta$ such the distance between the
spectrum of $\cG^{11}(z,\zeta)$ and the spectrum of
$\cP^{22}(z,\zeta)$ is larger than $c|\zeta|$ for $\zeta\in
\cC_\delta$, $|\zeta|\geq R$.   For such $\zeta$ Lemmas 7.5 and 7.6
of \cite{GMWZ6} use this spectral separation to construct a change
of variables $U=\cV\hat U$,\;$F=\cV\hat F$ that transforms
\eqref{ee4} to
\begin{align}\label{ee5}
\partial_z \hat U=\hat\cG(z,\zeta)\hat U+\hat F,\;\;\hat \Gamma(\zeta)\hat
U(0)=G.
\end{align}
Here $\hat \cG=\hat\cG_p+\cG'$ and with obvious notation
\begin{align}\label{ee6}
\hat\cG_p=\begin{pmatrix}\hat\cG^{11}&0&0\\0&0&I\\0&\cG^{32}-\cV^{31}\cG^{12}&\cG^{33}\end{pmatrix},\;\;\cG'=\begin{pmatrix}\Gamma^{-1}&\Gamma^0&\Gamma^{-1}\\\Gamma^{-1}&\Gamma^0&\Gamma^{-1}\\
\Gamma^{0}&\Gamma^0&\Gamma^{0}\end{pmatrix}.
\end{align}
We have $\cV=\cV_I(z,\zeta)\cV_{II}(z,\zeta)$, where
\begin{align}\label{ee7}
\cV_I=\begin{pmatrix}I&0&0\\|\zeta|^{-1}\cV^{21}&I&0\\\cV^{31}&0&I\end{pmatrix},\;\;\begin{pmatrix}I&\cV^{12}&|\zeta|^{-1}\cV^{13}\\0&I&0\\0&0&I\end{pmatrix},\;\;\cV^{ij}\in\Gamma^0.
\end{align}

\textbf{4. Rescaling.} It is helpful to work with an equivalent
rescaled system.  Let $J:=J_\zeta$ be the scaling operator
introduced in \eqref{reschf} (There are really two such operators,
but we use $J$ to denote both.)  For any matrix $M$ of size
$(N+N')\times (N+N')$ or $N_b\times (N+N')$, and for any
$U\in\bC^{N+N'}$ or $U\in\bC^{N_b}$ define
\begin{align}\label{ee8}
M_s=JMJ^{-1}, U_s=JU.
\end{align}
In expressions like $\hat U_s$ or $\hat\cG_s$ where the order of the
``hat" and scaling operators may be unclear, the scaling operator
always comes last.

With $V=\hat U_s$ the system \eqref{ee5} is then equivalent to
\begin{align}\label{ee9}
\partial_z V=\hat\cG_s(z,\zeta)V+\cV^{-1}_s F_s,\;\;\hat
\Gamma_s(\zeta)V(0)=G_s.
\end{align}
Observe that $\hat\Gamma_s=\Gamma_s\cV_s$ and that $\Gamma_s$ is the
same as the operator $\Gamma^{sc}$ defined in \eqref{splitGamma}. We
may write $\hat\cG_s=\hat\cG_{ps}+\cG'_s$ where
\begin{align}\label{ee10}
\begin{split}
&\hat\cG_{ps}=\begin{pmatrix}\hat\cG^{11}&0&0\\0&0&\Lambda\\0&\Lambda^{-1}(\cG^{32}-\cV^{31}\cG^{12})&\cG^{33}\end{pmatrix},\\
&\cG'_s=\begin{pmatrix}\Gamma^{-1}&(1+\gamma)^{\frac{1}{2}}\Lambda^{-\frac{1}{2}}\Gamma^0&(1+\gamma)^{\frac{1}{2}}\Lambda^{\frac{1}{2}}\Gamma^{-1}\\\Lambda^{\frac{1}{2}}(1+\gamma)^{-\frac{1}{2}}\Gamma^{-1}&\Gamma^0&\Lambda\Gamma^{-1}\\
\Lambda^{-\frac{1}{2}}(1+\gamma)^{-\frac{1}{2}}\Gamma^{0}&\Lambda^{-1}\Gamma^0&\Gamma^{0}\end{pmatrix}.
\end{split}
\end{align}

It will be important to know the exact form of $\cV_s$.  Direct
computation of $J \cV J^{-1}$ gives $\cV_s=$
\begin{align}\label{ee11}
\begin{pmatrix}1&(1+\gamma)^{\frac{1}{2}}\Lambda^{-\frac{1}{2}}\cV^{12}&0\\0&I&0\\
\Lambda^{-\frac{1}{2}}(1+\gamma)^{-\frac{1}{2}}\cV^{31}&\Lambda^{-1}\cV^{31}\cV^{12}&I\end{pmatrix}.
\end{align}
The only entry of this matrix that is not obviously bounded as
$|\zeta|\to\infty$ is the $(12)$-entry.  By equation (7.36) of
\cite{GMWZ6} we have
\begin{align}\label{ee12}
\cV^{12}(z,\zeta)=O(|\eta|/|\zeta|),
\end{align}
so boundedness of the $(12)$-entry follows from
\begin{align}\label{ee13}
(1+\gamma)^{\frac{1}{2}}|\eta|/|\zeta|=\frac{(1+\gamma)^{\frac{1}{2}}}{|\zeta|^{\frac{1}{2}}}\frac{|\eta|}{|\zeta|^{\frac{1}{2}}}\leq
|\eta|^{\frac{1}{2}}\leq\Lambda^{\frac{1}{2}}.
\end{align}
A similar computation of $\cV_s^{-1}$ shows
\begin{align}\label{ee14}
|\cV_s|\leq C, \;\; |\cV^{-1}_s|\leq C \text{ uniformly for }  z\geq
0,\; \zeta\in\cC_\delta,
\end{align}
and furthermore
\begin{align}\label{ee15}
\cV_s\approx\begin{pmatrix}1&O(1)&O(1)\\0&1&0\\0&0&1\end{pmatrix},\;\;\cV^{-1}_s\approx\begin{pmatrix}1&O(1)&O(1)\\0&1&0\\0&0&1\end{pmatrix}\text{
for }|\zeta| \text{ large}.
\end{align}

\textbf{5. Incoming-outgoing estimates. }Consider now the principal
part of \eqref{ee9}:
\begin{align}\label{ee27}
\partial_z V=\hat\cG_{ps}(z,\zeta)V+\cV^{-1}_s F_s,\;\;
\Gamma_s\cV_sV(0)=G_s.
\end{align}
By Lemma 7.13 of \cite{GMWZ6} there is a smooth change of variables
$\cW\in P\Gamma^0$ such that, if we set
\begin{align}\label{ee24}
T(z,\zeta):=\begin{pmatrix}I&0\\0&\cW\end{pmatrix},
\end{align}
then
\begin{align}\label{ee25}
T^{-1}\hat\cG_{ps}T=\begin{pmatrix}\hat\cG^{11}&0&0\\0&P_+&0\\0&0&P_-\end{pmatrix}:=\cG_{B,p}(z,\zeta),
\end{align}
with $P_\pm$ having their eigenvalues satisfying $\pm\Re \mu\geq
c\Lambda$.  Defining $W$ by
\begin{align}\label{ee26}
V=TW,\;  W=(w^1,w^+,w^-),
\end{align}
we can write \eqref{ee27} equivalently as
\begin{align}\label{ee28}
\partial_z W=\cG_B(z,\zeta) W+T^{-1}\cV^{-1}_sF_s,\;\;\Gamma_s\cV_s T
W=G_s,
\end{align}
where now $\cG_B=\cG_{B,p}-T^{-1}T_z$.

Define the outgoing ($W^+$) and incoming ($W^-$) parts of $W$ by
\begin{align}\label{ee29a}
W^+=\begin{pmatrix}w^1_+\\w^+\\0\end{pmatrix},\;\;W^-=\begin{pmatrix}w^1_-\\0\\w^-\end{pmatrix},
\end{align}
where $w^{1+}=w^1$, $w^{1-}=0$ when $N^1_+=0$, $w^{1+}=0$,
$w^{1-}=w^1$ when $N^1_+=N-N'$.  With $\|U\|:=\|U\|_{L^2(\bR_+)}$ we
define norms
\begin{align}\label{ee29}
\begin{split}
&(a) \|W\|_s=(1+\gamma)^{\frac{1}{2}}\|w^1\|+\Lambda^{\frac{1}{2}}\|w^+\|+\Lambda^{\frac{1}{2}}\|w^-\|\\
&(b) \|F\|'_s=(1+\gamma)^{-\frac{1}{2}}\|F^1\|+\Lambda^{-\frac{1}{2}}\|F^2\|+\Lambda^{-\frac{1}{2}}\|F^3\|\\
&(c) |W(0)|_s=|W(0)|.
\end{split}
\end{align}

Proposition 7.11 and Corollary 7.14 of \cite{GMWZ6} imply that for
large enough $R$ and $\zeta\in\cC_\delta$ with $|\zeta|\geq R$, we
have the following estimates:
\begin{align}\label{ee30e}
\begin{split}
&\|W^+\|_s+|W^+(0)|\leq C\|(\partial_z-\cG_{B,p})W^+\|'_s\\
&\|W^-\|_s\leq C\|(\partial_z-\cG_{B,p})W^-\|'_s+|W^-(0)|.
\end{split}
\end{align}

Suppose now that $V$ is a solution of \eqref{ee9}.   Let $T$ be
exactly as above and define $W$ by $V=TW$, $W=(w^1,w^+,w^-)$. Now
$W$ satisfies
\begin{align}\label{ee30h}
\partial_z W=\cG_B(z,\zeta) W+T^{-1}\cG'_s TW+T^{-1}\cV^{-1}_sF_s,\;\;\Gamma_s\cV_s T
W=G_s,
\end{align}
>From \eqref{ee30e} we deduce the following estimates  for $W$ by
treating the extra terms $T^{-1}\cG_s'TW$ and $T^{-1}T_zW$ as
forcing terms:
\begin{align}\label{ee30g}
\begin{split}
&\|W^+\|_s+|W^+(0)|\leq C\|(\partial_z-\cG_{B,p})W^+\|'_s+\eps(\zeta)\|W\|_s\\
&\|W^-\|_s\leq
C\|(\partial_z-\cG_{B,p})W^-\|'_s+|W^-(0)|+\eps(\zeta)\|W\|_s.
\end{split}
\end{align}
Here we have used the explicit forms of $\cG'_s$ \eqref{ee10} and
$T$. For example, the first row of $T^{-1}\cG'_sTW$ contributes
(recall \eqref{ee29}(b))
\begin{align}\label{ee34}
(1+\gamma)^{-\frac{1}{2}}\|\Gamma^{-1}w^1\|+C\|\Lambda^{-\frac{1}{2}}\Gamma^0
(w^+,w^-)\|+C\|\Lambda^{\frac{1}{2}}\Gamma^{-1} (w^+,w^-)\|,
\end{align}
which is a term of the form $\eps(\zeta)\|W\|_s$.

Defining the outgoing and incoming parts of $V$ by
\begin{align}\label{ee31}
V^+=TW^+,\;\;V^-=TW^-,
\end{align}
we deduce from \eqref{ee30g} and \eqref{ee14} the following
estimates for solutions to \eqref{ee9}:
\begin{align}\label{ee30}
\begin{split}
&(a)\;\|V^+\|_s+|V^+(0)|\leq C\|F_s\|'_s+\eps(\zeta)\|V\|_s\\
&(b)\;\|V^-\|_s\leq C\|F_s\|'_s+C|V^-(0)|+\eps(\zeta)\|V\|_s,
\end{split}
\end{align}
where $\eps(\zeta)\to 0$ as $|\zeta|\to \infty$.

\textbf{6. The stable subspace of $\hat\cG_{ps}(0,\zeta)$.}
Let $E^f_-(\zeta)$ be the stable subspace of the frozen operator
$\hat\cG_{ps}(0,\zeta)$.
>From \eqref{ee25} we see that when $N^1_+=0$ (resp. $N^1_+=N-N'$)
\begin{align}\label{ee30a}
\begin{split}
&E^f_-(\zeta)=\{T(0,\zeta)\begin{pmatrix}0\\0\\w^-\end{pmatrix}: w^-
\in \bC^{N'}\},\\
&(\text{resp., },
E^f_-(\zeta)=\{T(0,\zeta)\begin{pmatrix}w^{1-}\\0\\w^-\end{pmatrix}:w^{1-}\in\bC^{N-N'},w^-\in\bC^{N'}\}),
\end{split}
\end{align}
>From this and \eqref{ee29a} we conclude that for $V^-$ as in
\eqref{ee31}
\begin{align}\label{gg3}
V^-(0)\in E^f_-(\zeta).
\end{align}

Proposition 7.10 of \cite{GMWZ6} shows that, just like
$\cG_1(\zeta)$ in \eqref{aa1}(a), the matrix $\hat\cG^{11}$ has
eigenvalues with only positive, respectively negative, real part if
$\oA_d^{11}$ is outgoing ($N^1_+=0$), respectively incoming
($N^1_+=N-N'$).  Moreover, for $J_*:=J_\zeta^*$ as in \eqref{A1} and
$\cG_2$ as \eqref{aa1}(b), the $(32)$-entry of $\hat\cG_{ps}$
differs from that of $J_*\cG_2(\zeta)J_*^{-1}$ by a term that is
$O(1)$.   These observations imply that for $|\zeta|$ large
\begin{align}\label{ee18}
E^f_-(\zeta)\text{ is close to }\begin{pmatrix}J^1_\zeta
e_{-,h}(\zeta)\\0\end{pmatrix}\oplus\begin{pmatrix}0\\J^*_\zeta
e_{-,p}(\zeta)\end{pmatrix}   \;\;\;\text{  (recall }\eqref{aa2}).
\end{align}

\textbf{7. Incoming-outgoing estimates imply tracking.} Suppose that
$U(z)$ is a decaying solution of
\begin{align}\label{gg1}
\partial_z-\cG(z,\zeta)U=0.
\end{align}
Then $U(0)\in\bE^-(\zeta)$, or equivalently,
\begin{align}\label{gg2}
V(0)=J\cV^{-1}U(0)\in J\cV^{-1}\bE^-(0).
\end{align}
>From \eqref{ee30} we easily obtain
\begin{align}\label{gg4}
\begin{split}
&\|V^-\|_s\leq C|V^-(0)|+\eps(\zeta)\|V^+\|_s\\
&\|V^+\|_s+|V^+(0)|\leq C|V^-(0)|+\eps(\zeta)\|V^+\|_s,
\end{split}
\end{align}
and thus
\begin{align}\label{gg5}
|V^+(0)|\leq \eps(\zeta)|V^-(0)|.
\end{align}

Using \eqref{gg5}, \eqref{gg2}, and \eqref{ee30a} we obtain
\begin{align}\label{gg6}
J\cV^{-1}\bE^-(\zeta)\approx E^f_-(\zeta) \text{ for }
\zeta\in\cC_\delta,\;|\zeta|\geq R.
\end{align}
Applying $J \cV J^{-1}$ to \eqref{gg6} we find
\begin{align}\label{gg7}
J\bE^-(\zeta)\approx J\cV J^{-1} E^f_-(\zeta)=\cV_sE^f_-(\zeta).
\end{align}
Since $\cV_s$ has the upper triangular form \eqref{ee15} as
$|\zeta|\to\infty$, we conclude from \eqref{gg7} and \eqref{ee18}
that
\begin{align}\label{gg8}
J\bE^-(\zeta)\approx  \begin{pmatrix}J^1_\zeta
e_{-,h}(\zeta)\\0\end{pmatrix}\oplus\begin{pmatrix}0\\J^*_\zeta
e_{-,p}(\zeta)\end{pmatrix}   \;\;\text{ for }|\zeta|\geq R,
 \zeta\in\cC_\delta.
\end{align}

\textbf{8. Conclusion. }Since
\begin{align}\label{gg9}
\ker\Gamma_s=\begin{pmatrix}\ker\Gamma^1\\0\end{pmatrix}\oplus\begin{pmatrix}0\\\ker
\Gamma^{sc}_*\end{pmatrix},
\end{align}
\eqref{gg8} implies the equivalence in Theorem \ref{aa3} for
$|\zeta|\geq R$, $\zeta\in\cC_\delta$.  Together with the proof for
the elliptic zone $\cE_\delta$ in Appendix B, this concludes the
proof of Theorem \ref{aa3}.

\end{proof}

As a corollary we obtain the following simple criterion for
high-frequencies,

\begin{cor}\label{hfcrit}
Under (H1)--(H6), satisfaction of the high-frequency uniform Evans
condition for $|\zeta|\ge R$, $R>0$ sufficiently large, is
equivalent to nonvanishing on the positive parabolic sphere
$\gamma\ge 0$, $|\lambda |+ |\eta|^2=1$ (equivalently, the positive
unit sphere $\gamma \ge 0$, $|\zeta|=1$) of the parabolic Evans
function
\begin{align}\label{A1a}
d^2(\zeta):=\det_{\CC^{2N'}} \Big(  e_{-,p}(\zeta),\ker
\Gamma_*^{sc}(\zeta)\big).
\end{align}
\end{cor}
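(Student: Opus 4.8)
The plan is to deduce the statement from Theorem \ref{aa3}, which already reduces the rescaled uniform Evans condition on $|\zeta|\ge R$ to \emph{frozen-coefficients stability} for the two decoupled subsystems \eqref{aa1}(a),(b): the existence of $c_1,c_2,R'>0$ with $|D^{sc}_1(\zeta)|\ge c_1$ and $|D^{sc}_2(\zeta)|\ge c_2$ for $|\zeta|\ge R'$. So it remains to show (i) the hyperbolic factor $D^{sc}_1$ is automatically bounded away from zero, uniformly for $\gamma\ge 0$, $\zeta\ne 0$; and (ii) the parabolic factor $D^{sc}_2(\zeta)$ agrees, up to a smooth nonvanishing factor, with the parabolic Evans function $d^2$ of \eqref{A1a} evaluated on the parabolic sphere, so that ``$|D^{sc}_2(\zeta)|\ge c_2$ for $|\zeta|\ge R'$'' is equivalent to nonvanishing of $d^2$ there.

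\emph{The hyperbolic factor.} By (H3) the normal matrix $\overline A^{11}(w(0),\nu)$ is invertible with eigenvalues all of one sign, so the frozen hyperbolic system \eqref{linprinc}(a)---equivalently \eqref{aa1}(a)---is either totally outgoing ($N^1_+=0$) or totally incoming ($N^1_+=N-N'$). Since $\overline A^{11}_\nu$ is nonsingular there are no glancing modes, so the stable subspace $e_{-,h}(\zeta)$ is of constant dimension $N^1_+$, homogeneous of degree $0$ in $\zeta$, and continuous for $\gamma\ge 0$, $\zeta\ne 0$. In the outgoing case $e_{-,h}(\zeta)=\{0\}$ and $\Gamma_1$ maps into $\bR^0$, so $D^{sc}_1$ is the determinant of a basis of $\bC^{N-N'}$, a nonzero constant. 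In the incoming case $e_{-,h}(\zeta)=\bC^{N-N'}$ and, by (H6), $\Gamma_1=\Upsilon'_1(w^1(0))$ has full rank $N-N'$, so $\ker\Gamma_1=\{0\}$ and $D^{sc}_1(\zeta)$ is again the determinant of a basis of $\bC^{N-N'}$; nonvanishing here is just the uniform Lopatinski condition for a uniformly incoming hyperbolic operator with a full-rank Dirichlet boundary condition, a classical fact. In either case $|D^{sc}_1(\zeta)|$ is bounded below on the compact positive unit sphere, hence for all $|\zeta|\ge R'$.

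\emph{The parabolic factor.} The frozen parabolic operator \eqref{linprinc}(b) is quasi-homogeneous for the parabolic dilations $(\tau,\gamma,\eta)\mapsto(\mu^2\tau,\mu^2\gamma,\mu\eta)$ (under which $\Lambda$ scales by $\mu$). Writing $\mu=\Lambda(\zeta)$ and letting $\hat\zeta=(\mu^{-2}\tau,\mu^{-2}\gamma,\mu^{-1}\eta)$ be the parabolic-sphere representative of $\zeta$, an elementary computation in the first-order form $U^*=(u^2,\partial_z u^2)$ gives $\cG_2(\zeta)=\mu\,S_\mu\,\cG_2(\hat\zeta)\,S_\mu^{-1}$ with $S_\mu=\blockdiag(I,\mu I)$, whence, rescaling $z\mapsto\mu z$ in decaying solutions, $e_{-,p}(\zeta)=S_\mu\,e_{-,p}(\hat\zeta)$. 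Since $J^*_\zeta=\mu^{1/2}S_\mu^{-1}$ we get $J^*_\zeta e_{-,p}(\zeta)=e_{-,p}(\hat\zeta)$; and since $K_T(\eta)$ is linear in $\eta$, $\Lambda^{-1}K_T(\eta)=K_T(\hat\eta)$, so $\Gamma^{sc}_*(\zeta)=\Gamma^{sc}_*(\hat\zeta)$ and $\ker\Gamma^{sc}_*(\zeta)=\ker\Gamma^{sc}_*(\hat\zeta)$. Therefore $D^{sc}_2(\zeta)=d^2(\hat\zeta)$ up to a smooth nonvanishing factor coming from the choice of bases. By (H2) (strict parabolicity of the $u^2$ block) the symbol $\cG_2(\hat\zeta)$ has no eigenvalue on the imaginary axis for $\hat\zeta\ne 0$, $\hat\gamma\ge 0$, so $e_{-,p}$ is of constant dimension $N'$ (matching $\dim\ker\Gamma^{sc}_*=N'$ from (H6)) and continuous there; thus $d^2$ is continuous on the compact positive parabolic sphere. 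Consequently $|D^{sc}_2(\zeta)|\ge c_2$ for all $|\zeta|\ge R'$ if and only if $d^2$ is nonvanishing on the positive parabolic sphere (equivalently, by the same rescaling, on the positive unit sphere). Combining with the hyperbolic part and Theorem \ref{aa3} gives the corollary.

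The substantive work is entirely contained in Theorem \ref{aa3} (the tracking argument matching $J_\zeta\bE^-(\zeta)$ to the frozen stable subspace); here the only points to verify are the elementary quasi-homogeneity bookkeeping for the parabolic block and the classical observation about fully incoming hyperbolic Dirichlet problems, so I expect no real obstacle beyond keeping the rescalings straight.
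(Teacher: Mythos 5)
Your proposal is correct and follows essentially the same route as the paper's proof: invoke Theorem \ref{aa3}, observe that the hyperbolic factor is trivially nonvanishing because under (H3) either $e_{-,h}(\zeta)=\{0\}$ with $\ker\Gamma_1=\CC^{N-N'}$ or $e_{-,h}(\zeta)=\CC^{N-N'}$ with $\ker\Gamma_1=\{0\}$, and use parabolic quasi-homogeneity to show $J^*_\zeta e_{-,p}(\zeta)=e_{-,p}(\hat\zeta)$ and $\ker\Gamma^{sc}_*(\zeta)=\ker\Gamma^{sc}_*(\hat\zeta)$, reducing $D^{sc}_2$ to $d^2$ on the parabolic sphere. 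Your bookkeeping (the conjugation $S_\mu=\blockdiag(I,\mu I)$ and the explicit parabolic dilation of $\hat\zeta$) is a slightly more careful rendering of the paper's $\partial_z(J^*_\zeta U^*)=\Lambda\hat\cG_2(\zeta)J^*_\zeta U^*$ computation, and the continuity/compactness remarks you add are left implicit in the paper.
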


\begin{proof}

The system \eqref{aa1}(b) has the form
\begin{align}\label{A2}
\partial_zU^*=\mathcal{G}_2(\zeta)U^*:=\begin{pmatrix}0&1\\\mathcal{M}&\mathcal{A}\end{pmatrix}U^*,
\end{align}
where $\cM(\zeta)$ and $\cA(\zeta)$ are quasihomogeneous of degrees
two and one respectively.
The equation \eqref{A2} can
be written equivalently as
\begin{align}\label{A3}
\begin{split}
&\partial_z
(J_\zeta^*U^*)=\Lambda\hat{\cG}_2(\zeta)J_\zeta^*U^*,\text{
where }\\
&\hat{\cG}_2(\zeta):=\begin{pmatrix}0&1\\\frac{\cM}{\Lambda^2}&\frac{\cA}{\Lambda}\end{pmatrix}.
\end{split}
\end{align}
This shows that $U^*\in e_{-,p}(\zeta)$ if and only if
$J_\zeta^*U^*$ is in the stable subspace of $\hat{\cG}_2(\zeta)$.
With $\hat{\zeta}:=\zeta/\Lambda$ and writing $E_-(M)$ for the
stable subspace of any matrix $M$, we thus have
\begin{align}\label{A6}
J_\zeta^*e_{-,p}(\zeta)=E_-(\hat{\cG}_2(\zeta))=E_-(\hat{\cG}_2(\hat{\zeta}))=e_{-,p}(\hat{\zeta}).
\end{align}
Since we clearly have
\begin{align}\label{A7}
\ker \Gamma^{sc}_*(\zeta)=\ker \Gamma^{sc}_*(\hat{\zeta}),
\end{align}
it follows from \eqref{A6} that $|D^2(\zeta)|\geq C>0$ for $|\zeta|$
large if and only if $d^2(\zeta)$ is nonvanishing on the parabolic
unit sphere.

Since the eigenvalues of $\overline A^{11}_d$ were assumed all
positive (totally incoming) or all negative (totally outgoing), we
have either $\ker \Gamma_1=\{0\}$, $e_{-,h}(\zeta)=\CC^{N-N'}$ or
else $\ker \Gamma_1= \CC^{N-N'}$, $e_{-,h}(\zeta)=\{0\}$. In either
case the hyperbolic stability condition is trivially satisfied.
\end{proof}

The following result verifies high-frequency stability for many
physical cases, including the applications we will consider here.

\begin{prop}\label{symmhf} Consider a layer profile $w(z)$ as in \eqref{vv43}
and the linearized equations about $w(z)$ given by \eqref{linq} (or
\eqref{linq4}. In the symmetric-dissipative case (Defn.
\ref{defsymm}) the uniform high-frequency Evans condition is
satisfied either for full Dirichlet conditions $\rank \Upsilon_3= 0$
or full Neumann conditions $\rank \Upsilon_3= N'$ on the parabolic
variable $u^2$.
\end{prop}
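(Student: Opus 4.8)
The plan is to reduce, via Corollary \ref{hfcrit}, to a linear-algebraic nonvanishing statement on a \emph{compact} frequency set, and then to prove that statement by a direct $L^2$ energy estimate built from the symmetrizer $S$ of Definition \ref{defsymm}. Since (H3) is in force, $\overline A^{11}_d=\overline A^{11}_\nu(w(0))$ is totally incoming or totally outgoing, so the hyperbolic rescaled Evans function is trivially nonvanishing, and by Corollary \ref{hfcrit} the uniform high-frequency Evans condition for $w$ is equivalent to $d^2(\zeta):=\det_{\CC^{2N'}}\bigl(e_{-,p}(\zeta),\ker\Gamma_*^{sc}(\zeta)\bigr)\neq 0$ on the positive unit sphere $\{\gamma\ge 0,\ |\zeta|=1\}$, on which the parabolic weight $\Lambda$ is bounded above and below. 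Unwinding the definition of $d^2$, this amounts to showing that for each such $\zeta$ there is \emph{no} nonzero $v=u^2\in L^2(\RR_+)$, decaying at $+\infty$, that solves the frozen, Laplace--Fourier transformed parabolic equation
\begin{equation}\label{symmhf-pde}
(i\tau+\gamma)\,A_0^{22}(w(0))\,v-\sum_{j,k}B_{jk}^{22}(w(0))\,\hat\D_j\hat\D_k v=0,\qquad z>0,
\end{equation}
(with $\hat\D_j=i\eta_j$ for $j<d$ and $\hat\D_d=\D_z$) and satisfies $\Gamma_*^{sc}(\zeta)(v,\D_z v)\big|_{z=0}=0$. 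By Assumption \ref{assbc}, this boundary condition reduces to $v(0)=0$ in the full Dirichlet case ($\rank\Upsilon_3=0$, so $\Gamma_2=\Upsilon_2'(w^2(0))$ has full rank $N'$) and to $\D_z v(0)=0$ in the full Neumann case ($\rank\Upsilon_3=N'$, so $\Upsilon_2$ is absent and $K_d$ is invertible).

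Next I would bring in the symmetrizer and run the energy estimate. Because $A_0$ and the $B_{jk}$ have block structure \eqref{struc1}, $S$ is lower block triangular and $SB$ is block diagonal (the equivalences after Definition \ref{defsymm}), so its $22$-block $S^{22}:=S^{22}(w(0))$ has two key properties: $\tilde A_0:=S^{22}A_0^{22}$ is real symmetric positive definite, and the parabolic symbol $\tilde B(\xi):=\sum_{j,k}\xi_j\xi_k\,S^{22}B_{jk}^{22}$ has positive definite Hermitian part for every real $\xi\neq 0$ — because the kernel of $\re\,S(w(0))B(w(0),\xi)$, of dimension $N-N'$ by Definition \ref{defsymm}, is exactly the $u^1$-block; by homogeneity and compactness this sharpens to $\re\tilde B(\xi)\ge c|\xi|^2 I$ with $c>0$ uniform. (When $w$ is small-amplitude $w(0)$ lies near $\pi\cU_\partial\subset\pi\cU$ where $S$ is defined, and for the physical symmetric--dissipative systems considered later $S$ is globally available, so this holds at $w(0)\in\pi\cU^*$ as well.) Multiplying \eqref{symmhf-pde} on the left by $S^{22}$, pairing with $v$ in $L^2(\RR_+)$, and integrating by parts in $z$ twice, the $z=+\infty$ boundary term vanishes ($v$ lies in the stable subspace of $\cG_2(\zeta)$, hence decays exponentially with all derivatives) and the only remaining boundary contribution at $z=0$ is $v(0)^*\tilde B_{dd}\,\D_z v(0)$, which vanishes \emph{both} when $v(0)=0$ and when $\D_z v(0)=0$, i.e. in each of the two cases of the statement. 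Taking real parts of the resulting identity gives
\begin{equation}\label{symmhf-id}
\gamma\,\langle\tilde A_0 v,v\rangle_{L^2(\RR_+)}+\re\,\cQ_\eta(v)=0,
\end{equation}
where $\cQ_\eta(v)=\int_0^\infty v^*\,\tilde B(\eta,D_z)v\,dz$ is the half-line pairing of the second-order operator with symbol $\tilde B(\eta,\sigma)$ ($D_z=-i\D_z$). Extending $v$ to $\RR$ in a way matched to the boundary condition — by zero for Dirichlet, by even reflection for Neumann, each extension lying in $H^1(\RR)$ — and applying Plancherel together with $\re\tilde B(\xi)\ge c|\xi|^2 I$, one obtains the G\aa rding-type bound $\re\,\cQ_\eta(v)\ge c'\bigl(|\eta|^2\|v\|^2+\|\D_z v\|^2\bigr)$ with $c'>0$ uniform on the sphere. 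Since $\tilde A_0$ is positive definite, both terms in \eqref{symmhf-id} are then nonnegative and sum to zero, forcing $\D_z v\equiv 0$; hence $v$ is constant, hence $v\equiv 0$ by decay, contradicting $v\neq 0$. Thus $d^2(\zeta)\neq 0$ on the sphere, which is exactly the claimed uniform high-frequency Evans condition.

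The main obstacle — and the reason the statement isolates \emph{full} Dirichlet or \emph{full} Neumann conditions — is precisely that the $z=0$ boundary term $v(0)^*\tilde B_{dd}\,\D_z v(0)$ in the energy identity must drop out; for genuinely mixed Dirichlet--Neumann conditions it carries no favorable sign, and indeed Corollary \ref{symmstab}(c)--(d) shows Evans stability can then fail. The other delicate point is the uniform G\aa rding inequality on the half-line: one cannot read it off from the values of $\re\tilde B$ at $\xi=e_d$ and $\xi=(\eta,0)$ alone (the $\eta$--$\D_z$ cross term would then be uncontrolled), but must use positivity of the full parabolic symbol and pick the correct extension of $v$ for the given boundary condition; once that is set up the remaining computation is routine.
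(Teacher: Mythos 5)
Your reduction to the compact parabolic sphere via Corollary \ref{hfcrit}, and the recasting of nonvanishing of $d^2(\zeta)$ as nonexistence of a nonzero decaying solution of the frozen equation with homogeneous boundary data, are both sound, and your Dirichlet argument is a correct and in fact cleaner variant of the paper's: with $v(0)=0$ the extension by zero is \emph{exact}, so $\re\,\cQ_\eta(v)=\int_{\RR}\hat{\tilde v}^*\,\re\tilde B(\eta,\sigma)\,\hat{\tilde v}\,d\sigma$ with no error term, and the rigidity conclusion follows from a single energy identity. The paper instead proves quantitative trace estimates ($|u^2_z(0)|\le C|g|$, resp.\ $|u^2(0)|\le C|h|$) by pairing the equation with both $u^2$ and $-\D_z^2u^2$, accepting a G\aa rding extension error $C|\eta|\,|u^2(0)|^2$, and closing with the Sobolev trace inequality; for Dirichlet your route avoids all of that bookkeeping.

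The Neumann case, however, has a genuine gap at exactly the step you flag as delicate. Write $\cQ_\eta(v)=I+II+III$ with $I=\int_0^\infty v'^*\tilde B_{dd}v'$, $III=\sum_{j,k\ne d}\eta_j\eta_k\int_0^\infty v^*\tilde B_{jk}v$, and $II=-i\sum_{k\ne d}\eta_k\int_0^\infty v^*(\tilde B_{dk}+\tilde B_{kd})v'$. Under the even reflection $\tilde v(z)=v(|z|)$ the integrands of $I$ and $III$ are even but that of $II$ is odd (since $\tilde v'(-z)=-v'(z)$), so $\cQ^{\RR}_\eta(\tilde v)=2(I+III)+0$: Plancherel applied to the reflected function bounds $\re(I+III)$ from below and says nothing about $\re\,II$, which is a genuine interior quantity of size up to $C|\eta|\,\|v\|\,\|v'\|$ and of either sign (it is essentially $\sum_k\eta_k\,\Im\int v^*(\tilde B_{dk}+\tilde B_{kd})v'$, not a boundary term). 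In the scalar case positivity of the full symbol forces a discriminant condition that would rescue you via Cauchy--Schwarz, but in the matrix case $\re\tilde B(\xi)\ge c|\xi|^2$ constrains only expressions $w^*\tilde B_{jk}w$ with the same vector on both sides and does not dominate the mixed term with $v$ on one side and $v'$ on the other. So the claimed coercivity of $\re\,\cQ_\eta$ on Neumann data is unproved, and the single-identity argument does not close. The repair is essentially the paper's: a G\aa rding inequality with boundary defect $-C|\eta|\,|v(0)|^2$, a second estimate obtained by pairing with $-v''$, and absorption via $|v(0)|^2\le\delta\|v\|^2+C_\delta\|v'\|^2$, using that $|\eta|\le1$ on the sphere. (A smaller point, which the paper also elides: for $\rank\Upsilon_3=N'$ the rescaled boundary operator is $K_dv'(0)+\Lambda^{-1}K_T(\eta)v(0)$, not simply $v'(0)=0$, so the tangential term must be absent or treated as a controlled perturbation.)
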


\begin{proof}
\textbf{1. }By Corollary \ref{hfcrit} and Remark \ref{Evansest}, the
uniform high-frequency Evans condition in the case of  full
Dirichlet (resp. full Neumann) boundary conditions on $u^2$ is
equivalent to the estimate
\begin{align}\label{Z1}
|u^2_z(0)|\leq C|g|\;\;   \text{ (resp. } |u^2(0)|\leq C|h|)
\end{align}
for decaying solutions of
\begin{align}\label{Z2}
\begin{split}
&\lambda u^2-B^{22}_{dd}u^2_{zz}-i\sum_{k\neq
d}(B_{dk}^{22}+B_{kd}^{22})\eta_ku^2_z+\sum_{j,k\neq
d}\eta_j\eta_kB_{jk}^{22}=0\\
&u^2(0)=g\;\;\text{ (resp. } u^2_z(0)=h),
\end{split}
\end{align}
where the constant $C$ in \eqref{Z1} is independent of
$(\lambda,\eta)$ in the positive parabolic unit sphere.  As in
\eqref{linprinc} the coefficients in \eqref{Z2} are evaluated at
$w(0)$.

The estimates below are similar, but not identical, to those given
in section \ref{strictparabcase}. Here we highlight the differences
and refer to that section for extra detail.   We now take
\begin{align}\label{pp19}
|\lambda|+|\eta|^2=1.
\end{align}

\textbf{2. Dirichlet conditions. } Taking the real part of the inner
product of $u^2$ with \eqref{Z2}, we obtain after integration by
parts as in \eqref{real2}:
\begin{equation}
\label{real3}
\begin{aligned}
(\Re \lambda +|\eta|^2)|u_2|_2^2 + |u_2'|_2^2 \le C( |g||u_2'(0)|+
|\eta||g|^2).
\end{aligned}
\end{equation}
Here the last term on the right is a ``Garding error" that is
explained just below \eqref{j1}.

Similarly, taking the real part of the inner product of $-u_2''$
with \eqref{Z2}, we obtain as in \eqref{realderiv}
\begin{equation}
\label{realderiv3}
\begin{aligned}
(\Re \lambda +|\eta|^2)|u_2'|_2^2 + |u_2''|_2^2 \le
C\left((|\lambda|+|\eta|^2\right) |g||u_2'(0)|+ |\eta||u_2'(0)|^2).
\end{aligned}
\end{equation}
The small differences between the estimates here and in section
\ref{strictparabcase} reflect the absence of the matrices $A_k$ in
\eqref{Z2}.

Using the Sobolev bound
\begin{equation}\label{Sobb}
|u_2'(0)|^2\le |u_2'|_2|u_2''|_2\le C_\delta|u_2'|_2^2 +
\delta|u''|_2^2
\end{equation}
we immediately deduce
\begin{align}\label{pp22}
|u_2'(0)|\leq C|g|
\end{align}
from \eqref{real3}, \eqref{realderiv3}, and \eqref{pp1}.

\textbf{3. Neumann conditions. } Taking inner products as above, but
now taking imaginary parts in order to estimate
$|\Im\lambda||u_2|_2^2$ and $|\Im\lambda||u'_2|_2^2$, we obtain
after combining estimates:
\begin{align}\label{real4}
 (|\lambda| +|\eta|^2)|u_2|_2^2 + |u_2'|_2^2 \le C( |g||u_2'(0)|+
|\eta||g|^2)
\end{align}
and
\begin{equation}
\label{realderiv4}
\begin{aligned}
(|\lambda| +|\eta|^2)|u_2'|_2^2 + |u_2''|_2^2 \le
C\left((|\lambda|+|\eta|^2\right) |g||u_2'(0)|+ |\eta||u_2'(0)|^2).
\end{aligned}
\end{equation}

Using the Sobolev inequality
\begin{align}\label{a432}
|u_2(0)|^2\le |u_2|_2|u_2'|^2_2\leq
\delta|u_2|^2_2+C_\delta|u_2'|_2^2.
\end{align}
and the estimates on $u_2$ and $u_2'$ coming from the first terms on
the left in \eqref{real4} and \eqref{realderiv4} respectively, we
obtain
\begin{align}\label{pp16}
|u_2(0)|\leq C|h|.
\end{align}

\end{proof}

\begin{rem}\label{genBC}
 Recalling (see Remark \ref{homgen}) that the Evans condition is
equivalent to maximal stability estimates, we find from the
reduction to \eqref{linprinc} that for decoupled boundary conditions
the assumed ranks of $\Gamma_1$ and $(\Gamma_2, \Gamma_3)$ are {\it
necessary} in order to obtain maximal high-frequency stability
estimates. For example, specifying density or pressure rather than
velocity for outgoing isentropic gas-dynamical flow (see Section
\ref{Isentropic}) would result in degraded stability estimates.
\end{rem}

Note that the above results hold for {\it arbitrary-amplitude
layers}.

\subsection{The small-amplitude limit}\label{smallamp}
With Corollary \ref{hfcrit} we are now able to verify the uniform
Evans condition for high frequencies (Definition \ref{hfevans}) by
reducing to the consideration of a bounded set of frequencies.  This
puts us in position to prove Theorem \ref{smallred}.

\begin{proof}[Proof of Theorem \ref {smallred}]
\textbf{1. Preliminaries. }It is sufficient to show that uniform
Evans
stability of the constant layers $w(z,u,\nu)\equiv u$, $(u,\nu)\in
D$ implies uniform Evans stability for sufficiently small amplitude
profiles associated to elements of $D$. (The reverse direction is
trivial, zero-amplitude being included in the set of small-amplitude
profiles.)
By compactness of $D$, it is sufficient to establish stability of
small-amplitude layers in the vicinity of the constant layer
$w(z)=w(z,\uu,\unu)\equiv\uu$ associated to a single element
$(\uu,\unu)\in D$.  Recall that $\eps$-amplitude profiles
$w(z,u,\nu)$ as in Definition \ref{cd1} satisfy
\begin{align}\label{bg1}
\begin{split}
&(a)\; A_\nu(w)\partial_zw-\partial_z(B_\nu(w)\partial_zw)=0\text{
on
}z\geq 0\\
&(b)\;w\to u\text{ as }z\to \infty,\\
&(c)\;\|(w,w^2_z)-(\uu,0)\|_{L^\infty}\leq \eps, \;|\nu-\unu|\leq
\eps
\end{split}
\end{align}
for some $\eps>0$.

\textbf{2. Parameters.} From the assumed Evans stability of $w\equiv
\uu$, we have transversality of the constant layer by Lemma
\ref{Rousset}. Thus, by Proposition \ref{C14a} specialized to the
vicinity of a single point, there exists a neighborhood
$\omega\subset \bR^N\times S^{d-1}$ of
$(\underline{u},\underline{\nu})$ and constants $R>0$, $r>0$ such
that for $(\nu,q)\in\omega$, all solutions $w$ of \eqref{bg1}(a),(b)
satisfying
$$
\|(w, w^2_z)-(\uu, 0)\|_{L^\infty[0,\infty]}\leq R,
$$
are parametrized by a $C^\infty$ function $w=\Phi(z,\nu,q,a)$ on
$[0,\infty)\times \omega^*$, where $\omega^*$ is the set of
parameters $(\nu,q,a)$ with $(\nu,q)\in\omega$ and
$a\in\bE_-(G_\unu(\up))$ with $|a|\leq r$.

Let $D_{\uu,\unu}(\zeta)$ denote the Evans function for \eqref{vv2}
 corresponding to linearization around the constant state $\uu$,
let $D_{\nu,q,a}(\zeta)$ be the Evans function arising from a
profile $w=\Phi(z,\nu,q,a)$.
and note that
\begin{equation}\label{code}
D_{\uu,\unu}=D_{\unu,\uu,0}.
\end{equation}
Using similar notation for the Evans function $d^2(\zeta)$ defined
in \eqref{A1a} we have:
\begin{align}\label{hh4}
\begin{split}
&(a)d^2_k(\zeta)=\det\left(e_{-,p,k}(\zeta),\ker\Gamma_{*,k}^{sc}(\zeta)\right),\;\;k=(\uu,\unu),\;
(\nu,q,a)\\
&(b)D_k(\zeta)=\det\left(\bE^-_k(\zeta),\ker\Gamma_k(\zeta)\right),\;\;k=(\uu,\unu),\;
(\nu,q,a).
\end{split}
\end{align}

 Observe that for $\nu$ near $\unu$,  given $\delta>0$ there exists
$0<\eps<R$ such that
\begin{align}\label{hhh4}
\|(w, w^2_z)-(\uu, 0)\|_{L^\infty[0,\infty]}\leq
\eps,\;\;|\nu-\unu|<\eps \Rightarrow |\nu-\unu|+|q-\uu|+|a|<\delta.
\end{align}
This follows from the fact that by Proposition \ref{C14a},  for
$\nu$ near $\unu$, $(\Phi,\Phi^2_z)$ defines a diffeomorphism from a
neighborhood of $(q,a)=(\uu,0)$ into the center-stable manifold of
$(\uu,0)$ for \eqref{bg1}(a), written as a first-order system.

\textbf{2. High frequencies.}  By Proposition \ref{hfcrit} the
uniform Evans condition for high frequencies is equivalent to the
existence of $c>0$ such that $|d^2(\zeta)|\geq c$ for $\zeta$ on the
positive unit sphere $S^+:=\{\zeta:|\zeta|=1, \gamma\geq 0\}$, a
compact set. Thus, it suffices to show that in a small neighborhood
of any $\uzeta\in S^+$, the subspaces $e_{-,p,k}(\zeta)$, $k=\uu,(\nu,q,a)$
(resp. $\ker\Gamma_{*,k}^{sc}(\zeta)$, $k=\uu,(\nu,q,a)$) are close when
$|\nu-\unu|+ |q-\uu|+|a|$ is small enough. Recall from
\eqref{linprinc}(b) and \eqref{pp2} that those spaces depend on the
profile only through $w(0)$.
When $|\nu-\unu|+ |q-\uu|+|a|$ is small,
we have  $w(0)\approx \uu$, so
\begin{align}\label{hh5}
d^2_{\nu,q,a}(\zeta)\approx d^2_{\uu,\unu}(\zeta).
\end{align}

\textbf{3. Bounded frequencies}.  By compactness it suffices to show
that for $\zeta$ near some fixed $\uzeta$, the corresponding spaces
appearing in \eqref{hh4}(b) are close
for $|\nu-\unu|+ |q-\uu|+|a|$
sufficiently small.  This is true for the spaces
$\ker\Gamma_k(\zeta)$,
$k=(\uu,\unu),\;(\nu,q,a)$,
since they depend on the profile only through $w(0)$. The treatment
of $\bE^-_k(\zeta)$ requires the conjugator $P(z,\zeta,k)$ of Lemma
\ref{conjugation}, where now we write $k$ instead of $p$ for
parameters..

For $\zeta\neq 0$ let $\bE^-_\infty(\zeta,k)$ be the stable subspace
of $\cG(\infty,\zeta,k)$, which depends on the profile only through
$w(\infty)$, and recall from the discussion below \eqref{b37w} that
\begin{align}\label{hh6}
\bE^-(\zeta,k)=P(0,\zeta,k)\bE^-_\infty(\zeta,k),\;\;
k=(\nu,q,a),\;(\uu,\unu).
\end{align}
The dependence of $P(0,\zeta,k)$ on the profile is \emph{not}
through $w(0)$ alone
but on the entire profile. However, recalling from Lemma
\ref{conjugation} and (the proof of) Corollary \ref{contcor} that
$P(0,\zeta,k)$ and $\bE^-_\infty(\zeta,k)$ depend continuously on
the parameter $k$, we conclude  from \eqref{hh6} and the definition
of the Evans function that
$$
D_{\nu,q,a}(\zeta)\approx D_{\uu,\unu}(\zeta),
$$
for $|\nu-\unu|+ |q-\uu|+|a|$ sufficiently small. For $\zeta$ near
$0$, we replace $\bE^-_\infty(\zeta,k)$ by
$\bE^-_\infty(\hat\zeta,\rho,k)$ (recall \eqref{ab2}) in this
argument. (This bounded frequency argument is the same as the proof
of Corollary \ref{contcor}, but with the relevant parameters now
explicitly identified.)
\end{proof}

\section{Uniform Evans stability of small-amplitude layers for symmetric--dissipative systems}\label{stabsec}

In this section we prove Corollary \ref{symmstab}, which shows that
the uniform Evans condition Definition \ref{hfevans}(b) holds for
small-amplitude layers for symmetric-dissipative systems under
several types of boundary conditions. By Theorem \ref{smallred} it
suffices to show stability of constant layers for
symmetric--dissipative systems.

\subsection{The strictly parabolic case}\label{strictparabcase}

For clarity, we first carry out the simpler strictly parabolic case.

\begin{proof}[Proof of Corollary \ref{symmstab} in the case $N=N'$]
\textbf{}

Instability for $\rank \Upsilon_3>N^2_-$ follows again by
Proposition \ref{smalltrans} combined with Lemma \ref{Rousset},.

\textbf{1. Dirichlet boundary conditions. } By Theorem
\ref{smallred} it is sufficient to prove stability of the constant
layer $w(z)=p$. The matrix $Y_2'(p)$ is now an invertible $N\times
N$ matrix, so the boundary condition $Y_2'(p)u(0)=h$ can be written
$u(0)=Y_2'^{-1}(p)h:=g$.   With $\lambda=i\tau+\gamma$ we consider
decaying solutions $u(x_d,\lambda,\eta)$ of the Fourier-Laplace
transformed problem with coefficients evaluated at $p$:
\begin{align}\label{res}
\begin{split}
&(a)\;\;\lambda A_0 u + A_d u' +i \sum_{k\ne d } A_k\eta_k u  \\
&- B_{dd}u'' - i\sum_{k\ne d} (B_{dk}+B_{kd})\eta_k u' +\sum_{j,k\ne
d} \eta_j
\eta_k B_{jk}u= 0,\\
&(b) \;\;u(0)=g,
\end{split}
\end{align}
where the $A_j$ are symmetric, $A_0$ is positive definite, and
$B_{jk}$ is dissipative:
\begin{equation}
\label{diss} \Re \sum_{jk}B_{jk}\xi_j \xi_k \ge \theta
|\xi|^2,\;\theta>0, \text{ for all }\xi\in \RR^d.
\end{equation}
Note that by the coordinate change
\begin{align}\label{coord}
u=(A_0)^{-1/2}w
\end{align}
we can take $A_0=I$ without loss of generality, another advantage of
constant-coefficients.  We do this in the remainder of section
\ref{stabsec}.

By Remark \ref{homgen} we must establish the trace estimate
\begin{equation}\label{b1}
|u'(0)|\leq C\Lambda |g|,
\end{equation}
where $\Lambda\sim |1,\tau,\gamma|^{1/2}+|\eta|$.  In this case it
is easy to treat frequencies of all sizes, so here we do not make
use of the reduction to bounded frequencies effected by Corollary
\ref{symmhf}.

Taking the real part of the inner product of $u$ with \eqref{res},
we obtain
\begin{equation} \label{real}
\begin{aligned}
&\Re \lambda \langle u,  u\rangle - \frac{1}{2} g\cdot A_d g - \Re
g\cdot B_{dd}u'(0) +
\langle u', \Re B_{dd}u'\rangle \\
&\quad -\sum_{k\ne 1}  \langle \eta_ku, \Re i
(B_{dk}+B_{kd})u'\rangle +\sum_{j,k\ne d} \eta_k \eta_j \langle u,
\Re B_{jk}u\rangle =0.
\end{aligned}
\end{equation}
By \eqref{diss}, we obtain, extending to the whole line by $u\equiv
0$ on $x\le -\frac{1}{|\eta|}$ and $u= (x|\eta|+1)u(0)$ on
$-\frac{1}{|\eta|}\le x\le 0$, taking the Fourier transform, and
accounting for errors introduced by extension, the G\"arding
inequality
\begin{equation}
\label{j1}
\begin{aligned}
&\langle u', \Re B_{dd}u'\rangle
+\sum_{k\ne d}  \langle \eta_ku, \Re i (B_{dk}+B_{kd})u'\rangle \\
&\quad +\sum_{j,k\ne d} \eta_k \eta_j \langle u, \Re B_{jk}u\rangle
\ge \theta(|u'|^2_2+ |\eta|^2 |u|_2^2)
- C|\eta||u(0)|^2,
\end{aligned}
\end{equation}
where $\theta>0$.   Here $C|\eta||u(0)|^2$ is the error due to
extension in the G\"arding inequality.  It is an upper bound for the
left side of \eqref{j1}, computed using the explicit formula for the
extension of $u$ and inner products in $x\leq 0$.
 Combining \eqref{j1} with \eqref{real}, we obtain
\begin{equation}
\label{real2}
\begin{aligned}
(\Re \lambda +|\eta|^2)|u|_2^2 + |u'|_2^2 \le C( |g||u'(0)|+
(1+|\eta|)|g|^2).
\end{aligned}
\end{equation}

Similarly, taking the real part of the inner product of $-u''$ with
\eqref{res}, we obtain after integration by parts,
\begin{equation}
\label{realderiv}
\begin{aligned}
(\Re \lambda +|\eta|^2)|u'|_2^2 + |u''|_2^2 \le
C\left((|\lambda|+|\eta|+|\eta|^2\right) |g||u'(0)|+
(1+|\eta|)|u'(0)|^2).
\end{aligned}
\end{equation}

Using the Sobolev bound
\begin{equation}\label{Sob}
|u'(0)|^2\le |u'|_2|u''|_2\le C_\delta\Lambda|u'|_2^2 +
\delta|u''|_2^2/\Lambda,
\end{equation}
and the estimates on $|u'|_2^2$ and $|u''|_2^2$ coming from
\eqref{real2} and \eqref{realderiv} respectively, we obtain
\begin{align}\label{Sob2}
\Lambda|u'(0)|^2\leq C(\Lambda^2|g||u'(0)|+\Lambda^3|g|^2).
\end{align}
Since
\begin{align}
\Lambda^2|g||u'(0)|\leq
C_\delta\Lambda^3|g|^2+\delta\Lambda|u'(0)|^2,
\end{align}
we find that \eqref{Sob2} implies
\begin{align}\label{sob3}
\Lambda|u'(0)|^2\leq C\Lambda^3|g|^2.
\end{align}

\textbf{2. Neumann boundary conditions. }In place of \eqref{res}(b)
we now have the boundary condition
\begin{align}\label{a40}
u'(0)=h,
\end{align}
and we will continue to write $u(0)=g$ in what follows.  By Remark
\ref{homgen} it suffices to establish the trace estimate
\begin{align}\label{a41}
\Lambda|u(0)|\leq C|h|
\end{align}
for a constant $C$ independent of $\zeta$.   To shorten the argument
we now make use of the reduction to bounded frequencies $|\zeta|\leq
R$ provided by Proposition \ref{symmhf}.   In this regime the
estimate \eqref{a41} is equivalent to
\begin{align}\label{a42}
|u(0)|\leq C\Lambda^2|h|,
\end{align}
so we proceed now to prove the latter estimate.

Again we use the Sobolev inequality
\begin{align}\label{a43}
|u(0)|^2\le |u|_2|u'|^2_2\leq \delta|u|^2_2+C_\delta|u'|_2^2.
\end{align}
Letting $\Lambda':=\gamma^{\frac{1}{2}}+|\eta|$, for $\Lambda'\geq
c>0$ we have from \eqref{real2} and \eqref{realderiv}
\begin{align}\label{a44}
\begin{split}
&|u|^2_2\leq \frac{C}{\Lambda'^2}|g||h|+\frac{C}{\Lambda'}|g|^2\\
&|u'|^2_2\leq C\Lambda^2|g||h|+\frac{C}{\Lambda'}|h|^2.
\end{split}
\end{align}
Substituting into \eqref{a43} and taking $\delta$ small enough, we
absorb the terms involving $g$ to obtain
\begin{align}\label{a45}
|u(0)|^2\leq C\Lambda^4|h|^2.
\end{align}

By assumption $N^2_-=N'=N$ and the fact that $N_+=N_b-N^2_-=0$, the
boundary term $-A_d g\cdot g$ in \eqref{real} has favorable sign.
Instead of \eqref{real2} we find
\begin{align}\label{a46}
(\Re \lambda +|\eta|^2)|u|_2^2 + |u'|_2^2 +|u(0)|^2\le C( |g||h|+
|\eta||g|^2).
\end{align}
For $\Lambda'\leq c$ with $c>0$ small enough, we can absorb the
terms involving $g$ in \eqref{a46} to deduce \eqref{a45}.   This
completes the proof of \eqref{a42} for bounded frequencies.

\textbf{3. Instability.} The fact that the Evans condition fails for
$\rank \Upsilon_3>N^2_-$ follows by Corollary \ref{smalltrans}
combined with Lemma \ref{Rousset}.
\end{proof}

\begin{rem}\label{garding}
 The argument of \cite{GG} establishing stability of
small-amplitude Dirichlet profiles in the non-constant coefficient,
Laplacian-viscosity case required the weighted Poincar\'e estimate
\begin{align}\label{a50}
 \int_0^{+\infty} |w'(z)||u(z)|^2\, dz\le \|zw'\|_{L^1(0,+\infty)}
\|u'\|_{L^2(0,+\infty)},
\end{align}
established there for $u(0)=0$.  This estimate was also used in the
one-dimensional treatments of \cite{GS,R3}.  In the multidimensional
case of general $B_{jk}$, the approach of \cite{GG} also requires a
careful estimate of the error due to extension in the application of
Garding's inequality. These technicalities disappear in the
constant-coefficient limit, a major simplification. However, it may
be possible to use the argument of \cite{GG} in certain cases
involving variable multiplicities not covered by our structural
assumptions.

\end{rem}

\subsection{The partially parabolic case}\label{gencase}
We now treat the case that $B_{jk}$ is only semidefinite.

\begin{proof}[Proof of Corollary \ref{symmstab} ($N> N'$)]
\textbf{}
Instability for $\rank \Upsilon_3>N^2_-$ follows again by
Corollary \ref{smalltrans} combined with Lemma \ref{Rousset}.

\textbf{1. Dirichlet boundary conditions. } It is sufficient to
consider the constant-coefficient equation
\begin{equation}\label{a1}
\lambda  u + A_d u' +i \sum_{k\ne d } A_k\eta_k u  - B_{dd}u'' -
i\sum_{k\ne d} (B_{dk}+B_{kd})\eta_k u' +\sum_{j,k\ne d} \eta_j
\eta_k B_{jk}u= 0,
\end{equation}
where $A_j$ are symmetric, $A_0$ positive definite, and $B_{jk}$ is
now block-diagonal and dissipative:
\begin{equation}\label{a2}
\Re \sum \xi_j \xi_k B^{22}_{jk}>0 \text{ for }\xi\in \RR^d\setminus
\{0\}.
\end{equation}
For later reference we record the first and second components of
\eqref{a1}:
\begin{align}\label{e5.20}
\begin{split}
&(a)\;(i\tau+\gamma)u_1
+i \sum_{k\ne d } A_k^{11}\eta_k u_1
+i \sum_{k\ne d } A_k^{12}\eta_k u_2
+A_d^{11} u_1'+A_d^{12} u_2'=0\\
&(b)\;(i\tau+\gamma)u_2
+i \sum_{k\ne d } A_k^{21}\eta_k u_1 +i \sum_{k\ne d }
A_k^{22}\eta_k
u_2+A_d^{21} u_1'+A_d^{22} u_2'\\
&- B_{dd}^{22}u_2'' - i\sum_{k\ne d} (B_{dk}^{22}+B_{kd}^{22})\eta_k
u'_2+\sum_{j,k\ne d} \eta_j \eta_k B_{jk}^{22}u_2= 0.
\end{split}
\end{align}
By Proposition \ref{symmhf}, we need only consider bounded
frequencies $|\zeta|\le R$; this greatly simplifies the analysis.


{\it Case (i) Totally outgoing flow.} We first consider the totally
outgoing case $A_d^{11}<0$, for  which the boundary conditions in
\eqref{a1} are
\begin{align}\label{a3}
u_2(0)=g.
\end{align}
By Remark \ref{homgen} a trace estimate that is equivalent to the
(bounded-frequency) Evans hypothesis is
\begin{align}\label{a4}
|u_1(0)|+|u'_2(0)|\leq C\Lambda |g|,
\end{align}
where $\Lambda\sim |1,\tau,\gamma|^{1/2}+|\eta|$. We proceed to
prove \eqref{a4} assuming \eqref{a2} and $A_d^{11}<0$.

Pairing \eqref{a1} with $u$ we obtain the usual Friedrichs estimate
\begin{align}\label{a8}
\begin{split}
&\gamma|u|^2+|\eta|^2|u_2|^2+|u_2'|^2+|u_1(0)|^2\leq\\
&\quad C(|u_2'(0)||u_2(0)|+|u_2(0)|^2+|\eta||u_2(0)|^2),
\end{split}
\end{align}
where $|\cdot|$ denotes an $L^2(x)$ norm for interior terms and a
$\CC^n$ norm for boundary terms, and we have dropped the subscript
``2" on interior norms. Here the last term on the right represents
the G\"arding error (from extension) as well as a boundary term from
integration by parts. The $|u_1(0)|^2$ on the left is there because
of the favorable sign
of $A_d^{11}$.   From \eqref{a8} we obtain
\begin{align}\label{a8a}
|u_1(0)|^2\leq C\Lambda|g|^2+\delta|u_2'(0)|^2.
\end{align}

Similarly, differentiating \eqref{a1}
and pairing with $u'$ we obtain
\begin{align}\label{a9}
\begin{split}
&\gamma|u'|^2+|\eta|^2|u_2'|^2+|u_2''|^2+|u_1'(0)|^2\leq\\
&\quad C\left(|u_2''(0)||u_2'(0)|+(1+|\eta|)|u_2'(0)|^2\right),
\end{split}
\end{align}
where again the terms on the right represent either G\"arding error
or boundary terms from integration by parts.

We now examine $|u_2'(0)|$.   First we have
\begin{align}\label{a10}
|u_2'(0)|^2\leq |u_2'||u_2''|\leq C_\delta\Lambda
|u_2'|^2+\frac{\delta}{\Lambda}|u_2''|^2.
\end{align}
>From \eqref{a8} we find easily
\begin{align}\label{a10a}
C_\delta\Lambda|u_2'|^2\leq C\Lambda^2|g|^2+\delta |u_2'(0)|^2.
\end{align}
We claim that the last term in \eqref{a10} satisfies
\begin{align}\label{a10b}
\frac{\delta}{\Lambda}|u_2''|^2\leq
C\Lambda^2|g|^2+\delta|u'_2(0)|^2+\delta |u_1(0)|^2.
\end{align}
With \eqref{a8a} this will complete the proof of \eqref{a4}.

To analyze the last term in \eqref{a10} we first use
\eqref{e5.20}(b) to estimate $|u_2''(0)|$:
\begin{align}\label{a12}
|u_2''(0)|\leq
C\left(|\lambda||u_2(0)|+|\eta||u(0)|+|u_1'(0)|+|u_2'(0)|+|\eta|^2|u_2(0)|+|\eta||u_2'(0)|\right),
\end{align}
and then substitute in \eqref{a9} to get
\begin{align}\label{a13}
\begin{split}
&\gamma|u'|^2+|\eta|^2|u_2'|^2+|u_2''|^2+|u_1'(0)|^2\leq\\
&\qquad C\big((|\lambda||u_2(0)|+|\eta||u(0)|+|u_1'(0)|+|u_2'(0)|\\
&\qquad \qquad +|\eta|^2|u_2(0)|+|\eta||u_2'(0)|)|u_2'(0)|
+(1+|\eta|)|u_2'(0)|^2\big)\leq\\
&\qquad C\big(|\lambda||u_2(0)|+|\eta||u(0)|+|\eta|^2|u_2(0)|
+(1+|\eta|)|u_2'(0)|\big)|u_2'(0)|.
\end{split}
\end{align}
Here we have used the $|u_1'(0)|^2$ on the left to absorb a term,
and then enlarged $C$.

>From \eqref{a13} we have
\begin{align}\label{a14}
\frac{\delta}{\Lambda}|u_2''|^2\leq\frac{\delta}{\Lambda}\left((|\lambda||u_2(0)|+|\eta||u(0)|+|\eta|^2|u_2(0)|)|u_2'(0)|\right)+\frac{\delta}{\Lambda}(1+|\eta|)|u_2'(0)|^2,
\end{align}
and the second term on the right can be absorbed in \eqref{a10}. We
have
\begin{align}\label{a15}
\frac{\delta}{\Lambda}|\lambda||u_2(0)||u_2'(0)|\leq
\delta\Lambda|u_2(0)||u_2'(0)|\leq
\delta\Lambda^2|g|^2+\delta|u_2'(0)|^2.
\end{align}
The other terms in the estimate of $\frac{\delta}{\Lambda}|u_2''|^2$
are similar or easier to handle, so this concludes the proof of
\eqref{a4}.

{\it Case (ii) Totally incoming flow.} It remains to treat the
totally incoming case $A_d^{11}>0$, with full Dirichlet boundary
conditions $u(0)=(u_1(0),u_2(0))=g$. A trace estimate that is
equivalent to the (bounded-frequency) Evans hypothesis is
\begin{align}\label{a4ii}
|u'_2(0)|\leq C\Lambda^{\frac{3}{2}} |g|,
\end{align}
where $\Lambda\sim |1,\tau,\gamma|^{1/2}+|\eta|$.

Making the same energy estimates as in the totally outgoing case, we
find that the only differences are that (i) there now appears a term
$C|u_1(0)|^2$ in the righthand side of \eqref{a8}, and (ii) there
now appears a term $C|u_1'(0)|^2$ in the righthand side of
\eqref{a13}. Difference (i) is harmless, since $C|u_1(0)|^2\le
C|g|^2$ is of the same order as $C|u_2(0)|^2$ terms already
appearing on the righthand side of \eqref{a8}.

Difference (ii) can be handled by estimating $|u_1'(0)|$ using
\eqref{e5.20}(a) as
$$
|u_1'(0)|\le |(A_1^{11})^{-1}| \big( C|\zeta||u(0)| + C|u_2'(0)|
\big) \le C((|\lambda|+|\eta|)|g| + |u_2'(0)|)
$$
to see that $C|u_1'(0)|^2$ contributes terms of order
$C((|\lambda|+|\eta|)^2|g|^2 + |u_2'(0)|^2)$.  Following the
previous argument and using $(|\lambda|+|\eta|)^2\leq \Lambda^4$, we
obtain \eqref{a4ii} as claimed.

\textbf{2. Neumann boundary conditions. }  We now assume
$N^2_-=N'=\mathrm{rank}\Up_3$.   Consider first the \emph{totally
outgoing} case, so $A^{11}_d<0$ and $N_+=N^1_+=0$.   The boundary
conditions for \eqref{a1} are now
\begin{align}\label{a21}
u_2'(0)=h
\end{align}
and for bounded frequencies it suffices to show
\begin{align}\label{a22}
|u_1(0)|+|u_2(0)|\leq C\Lambda^2|h|.
\end{align}
Writing $u_2(0)=g$ and $\Lambda'=\gamma^{\frac{1}{2}}+|\eta|$, since
$N_+=0$ we have now in place of \eqref{a8}
\begin{align}\label{a20}
&\gamma|u|^2+|\eta|^2|u_2|^2+|u_2'|^2+|u(0)|^2\leq
C(|u_2'(0)||u_2(0)|+|\eta||u_2(0)|^2).
\end{align}
As before for $\Lambda'\leq c$ with $c>0$ small enough, we easily
absorb the terms involving $g$ in \eqref{a20} to obtain \eqref{a22}.

To estimate $u_2(0)$ we use
\begin{align}\label{a23}
|u_2(0)|^2\leq \delta|u_2|^2+C_\delta|u_2'|^2.
\end{align}
For $\Lambda'\geq c>0$ we have from \eqref{a20} and \eqref{a13}
\begin{align}\label{a24}
\begin{split}
&|u_2|^2\leq\frac{C}{\Lambda'^2}|h||g|+\frac{C}{\Lambda'}|g|^2\\
&|u_2'|^2\leq
\frac{C}{\Lambda'^2}|h|\left(|\lambda||g|+|\eta||u(0)|+|\eta|^2|g|+(1+|\eta|)|h|\right)\\
&\qquad\quad\leq
C\Lambda^2|g||h|+\frac{C}{\Lambda'}|u(0)||h|+C|g||h|+\frac{C}{\Lambda'}|h|^2
\end{split}
\end{align}
>From \eqref{a23}, \eqref{a24} we obtain, after absorbing some terms
from the right
\begin{align}\label{a25}
|u_2(0)|^2\leq C\Lambda^4|h|^2+C|u_1(0)||h|.
\end{align}
Substituting the estimate on $|u_1(0)|$ from \eqref{a20} into
\eqref{a25}, we easily obtain \eqref{a22} after adding the estimates
for $u_1(0)$ and $u_2(0)$ and absorbing terms.

Consider finally the \emph{totally incoming case} where
$A^{11}_d>0$.   We have $\mathrm{rank} \Up_3=N^2_-=N'$, so
$N_+=N^1_+=N-N'$ and the boundary conditions for \eqref{a1} are
\begin{align}\label{a26}
u_1(0)=g_1, u_2'(0)=h.
\end{align}
Evans stability can fail in this case, even for small amplitude
profiles.  See Example \ref{cegrmk}.

\end{proof}

\begin{rems}\label{realrmks}
 1.) We note that the argument in the partially parabolic
case is even simpler  than the treatment of the one-dimensional case
in \cite{R3}, thanks mainly to the reduction to finite frequencies
and to trace rather than interior estimates. In particular, we do
not require the ``Kawashima-type'' estimate used in \cite{R3} to
obtain an interior estimate on $|u_1'|$. Nor do we require a
weighted Poincare estimate of the type \eqref{a50} used in
\cite{R3}.

2.) In the small-amplitude analysis we expect that one may drop the
constant-sign assumption on $A^{11}_d$, substituting as in \cite{R3}
the assumption that that $\Gamma_1$ be maximally dissipative with
respect to $A^{11}_d$, i.e., that $A^{11}_d$ be negative definite on
$\ker \Gamma_1$.  For {\rm bounded frequencies} the above arguments
go through in this case essentially unchanged. For {\rm  high
frequencies}  the exponential weights of \cite{GMWZ6,Z3} are not
necessary in the small-amplitude case. It may be possible to modify
the high-frequency argument of \cite{GMWZ6} to work without the
constant-sign assumption.

3.) In the case of Dirichlet boundary conditions
($\mathrm{rank}\Up_3=0$), the arguments of this section may be
simplified still further by making use of Proposition
\ref{realmaxdiss} below.  This Proposition together with Lemma
\ref{Rousset} yields uniform Evans stability for small frequencies
$|\zeta|\le r$, $r>0$ sufficiently small. Using also the reduction
to bounded frequencies (Cor. \ref{symmhf}), it follows that in order
to show uniform Evans stability we must only show nonvanishing of
the Evans function $D(\zeta)$; that is, nonexistence of nontrivial
solutions of the eigenvalue equation \eqref{a1} with homogeneous
forcing and boundary data, $f=g=0$. Under these conditions the
required estimates become almost trivial.
\end{rems}

\begin{exam}[A  counterexample]\label{cegrmk}
\textup{
Finally, we show that stability may fail in general for
Neumann boundary conditions with $N^2_-=N'=\mathrm{rank}\Up_3$,
in the  \emph{totally incoming} case $A^{11}_d>0$.
Consider the linear constant-coefficient system
$$
u_t+ A_1u_{x_1}+ A_2 u_{x_2}= \begin{pmatrix} 0\\ \Delta_x u_2 \\
\end{pmatrix},
\qquad
A_1=\begin{pmatrix} 0 & 1 \\ 1 & 0 \end{pmatrix},
\,
A_2=\begin{pmatrix} 1 & a \\ a & b \end{pmatrix}
$$
on $x_2>0$ with boundary conditions
$$
u_1|_{x_2=0}=0, \quad \D_{x_2} u_2|_{x_2=0}=0,
$$
under the assumptions
$$
b>0,\quad b-a^2<0.
$$
Then, $A_2^{11}>0$, and $N^2_-=1= \mathrm{rank}\Up_3$.
Seeking layer profiles for this linear system, we have
immediately, since these satisfy a first-order ODE in $u_2$
with initial value at $x_2=0$ by the Neumann condition an
equilibrium value,
that these are exactly the {\it constant solutions}, from
which we deduce immediately that the residual hyperbolic boundary
condition is exactly $u_1=0$.
}

\textup{
Applying now Lemma \ref{Rousset}, we find that a necessary
condition for uniform low-frequency stability
is satisfaction of the uniform Lopatinski condition for
$$
u_t+ A_1u_{x_1}+ A_2 u_{x_2}= 0
$$
with boundary condition $u_1|_{x_2=0}=0$, or
\begin{equation}\label{lop}
r_1\ne 0
\quad \hbox{\rm for }\quad
r \in \EE_- \Big(-(A_2)^{-1}(\gamma + i\tau + i\eta A_1)\Big)
\end{equation}
for each $\tau, \, \eta \in \RR$, $\gamma\in \RR^+$,
where $\EE_-$ as usual denotes the limit of the stable subspace
as $\gamma \to 0^+$.
But, \eqref{lop} is clearly violated for
$$
r=(0,1)^T, \,
\eta=1, \,
\gamma=0 , \,
\tau= b/a,
$$
in which case
$-(A_2)^{-1}(\gamma + i\tau + i\eta A_1)r= \mu r$
for $\mu=i/a$.
Continuing  eigenvector and eigenvalue $r=r(\gamma)$, $\mu=\mu(\gamma)$
while varying $\gamma$ in the positive direction, we obtain
by standard matrix perturbation theory, noting that $r^TA_2$ by
symmetry is an associated left eigenvector, that
$$
(d\mu/d\gamma)|_{\gamma=0}=
\frac{
-r^TA_2 (A_2)^{-1}\D_\gamma (\gamma + i\tau + i\eta A_1)r}
{r^T A_2 r}=
-A^{22}_2<0,
$$
verifying that $r\in \EE_-$.
}

\textup{Alternatively, we may recall from \cite{MZ2} that
$2\times 2$ two-dimensional hyperbolic constant-coefficient
systems with both outgoing and incoming characteristics and
for which $A_1$ and $A_2$ do not commute
satisfy the uniform Lopatinski condition if and only if
they are maximally dissipative, i.e., $A_2<0$ on the kernel
$\Span \{(0,1)\}$ of the residual boundary condition,
or $A_2^{22}<0$, to make the same conclusion without calculation.
}

\textup{Note that this example does not yield one-dimensional
low-frequency instability, and in fact is one-dimensionally stable.
For, taking the real part of the inner product of $u$ against the associated
eigenvalue equation $\lambda u+ A_2 u'=u_2''$,
we obtain $\Re \lambda |u|_2^2 + |u_2'|_2^2=0$, yielding
$u_2\equiv 0$.  Substituting into the $u_1$ equation, we find
by direct computation that $u_1= e^{-\lambda z}u_1(0)=0$.
This gives one-dimensional Evans stability for bounded frequencies;
the result for high frequencies follows by Proposition \ref{symmhf}.
}
\end{exam}

\subsection{Maximal dissipativity of residual hyperbolic boundary conditions}\label{maxdiss}

Before presenting calculations for example systems, we digress
slightly to complete the picture of qualitative behavior.
Transversality and uniform Lopatinski condition follow for
small-amplitude profiles of symmetric dissipative systems by Evans
stability combined with Lemma \ref{Rousset}, under mild structural
conditions on multiplicity of characteristics. Here, we note that
they may alternatively be verified directly, and without any
assumptions on multiplicity of characteristics.  Proposition
\ref{realmaxdiss} yields the additional information that residual
boundary conditions for small-amplitude layers of symmetric
dissipative systems satisfy not only the uniform Lopatinski
condition,
but also the stronger condition of maximal dissipativity: $S A_d<0$
on the kernel of the linearized hyperbolic boundary conditions
$\Gamma_{res}$, where $S>0$ is the symmetric positive definite
matrix symmetrizing $ A_j$.

This result was established first in \cite{BRa} for the case of
symmetric, strictly parabolic $B_{jk}$. It was established in Lemma
4.3.1, \cite{Met4} for the strictly parabolic (not necessarily
symmetric) case; see also Lemma 7 \cite{BSZ}. The argument is based
on dissipative integral estimates of a similar flavor to those used
to establish uniform Evans stability in Section \ref{stabsec}.

\begin{prop}\label{realmaxdiss}
Consider the class of symmetrizable dissipative systems (Definition
\ref{defsymm}) and the class of decoupled boundary conditions
\eqref{viscbcd} that are full Dirichlet in the parabolic variable
$u_2$ ($N''=0$) and maximally dissipative in the hyperbolic variable
$u_1$: that is, $(SA_d)^{11}$ is negative definite on
$\ker\Gamma_1$. Then sufficiently small-amplitude noncharacteristic
boundary layers are transverse, and the associated residual boundary
conditions are maximally dissipative.
\end{prop}

\begin{proof}
\textbf{1. }By continuity both assertions reduce to the
corresponding assertions for the limiting constant layer, say
$w(z)=p=(p^1,p^2)$. Transversality was already proved in Corollary
\ref{smalltrans}, which applies also for the case of variable
multiplicities.
As in section \ref{stabsec} (see \eqref{coord}) we may without loss
of generality take $S=A_0=I$ and $A_\nu=A_d$ (or
$\nu=(0,\dots,0,1)$). We must show that $A_d<0$ on the kernel of the
residual boundary condition $\Gamma_{res}$ for the linearized
hyperbolic problem at $p$.

Let $C_{\nu,p}$ denote the manifold of states $q$ near $p$ such that
there exists a profile $W(z,q)$ satisfying \eqref{layeq} with
\begin{align}\label{a60}
(g_1,g_2):=(\Up_1(p^1),\Up_2(p^2))
\end{align}
and $W(z,q)\to q$ as $z\to\infty$.   Let $\dot{\cC}_{\nu,p}$ be the
space of $\dot{q}$ such that there is a solution
$\dot{w}(z,\dot{q})$ of the linearized profile problem
\eqref{linlayeq}, \eqref{linlaybc} with $\dot{w}(z,\dot{q})\to
\dot{q}$ as $z\to \infty$. The entries of $\cG_+(\nu)$ in
\eqref{Gform} are now evaluated at $w(z)=p$. It is not hard to check
(see \cite{Met4}, Prop. 5.5.5) that
\begin{align}\label{a61}
T_p\cC_{\nu,p}=\dot{\cC}_{\nu,p}:=\ker\Gamma_{res}.
\end{align}
The linearized hyperbolic problem at $p$ is
\begin{align}\label{a61a}
\begin{split}
&v_t+\sum_{j=1}^d A_j(p)\partial_jv=f \\
&v|_{x_d=0}\in T_p\cC_{\nu,p}.
\end{split}
\end{align}
Therefore, we must show
\begin{align}\label{a62}
A_d<0 \text{ on }\dot{\cC}_{\nu,p}.
\end{align}

\textbf{2. }Set $\Gamma_1:=\Up_1'(p^1)$ and define
\begin{align}\label{a63}
\cN=\{(n,0)\in\bR^N: n\in\ker \Gamma_1\}.
\end{align}
In Lemma \ref{a64} below we show
\begin{align}\label{a65a}
\dot{\cC}_{\nu,p}=\cN\oplus\bE_-(A_d^{-1}B_{dd}),
\end{align}
where $\bE_-(M)$ denotes the stable subspace of $M$.  Since
$\cN\subset\ker B_{dd}$ and $A_d<0$ on $\cN$  (recall $A_d^{11}<0$
on $\ker\Gamma_1$), the result thus follows by Lemma \ref{neutral}
below.
\end{proof}

\begin{lem} [\cite{Z1}] \label{neutral}
Let $A$ and $B$ be $N\times N$ matrices, where $A$ is symmetric and
invertible, and $B$ is positive semidefinite, $\Re(B)\geq 0$, satisfying
in addition the block structure condition $\ker B=\ker \Re(B)$. Given
any subspace $\cN$ on which $A$ is negative definite, then $A$ is
negative definite also on the subspace $\bE_-(A^{-1}B)\oplus (
\cN\cap \ker B)$, where $\bE_-(M)$ denotes the stable subspace of
$M$.
\end{lem}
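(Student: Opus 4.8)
The plan is to diagonalize the situation by blocking $\mathbb{C}^N$ according to the kernel of $B$, reducing the problem to the case where $B$ has trivial kernel, and then to use a Lyapunov-type integral argument to control the sign of $A$ on $\bE_-(A^{-1}B)$. First I would write $\mathbb{C}^N = \cN_0 \oplus \cN_0^{\perp_A}$, where $\cN_0 := \ker B = \ker \Re(B)$ and the orthogonal complement is taken with respect to the (nondegenerate, symmetric) bilinear form $\langle A\cdot,\cdot\rangle$; this is possible precisely because $A$ is symmetric and invertible, and one checks $A$ remains nondegenerate on each summand. On $\cN_0$, $A$ restricts to a symmetric invertible form, but it need not be negative definite there in general — however the only part of $\cN_0$ that enters the statement is $\cN \cap \ker B = \cN \cap \cN_0$, on which $A<0$ by hypothesis. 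The key reduction is that $A^{-1}B$ preserves the splitting up to the obvious triangular structure: since $B\cN_0 = 0$, the matrix $A^{-1}B$ kills $\cN_0$, so $\bE_-(A^{-1}B)$ is a graph over a subspace of $\cN_0^{\perp_A}$, and I would show $\bE_-(A^{-1}B) = \bE_-\big((A^{-1}B)|_{\cN_0^{\perp_A}}\big) \oplus (\text{piece inside } \cN_0)$ with the $\cN_0$-piece being exactly $\cN_0 \cap (\text{generalized }0\text{-eigenspace})$, which for our purposes is absorbed into the $\cN \cap \ker B$ term.

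The heart of the argument is then the strictly dissipative case: on $V := \cN_0^{\perp_A}$, the operator $\tilde B := B|_V$ satisfies $\Re \tilde B > 0$ (using $\ker B = \ker\Re B$ and that $V$ meets $\ker B$ trivially), while $\tilde A := A|_V$ is symmetric invertible. I claim $\tilde A < 0$ on $\bE_-(\tilde A^{-1}\tilde B)$. To see this, let $r \in \bE_-(\tilde A^{-1}\tilde B)$ and let $U(z)$ solve $U' = \tilde A^{-1}\tilde B\, U$, $U(0) = r$, so $U(z) \to 0$ as $z \to +\infty$. Equivalently $\tilde A U' = \tilde B U$. Pairing with $U$ and taking real parts,
\begin{equation*}
\tfrac{1}{2}\tfrac{d}{dz}\langle \tilde A U, U\rangle = \Re\langle \tilde A U', U\rangle = \Re\langle \tilde B U, U\rangle = \langle \Re\tilde B\, U, U\rangle > 0
\end{equation*}
for $U \neq 0$; hence $z \mapsto \langle \tilde A U(z), U(z)\rangle$ is strictly increasing, and since it tends to $0$ at $+\infty$ (as $U \to 0$), it is strictly negative at $z = 0$, i.e. $\langle \tilde A r, r\rangle < 0$. (One must note $U(z) \not\equiv 0$ on any subinterval, which is automatic for a nontrivial linear ODE solution.) This is the standard "stable subspace of $\tilde A^{-1}\tilde B$ is negative for $\tilde A$" fact, cited here from \cite{Z1}.

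Finally I would reassemble: a general element of $\bE_-(A^{-1}B) \oplus (\cN \cap \ker B)$ decomposes, via the $A$-orthogonal splitting, into a $V$-component lying in $\bE_-(\tilde A^{-1}\tilde B)$ (negative for $\tilde A$ by the above, unless zero) plus a $\cN_0$-component lying in $\cN \cap \ker B$ (negative for $A$ by hypothesis, unless zero), and because the splitting is $A$-orthogonal these contributions add without cross terms, so the sum is $\le 0$ and $= 0$ only at the origin. The main obstacle is the bookkeeping in the reduction step: verifying that $\bE_-(A^{-1}B)$ really is the direct sum of $\bE_-(\tilde A^{-1}\tilde B)$ with a subspace of $\ker B$ sitting inside $\cN_0$, and that the latter subspace is contained in $\cN \cap \ker B$ when we intersect with the subspace in the statement — this requires care about generalized (as opposed to genuine) eigenspaces of $A^{-1}B$ at eigenvalue $0$ and about how the $A$-orthogonal projection interacts with $\bE_-$. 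Everything else is the routine Lyapunov computation above.
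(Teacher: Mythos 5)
Your Lyapunov computation (pair $\tilde A U'=\tilde B U$ with $U$, take real parts, use monotonicity of $\langle \tilde A U,U\rangle$ plus decay at $+\infty$) is exactly the engine of the paper's proof, but the reduction you wrap around it is where the real difficulty sits, and as written it does not close — you yourself defer it as the ``main obstacle'' rather than resolving it. Two concrete problems. First, the splitting $\CC^N=\cN_0\oplus\cN_0^{\perp_A}$ with $\cN_0=\ker B$ need not be a direct sum: $A$ is invertible on $\CC^N$, but its restriction to $\ker B$ can be degenerate (e.g.\ $A=\left(\begin{smallmatrix}0&1\\1&0\end{smallmatrix}\right)$, $\ker B=\Span\{e_1\}$ gives $\cN_0^{\perp_A}=\cN_0$), so ``one checks $A$ remains nondegenerate on each summand'' is an unverified extra hypothesis. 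Second, and more seriously, your reassembly needs the $\cN_0$-component $\phi(v)$ of an element of $\bE_-(A^{-1}B)$ to contribute a nonpositive term $\langle A(\phi(v)+n),\phi(v)+n\rangle$; but the hypothesis only gives $A<0$ on $\cN\cap\ker B$, not on all of $\ker B$, so if $\phi\not\equiv 0$ this term is uncontrolled and the argument fails. The hole is patchable: the block structure condition $\ker B=\ker\Re B$ forces $B^*w=0$ for every $w\in\ker B$, hence $\langle Bv,w\rangle=0$ for all $v$, hence $\range(A^{-1}B)\subset\cN_0^{\perp_A}$ and therefore $\bE_-(A^{-1}B)\subset\cN_0^{\perp_A}$, i.e.\ $\phi\equiv 0$ (and then only the cross-term orthogonality is needed, not a full direct-sum decomposition of $\CC^N$). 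But this is precisely the step you left open, and without it the proof is incomplete.

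The paper avoids the decomposition entirely. For $x_0=x_1+x_2$ with $x_1\in\bE_-(A^{-1}B)$ and $x_2\in\cN\cap\ker B$, it runs the flow $x'=A^{-1}Bx$ from $x_0$ on the whole space: $x_2$ is an equilibrium and $x_1$ lies in the stable subspace, so $x(t)\to x_2$ and $\langle Ax(t),x(t)\rangle\to\langle Ax_2,x_2\rangle\le 0$, while $\frac{d}{dt}\langle Ax,x\rangle=2\langle\Re(B)x,x\rangle\ge 0$ gives $\langle Ax_0,x_0\rangle\le\langle Ax_2,x_2\rangle\le 0$. Equality forces $x_2=0$ and $\Re(B)x(t)\equiv 0$, hence $x_1\in\ker\Re B=\ker B\subset\ker(A^{-1}B)$, impossible for $0\ne x_1\in\bE_-(A^{-1}B)$. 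I recommend you either adopt this global argument or supply the proof that $\phi\equiv 0$ sketched above.
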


\begin{proof}
Suppose that $x_0\neq 0$ lies in the subspace $\bE_-(A^{-1}B)\oplus
(\cN\cap \ker B)$, i.e.,
$$x_0=x_1+x_2$$
where $x_1\in\bE_-(A^{-1}B)$, $x_2\in ( {\cN}\cap \ker B)$.
Define $x(t)$ by the ordinary differential equation $
x'=A^{-1}Bx,\quad x(0)=x_0. $ Then $x(t)\rightarrow x_2$ as
$t\rightarrow +\infty$ and thus
\begin{align}\notag
\lim_{t\rightarrow +\infty}\langle x(t),Ax(t)\rangle =\langle
x_2,Ax_2\rangle \leq 0,
\end{align}
with equality only if $x_2=0$.
On the other hand,
$$
\langle x, Ax\rangle'  =  2\Re\langle A^{-1}Bx,Ax\rangle
                  =  2\Re\langle Bx,x\rangle   \geq  0,
$$
yielding $\langle x_0, Ax_0\rangle \leq \langle x_2,Ax_2\rangle\leq
0$.
Thus,
$\langle x_0, Ax_0\rangle <0$ unless $x_2=0$ and $\langle \Re(B)x,x\rangle
\equiv 0$, in particular, $\langle \Re(B)x_1, x_1\rangle$, so that
$x_1\in \ker \Re(B)=\ker(B)\subset \ker A^{-1}B$.
But, this is impossible, since $x_1\in \EE_-(A^{-1}B)$.
\end{proof}

It just remains to prove:
\begin{lem}\label{a64}
Let $\dot{\cC}_{\nu,p}$ be as in \eqref{a61}, the space of $q$ such
that there is a solution $\dot{w}(z,q)$ of the linearized profile
problem \eqref{linlayeq}, \eqref{linlaybc} with $\dot{w}(z,q)\to q$
as $z\to \infty$.   Then
\begin{align}\label{a65}
\dot{\cC}_{\nu,p}=\cN\oplus\bE_-(A_d^{-1}B_{dd}),
\end{align}
for $\cN$ as in \eqref{a63}.
\end{lem}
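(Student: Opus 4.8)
The plan is to compute $\dot{\cC}_{\nu,p}$ directly from the linearized profile system \eqref{linlayeq}--\eqref{linlaybc}, which at the constant layer $w(z)\equiv p$ has constant coefficients $\CalG_\nu = \CalG_\nu(+\infty)$ given by \eqref{Gform}. The subspace $\dot{\cC}_{\nu,p}$ consists of endstates $\dot q = \lim_{z\to\infty}\dot w(z)$ over solutions $\dot w$ of \eqref{linlayeq} that approach finite limits and satisfy the boundary condition $\Gamma_\nu(\dot w,\partial_z\dot w^2)|_{z=0}=0$. First I would recall from Lemma \ref{count}(iii) that the space $\CalS$ of solutions approaching finite limits has dimension $N+N^2_-$ and that the decaying subspace $\CalS_0$ has dimension $N^2_-$; since for the constant layer any constant vector $\dot W$ with $\CalG_\nu \dot W = 0$ is a (constant) solution, the subspace of constant solutions accounts for $\dim\ker\CalG_\nu = N$ of these, and every solution in $\CalS$ splits uniquely as a constant solution plus a decaying one. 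Hence the endstate map $\dot w\mapsto \dot q$ restricted to $\CalS$ has image $\ker\CalG_\nu$ (identified with $\RR^N$ via the first $N$ components, using the structure of \eqref{Gform}, where $\ker\CalG_\nu=\{(\dot w^1,\dot w^2,0)\}$), and kernel $\CalS_0$.

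The key computation is then: $\dot q\in\dot{\cC}_{\nu,p}$ iff there is a constant solution with endstate $\dot q$, plus a decaying correction, whose sum satisfies the boundary conditions at $z=0$. Writing a general element of $\CalS$ as $\dot W(z) = \dot W_c + \dot W_0(z)$ with $\dot W_c\in\ker\CalG_\nu$ constant and $\dot W_0\in\CalS_0$ decaying, the endstate is the ``$u^1,u^2$'' part of $\dot W_c$. Using the block form \eqref{Gammaeq}, the boundary condition decouples: the $K_\nu u^3$ component forces $\Gamma_3$-type constraints, and since we are in the full-Dirichlet-in-$u^2$ case ($N''=0$), $\Gamma_3$ is absent, so the only constraints are $\Gamma_1 \dot w^1(0)=0$ and $\Gamma_2\dot w^2(0)=0$. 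Now $\Gamma_2 = \Up_2'(p^2)$ is injective with rank $N'-N''=N'$ in the strictly... — more precisely, with $N''=0$, $\Gamma_2$ has rank $N'$, i.e. is an isomorphism on $\RR^{N'}$ (after the reduction $S=A_0=I$). I would next observe that the decaying solutions $\dot W_0\in\CalS_0$ are governed, in the $(u^2,u^3)=(u^2,\partial_z u^2)$ block, by the matrix $G_\nu = (B^{22}_\nu)^{-1}(A^{22}_\nu - A^{21}_\nu(A^{11}_\nu)^{-1}A^{12}_\nu)$, equivalently by $A_\nu^{-1}B_\nu$ restricted appropriately; a dimension count (the stable subspace of $G_\nu$ has dimension $N^2_-$) matches $\dim\CalS_0 = N^2_-$. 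Translating the $z=0$ boundary conditions through this parametrization: the $\Gamma_2$ condition at $z=0$ can always be solved for the $u^2$-component of $\dot W_c$ in terms of the decaying part (since $\Gamma_2$ is invertible), so it imposes no constraint on which $\dot q^1\in\ker\Gamma_1$ and which stable $G_\nu$-direction we keep; meanwhile the $\Gamma_1$ condition forces $\dot w^1(0)\in\ker\Gamma_1$, and since $\dot w^1$ is determined from $\dot w^3$ by the first equation of \eqref{layeq2} (which decays), $\dot w^1(0) = \dot q^1 + (\text{decaying part})$. The upshot, after unwinding, is that $\dot q$ ranges over $\cN\oplus\bE_-(A_d^{-1}B_{dd})$ where $\cN=\{(n,0):n\in\ker\Gamma_1\}$ captures the hyperbolic endstates and $\bE_-(A_d^{-1}B_{dd})$ captures the parabolic decaying modes lifted to their endstates — noting that stable solutions of the $u^2$-block decay to $0$, so their ``endstate'' contribution comes only through the reconstruction, giving exactly the stable subspace.

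I expect the main obstacle to be bookkeeping the identification of $\bE_-(A_d^{-1}B_{dd})$ (a subspace of $\RR^N$, with $B_{dd}$ the full $N\times N$ matrix having the block structure \eqref{struc1}) with the stable subspace of $G_\nu$ (an $N'\times N'$ matrix) — that is, checking that the natural inclusion $\RR^{N'}\hookrightarrow\RR^N$ in the second factor carries $\bE_-(G_\nu)$ onto $\bE_-(A_d^{-1}B_{dd})$, and that $\cN\cap\bE_-(A_d^{-1}B_{dd}) = 0$ so the sum is direct (this uses $\cN\subset\ker B_{dd}$ and $\ker B_{dd}\cap\bE_-(A_d^{-1}B_{dd})=0$, which holds because $A_d$ is nonsingular so $A_d^{-1}B_{dd}$ has kernel $\ker B_{dd}$, a center-direction subspace disjoint from the stable subspace). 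A clean way to organize this is to work directly with the first-order form $\partial_z\dot W = \CalG_\nu\dot W$ from \eqref{layeq2}, split into the decoupled $\dot w^1$-equation and the $(\dot w^2,\dot w^3)$-subsystem with matrix $\left(\begin{smallmatrix}0&I\\0&G_\nu\end{smallmatrix}\right)$, whose stable subspace projects onto $\bE_-(G_\nu)$ in the $\dot w^2$-coordinate and to $0$ in the $\dot w^3$-coordinate at $z=\infty$; then reassemble and compare with $A_d^{-1}B_{dd} = \left(\begin{smallmatrix}0&0\\0&(B^{22}_{dd})^{-1}(\cdots)^{-1}\cdots\end{smallmatrix}\right)$ — actually $A_d^{-1}B_{dd}$ has the block form $\left(\begin{smallmatrix}0&0\\ *&\bar B^{22}_{dd}\end{smallmatrix}\right)$ after accounting for $A_d^{21}$, and one checks its stable subspace is $\{(-(A^{11}_d)^{-1}A^{12}_d v, v): v\in\bE_-(G_\nu)\}$, matching precisely the reconstruction formula for $\dot w^1$ in terms of $\dot w^3$ at $z=0$ evaluated along stable modes. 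I would close by citing \cite{Met4}, Prop. 5.5.5 for the identification \eqref{a61} already granted in the statement, and present the above as the verification of \eqref{a65}. Once \eqref{a65} is in hand, Lemma \ref{neutral} (applied with $A=A_d$, $B=B_{dd}$, $\cN$ as above, using $\cN\subset\ker B_{dd}$ and $A_d^{11}<0$ on $\ker\Gamma_1$) completes the proof of Proposition \ref{realmaxdiss}, so this lemma is the last missing ingredient.
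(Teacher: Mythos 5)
Your proposal is correct and follows essentially the same route as the paper: explicitly integrate the constant-coefficient linearized profile system in terms of $\bE_-(G_\nu)$, impose the boundary conditions using the invertibility of $\Up_2'(p^2)$ (full Dirichlet, $N''=0$), and identify $\bE_-(A_d^{-1}B_{dd})$ as the graph $\{(-(A_d^{11})^{-1}A_d^{12}v,\,v):v\in\bE_-(G_\nu)\}$, which is exactly the paper's \eqref{a67}--\eqref{a70}. One small slip: $A_d^{-1}B_{dd}$ has block form $\begin{pmatrix}0&*\\0&*\end{pmatrix}$ rather than $\begin{pmatrix}0&0\\ *&*\end{pmatrix}$, but your stated stable subspace (computed via the Schur complement $(A_d^{-1})^{22}=G_\nu^{-1}(B^{22}_{dd})^{-1}\cdots$) is nonetheless right, and directness of the sum is immediate from the parametrization.
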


\begin{proof}
\textbf{1. }With $w(z)=p$ the constant layer, define as in
\eqref{C13} the $N'\times N'$ matrix
\begin{align}\label{a66}
G_d(p):=(B^{22}_{dd})^{-1} \left( A_d^{22} - A_d^{21} (
A_d^{11})^{-1} A_d^{12}\right)(p).
\end{align}
A short computation shows
\begin{align}\label{a67}
\bE_-(A_d^{-1}B_{dd}(p))=\{\begin{pmatrix}-(A_d^{11})^{-1}A_d^{12}r^2\\r^2\end{pmatrix}:
r^2\in\bE_-(G_d(p))\}.
\end{align}

\textbf{2. }Consider the linearized profile equation
\eqref{linlayeq} at $p$ with
$\dot{W}=(\dot{w}_1,\dot{w}_2,\dot{w}_3)$. For any
$(q^1,q^2)\in\bR^N$, this equation is easily integrated to yield a
solution with $(\dot{w}_1(z),\dot{w}_2(z))\to(q^1,q^2)$ as
$z\to\infty$:
\begin{align}\label{a68}
\begin{split}
&\dot{w}_1(z)=-(A_d^{11})^{-1}A_d^{12}e^{zG_d(p)}(G_d(p))^{-1}r^2+q^1\\
&\dot{w}_2(z)=e^{zG_d(p)}(G_d(p))^{-1}r^2+q^2\\
&\dot{w}_3(z)=e^{zG_d(p)}r^2, \text{ where }r^2\in \bE_-(G_d(p)).
\end{split}
\end{align}
Setting $\Up_1'(p^1)\dot{w}_1(0)=0$ and $\Up_2'(p^2)\dot{w}_2(0)=0$
we find
\begin{align}\label{a69}
\begin{split}
&\Up_1'(p^1)q^1=\Up_1'(p^1)(A_d^{11})^{-1}A_d^{12}(G_d(p))^{-1}r^2\\
&q^2=-(G_d(p))^{-1}r^2.
\end{split}
\end{align}
This gives
\begin{align}\label{a70}
\dot{\cC}_{\nu,p}=\{(q^1,q^2):\Up_1'(p^1)\left(q^1+(A_d^{11})^{-1}A_d^{12}q^2\right)=0,q^2\in\bE_-(G_d(p))\}.
\end{align}
Together with \eqref{a67}, this implies the result.
\end{proof}

\begin{rem}\label{max}
 A theorem of Rauch \cite{Ra} asserts the converse result that any
maximally dissipative boundary condition may be realized as the
residual boundary condition associated with some symmetric
dissipative viscosity. See also the interesting recent
investigations of Sueur \cite{Su} in which he establishes that any
(nonstrictly) dissipative boundary condition may be realized as the
residual boundary condition associated with some (not necessarily
symmetric) dissipative viscosity.
\end{rem}

\section{The compressible Navier-Stokes and viscous MHD
equations}\label{egs}

In this section we present computations of $\cC$ manifolds for some
classical symmetric-dissipative systems with various boundary
conditions, including standard inflow/outflow conditions for
Navier-Stokes. For the small-amplitude case the results of sections
\ref{globalC} and \ref{stabsec}  apply and give much information
about when profiles satisfy the uniform Evans stability condition or
transversality, and when the reduced hyperbolic boundary conditions,
which are expressed in terms of $\cC$ manifolds, satisfy the maximal
dissipativity or uniform Lopatinski conditions.   In a few cases we
will say something about such properties for large amplitude
profiles.

\subsection{Isentropic Navier--Stokes equations} \label{Isentropic}
We start with the simplest case of noncharacteristic boundary layers
for the isentropic compressible Navier--Stokes equations. In this
case we are able to give a detailed description of possible
boundary-layer connections, including large amplitude layers, and
the resulting residual boundary conditions.

.\\

\textbf{Computation of residual boundary conditions. } Consider the
isentropic Navier--Stokes equations
\begin{align}
\rho_t+ (\rho u)_x +(\rho v)_y&=0,\label{eq:mass}\\
(\rho u)_t+ (\rho u^2)_x + (\rho uv)_y + P_x&=
(2\mu +\eta) u_{xx}+ \mu u_{yy} +(\mu+ \eta )v_{xy},\label{eq:momentumx}\\
(\rho v)_t+ (\rho uv)_x + (\rho v^2)_y + P_y&= \mu v_{xx}+ (2\mu
+\eta) v_{yy} + (\mu+\eta) u_{yx}\label{eq:momentumy}
\end{align}
on the half-space $y>0$, where $\rho$ is density, $u$ and $v$ are
velocities in $x$ and $y$ directions, and $P=P(\rho)$ is pressure,
and $\mu>|\eta|\ge0$ are coefficients of first (``dynamic'') and
second viscosity. We assume a monotone pressure function
\begin{equation}\label{mon}
P'(\rho)>0.
\end{equation}

Seeking boundary-layer solutions $(\rho, u,v)(y)$, and  setting
$z:=y$, we obtain profile equations
\begin{align}\label{one}
\begin{split}
&(\rho v)'=0\\
&(\rho u v)'=\mu u''\\
&(\rho v^2)'+P(\rho)'=\epsilon v'',
\end{split}
\end{align}
where ``$'$'' denotes $d/dz$, and $\epsilon:=2\mu+\eta$. Integrating
$\int^z_{+\infty}$, we obtain
\begin{align}\label{1}
\begin{split}
&\rho v(z)=\rho_\infty v_\infty:=m_\infty\\
&\mu u'=\rho u v-\rho_\infty u_\infty v_\infty\\
&\epsilon v'=\rho v^2+P(\rho)-(\rho_\infty
v_\infty^2+P(\rho_\infty),
\end{split}
\end{align}
where $(\rho_\infty, u_\infty, v_\infty):=(\rho,u,v)(+\infty)$ and
$m:=\rho v$ denotes momentum in the normal ($z$) direction.
Within the set of allowable boundary conditions at $z=0$ in our
abstract framework, we consider a subset consisting of linear
conditions
\begin{align}\label{qq1}
\Gamma U(0)=g=(g^1,g^2,0)
\end{align}
that are
pure Dirichlet ($N''=0)$ or else mixed Dirichlet--homogeneous
Neumann boundary conditions ($N''=N'=2$), and which includes some of
the most commonly used conditions.

 With $\RR^3_+=\{(\rho_\infty,u_\infty,v_\infty):\rho_\infty>0\}$,
$U(z)=(\rho(z),u(z),v(z))$,
$U_\infty=(\rho_\infty,u_\infty,v_\infty)$, and
$U_0=(\rho_0,u_0,v_0)$,  let
\begin{align}\label{2}
\begin{split}
&\cC_{\Gamma,g}=\{U_\infty\in\RR^3_+:\text{there exists }U(z)\text{
satisfying
}\eqref{1}\text{ together with }\\
&\qquad U(+\infty)=U_\infty\text{ and the boundary conditions
\eqref{qq1} at $z=0$ }\}.
\end{split}
\end{align}
For arbitrary $U_0$ with $\rho_0>0$ note that the constant profile
$$
U(z)=U_0
$$
determines an element $U_\infty=U_0\in\cC_{\Gamma,g}$, where
$g=\Gamma U_0$. The goal here is to determine $\cC_{\Gamma,g}$ in
some (not necessarily small) neighborhood of $U_\infty=U_0$, and to
understand how $\cC_{\Gamma,g}$ changes with $g$.
We consider separately the outflow and inflow case.

\medskip
{\bf Outflow with Dirichlet conditions.} We first consider the
outflow case $v_0<0$, with Dirichlet boundary conditions.    We have
$\overline{A}_d^{11}<0$ now, so $N^1_+=0$, $N_b=2$ and thus we
prescribe only the parabolic variables on the boundary:
\begin{equation}\label{NSis}
\Gamma U(0)=g=(u_0,v_0).
\end{equation}

Noting that $m_\infty=m_0<0$ since $v_0<0$ and $\rho_0 >0$, and
rewriting the second equation in \eqref{1} as
\begin{align}\label{3}
\mu u'=m_\infty (u-u_\infty),
\end{align}
we obtain solutions
\begin{align}\label{4}
u(z)=u_\infty\left(1-(1-\frac{u_0}{u_\infty})e^{\frac{m_\infty}{\mu}z}\right)
\end{align}
satisfying the conditions (on $u(z)$) in \eqref{2} with no
restrictions so far except $\rho_0>0$, $u_\infty\in\RR$.

Rewrite the third equation in \eqref{1} using the first equation to
get
\begin{align}\label{5}
\begin{split}
&\epsilon v'=m_\infty
v+P\left(\frac{m_\infty}{v}\right)-\left(m_\infty
v_\infty+P\left(\frac{m_\infty}{v_\infty}\right)\right)=\\
&m_\infty(v-v_\infty)+c^2(\rho,\rho_\infty)\left(\frac{m_\infty}{v}-\frac{m_\infty}{v_\infty}\right)=\\
&\left(m_\infty-\frac{m_\infty c^2(\rho,\rho_\infty)}{v
v_\infty}\right)(v-v_\infty),
\end{split}
\end{align}
where
\begin{equation}\label{cfn}
c^2(\rho,\rho_\infty):= \frac{ P(\rho)- P(\rho_\infty)} { \rho
-\rho_\infty}
>0
\end{equation}
by assumption \eqref{mon}. Note that if we set
\begin{align}
c_\infty=\sqrt{P'(\rho_\infty)}
\end{align}
we have
$c(\rho_\infty,\rho_\infty)=c_\infty$.

With $(u_0,v_0)$ fixed, consider first a candidate state $U_\infty$
for membership in $\cC_{\Gamma,g}$ satisfying
\begin{align}\label{7}
v_\infty<0, \,  v_\infty +c_\infty>0, \text{ so }|v_\infty|<c_\infty.
\end{align}
For $z$ large we have $v\approx v_\infty$, $\rho\approx \rho_\infty$
and thus
\begin{equation}\label{sign}
m_\infty-\frac{m_\infty c^2(\rho,\rho_\infty)}{v v_\infty}>0.
\end{equation}
This implies that \eqref{5} has no nontrivial bounded solution with limit
$v_\infty$ as $z\to +\infty$. In this case, the manifold
$\cC_{\Gamma,g}$ is an $N-N_+=2$ dimensional manifold defined
by the single condition
\begin{align}\label{9}
v_\infty\equiv v_0,
\end{align}
which is the right number of conditions ($N_+=1$) for the Euler
equations at a value $U_\infty$ satisfying \eqref{7}.
Indeed, this is the classical (specified-normal velocity) Euler boundary
condition corresponding to the (specified-normal velocity, no-slip)
Navier--Stokes boundary conditions $(u,v)(0)=(u_0,v_0)$.

In the case that
\begin{align}\label{qq10}
v_\infty +c_\infty<0, \text{ so } |v_\infty|>c_\infty \;(N_+=0),
\end{align}
 we
obtain the opposite inequality
\begin{equation}\label{oppsign}
m_\infty-\frac{m_\infty c^2(\rho,\rho_\infty)}{v v_\infty}<0.
\end{equation}
Thus, in this case we have nontrivial solutions \eqref{5} with
$v(\infty)=v_\infty$.  Provided $v_0$ is close enough to $v_\infty$,
$\cC_{\Gamma,g}$ is a full neighborhood of $U_\infty$, as expected
from the general theory. That is, there are no incoming hyperbolic
characteristics and no boundary conditions imposed on the Euler
equations in this case ($\dim\cC_{\Gamma,g}=N-N_+=3$).

Uniform Evans stability of the above layers in the small amplitude
case follows by Corollary \ref{symmstab}(b).

Examining further, we see that connections to such states $U_\infty$
exist for boundary data $U_0$ such that
\begin{align}\label{rr1}
v^1_*<v_0\leq v_\infty\text{ or }v_\infty\leq v_0< v^2_*,
\end{align}
where the $v^j_*$ are the nearest rest points of \eqref{5} to
$v_\infty$.  This is because all states $v$ in the closed interval
between $v_0$ and $v_\infty$ then satisfy
\begin{equation}\label{signtwo}
m_\infty-\frac{m_\infty c^2(m_\infty/v,m_\infty/v_\infty)}{v
v_\infty}<0.
\end{equation}
In the typical (genuinely nonlinear) case that $P''(\rho)>0$, there
is a single such rest state $v^2_*>v_\infty$.  The states $v^2_*$
and $v_\infty$ then correspond to endstates of stationary viscous
shock waves on the whole line. Layers with $v<v_0$ are in this case
``compressive'', consisting of pieces of the viscous shock from
$v^2_*$ to $v_\infty$, while layers with $v>v_0$ are ``expansive''
(decreasing $\rho$), analogous more to rarefactions; see \cite{MN,
CHNZ} for further discussion.

\begin{rems}\label{solnstructure}
 There are two transitions worth mentioning. One is when
the number of hyperbolic characteristics changes; that is, at the
boundary where inequality \eqref{7} changes its sense. Note that
this transition has to do with the outer, hyperbolic solution
$U_\infty$ and so we cannot deduce that such a transition occurs
from knowledge of the Navier-Stokes boundary data $U_0$ alone, but
must know the solution of the hyperbolic equation.

A second transition has to do with the ``inner'', boundary-layer
structure, when $U_0$ goes out of range of \eqref{rr1}, with
$U_\infty$ held fixed satisfying \eqref{oppsign}.  Consider a
solution of the Navier-Stokes system with boundary data that
includes states $U_0$ with some in and some out of range of
\eqref{rr1}. In this case our description of the solution as
boundary layer plus outer smooth solution fails, and it is a natural
question to ask what happens instead. This question has been
answered for the one-dimensional case in \cite{MN}. The resolution
is that in this case there is a more complicated structure at the
boundary consisting of boundary layer plus shock or rarefaction
waves incoming to the domain: that is, a (nonsmooth)
boundary--Riemann solution.
\end{rems}

\textbf{Outflow with Neumann conditions}. We next consider  the case
of homogeneous Neumann conditions in the outflow case:
\begin{align}\label{qq8}
\Gamma U(0)=(u',v')(0)=(0,0)=g.
\end{align}
Again consider a candidate state $U_\infty$ satisfying $v_\infty<0$
for membership in $\cC_{\Gamma,g}$.  The vanishing of $u'(0)$ and
$v'(0)$ implies that $u(0)$ and $v(0)$ are rest points of \eqref{3}
and \eqref{5} respectively. Thus $u(z)\equiv u_\infty$, $v(z)\equiv
v_\infty$, so $U_\infty$ can only be the endstate of a constant
layer, and
\begin{align}\label{qq13}
\cC_{\Gamma,g}=\{U_\infty\in \bR^3:\rho_\infty>0, v_\infty<0\}.
\end{align}
In the case \eqref{7} we have $N_+=N^2_-=1$, so $N''=2>N^2_-$; thus,
by Corollary \ref{smalltrans} these layers are not transversal, and
thus are not even low frequency Evans stable by Lemma \ref{Rousset}.
Observe that the ``correct" dimension for $\cC_{\Gamma,g}$ in this
case is $N-N_+=2$.

In the case when $U_\infty$ satisfies \eqref{qq10},  we have
$N_+=0$, so $N''=N^2_-=2$ and $\cC_{\Gamma,g}$ has the right
dimension.  As expected there are no Euler boundary conditions and
by Corollary \ref{symmstab}(b), the constant layers are Evans
stable.

\medskip
{\bf Inflow with Dirichlet conditions.}   In the case of inflow
$v_0>0$, so $N^1_+=1$, $N_+=3$ and we prescribe
\begin{align}\label{qq12}
\Gamma U(0)=(\rho_0,u_0,v_0)=g.
\end{align}
Let $U_\infty$ be a candidate state for $\cC_{\Gamma,g}$ with
$v_\infty>0$. The equation for the transverse velocity $u$ decouples
as \eqref{3}; however, due to the opposite sign $m_\infty>0$, this
equation now has no nonconstant solutions converging to $u_\infty$.
Hence, the transverse velocity is specified as
$$
u_\infty=u_0.
$$
Continuing as before, we find two cases, according as
$v_\infty-c_\infty\gtrless 0$, i.e.,
$$
v_\infty\gtrless c_\infty.
$$
In the first case $N_+=3$ and we find as before that there are no
nontrivial solutions of \eqref{5}, whence $v_\infty=v_0$, and
$\rho_\infty=m_\infty/v_\infty= m_0/v_0=\rho_0$. Thus, only the
constant layer is possible, and the induced hyperbolic boundary
conditions are full Dirichlet,
$$
U_\infty=U_0,\;\; \cC_{\Gamma,g}=\{U_0\}
$$
in agreement with the classical Euler conditions.

In the second case $N_+=2$ and there exist nontrivial connections on
the range specified by \eqref{signtwo}. The induced hyperbolic
boundary conditions defining $\cC_{\Gamma,g}$ are
$$
u_\infty=u_0, \quad m_\infty=m_0,
$$
specifying transverse velocity and {\it momentum}, an interesting
variation on \eqref{9} in the outflow case.

Uniform Evans stability of the above layers in the small amplitude
case follows from Corollary \ref{symmstab}(b).

\medbreak

\textbf{Inflow with mixed boundary conditions.}  Finally, we
consider the case of mixed Dirichlet-homogeneous Neumann conditions
\begin{align}\label{qq14}
\Gamma U(0)= (\rho(0),u'(0),v'(0))=(\rho_0,0,0)=g
\end{align}
for the inflow case.    Again, $u(z)\equiv u_\infty$ and $v(z)\equiv
u_\infty$, since $u(0)$ and $v(0)$ are rest points of \eqref{3} and
\eqref{5} respectively.  The mass equation then implies
$\rho(z)\equiv\rho_\infty$, so in particular $\rho_\infty=\rho_0$.
Therefore, a candidate state $U_\infty$ can belong to
$\cC_{\Gamma,g}$ only if it is the endpoint of a constant layer, and
we have
\begin{align}\label{qq15}
\cC_{\Gamma,g }=\{U_\infty\in
\bR^3:v_\infty>0,\rho_\infty=\rho_0\},\;\;\dim \cC_{\Gamma,g}=2.
\end{align}

Consider the cases
\begin{align}\label{qq16}
v_\infty-c_\infty\gtrless 0,
\end{align}
and recall that $N_b=3=N_++N^2_-$. In these cases $N_+=3$, $N^2_-=0$
and $N_+=2$, $N^2_-=1$ respectively, so in both cases $N''=2>N^2_-$.
Corollary \ref{smalltrans} implies that these constant layers are
never transversal; hence they fail to satisfy even low frequency
Evans stability.  Observe that in both cases $\cC_{\Gamma,g}$ fails
to have the ``correct" dimension $N-N_+$.

\bigskip

\subsubsection{Maximal dissipativity/uniform Lopatinski condition.
}\label{maxlop} The question of uniform Evans stability for large
amplitude layers remains a mostly open question. For the
one-dimensional isentropic case with $\gamma$-law equation of state,
it has been shown numerically for Dirichlet boundary conditions that
noncharacteristic boundary layers are stable, independent of
amplitude \cite{CHNZ}. For the full, nonisentropic case on the other
hand, it has been shown for Dirichlet boundary conditions
that, even in one dimension and for $\gamma$-law equation of state,
instabilities may occur \cite{SZ}. Here we show that the residual
boundary conditions determined in the previous subsection are
maximally dissipative (and thus satisfy the uniform Lopatinski
condition) for all amplitudes. When $\Gamma$ is a full Dirichlet
condition ($N''=0$), this conclusion follows without any
computation, since the
residual boundary conditions by the analysis of the previous
subsection have form independent of the amplitude of the boundary
layer, and for small
amplitudes are known (Theorem \ref{realmaxdiss}) to be maximally
dissipative.
We carry out the computations nonetheless, to show how they work out
in this simplest case. See \cite{SZ} for
 one-dimensional calculations involving more general,  not necessarily
 decoupled, boundary conditions.

\medbreak

\textbf{1. Dirichlet conditions, outflow, $v_\infty +c_\infty >
0>v_\infty$.}  The hyperbolic problem is symmetric, with one
incoming, and two  outgoing characteristics. Rewriting the
isentropic Euler equations in $U:=(\rho, u, v)$ coordinates and
linearizing about $(\rho_\infty, u_\infty, v_\infty)$, we obtain
$U_t+A^1U_x+A^2U_y$, where
$$
A_2=
\begin{pmatrix}
v_\infty & 0 & \rho_\infty\\
0 & v_\infty & 0\\
c_\infty^2/\rho_\infty & 0 & v_\infty \\
\end{pmatrix},
$$
while the residual boundary condition (linear, in this case) is
$$
\Gamma_{res} U=g, \qquad \Gamma=
\begin{pmatrix}
0 & 0 & 1
\end{pmatrix},
$$
hence the kernel of the linearized boundary condition is the set of
vectors $(a,b,0)$. Applying the positive definite symmetrizer $
S=\diag\{c_\infty^2/\rho_\infty^2, 1, 1 \}$, we obtain
\begin{align}\label{qq20}
SA_2=
\begin{pmatrix}
v_\infty c_\infty^2/\rho_\infty^2 & 0 & c_\infty^2/\rho_\infty\\
0 & v_\infty & 0\\
c_\infty^2/\rho_\infty & 0 & v_\infty \\
\end{pmatrix},
\end{align}
which is evidently negative definite on the kernel of
$\Gamma_{res}$; hence $\Gamma_{res}$ is maximally dissipative.

\medbreak

\textbf{2. Dirichlet conditions, outflow, $v_\infty +c_\infty<0$.}
In this case $SA_2$ is negative definite, so the residual boundary
conditions are automatically {\it maximally dissipative}. \medbreak

\textbf{3. Dirichlet conditions, inflow, $v_\infty-c_\infty >0$.}
Again, this is a trivial case, in which all hyperbolic modes are
incoming, and the residual boundary conditions are
$\Gamma_{res}U=U$. So $\ker \Gamma_{res}=\{0\}$, and thus
$\Gamma_{res}$ is  maximally dissipative.

\medbreak

\textbf{4. Dirichlet conditions, inflow, $v_\infty-c_\infty <
0<v_\infty$.} In this final case, we have prescription of momentum
$m_\infty$ and transverse velocity $u_\infty$, and the kernel of the
linearized boundary condition is spanned by
$w:=(\rho_\infty,0,-v_\infty)$. Computing
$$
w^TSA_2w= v_\infty(v_\infty^2-c_\infty^2)<0,
$$
we find that the restriction of $SA_2$ to the kernel is again
negative definite, so that the residual boundary condition is indeed
maximally dissipative in agreement with the abstract theory.

\begin{rem}
1.  We remark on the contrast between specification of velocity vs.
momentum in outgoing vs. incoming case. (Specifying $\rho$ or $v$
along with $u$ in the incoming case wouldn't be dissipative in
general.)

2. \textbf{Neumann or mixed conditions.} The only  case in section
\ref{Isentropic} involving Neumann conditions that was Evans stable
was the outflow case when $v_\infty+c_\infty<0$. Again, $SA_2$ is
negative definite so the residual conditions are maximally
dissipative.   In the remaining three cases involving Neumann or
mixed conditions, maximal dissipativity fails. For example, in the
inflow case ($v_\infty>0$) $\Gamma_{res}=(1,0,0)$, and so $SA_2$ is
positive definite on $\ker\Gamma_{res}$.

\end{rem}

\subsubsection{Transversality of large amplitude layers}

We now show how to verify that transversality holds in the three
cases where we constructed large amplitude layers:\medbreak

(a) outflow/Dirichlet/$v_\infty+c_\infty>0$;

(b) outflow/Dirichlet/$v_\infty+c_\infty<0$;

(c) inflow/Dirichlet/$v_\infty-c_\infty<0$. \medbreak

  It is helpful first to reformulate the definition of
transversality geometrically.  Let $w(z)$ be a possibly large
amplitude layer profile converging to
$q_\infty:=(\rho_\infty,u_\infty,v_\infty)$.  Set
\begin{align}\label{xx1}
W_0:=(w(0),w^2_z(0)),\;W_\infty=(q_\infty,0)
\end{align}
and observe that $W_0\in\bR^5$ lies in both the stable manifold of
$W_\infty$, denoted $\cW^s(W_\infty)$, and the center-stable
manifold of $W_\infty$, denoted $\cW^{cs}(W_\infty)$.   The
conditions defining transversality of $w(z)$ in Definition
\ref{deftrans} can be rephrased:\medbreak

\quad (i)\;$\Gamma:T_{W_0}\left(\cW^{s}(W_\infty)\right)\to\bR^{N_b}$ is
injective,

\quad (ii)\;$\Gamma:T_{W_0}\left(\cW^{cs}(W_\infty)\right)\to\bR^{N_b}$ is
surjective.

These may be recognized as the conditions that $\Upsilon$ be
full rank on the stable and center--stable manifolds, respectively,
of $W_\infty$, which correspond by definition to the following
geometric version of transversality.

\begin{lem}[Geometric transversality conditions]\label{geo}
The transversality conditions of Definition \ref{deftrans}
are equivalent to the conditions that

(i')\; the level set $\{W:\, \Upsilon(W)=\Upsilon(W_0)\}$ meets the
stable manifold of $W_\infty$ transversally in phase space $W=(w,
w^2_z)$.

(ii')\; the level set $\{W:\, \Upsilon(W)=\Upsilon(W_0)\}$ meets the
center-stable manifold of $W_\infty$ transversally in phase space
$W=(w, w^2_z)$.
\end{lem}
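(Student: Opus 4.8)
The plan is to unwind the definitions on both sides and observe that "transversality of $w$" and "transversal intersection of a level set with an invariant manifold" are two names for the same linear-algebraic fact, once one identifies the relevant tangent spaces. First I would recall that, writing the profile equation \eqref{layeq} as the first-order system \eqref{layeq2} in the phase variable $W=(w,w^2_z)\in\RR^{N+N'}$, the subspace $\cS$ of \eqref{count}(iii) consisting of solutions of the linearized equation \eqref{linlayeq} that approach finite limits as $z\to+\infty$ is precisely the tangent space $T_{W_0}\bigl(\cW^{cs}(W_\infty)\bigr)$ at the point $W_0=(w(0),w^2_z(0))$, and $\cS_0$ (solutions decaying to $0$) is precisely $T_{W_0}\bigl(\cW^{s}(W_\infty)\bigr)$. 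This is the standard linearization-of-invariant-manifolds fact: the center--stable (resp. stable) manifold of the rest point $W_\infty$ is, near $W_0$, a submanifold whose tangent space along the orbit through $W_0$ is spanned by the variations of that orbit lying in the center--stable (resp. stable) subspace, i.e. exactly the bounded (resp. decaying) solutions of the variational equation. The dimension counts $\dim\cS=N+N^2_-=\dim\cW^{cs}(W_\infty)$ and $\dim\cS_0=N^2_-=\dim\cW^{s}(W_\infty)$ from Lemma \ref{count} confirm the identification. (Here one uses that $W_\infty=(\uu,0)$ is an equilibrium and that, by Lemma \ref{count}(ii), $w$ converges exponentially, so $W_0$ genuinely lies on both manifolds and the Stable/Center--Stable Manifold Theorems apply; this is the same circle of ideas already invoked in Remark \ref{CS}.)

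Granting that identification, the second step is purely formal. The map $\dot W\mapsto \Gamma_\nu(\dot w,\D_z\dot w^2)_{|z=0}$ appearing in Definition \ref{deftrans} is, by \eqref{Gammaeq} and the fact that $U=(w,w^2_z)$ in phase space, nothing but the restriction to $\cS$ (resp. $\cS_0$) of the differential $\Upsilon'(W_0)=d\Upsilon_{W_0}:\RR^{N+N'}\to\RR^{N_b}$. Condition (i) of Definition \ref{deftrans}—no nontrivial $\dot w\in\cS_0$ in the kernel of the boundary operator—says exactly that $d\Upsilon_{W_0}$ is injective on $T_{W_0}\bigl(\cW^s(W_\infty)\bigr)$; by the standard characterization of transversality of a submanifold with a level set, $d\Upsilon_{W_0}|_{T_{W_0}\cW^s}$ injective is equivalent to $\cW^s(W_\infty)$ meeting $\{W:\Upsilon(W)=\Upsilon(W_0)\}=\Upsilon^{-1}(\Upsilon(W_0))$ transversally at $W_0$ (since the level set has codimension $N_b=\operatorname{rank}\Upsilon'$ and its tangent space is $\ker d\Upsilon_{W_0}$, so $T_{W_0}\cW^s\cap\ker d\Upsilon_{W_0}=\{0\}$ together with the dimension bookkeeping gives $T_{W_0}\cW^s + \ker d\Upsilon_{W_0}=\RR^{N+N'}$). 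This is exactly (i'). Likewise, condition (ii) of Definition \ref{deftrans}—that the map from $\cS$ to $\CC^{N_b}$ have rank $N_b$, i.e. be surjective—says that $d\Upsilon_{W_0}$ is surjective on $T_{W_0}\bigl(\cW^{cs}(W_\infty)\bigr)$, which is precisely the statement that $\cW^{cs}(W_\infty)$ meets the level set $\Upsilon^{-1}(\Upsilon(W_0))$ transversally at $W_0$; that is (ii'). Conversely, reading these equivalences backwards gives (i')$\Rightarrow$(i) and (ii')$\Rightarrow$(ii).

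The only genuinely substantive point—hence what I expect to be the main obstacle to write cleanly rather than to believe—is the first step: justifying that the tangent spaces to the stable and center--stable manifolds of $W_\infty$ at the point $W_0$ on the profile orbit are exactly $\cS_0$ and $\cS$. For a rest point this would be immediate, but $W_0$ is a point on a nonconstant orbit, so one must invoke the invariance of the manifolds under the (nonlinear) flow of \eqref{layeq2} together with the exponential approach of the orbit to $W_\infty$: flowing $W_0$ forward to a point $W(z_0)$ near $W_\infty$ for $z_0$ large, applying the classical Stable/Center--Stable Manifold Theorems at the equilibrium there (where tangent space = stable/center--stable subspace of $\CalG_\nu(+\infty)$, and the dimensions match Lemma \ref{count}), and pulling back by the linearized flow $d\Phi^{z_0}$, which carries the variational solutions bijectively and identifies $d\Phi^{-z_0}$ of the stable/center--stable subspace at $W(z_0)$ with $\cS_0$/$\cS$ at $W_0$. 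This is standard ODE/dynamical-systems material and is precisely the content cited from \cite{Met4}, Prop.\ 5.5.5 and the discussion around Definition \ref{deftrans}; I would state it as a lemma or simply cite it, since reproving the invariant-manifold machinery here would be out of scale. Everything else is the codimension-counting tautology of transversality recalled above.
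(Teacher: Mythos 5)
Your overall architecture matches the paper's (which treats the lemma as essentially definitional once Definition \ref{deftrans} is rephrased as injectivity/surjectivity of $\Gamma$ on $T_{W_0}(\cW^{s}(W_\infty))=\cS_0$ and $T_{W_0}(\cW^{cs}(W_\infty))=\cS$), and your treatment of (ii') is correct. But there is a genuine error in your handling of (i'): you claim that $T_{W_0}\cW^{s}\cap\ker d\Upsilon_{W_0}=\{0\}$ ``together with the dimension bookkeeping'' yields $T_{W_0}\cW^{s}+\ker d\Upsilon_{W_0}=\RR^{N+N'}$, i.e.\ transversality in the standard sense. The dimensions do not add up: $\dim T_{W_0}\cW^{s}=N^2_-$ and $\dim\ker d\Upsilon_{W_0}=(N+N')-N_b=N-N^1_+$, so the sum of the dimensions is $N^2_-+N-N^1_+=(N+N')-N_+$, which falls short of $N+N'$ whenever $N_+>0$, that is, in every case where the residual hyperbolic problem carries a nontrivial boundary condition. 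In those cases the stable manifold can never meet the level set transversally in the standard sense, and the equivalence you assert for (i') would make the lemma false.

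The resolution is in the remark the paper places immediately after the lemma: ``transversal'' in (i') is used in a nonstandard sense, meaning that the intersection $\{W:\Upsilon(W)=\Upsilon(W_0)\}\cap\cW^{s}(W_\infty)$ is the single point $W_0$ (a minimal intersection, not a generic one). With that convention, condition (i) of Definition \ref{deftrans} -- injectivity of $d\Upsilon_{W_0}$ on $T_{W_0}\cW^{s}$, equivalently $T_{W_0}\cW^{s}\cap\ker d\Upsilon_{W_0}=\{0\}$ -- is indeed equivalent, locally, to the intersection reducing to the single point $W_0$, since two submanifolds whose tangent spaces meet trivially intersect locally only at the base point. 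If you replace the spanning claim by this statement, the rest of your argument, including the invariant-manifold identification of the tangent spaces (the content of \cite{Met4}, Prop.\ 5.5.3 and 5.5.5), goes through.
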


\begin{rem}The use of the word ``transversal" in (i') is perhaps nonstandard; it is used to suggest a minimal intersection.  We mean $\{W: \Upsilon(W)=\Upsilon(W_0)\}\cap
\cW^s(W_\infty)=\{W_0\}$; that is, the intersection is a single
point.
\end{rem}

%
%

The stable and center-stable manifolds are easily parametrized in
each of cases (a), (b), and (c) above, so conditions (i) and (ii)
can be checked explicitly.  For example, consider case (c), where
$N_b=N_++N^2_-=2+1=3$, the viscous boundary condition is
\begin{align}\label{xx2}
\Gamma U(0)= (\rho_0,u_0,v_0):=U_0
\end{align}
with $U_0$ fixed, $w(0)=U_0$, $w(\infty)=q_\infty$ is fixed, and the
dimensions of the stable and center-stable manifolds above are $1$
and $4$ respectively.   Recalling the induced hyperbolic boundary
conditions
\begin{align}\label{xx5}
u_\infty=u_0,\;\; \rho_\infty v_\infty=\rho_0v_0,
\end{align}
we see that those manifolds can be parametrized as follows:
\begin{align}\label{xx3}
\begin{split}
&\cW^s(W_\infty)=\Big\{\begin{pmatrix}\frac{\rho_\infty v_\infty}{a}\\u_\infty\\a\\0\\f(a,\rho_\infty,v_\infty)\end{pmatrix}: a \in A\Big\} \\
&\cW^{cs}(W_\infty)=\Big\{\begin{pmatrix}\frac{b
d}{a}\\c\\a\\0\\f(a,b,d)\end{pmatrix}: a \in A\subset \bR,
(b,c,d)\in B\subset\bR^3\Big\}
\end{split}
\end{align}
where $B$ is a neighborhood of $q_\infty$, $A$ is a neighborhood of
$v_0$ determined by \eqref{rr1}, and the function $f$ (whose form
turns out not to be important for verifying transversality) can be
read off from \eqref{5}.   Independent vectors spanning the tangent
spaces to $\cW_s(W_\infty)$ (resp. $\cW^{cs}(W_\infty)$) at $W_0$
may be computed by differentiating the  formulas \eqref{xx3} with
respect to $a$ (resp. $a$, $b$, $c$, $d$). With these explicit
formulas the injectivity and surjectivity conditions above are
immediately obvious.

We summarize this discussion in the following
\begin{prop}\label{ac1}
The isentropic Navier-Stokes large-amplitude layer profiles
described in cases (a), (b), (c) at the beginning of this section
are transversal.

\end{prop}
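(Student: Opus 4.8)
The plan is to verify the geometric transversality conditions of Lemma \ref{geo}, equivalently the injectivity/surjectivity conditions (i)--(ii) of Definition \ref{deftrans}, in each of the three cases, exploiting the fact that the isentropic profile equations \eqref{1} are completely integrable. First I would observe that the three integrated relations in \eqref{1} are first integrals of the first--order system \eqref{layeq2} in phase space $W=(\rho,u,v,u_z,v_z)\in\bR^5$, with everywhere independent differentials; hence every profile limiting to a constant layer $W_\infty=(q_\infty,0)$ lies in the smooth two--dimensional leaf $\Lambda_\infty$ obtained by freezing these integrals at their values at $W_\infty$. Coordinatizing $\Lambda_\infty$ by $(u,v)$ (with $\rho=m_\infty/v$ and $u_z,v_z$ then determined), the reduced flow on $\Lambda_\infty$ is a \emph{product}: $u$ obeys the scalar linear equation \eqref{3} and $v$ obeys the scalar equation \eqref{5}. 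Thus $W_\infty$ is hyperbolic within $\Lambda_\infty$, its stable eigenvalues being $m_\infty/\mu$ precisely when $m_\infty<0$ together with the $v$--linearization eigenvalue precisely when the sign condition \eqref{oppsign} (resp.\ \eqref{signtwo}) holds, which reproduces the count $N^2_-$ of Lemma \ref{count}; and because the $v$--basin of attraction is exactly the interval \eqref{rr1}, the stable manifold $\cW^s(W_\infty)\subset\Lambda_\infty$ is globally explicit, covering the whole large--amplitude range. Since by Remark \ref{CS} the center manifold of \eqref{layeq2} consists entirely of constant layers, $\cW^{cs}(W_\infty)$ is the union of the stable leaves $\cW^s((q,0))$ over rest points $q$ near $q_\infty$, hence is also explicitly parametrized (by $q$, of dimension $N=3$, together with the $N^2_-$ leaf coordinates). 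The integrability therefore lets the large--amplitude case be treated exactly like the small one.

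With these parametrizations in hand --- which in case (c) are exactly those recorded in \eqref{xx3} --- the remaining work is to apply the linearized viscous boundary operator $\Gamma$ to the tangent vectors obtained by differentiating the parametrizations at $W_0$ and read off the rank. In the Dirichlet cases $\Gamma$ is simply the coordinate projection $W\mapsto(u,v)$ in the outflow cases (a),(b) (where $N^1_+=0$, $N_b=2$) and $W\mapsto(\rho,u,v)$ in the inflow case (c) (where $N^1_+=1$, $N_b=3$). I would treat case (c) first: differentiating \eqref{xx3} in the single leaf parameter $a$ gives a tangent vector to $\cW^s(W_\infty)$ whose $(\rho,u,v)$--projection is $\propto(-m_\infty/a^2,\,0,\,1)$, nonzero since $m_\infty\neq0$, so $\Gamma$ is injective on $T_{W_0}\cW^s$; differentiating the four--parameter description of $\cW^{cs}(W_\infty)$ and projecting onto $(\rho,u,v)$ produces a $3\times4$ matrix whose minor involving the leaf parameter and the $u_\infty$, $v_\infty$ directions is (a nonzero multiple of) $v_\infty\neq0$, so $\Gamma$ is surjective onto $\bR^{N_b}$. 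Cases (a) and (b) go the same way: in (a) ($v_\infty+c_\infty>0$, $N_+=1$) only the $u$--direction decays, so $\cW^s(W_\infty)$ is one--dimensional, $\cW^{cs}(W_\infty)$ is four--dimensional, and one checks that the leaf direction maps under $\Gamma$ to a nonzero vector while varying the endstate $q$ supplies the complementary direction; in (b) ($v_\infty+c_\infty<0$, $N_+=0$) both leaf directions decay, $\cW^s(W_\infty)$ is an open subset of $\Lambda_\infty$ on which $\Gamma$ restricts to the identity in the $(u,v)$ coordinates (so injectivity is immediate), while $\cW^{cs}(W_\infty)$ is open in $\bR^5$, giving $T_{W_0}\cW^{cs}(W_\infty)=\bR^5$ and hence automatic surjectivity of $\Gamma$.

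The main obstacle is organizational rather than analytic: there is no singular limit and no estimate, only finite--dimensional linear algebra, but one must carefully track, in each case, which coordinates of the boundary trace $W_0$ are free leaf parameters, which are pinned by the choice of endstate $q$ together with the conserved relations \eqref{1}, and which are annihilated by $\Gamma$, and then match the resulting rank against $N_b=N_++N^2_-$ as computed in the preceding subsection. Getting the parametrization of $\cW^{cs}(W_\infty)$ right --- justifying via Remark \ref{CS} that the center directions are exactly ``move the rest point $q$,'' and then verifying that these variations move $W_0$ with the expected $\Gamma$--images (e.g.\ that changing $\rho_\infty$ or $v_\infty$ in case (c) changes $\rho(0)=m_\infty/v(0)$ in the manner dictated by the induced hyperbolic constraints \eqref{xx5}) --- is where the care goes. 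Once the parametrizations are assembled, the injectivity and surjectivity conditions are visible by inspection of the explicit formulas, and Proposition \ref{ac1} follows.
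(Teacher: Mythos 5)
Your argument is correct and follows essentially the same route as the paper: exploit the integrated (completely integrable) profile relations \eqref{1} to write down the stable and center--stable manifolds explicitly --- in case (c) exactly the parametrizations \eqref{xx3} --- and then verify injectivity/surjectivity of $\Gamma$ on the tangent spaces by differentiating in the leaf and endstate parameters, which is precisely how the paper checks conditions (i)--(ii) of Lemma \ref{geo}. (The only quibble is cosmetic: in case (c) the $3\times3$ minor in the columns for $a$, $u_\infty$, $v_\infty$ comes out proportional to $\rho_\infty/v_0$ rather than $v_\infty$, but either way it is nonzero and the rank count stands.)
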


\subsection{Full Navier-Stokes equations}\label{full}
Next we consider the full (nonisentropic) Navier--Stokes equations
\begin{align}
\begin{split}
&(a)\;\rho_t+ (\rho u)_x +(\rho v)_y=0,\\
&(b)\;(\rho u)_t+ (\rho u^2)_x + (\rho uv)_y + p_x= (2\mu +\eta)
u_{xx}+
\mu u_{yy} +(\mu+ \eta )v_{xy},\\
&(c)\;(\rho v)_t+ (\rho uv)_x + (\rho v^2)_y + p_y=
\mu v_{xx}+ (2\mu +\eta) v_{yy} + (\mu+\eta) u_{yx},\\
&(d)\;(\rho E)_t+ (u \rho E)_x + (v \rho E)_y+ (pu)_x + (pv)_y=\\
&\qquad \kappa T_{xx} + \kappa T_{yy}+\Big( (2\mu+\eta)uu_x + \mu
v(v_x+u_y) + \eta uv_y\Big)_x +\\
&\qquad\qquad\qquad\;\;\quad\quad \Big((2\mu+\eta)vv_y + \mu
u(v_x+u_y) + \eta vu_x\Big)_y
\end{split}
\end{align}
where $\rho$ is density, $u$ and $v$ are velocities in the $x$ and
$y$ directions, $p$ is pressure,
and $e$ and $E=e+\frac{u^2}{2} +\frac{v^2}{2}$ are specific internal
and total energy respectively. The constants $\mu>|\eta|\ge0$ and
$\kappa>0$ are coefficients of first (``dynamic'') and second
viscosity and heat conductivity. Finally, $T$ is the temperature and
we assume that the internal energy $e$ and the pressure $p$ are
known functions of density and temperature:
\begin{equation*}
p=p(\rho,T),\quad e=e(\rho,T).
\end{equation*}

It is clear for full (nonisentropic) gas dynamics that the uniform
Lopatinski condition does \emph{not} hold for arbitrary amplitudes,
even for the simplest case of a $\gamma$-law gas. The proof of
one-dimensional ($\eta=0$) viscous instability in \cite{SZ} for
compressive boundary-layers of such a gas \footnote{ Specifically,
for $\gamma>2$
and $2\mu+\eta>\kappa$, in the inflow case
$v_\infty>0>v_\infty-c_\infty$ with full Dirichlet conditions
imposed at the boundary, instability was shown for layers consisting
of a sufficiently large portion of a sufficiently large-amplitude
$1$-shock.} was based on showing that the Lopatinski determinant (a
multiple of $\lambda$) restricted to the positive real axis could
change sign as parameters, including amplitude, were varied; in
particular, the determinant could vanish, yielding hyperbolic
instability.  So we cannot hope to show that the residual boundary
conditions independent of amplitude as before.  A brief examination
reveals, likewise, that the residual boundary conditions are rather
complicated to describe: in short, there appears to be no hope of
other than a {\it local} analysis near the limiting constant layer.
We carry this out below, and compute the linearized boundary
condition for the linearized hyperbolic problem at a constant layer.

The profile equations are, setting $\nu := 2 \mu + \eta$:
\begin{eqnarray}
(\rho v)'  & = & 0 \nonumber\\
(\rho u v )' & = & \mu u'' \nonumber\\
(\rho v^2)' + p' & = & \nu v'' \nonumber\\
\big( \rho v (e + \frac{u^2 + v^2}{2} ) + pv \big)'  & = & \kappa
T'' + \frac{1}{2}(\mu u^2 + \nu v^2)''\nonumber
\end{eqnarray}
Let us introduce the unknown $m=\rho v$. Writing the equations in
terms of the unknowns $(m, u,v,T)$ and integrating once gives the
following equations with $p= P(v,T)$ and $p_\infty = P(v_\infty,
T_\infty)$:
\begin{eqnarray}\label{profeq}
(a)\;\;m  & = & m_\infty \\
(b)\;\;u' & = & \frac{m_\infty}{\mu}(u-u_\infty) \nonumber\\
(c)\;\;v'  & = &  \frac{m_\infty}{\nu}(v-v_\infty ) + \frac{p - p_\infty}{\nu} \nonumber \\
(d)\;\;T ' & = & \frac{m_\infty}{\kappa}(e-e_\infty)
- \frac{m_\infty}{2\kappa}(u-u_\infty)^2 \nonumber \\
& &  - \frac{m_\infty}{2\kappa}(v-v_\infty)^2 +
\frac{p_\infty}{\kappa} (v-v_\infty) \nonumber
\end{eqnarray}
The equation for $X =(u,v,T)$ reads $X' = F(X; m_\infty,
X_\infty)$ where $X_\infty$ and $m_\infty$ are parameters.\\

\textbf{Case of an outgoing flow.}  Let $U=(m,u,v,T)=(m,X)$ and
consider an outgoing flow, $v< 0$, so $N_b=N'+N^1_+=3+0=3$. We fix a
state $U_0\in\cU_\partial$ for which $v_0<0$ consider boundary
conditions that are just Dirichlet conditions on $X$:
\begin{align}\label{tt1}
\Gamma U(0)=X(0)=X_0.
\end{align}
As usual we define $\cC_{\Gamma,X_0}$ as the set of states
$(m_\infty,X_\infty)$ such that there exists a solution $(m,X)$ of
the equations (\ref{profeq}) on $[0, +\infty[$, such that $X(0) =
X_0$ and $U(\infty)  = ( m_\infty , X_\infty ) $.  Clearly, $U_0\in
\cC_{\Gamma,X_0}$, and near $U_0$ we can write $\cC_{\Gamma,X_0}$ as
the union
\begin{equation}
\cC_{\Gamma,X_0} = \cup_{m_\infty}\cC_{m_\infty,X_0}
\end{equation}
where $\cC_{m_\infty,X_0}$ is the set of $X_\infty \in\bR^3$ such
that there exists a solution to equations \ref{profeq} (b-c) on $[0,
+\infty[$  satisfying $X(0) = X_0$ and $X(\infty) = X_\infty$.
Corollary \ref{smalltrans} shows that the constant layer $U(z)=U_0$
is transversal, and thus $\cC_{\Gamma,X_0}$ is  a $\cC^\infty$
manifold near $U_0$. By the argument of \cite{Se}, Lemma 15.2.5 (or
alternatively, by an argument similar to our proof of Lemma
\ref{a64}), the tangent space $T_{U_0} \cC_{\Gamma,X_0}$ is given by
\begin{align}\label{tt10}
T_{U_0} \cC_{\Gamma,X_0} = \bR\times \EE^-\big( D_{X}F(X_0;m_0,X_0)
\big).
\end{align}

Hence we are lead to calculate the stable invariant subspace of $D_X
F(X_0; m_0,X_0)$. One finds
\begin{equation}
D_XF(X;m,X) = \left(
\begin{array}{ccc}
 \frac{m}{\mu}  & 0 & 0 \\
\\
0 & { \frac{1}{\nu} } (m+ P'_v) &
 { \frac{1}{\nu} } P'_{ T } \\
\\
0  &  \frac{1}{\kappa}(P_\infty+ m  e'_v) & \frac{m}{\kappa} e'_T
\end{array}
\right)
\end{equation}
Since $m_0< 0$, the matrix $D_XF(X_0;m_0,X_0)$ has at least one
negative eigenvalue which is $m_0 / \mu$. It has exactly two
eigenvalues with negative real part (in fact real negative
eigenvalues) if and only if
\begin{equation} \label{detextra}
\mathrm{det} \left[
\begin{array}{cc}
 { \frac{1}{\nu} } (m+ P'_{ v } ) &
 { \frac{1}{\nu} } P'_{ T} \\
\\
\frac{1}{\kappa}(P_\infty+ m e'_v) & \frac{m}{\kappa} e'_T
\end{array}
\right] \ < \ 0.
\end{equation}
In that case $\EE^-\big( D_XF(X_0;m_0,X_0) \big)$ has dimension $2$
and $T_{U_0 }\cC_{\Gamma,X_0}$ has dimension 3. Let us call
$\lambda_- $ the second negative eigenvalue of $D_XF(X_0;m_0,X_0)$.
Then, the tangent space to $\cC_{\Gamma,X_0}$ at the point
$(m_0,X_0)$ is defined by the equations:
\begin{equation} \label{resequwithtemp}
(\dot{m},\dot{u},\dot{v},\dot{T}) \in T_{(m_0,X_0) }\cC_{\Gamma,X_0}
\iff
\begin{array}{c}
\left(\frac{1}{\nu}  (m_0+ P'_{ v })-\lambda_- )\right)\dot{v}  +
 \frac{1}{\nu}P'_T \dot{T} = 0  .
\end{array}
\end{equation}
We do not know whether these boundary conditions have already
appeared in the theory of the Euler equations, or if they have a
special physical meaning. As we already know from the general
theory, they are maximally dissipative for the Euler equations.

\subsection{MHD equations}\label{MHDsec}

The equations of isentropic magnetohydrodynamics (MHD) are
\begin{equation}
\label{mhdeq} \left\{ \begin{aligned}
 & \D_t \rho +  \div (\rho u) = 0
 \\
 &\D_t(\rho  u) + \div(\rho u^tu)+ \na p + H \times \curl H =
\eps \mu \Delta u + \eps(\mu+\eta) \nabla \div u
 \\
 &
 \D_t H + \curl (H \times u) = \eps \sigma \Delta H
 \end{aligned}\right.
\end{equation}
\begin{equation}\label{divfree}
\div H=0,
\end{equation}
where $\rho\in \RR$ represents density, $u\in \RR^3$ fluid velocity,
$p=p(\rho)\in \RR$ pressure, and $H\in \RR^3$ magnetic field, with
viscosities $\mu>|\eta|\ge0$
and magnetic resistivity $\sigma \ge 0$; {\it for simplicity, we
consider here the case $\sigma=0$.} When  $H\equiv 0$ \eqref{mhdeq}
reduces to the equations of isentropic fluid dynamics.

The equations \eqref{mhdeq} are not yet in a form that satisfies the
assumptions of section \ref{equations}.  For example, the
noncharacteristic condition is violated for every state
$(\rho,u,H)$. The equations may be put in conservative form using
identity
\begin{equation}
H\times \curl H= (1/2)\div (|H|^2I-2H^tH)^\trans + H\div H
\end{equation}
together with constraint \eqref{divfree} to express the second
equation as
\begin{equation}\label{cons2}
 \D_t(\rho  u) + \div(\rho u^tu)+ \na p + (1/2)\div (|H|^2I-2H^tH)^\trans= \eps \mu \Delta u + \eps(\mu+\eta) \nabla \div u.
\end{equation}
They may be put in symmetrizable (but no longer conservative) form
by a further change, using identity
\begin{equation}
\curl (H \times u) =
 (\div u) H+ (u\cdot \na)H -(\div H) u- (H\cdot \na)u
\end{equation}
together with constraint \eqref{divfree} to express the third
equation as
\begin{equation}\label{symm3}
 \D_t H +
 (\div u) H+ (u\cdot \na)H - (H\cdot \na)u= \sigma \eps \Delta H.
\end{equation}
Forgetting  the constraint equation, we get a $7 \times 7$ symmetric
system that satisfies all our structural assumptions except for
(H4).

\begin{rem}\label{yy1}
1) Define
\begin{align}\label{yy2}
c^2 = p'(\rho) > 0 , \quad  v = H/ \sqrt{\rho},  \quad  b= | \hat
\xi \times v |,  \quad
  \hat \xi = \xi/ | \xi |,
\end{align}
$$
  \begin{aligned}
  &c_f^2  := \frac{1}{2}
  \Big( c^2 + |v |^2   + \sqrt{ (c^2 - | v |^2)^2 + 4 b^2 c^2  }
  \Big),
  \\
 &   c_s^2  := \frac{1}{2}
  \Big( c^2 + | v| ^2   -  \sqrt{ (c^2 - | v | ^2)^2 +  4 b^2  c^2   }
  \Big).
  \\
\end{aligned}
$$

The boundary $x_3=0$ is noncharacteristic for the hyperbolic part
when
\begin{align}\label{yy3}
u_3\notin\{0,\pm v_3,\pm c_s(n),\pm c_f(n)\},
\end{align}
where $c_s(n)$ and $c_f(n)$ are the slow and fast speeds computed in
the normal direction $n=(0,0,1)$.  Lemma 8.2 of \cite{GMWZ6} shows
that if we assume in addition
\begin{align}\label{yy4}
0<|v|\neq c,\;\; |u_3|>|v_3|,
\end{align}
then Hypothesis (H4$'$) is satisfied.   For the stability analysis,
which requires the construction of $K$-families of symmetrizers
(\cite{GMWZ6}, Definition 3.5), we must use \eqref{cons2},
\eqref{symm3}.   For the purposes of deriving profile equations,
describing $\cC$ manifolds, and computing linearized residual
boundary conditions, the form \eqref{mhdeq}, \eqref{divfree} is more
convenient to use.

2)  As the divergence-free condition is preserved by the evolution
of the equations, we are free to ignore it in establishing stability
of solutions.  Results on instability, however, must be examined to
check whether associated unstable modes are true, divergence-free
instabilities or only apparent, spurious instabilities.

\end{rem}

 With $\nu := 2
\mu + \eta$ the profile equations are:
\begin{eqnarray}\label{MHDprofequ}
(\rho u_3)'  & = & 0 \\
(\rho u_1 u_3 - H_1 H_3)'  & = & \mu u_1'' \nonumber\\
(\rho u_2 u_3 - H_2 H_3)' & = & \mu u_2'' \nonumber\\
\big( \rho u_3^2 + p +  {\frac{1}{2}}(H_1^2 + H_2^2 - H_3^2) \big)'
& = &
\nu  u_3'' \nonumber\\
(H_1 u_3 - u_1 H_3)' & = & 0 \nonumber\\
(H_2 u_3 - u_2 H_3)' & = & 0 \nonumber\\
H_3' & = & 0\nonumber
\end{eqnarray}
By choosing the set of new unknowns $m := \rho u_3$, $\alpha := (H_1
u_3 - u_1 H_3)$, $\beta := (H_2 u_3 - u_2 H_3)$, $H_3$, $u_1$,
$u_2$, $u_3$ and integrating once, the system reduces to a system on
$u=(u_1, u_2, u_3)$, where $M:=(m , \alpha, \beta , H_3) \in \RR^4$
and  $u_\infty = (u_{1\infty}, u_{2\infty}, u_{3\infty}) \in \RR^3$
are parameters:
\begin{eqnarray}\label{yy8}
 \; M&= &M_\infty\nonumber\\
 \;u_1' & = &  \frac{m}{\mu}\big( u_1 - u_{1\infty} \big)
- \frac{\alpha H_3}{\mu} \big(  \frac{1}{u_3} -
\frac{1}{u_{3\infty}}\big) - \frac{H_3^2}{\mu}
\big(  \frac{u_1}{u_3} -  \frac{u_{1\infty}}{u_{3\infty}} \big)\\
 u_2'  & = &  \frac{m}{\mu}\big( u_2 - u_{2\infty } \big)
- \frac{\beta H_3}{\mu} \big( \frac{1}{u_3} -
\frac{1}{u_{3\infty}}\big) - \frac{H_3^2}{\mu}
\big(  \frac{u_2}{u_3} -  \frac{u_{2\infty}}{u_{3\infty}} \big)\nonumber\\
u_3' & = &  \frac{m}{\nu}\big(  u_3 - u_{3\infty} \big) +
\frac{1}{\nu} \big( p(\frac{m}{u_3}) - p(\frac{m}{u_{3\infty}})
\big) +
\nonumber\\
& + & \frac{1}{2\nu} \big( H_1^2  -  H_{1\infty}^2 \big)
 +  \frac{1}{2\nu} \big( H_2^2 - H_{2\infty}^2 \big)
-  \frac{1}{2\nu} \big( H_3^2 - H_{3\infty}^2 \big)\nonumber.
\end{eqnarray}
In the last equation it is understood that $H_1 = \frac{\alpha + u_1
H_3}{u_3}$ and $H_2 = \frac{\beta + u_2 H_3}{u_3}$. As in the
calculations of section \ref{full}, the system has the form $u' =
F(u;M,u_\infty)$. The Jacobian matrix is
\begin{equation*}
D_uF( u;M,u) = \left[
\begin{array}{ccc}
a & 0 & e/\mu \\
0 & a & d/\mu \\
e/\mu & d/\nu & b
\end{array}
\right],
\end{equation*}
where $M:=(m,\alpha,\beta,H_3)$ and
\begin{equation}\label{notationsMHD}
\begin{array}{lcl}
a := \frac{m}{\mu} - \frac{H_3^2}{\mu u_3} & \quad  & b:=
\frac{m}{\nu} \big( 1- \frac{P'(\rho)}{u_3^2} \big)
- \frac{H_1^2 + H_2^2}{ \nu u_3} \\
\\
e := \frac{H_1 H_3}{u_3} & \quad  & d := \frac{H_2 H_3}{u_3}.
\end{array}
\end{equation}
The eigenvalues of $D_uF(u;M,u)$ are
\begin{equation}
a\text{ and }\lambda_\pm:= \frac{a+b}{2} \pm \sqrt { \
\frac{(a-b)^2}{4} + \frac{e^2 + d^2 }{\mu \nu} },
\end{equation}
and we have
\begin{equation}\label{prod}
\lambda_+ \lambda_-  =  ab - (e^2 + d^2)/ \mu\nu.
\end{equation}

\textbf{Case of an outgoing flow.}  Let
$U=(m,\alpha,\beta,H_3,u)=(M,u)$ and fix a state
$U_0=(M_0,u_0)\in\cU_\partial$ which satisfies the conditions
\eqref{yy3} and \eqref{yy4} and for which the flow is outgoing,
$u_3< 0$, so $N_b=N'+N^1_+=3+0=3$. We consider boundary conditions
that are just Dirichlet conditions on $u$:
\begin{align}\label{yy9}
\Gamma U(0)=u(0)=u_0.
\end{align}
As usual we define $\cC_{\Gamma,u_0}$ as the set of states
$(M_\infty,u_\infty)$ such that there exists a solution $(M,u)$ of
the equations \eqref{yy8} on $[0, +\infty[$, such that $u(0) = u_0$
and $U(\infty)  = ( M_\infty , u_\infty ) $.  Clearly, $U_0\in
\cC_{\Gamma,u_0}$, and near $U_0$ we can write $\cC_{\Gamma,u_0}$ as
the union
\begin{equation}
\cC_{\Gamma,u_0} = \cup_{M_\infty}\cC_{M_\infty,u_0},
\end{equation}
where $\cC_{M_\infty,u_0}$ is the set of $u_\infty \in\bR^3$ such
that there exists a solution to the last three equations in
\eqref{yy8} on $[0, +\infty[$ satisfying $u(0) = u_0$ and $u(\infty)
= u_\infty$.   Corollary \ref{smalltrans} shows that the constant
layer $U(z)=U_0$ is transversal,  and thus $\cC_{\Gamma,X_0}$ is  a
$\cC^\infty$ manifold near $U_0$.  By the argument of section
\ref{full},
 the tangent space $T_{U_0} \cC_{\Gamma,u_0}$ is given by
\begin{align}\label{yy11}
T_{U_0} \cC_{\Gamma,u_0} = \bR^4\times \EE^-\big(
D_{u}F(u_0;M_0,X_0) \big).
\end{align}
We proceed to compute the stable subspace of
$\cF_0:=D_{u}F(u_0;M_0,X_0)$.

The condition \eqref{yy4} implies that $\cF_0$ has at least one
negative eigenvalue, namely $a$, with corresponding eigenvector
$(-\frac{d}{\nu},\frac{e}{\mu},0)$. The matrix $\cF_0$ will have
exactly one, two, or three negative eigenvalues depending on whether
\begin{align}\label{yy12}
\lambda_->0,\;\lambda_-<0<\lambda_+,\;\text{ or }\lambda_+<0
\end{align}
respectively.   From \eqref{yy11} we see that the corresponding
dimensions of $\cC_{\Gamma,u_0}$ are $5$, $6$, and $7$ respectively.
By Proposition \ref{C22prop} the dimension of $\cC_{\Gamma,u_0}$ is
$N-N_+$, where $N_+$ is the  number of positive eigenvalues of
$\oA_3$.  The eigenvalues of $\oA_3$ are
\begin{align}\label{yy13}
\lambda_0  =  u_3 ,\; \lambda_{\pm s}  = u_3 \pm c_s(n),\;
 \lambda_{\pm 2} = u_3  \pm  v_3   ,\; \lambda_{\pm f} =
u_3 \pm c_f(n),
\end{align}
so we observe that the cases \eqref{yy12} correspond  to the cases
\begin{align}\label{yy14}
|u_3|<c_s(n),\;c_s(n)<|u_3|<c_f(n),\;c_f(n)<|u_3|
\end{align}
respectively. For example, consider the case
$\lambda_-<0<\lambda_+$, which by
formula (\ref{prod}) occurs if and only if
\begin{align}\label{yy15}
ab-\frac{e^2+d^2}{\mu\nu}<0.
\end{align}
%
A short computation now shows that the stable subspace of $\cF_0$ is
spanned by
\begin{align}\label{yy16}
\begin{split}
&\{(-\frac{d}{\nu},\frac{e}{\mu},0),(\frac{e}{\mu},\frac{d}{\mu},\lambda_--a)\},\\
&\text{ where
}\lambda_--a=\frac{b-a}{2}-\sqrt{\frac{(a-b)^2}{4}+\frac{e^2+d^2}{\mu\nu}}
\end{split}
\end{align}
The linearized residual boundary condition is thus expressed by
\begin{align}\label{yy17}
(\dot M,\dot u)\in T_{U_0}\cC_{\Gamma,u_0}\Leftrightarrow \dot
u\cdot
\left(\frac{e}{\mu}(a-\lambda_-),\frac{d}{\nu}(a-\lambda_-),\frac{d^2}{\mu\nu}+\frac{e^2}{\mu^2}\right)=0,
\end{align}
where $a$,$b$,$e$,$d$ are evaluated at $U_0$.

\begin{rem}\label{gj}
The hypotheses and conclusions of Theorem \ref{mainresult} on Evans
stability and the existence of small viscosity limits apply to small
amplitude layers for all the physical examples considered in section
5 except

a)isentropic NS/inflow ($v_0>0$)/mixed Dirichlet-Neumann boundary
conditions \eqref{qq14}

b)isentropic NS/outflow ($v_0<0$)/$v_\infty+c_\infty>0$/Neumann
boundary conditions \eqref{qq8}.

Additional examples for the full Navier Stokes and viscous MHD
equations where Theorem \ref{mainresult} applies can be deduced from
Corollary \ref{symmstab}(b).

\end{rem}

\bigbreak
\appendix

\section{Construction of Approximate solutions}\label{approximate}

In this appendix, we give the construction of approximate solutions,
following an approach similar to that used in \cite{GG,GMWZ7}. A new
feature here is that we obtain global approximate solutions on a
domain $\Omega$ with compact closure and smooth boundary.   The same
construction works on unbounded domains whose boundary coincides
with a half-space outside a compact set.

We seek high-order approximate solutions to

\begin{align}\label{B1}
\begin{split}
&(a)\cL_\eps(u):=
 A_0(u)u_t  + \sum_{j=1}^d   A_j(u) \D_{j} u    -
 \eps \sum_{j,k= 1}^d \D_{j} \big( B_{jk}(u) \D_{k} u \big) = 0,\\
&(b)\Up(u,\partial_Tu^2,\partial_\nu u^2)=(g_1,g_2,0)\text{ on
}\partial\Omega
\end{split}
\end{align}
which converge to a given solution $u^0(t,x)$ of the inviscid
hyperbolic problem:
\begin{align}\label{B2}
\begin{split}
&\cL_0(u^0)=0\text{ on }[-T_0,T_0]\times\Omega\\
&u^0(t,x_0)\in \cC(t,x_0)\text{ for }(t,x_0)\in
[-T_0,T_0]\times\partial\Omega,
\end{split}
\end{align}
where $\cC(t,x_0)$ is the endstate manifold defined in Assumption
\ref{C5} (see also Prop. \ref{C30}). Using the cutoff $\chi(x)$ and
the normal coordinates $(x_0,z)$ in a collar neighborhood of
$\partial \Omega$ defined in section \ref{multi-scale}, we look for
an approximate solution of the form
\begin{align}\label{B3}
\begin{split}
&u_a(t,x)=\sum_{0\le j\le M} \eps^j\cU^j(t,x, \frac{z}{\epsilon})+\epsilon^{M+1}u^{M+1}(t,x),\\
&\mathcal{U}^j(t,x,\frac{z}{\epsilon})=
\chi(x)V^j(t,x_0,\frac{z}{\epsilon}) + u^j(t,x).
\end{split}
\end{align}
Here $u^0$ satisfies\eqref{B2} and $V^0$  is given by
\begin{align}\label{B4}
V^0(t,x_0,Z)=W(Z,t,x_0,u^0(t,x_0))-u^0(t,x_0),
\end{align}
for a profile $W(Z,t,x_0,u^0(t,x_0))$ as in Assumption \ref{C5}.

The $V^j_\pm(Z,x_0,t)$ are boundary layer profiles constructed to be
exponentially decreasing to $0$ as $Z\to \pm\infty$.    For the
moment we just assume enough regularity so that all the operations
involved in the construction make sense.   A precise statement is
given in Prop. \ref{esoln}.

\subsection{Profile equations}\label{profile}

We substitute \eqref{B3} into \eqref{B1} and write the result as
\begin{align}\label{B5}
\sum^M_{-1}\epsilon^j\mathcal{F}^j(t,x,Z)|_{Z=\frac{z}{\epsilon}}+\epsilon^M
R^{\epsilon,M}(t,x),
\end{align}
where we separate $\cF^j$ into slow and fast parts
\begin{align}\label{B10}
\mathcal{F}^j(t,x,Z)=F^j(t,x)+G^j(t,x_0,Z),
\end{align}
and the $G^j$ decrease exponentially to $0$ as $Z\to\pm\infty$.

The interior profile equations are obtained by setting the $F^j,G^j$
equal to zero.  In the following expressions for $G^j(t,x_0,Z)$, the
functions $u^j(t,x)$ and their derivatives are evaluated at
$(t,x_0)$. With $W=W(Z,t,x_0,u^0(t,x_0))$ set
\begin{align}\label{B11}
\begin{split}
&\bL(t,x_0,Z,\partial_Z)v:=A_{\nu(x_0)}(W)v_Z+(d_uA_\nu(W)\cdot v)W_Z-\\
&\quad\frac{d}{dZ}\left(B_\nu(W)v_Z\right)-\frac{d}{dZ}\left(
(v\cdot d_u B_\nu(W))W_Z \right),
\end{split}
\end{align}
the operator determined by the linearizing the profile equations
about $W$, and
\begin{align}\label{B11a}
\cL_0v:=
 A_0(u^0)v_t  + \sum_{j=1}^d   A_j(u^0) \D_{j} v.
\end{align}
We have
\begin{align}\label{B12}
\begin{split}
&F^{-1}(t,x)=0\\
&G^{-1}(t,x_0,Z)=A_\nu(W)W_Z-\frac{d}{dZ}(B_\nu(W)W_Z),
\end{split}
\end{align}
\begin{align}\label{B13}
\begin{split}
&F^0(t,x)=\cL_0u^0\\
&G^0(t,x_0,Z)=\bL(t,x_0,Z,\partial_Z)\cU^1-Q^0(t,x_0,Z),
\end{split}
\end{align}
where $Q^0$ decays exponentially as $Z\to+\infty$ and depends only
on $(u^0,V^0)$. For $j\geq 1$ we have
\begin{align}\label{B14}
\begin{split}
&F^j(t,x)=\cL_0u^j-P^{j-1}(t,x)\\
&G^j(t,x_0,Z)=\bL(t,x_0,Z,\partial_Z)\cU^{j+1}-Q^j(t,x_0,Z),
\end{split}
\end{align}
where $Q^j$ decays exponentially as $Z\to+\infty$ and $P^j$, $Q^j$
depend only on $(u^k,V^k)$ for $k\leq j$.   Note that dependence on
the cutoff $\chi(x)$ occurs only in the $P^j$.

In writing out the boundary profile equations, we note first that
the boundary condition \eqref{B1}(b) is equivalent for $\epsilon>0$
to
\begin{align}\label{B15}
\Up(u,\epsilon\partial_Tu^2,\epsilon\partial_\nu u^2)=(g_1,g_2,0).
\end{align}
With $\cU^j(t,x,Z)=(\cU^{j,1},\cU^{j,2})$ always evaluated at
$(t,x_0,0)$ and
\begin{align}\label{B16}
\Up'(\cU^0)(v,0,v^2_Z):=\begin{pmatrix}\Up_1'(\cU^0)v^1\\\Up_2'(\cU^0)v^2\\K_N\partial_Zv^2\end{pmatrix},
\end{align}
the boundary profile equations at order $\epsilon^j$ take the form:
\begin{align}\label{B17}
\Up(\cU^0,0,\partial_Z\cU^{0,2})=(g_1,g_2,0) \quad(\text{order }
\epsilon^0),
\end{align}
\begin{align}\label{B18}
\Up'(\cU^0)(\cU^1,0,\partial_Z\cU^{1,2})=(0,0,c_{1,3}(t,x_0)) \quad
(\text{order }\epsilon^1),
\end{align}
\begin{align}\label{B19}
\Up'(\cU^0)(\cU^j,0,\partial_Z\cU^{j,2})=(c_{j,1},c_{j,2},c_{j,3})
\quad (\text{order }\epsilon^j, j\geq 2),
\end{align}
where the $c_{j,k}(t,x_0)$ depend just on the $\cU^p$ and their
first derivatives for $p\leq j-1$.

\subsection{Solution of the profile
equations}\label{solutionprofile}

The solution of the profile equations given below assumes
transversality of $W(Z,u^0(t,x_0))$ and the uniform Lopatinski
condition, as well as the existence of a $K$-family of smooth
inviscid symmetrizers. Recall from Lemma \ref{Rousset} that the
first two conditions both follow from the low frequency uniform
Evans condition.

\textbf{1.} The interior equations $G^{-1}=0$ and $F^0=0$ and the
boundary equation \eqref{B17} are satisfied because of our
assumptions about $u^0$  and $W(Z,t,x_0,u^0(t,x_0))$.

\textbf{2. Construction of $(\cU^1,u^1)$.}     We construct the
functions $\cU^1(t,x,Z)$ and $u^1(t,x)$ from the equations $G^0=0$,
$F^1=0$, and the boundary equation \eqref{B18}.  $\cU^1$ will be a
sum of three parts
\begin{align}\label{B20}
\begin{split}
&\cU^1(t,x,Z)=\cU^1_a+\cU^1_b+\cU^1_c,\text{ where }\\
&\cU^1_k(t,x,Z)=u^1_k(t,x)+V^1_k(t,x_0,Z),\;k=a,b,c.
\end{split}
\end{align}

First use the exponential decay of $Q^0$ to find an exponentially
decaying solution $V^1_a(t,x_0,Z)$ to
\begin{align}\label{B21}
\begin{split}
&\bL(t,x_0,Z,\partial_Z)V^1_a=Q^0(t,x_0,Z)\text{ on }\pm Z\geq 0\\
&V^1_a\to 0\text{ as }Z\to +\infty,
\end{split}
\end{align}
and define $u^1_a(t,x)\equiv 0$.   This problem is easily solved
after first conjugating to a constant coefficient ODE using the
operators $P$ defined in Lemma \ref{conjugation}.

Next, for $\cU^1_a$ fixed as above,  use part (ii) of the definition
of transversality (Definition \ref{deftrans})
to see that we can solve for $\cU^1_b(t,x_0,Z)\in \cal S$ satisfying
\begin{align}\label{B22}
\begin{split}
&\bL(t,x_0,Z,\partial_Z)\cU^1_b=0\text{ on } Z\geq 0\\
&\Up'(\cU^0)\left(\cU^1_a+\cU^1_b,0,\partial_Z(\cU^{1,2}_a+\cU^{1,2}_b)\right)=(0,0,c_{1,3}(t,x_0)).
\end{split}
\end{align}
Recalling the definition of $\cS$ from Lemma \ref{count}, we see
that $\cU^1_b$ has limits as $Z\to\infty$. Define
\begin{align}\label{B23}
\begin{split}
&u^1_b(t,x_0):=\lim_{Z\to \infty}\cU^1_b(t,x_0,Z), \\
&V^1_b(t,x_0,Z):=\cU^1_b(t,x_0,Z)-u^1_b(t,x_0),
\end{split}
\end{align}
and let $u^1_b(t,x)$ be any smooth extension of $u^1_b(t,x_0)$ to
$[-T_0,T_0]\times\Omega$.

Finally, for an appropriate choice of $u^1_c(t,x_0)$ we need
$\cU^1_c(t,x_0,Z)$ to satisfy
\begin{align}\label{B24}
\begin{split}
&\bL(t,x_0,Z,\partial_Z)\cU^1_c=0\\
&\Up'(\cU^0)(\cU^1_c,0,\partial_Z\cU^{1,2}_c)=0\\
&\lim_{z\to+\infty}\cU^1_c(t,x_0,Z)=u^1_{c}(t,x_0).
\end{split}
\end{align}
According to the characterization of $T_{q}\cC(t,x_0)$ given in
Remark \ref{vvv1}, this is possible if and only if $u^1_c(t,x_0)\in
T_{u^0(t,x_0)}\cC(t,x_0)$.   Thus, we first solve for $u^1_c(t,x)$
satisfying the linearized inviscid problem
\begin{align}\label{B25}
\begin{split}
&\cL_0u^1_c=P^0-\cL_0u^1_b \\
&u^1_c(t,x_0)\in T_{u^0(t,x_0)}\cC(t,x_0).
\end{split}
\end{align}

This problem requires an initial condition in order to be
well-posed.  The right side in the interior equation of \eqref{B25}
is initially defined just for $t\in [-T_0,T_0]$. With a $C^\infty$
cutoff that is identically one in $t\geq -T_0/2$, we can modify the
right side to be zero in $t\leq -T_0+\delta$, say. Requiring $u^1_c$
to be identically zero in $t\leq -T_0+\delta$, we thereby obtain a
problem for $u^1_c$ that is forward well-posed since $u^0$ satisfies
the uniform Lopatinski condition. Thus, there exists a solution to
\eqref{B25} on $[-\frac{T_0}{2},T_0]$. This allows us to obtain
$\cU^1_c(t,x_0,Z))$ satisfying \eqref{B24} and to define
\begin{align}\label{B26}
V^1_c(t,x_0,Z):=\cU^1_c(t,x_0,Z)-u^1_c(t,x_0).
\end{align}

By construction the functions  $(\cU^1,u^1)$ satisfy the equations
$G^0=0$, $F^1=0$, and the boundary conditions \eqref{B18}.

\textbf{3. Contruction of $(\cU^j,u^j)$, $j\geq 2$.}  In the same
way, for $j\geq 2$ we use the equations $G^{j-1}=0$, $F^j=0$, and
the boundary conditions \eqref{B19} to determine the functions
$(\cU^j,u^j)$.   The corrector $\epsilon^{M+1}u^{M+1}$ is chosen
simply to solve away an $O(\epsilon^{M+1})$ error that remains in
the boundary conditions after the construction of $\cU^M$.

In the next Proposition we formulate a precise statement summarizing
the construction of this section.  The regularity assertions in the
Proposition are justified as in \cite{GMWZ4}, Prop. 5.7.  Regularity
is expressed in terms of the following spaces:
\begin{defi}\label{espaces}
1.  Let $H^s$ (resp. $H^s_b$) be the standard Sobolev space on
$[-T_0,T_0]\times\Omega$ (resp. $[-T_0,T_0]\times\partial\Omega$).

2. Let $\tilde{H}^s$ be the set of functions $V(t,x_0,Z)$ on
$[-T_0,T_0]\times\partial\Omega\times \overline{\bR}_+$ such that
$V\in
C^\infty(\overline{\bR}_+,H^s([-T_0,T_0]\times\partial\Omega))$ and
satisfies
\begin{align}\label{B27}
|\partial_Z^kV(t,x_0,Z)|_{H^s_b}\leq C_{k,s}e^{-\delta|Z|}\text{ for
all }k
\end{align}
for some $\delta>0$.

\end{defi}

\begin{prop}[Approximate solutions]\label{esoln}
Assume (H1)-(H6) (with (H4$'$) replacing (H4) in the
symmetric-dissipative case).   For given integers $m\geq 0$ and
$M\geq 1$ let
\begin{align}\label{B28}
s_0>m+\frac{7}{2}+2M+\frac{d+1}{2}.
\end{align}
Suppose that the inviscid solution $u^0$ as in \eqref{B2} satisfies
the uniform Lopatinski condition and that the profiles
$W(Z,u^0(t,x_0))$ are transversal.
Assume $u^0\in H^{s_0}$ and  $u^0|_{\partial\Omega}\in H^{s_0}_b$.
Then one can construct $u_a$ as in \eqref{B3} satisfying:
\begin{align}\label{B29}
\begin{split}
&\cL_\epsilon u_a=\epsilon^M R^M(t,x)\text{ on }[-\frac{T_0}{2},T_0]\times\Omega\\
&\Up(u_a,\partial_Tu^2_a,\partial_\nu u^2_a)=(g_1,g_2,0)\text{ on
}\partial\Omega.
\end{split}
\end{align}
We have
\begin{align}\label{B30}
u^j(t,x)\in H^{s_0-2j},\; V^j(t,x_0,Z)\in\tilde{H}^{s_0-2j},
\end{align}
and $R^M(t,x)$ satisfies
\begin{align}\label{B31}
\begin{split}
&(a)\;|(\partial_t,\partial_{x_0},\epsilon\partial_{z})^\alpha
R^M|_{L^2}\leq
C_\alpha\text{ for }|\alpha|\leq m+\frac{d+1}{2}\\
&(b)\;|(\partial_t,\partial_{x_0},\epsilon\partial_{z})^\alpha
R^M|_{L^\infty}\leq C_\alpha\text{ for }|\alpha|\leq m.
\end{split}
\end{align}
In a collar neighborhood of $\partial\Omega$, $\partial_{x_0}$
denotes an arbitrary vector field tangent to $\partial\Omega$ . Away
from such a neighborhood, $\partial_{x_0}$ can be a completely
arbitrary vector field.
\end{prop}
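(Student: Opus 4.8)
The plan is to make the formal WKB construction of Sections \ref{profile} and \ref{solutionprofile} quantitative: run the order-by-order solution of the profile cascade while tracking Sobolev regularity, then read off the remainder bounds \eqref{B31} from Sobolev embedding. First I would substitute the ansatz \eqref{B3} into \eqref{B1}, expand in powers of $\epsilon$, and split each coefficient into its slow part $F^j(t,x)$ and its exponentially decaying fast part $G^j(t,x_0,Z)$, exactly as in \eqref{B5}--\eqref{B19}. This is pure bookkeeping; the only point needing attention is that the cutoff $\chi$ enters only through the slow forcings $P^j$, so that the fast profiles $V^j$ stay genuine exponentially decaying boundary-layer functions. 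The orders $\epsilon^{-1}$ and $\epsilon^{0}$, i.e. $G^{-1}=0$, $F^0=\cL_0u^0=0$, together with the boundary relation \eqref{B17}, hold by hypothesis on $u^0$ and on $W(Z,t,x_0,u^0(t,x_0))$.

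The core of the argument is the inductive solution of the coupled interior/boundary equations at order $\epsilon^{j}$, using the three-piece decomposition $\cU^j=\cU^j_a+\cU^j_b+\cU^j_c$ of \eqref{B20}. For $\cU^j_a$ I would apply the conjugation lemma (Lemma \ref{conjugation}) to reduce the linearized profile operator $\bL$ to constant coefficients, so the exponentially decaying forcing $Q^{j-1}$ is inverted by an explicit integral. For $\cU^j_b$ I would use part (ii) of transversality (Definition \ref{deftrans}) to adjust an element of $\cS$ matching the inhomogeneous boundary data $(0,0,c_{1,3})$ when $j=1$, or $(c_{j,1},c_{j,2},c_{j,3})$ when $j\ge2$; by Lemma \ref{count} such a solution has a finite limit at $Z=+\infty$, which fixes the slow part $u^j_b$. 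The remaining freedom is an inviscid correction $u^j_c$ which, by the characterization of $T_{u^0(t,x_0)}\cC(t,x_0)$ in Remark \ref{vvv1}, must lie in that tangent space, i.e. it solves the linearized residual hyperbolic problem \eqref{B25}; after a time cutoff this is forward well-posed because $u^0$ satisfies the uniform Lopatinski condition (Theorem \ref{vv10}, with transversality and Lopatinski supplied by Lemma \ref{Rousset}). Patching these pieces for $0\le j\le M$ and absorbing the leftover $O(\epsilon^{M+1})$ boundary error into the corrector $\epsilon^{M+1}u^{M+1}$ produces $u_a$ satisfying \eqref{B29}. The regularity claim \eqref{B30} follows because the inviscid solve in (c) and the ODE solves in (a)--(b) each cost two derivatives — as justified in \cite{GMWZ4}, Prop. 5.7 — so after $M$ steps one retains $s_0-2M$ derivatives; the hypothesis \eqref{B28} is precisely what guarantees that, after a Sobolev embedding in the $d+1$ slow variables, $s_0-2M$ derivatives still control the $L^2$ and $L^\infty$ conormal norms of $R^M$ appearing in \eqref{B31}.

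The main obstacle is the remainder estimate: one must show that, after solving the profile equations through order $\epsilon^M$, the leftover $\epsilon^M R^M$ is bounded in the anisotropic norms \eqref{B31}(a),(b) \emph{uniformly in $\epsilon$}. This forces one to check that every $z$-derivative falling on a fast profile in $R^M$ occurs in the scale-invariant combination $\epsilon\partial_z=\partial_Z$, and to use the exponential decay in $Z$ of the $V^j$ to absorb the derivatives of $\chi$ in the collar; these are the technicalities that are routine in prior works. What is genuinely new, and slightly delicate, is carrying this out globally on $\partial\Omega$ rather than in a single boundary chart as in \cite{GG,GMWZ7}: here the smooth global $\cC$-manifold and profile family of Assumption \ref{C5}, together with the normal coordinates and cutoff of Section \ref{multi-scale}, are exactly what make the local constructions patch into a single $u_a$ defined on all of $[-T_0,T_0]\times\Omega$.
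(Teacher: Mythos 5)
Your proposal follows the paper's own argument essentially step for step: the same slow/fast splitting \eqref{B5}--\eqref{B19}, the same three-piece decomposition $\cU^j=\cU^j_a+\cU^j_b+\cU^j_c$ with $\cU^j_a$ obtained via the conjugation lemma, $\cU^j_b$ via transversality part (ii) and Lemma \ref{count}, and $u^j_c$ via the time-truncated linearized residual hyperbolic problem made forward well-posed by the uniform Lopatinski condition, with the final corrector absorbing the $O(\epsilon^{M+1})$ boundary error and the regularity count $s_0-2j$ justified as in \cite{GMWZ4}, Prop.~5.7. This matches the paper's proof in Appendix \ref{approximate}.
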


\section{The Tracking Lemma and construction of symmetrizers}\label{trackapp}

In this appendix, we discuss further the high-frequency
analysis of Section \ref{HF}, in particular completing
the proof of Theorem \ref{aa3} by a treatment of the
remaining (much easier) elliptic case.
Recall that Theorem \ref{aa3}, together with its easy consequences
Corollary \ref{hfcrit} and Proposition \ref{symmhf},
allows us to reduce to considering only a
compact set of frequencies in the stability analysis of many kinds
of layers.   We applied this result in showing, for example, that
Evans stability of small amplitude layers follows from Evans
stability of the limiting constant layer.

In the process we prove a useful and previously unremarked relation
between the tracking lemma  of \cite{ZH,MaZ3,PZ} and the
construction of high-frequency symmetrizers as in
\cite{MZ1,GMWZ3,GMWZ4,GMWZ6}.
In particular, we state the result (used implicitly in \cite{GMWZ6})
that existence of a $k$-family of symmetrizers implies continuity
of decaying subspaces in the high-frequency limit (tracking).
A similar argument was used in \cite{MZ3} to establish continuity
of subspaces in the low-frequency limit.

\subsection{Abstract setting}
Consider a generalized resolvent ODE
\begin{equation}\label{geneq}
U'-\check \cG(z,p, \eps)U=F,
\end{equation}
where $'$ denotes $\partial_z$ and $(p,\eps)$ comprise frequencies
and model parameters, in the limit as $\eps\to 0$. We are interested
in the situation, as in the high-frequency regime, that $\check \cG$
is ``slowly varying'' in the sense that $\check \cG'$ is small in an
appropriate sense compared to $\check \cG$.

The following proposition gives one version of this notion
in a simple case.

\begin{prop}\label{bb1}
(a) Assume the $M\times M$ matrix $\check\cG(z,p,\eps)$ is a
$C^\infty$ function of
\begin{align}\label{bb5}
(z,p,\eps)\in [0,\infty)\times P\times (0,1]
\end{align}
for some parameter set $P$, and that $|\check\cG|\geq c_1>0$ for
$c_1$ independent of $(z,p,\eps)$. Suppose
\begin{align}\label{bb6}
\frac{|\partial_z\check \cG|}{|\check \cG|^2}\to 0 \text{ as }
\eps\to 0 \text{ uniformly with respect to }(z,p),
\end{align}
and suppose
\begin{align}\label{bb7}
\frac{\gap(\check \cG)}{|\check \cG|}\ge c_0>0 \text{ for }c_0
\text{ independent of }(z,p,\eps).
\end{align}
Here, $\gap(M)$ denotes the spectral gap of a matrix $M$, defined as
the minimum absolute value of the real parts of the eigenvalues of
$M$. There exists a conjugating transformation $T(\check \cG)$
taking $\check \cG$ to block-diagonal form
\begin{equation}\label{simple2}
T\check \cG T^{-1}= \begin{pmatrix} M_+ & 0\\ 0 & M_-\end{pmatrix},
\quad \Re M_+>\eta(z,p,\eps) \quad \hbox{\rm and} \quad \Re
M_-<-\eta(z,p,\eps),
\end{equation}
where $\eta:=\gap(\check \cG)\ge c_0 |\check \cG|$.   For fixed
$c_0>0$ as in \eqref{bb7} the matrix $T(\check\cG(z,p,\eps))$
satisfies
\begin{align}\label{bb4}
(a)\;|T|\leq C,\;\;\;(b)\;|T^{-1}|\leq C,\;\;\;(c)\;|T_z|\leq C
|\check\cG_z|/|\check\cG|,
\end{align}
uniformly with respect to $(z,p,\eps)$.

(b) Setting $U=T^{-1}V$ in  \eqref{geneq}, we obtain
\begin{equation}\label{blockdiag}
\begin{aligned}
V'- \tilde \cG(z,p,\eps)V =\tilde F,\\
\tilde \cG  := \tilde \cG_{\rm p}+
\Theta(z,p,\eps),\\
\tilde \cG_{\rm p}:=
\begin{pmatrix} M_+ & 0\\ 0 & M_-\end{pmatrix}, \quad
|\Theta| \le \delta,
\end{aligned}
\end{equation}
where $\delta(z,p,\eps)=\delta:= C |\D_z \check \cG|/| \check\cG|$,
and so
\begin{align}\label{bb2}
\delta/\eta\to 0 \text{ as }\eps\to 0 \text{ uniformly with respect
to }(z,p).
\end{align}

\end{prop}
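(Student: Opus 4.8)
The plan is to prove part (a) first --- the construction of the conjugator $T$ and the bounds \eqref{bb4} --- and then derive part (b) as an essentially formal consequence. For part (a), the key observation is that everything is driven by the \emph{normalized} matrix $\hat\cG := \check\cG/|\check\cG|$, which has norm $1$ and, by \eqref{bb7}, a spectral gap bounded below by $c_0$. Splitting the spectrum of $\hat\cG$ into the parts with positive and negative real part, I would invoke standard matrix perturbation theory / the Riesz projection associated with a contour separating the two spectral halves at distance $\ge c_0/2$ from each. Since $\hat\cG$ lives in the compact set $\{|M|\le 1,\ \gap(M)\ge c_0\}$ (or at least a set on which the spectral projections are uniformly bounded and depend smoothly on $M$), the associated block-diagonalizing transformation $T=T(\hat\cG)$ and $T^{-1}$ are bounded uniformly, giving \eqref{bb4}(a),(b); this is the content of, e.g., Lemma 2.5 of \cite{ZH} or the analogous statements in the tracking-lemma literature. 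Crucially $T$ is a \emph{smooth function of $\hat\cG$ alone}, homogeneous of degree $0$ in $\check\cG$, so $T(\check\cG)=T(\hat\cG)$ and the conjugated matrix is
\begin{equation}\label{planconj}
T\check\cG T^{-1} = |\check\cG|\, T\hat\cG T^{-1}
= |\check\cG|\begin{pmatrix} \widehat M_+ & 0 \\ 0 & \widehat M_- \end{pmatrix}
=: \begin{pmatrix} M_+ & 0 \\ 0 & M_- \end{pmatrix},
\end{equation}
with $\Re\widehat M_\pm \gtrless \pm c_0$, whence $\Re M_\pm \gtrless \pm c_0|\check\cG| $, i.e. $\Re M_\pm\gtrless\pm\eta$ with $\eta=\gap(\check\cG)\ge c_0|\check\cG|$. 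This gives \eqref{simple2}.

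For the derivative bound \eqref{bb4}(c), I would write $T_z = (DT)(\hat\cG)\cdot \partial_z\hat\cG$ by the chain rule, where $DT$ is bounded uniformly (again by compactness / smoothness of the projection map on the relevant set of matrices). Then $\partial_z\hat\cG = \partial_z(\check\cG/|\check\cG|) = \check\cG_z/|\check\cG| - \check\cG\,\partial_z|\check\cG|/|\check\cG|^2$, and since $|\partial_z|\check\cG||\le|\check\cG_z|$ and $|\check\cG|/|\check\cG|^2 = 1/|\check\cG|$, both terms are $O(|\check\cG_z|/|\check\cG|)$. Hence $|T_z|\le C|\check\cG_z|/|\check\cG|$, which is \eqref{bb4}(c). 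Note hypothesis \eqref{bb6} has not yet been used; it enters only in part (b).

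For part (b): substituting $U = T^{-1}V$ into \eqref{geneq} gives, by the product rule $(T^{-1}V)' = T^{-1}V' + (T^{-1})_z V$ and $(T^{-1})_z = -T^{-1}T_z T^{-1}$, the equation $V' - \big(T\check\cG T^{-1} + T_z T^{-1}\big)V = TF$. Setting $\tilde F := TF$, $\tilde\cG_{\rm p} := T\check\cG T^{-1} = \diag(M_+,M_-)$ as in \eqref{planconj}, and $\Theta := T_z T^{-1}$, we get \eqref{blockdiag}. The bound on $\Theta$ follows from \eqref{bb4}(b),(c): $|\Theta| \le |T_z|\,|T^{-1}| \le C|\check\cG_z|/|\check\cG| =: \delta$. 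Finally \eqref{bb2} is immediate: $\delta/\eta = \big(C|\check\cG_z|/|\check\cG|\big)/\big(\gap(\check\cG)\big) \le \big(C|\check\cG_z|/|\check\cG|\big)/\big(c_0|\check\cG|\big) = (C/c_0)\,|\check\cG_z|/|\check\cG|^2 \to 0$ as $\eps\to 0$ uniformly in $(z,p)$ by hypothesis \eqref{bb6}. This closes the argument.

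I do not expect any serious obstacle here: the statement is a packaging of the standard gap/tracking estimates, and the only point requiring a little care is the uniformity of the bounds on $T$, $T^{-1}$, and $DT$ --- these must be uniform over the (possibly non-compact) parameter set $P$ and over $\eps\in(0,1]$, which is why the proof is organized around the normalized matrix $\hat\cG$ taking values in the fixed compact-type set $\{|M|=1\ (\text{or}\le C),\ \gap(M)\ge c_0\}$ on which the spectral projection map is real-analytic with uniformly bounded derivatives. If one wants to be fully rigorous about ``compact-type,'' one replaces the compactness claim by the observation that the spectral projection is given by a Dunford integral $\frac{1}{2\pi i}\oint (\zeta - \hat\cG)^{-1}\,d\zeta$ over a fixed contour at distance $\ge c_0/2$ from $\sigma(\hat\cG)$, and bounds the resolvent there uniformly using $|\hat\cG|\le C$ and $\gap(\hat\cG)\ge c_0$. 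The rest is bookkeeping with the chain rule and the product rule.
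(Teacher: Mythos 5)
Your proof is correct and follows essentially the same route as the paper's: both pass to the normalized matrix $\cK=\check\cG/|\check\cG|$, block-diagonalize it by standard spectral perturbation theory (uniformly, thanks to the uniform relative gap \eqref{bb7}), obtain \eqref{bb4}(c) from the chain rule via $|d\cK/d\check\cG|\le C/|\check\cG|$, and then read off $\Theta=-T(T^{-1})_z=T_zT^{-1}$ together with \eqref{bb2} from \eqref{bb6}. Your Dunford-integral remark merely makes explicit the uniformity that the paper delegates to the citation of \cite{Kat}.
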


\begin{proof}

\textbf{(a). } Consider the matrix $\cK:=\check\cG/|\check\cG|$
which has a spectral gap uniformly bounded away from $0$ by
assumption. It follows by standard matrix perturbation theory
\cite{Kat} that there exists a smooth transformation $T(\cK)$ taking
$ \cK$ to block-diagonal form
\begin{equation}\label{simple}
T \cK T^{-1}= \begin{pmatrix} N_+ & 0\\ 0 & N_-\end{pmatrix}, \quad
\Re N_+>c_0 \quad \hbox{\rm and} \quad \Re N_-<-c_0
\end{equation}
and satisfying \eqref{bb4}(a), (b).  Clearly the same $T$ satisfies
\eqref{simple2}.   Moreover, with obvious notation we have
\begin{align}\label{bb9}
|T_z|\leq | dT/d\cK|\;|d\cK/d\check\cG|\; |\check\cG_z|\leq
C|\check\cG_z|/|\check\cG|.
\end{align}

\textbf{(b). } Defining $V$ by $U=T^{-1}V$, substituting in
\eqref{geneq}, and using  \eqref{simple} yields \eqref{blockdiag}
with
\begin{align}\label{bb10}
\tilde F=TF, \;\;\Theta=-T(T^{-1})_z,
\end{align}
so by \eqref{bb4} $|\Theta|\leq C|\check\cG_z|/|\check\cG|$. Thus,
\eqref{bb2} follows from the assumption \eqref{bb6}.

\end{proof}

For example, in the application of Proposition \ref{bb1} to the
``elliptic zone" in section \ref{ell}, we will have
\begin{align}\label{bb11}
\eps=|\zeta|^{-1},\; |\check\cG|\sim |\zeta|,\; |\check\cG_z|\sim
|\zeta|, \text{ and so }  \delta\sim 1,\;\; \eta\sim |\zeta|.
\end{align}
Depending on the application, $\eta$ may be bounded, go to zero, or
go to infinity as $\eps\to 0$, and likewise $|\check \cG|$. In some
sense this is artificial, since the scale of $\check \cG$ (and thus
of $\eta$) may be changed arbitrarily by rescaling the independent
variable $z$; however, it is convenient not to have to rescale.

More generally, following \cite{MaZ3}, we take the existence of a
transformation to form \eqref{blockdiag}--\eqref{bb2} as {\it defining}
the notion of a slowly-varying coefficient $\cG$.
The construction of such transformations must in general be done
quite carefully by hand, and does not follow by a simple argument
like that of Proposition \ref{bb1}; see for example the treatment
in Section 7, \cite{GMWZ6}, and here in the proof of Theorem \ref {aa3}
of the general case away from the elliptic zone.
Proposition \ref{bb1} suffices to treat strictly parabolic systems,
as pointed out in \cite{GZ,ZH,Z3}.

\subsection{Tracking}
For systems \eqref{geneq} as above, we have the following version of
the ``tracking lemma" of \cite{MaZ3}. The lemma implies that in the
modified coordinates of \eqref{blockdiag}, under assumption \eqref{bb2},
the subspace of initial data at $z_0\in \bR$  of decaying (resp.
growing) solutions of the homogeneous equation approximately
``tracks'' the stable (resp. unstable) subspace of the
principal part $\tilde \cG_{\rm p}=\blockdiag\{M_+,M_-\}$ evaluated
at $z_0$
in the sense that one subspace approaches the other uniformly as
$\eps\to 0$.

In view of \eqref{bb2} we will often suppress the dependence of
$\delta$ and $\eta$ on $(z,p)$ in what follows and simply write
$\delta(\eps)$, $\eta(\eps)$.

\begin{prop}[\cite{MaZ3}] \label{reduction}
Consider an approximately diagonalized system
\eqref{blockdiag} with $\tilde F\equiv 0$ satisfying bound \eqref{bb2}.

(i) For all $0<\epsilon\le \epsilon_0$, there exist (unique) linear
transformations $\Phi_1^\epsilon(z,p)$ and $\Phi_2^\epsilon(z,p)$,
with $C^\infty$ dependence on $(z,p,\eps)$ for which the graphs
\begin{align}\notag
\{(Z_1, \Phi^\epsilon_2(z,p) Z_1)\} \text{ and }
\{(\Phi^\epsilon_1(z,p) Z_2 ,Z_2)\}
\end{align}
are invariant under the flow of \eqref{blockdiag}. The graphs
consist precisely of the initial data at $z=z_0$ of solutions of
\eqref{blockdiag} that are respectively exponentially growing and
decaying. Moreover, the functions $\Phi^\eps_j$ satisfy
$$ |\Phi^\epsilon_1|, \, |\Phi^\epsilon_2| \le C
\delta(\epsilon)/\eta(\epsilon) \, \text{\rm for all } z.
$$

(ii)In particular, the subspace $E_-(p,\eps)$ of data at $z=0$ for
which the solution of \eqref{blockdiag} decays as $z\to +\infty$ is
given by the graph $\{(\Phi^\eps_1(0,p)v_-, v_-):v_-\in\bC^{dim
M_-}\}$ and converges as $ \eps \to 0$ to $\tilde E_-:= \{(0,
v_-):v_-\in\bC^{dim M_-}\}$.
\end{prop}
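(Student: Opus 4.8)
\textbf{Proof proposal for Proposition \ref{reduction}.}

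The plan is to construct the tracking transformations $\Phi_j^\epsilon$ as solutions of a Riccati-type equation derived from the block structure of \eqref{blockdiag}, and then to read off the statement about $E_-(p,\eps)$ as the special case $z=0$. First I would write a solution $V=(V_1,V_2)$ of \eqref{blockdiag} (with $\tilde F\equiv 0$) in block form and decompose $\Theta$ accordingly into $\Theta_{11},\Theta_{12},\Theta_{21},\Theta_{22}$. Seeking the invariant graph $V_1 = \Phi_1^\epsilon(z,p) V_2$, substitution into the two block equations and elimination of $V_2'$ yields the matrix Riccati equation
\[
\partial_z \Phi_1 = M_+ \Phi_1 - \Phi_1 M_- + (\Theta_{11}\Phi_1 - \Phi_1\Theta_{22}) + \Theta_{12} - \Phi_1 \Theta_{21}\Phi_1,
\]
and similarly for $\Phi_2$ with the roles of $\pm$ reversed. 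The unknown $\Phi_1$ should be the one bounded on $[z_0,\infty)$; since $\Re M_+ > \eta$ and $\Re M_- < -\eta$, the linear part $\Phi\mapsto M_+\Phi - \Phi M_-$ is a hyperbolic (in fact strongly positive) operator with spectral gap $\gtrsim \eta$, so the homogeneous linear flow contracts at rate $\eta$ in the appropriate time direction.

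The key step is a contraction-mapping / fixed-point argument on the integral equation obtained by inverting this linear part. Writing $\cS_z$ for the solution operator of $\partial_z\Phi = M_+\Phi-\Phi M_-$, I would set up
\[
\Phi_1(z) = \int_{+\infty}^{z} \cS_{z}\cS_{s}^{-1}\Big( (\Theta_{11}\Phi_1 - \Phi_1\Theta_{22}) + \Theta_{12} - \Phi_1\Theta_{21}\Phi_1\Big)(s)\, ds,
\]
and estimate in the space of bounded continuous matrix functions with sup norm. Using $\|\cS_z\cS_s^{-1}\| \le C e^{-\eta(z-s)}$ for $s\ge z$, $\|\Theta\|\le\delta$, and restricting to a ball of radius $r = C'\delta/\eta$, one checks the map is a contraction for $\delta/\eta$ small, which holds for $\epsilon\le\epsilon_0$ by \eqref{bb2}. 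This produces the unique bounded solution $\Phi_1^\epsilon$ with $\|\Phi_1^\epsilon\|\le C\delta/\eta$ uniformly in $z$; smoothness in $(z,p,\epsilon)$ follows from smoothness of the data and the implicit-function/parameter-dependence version of the contraction principle. The same argument on $(-\infty,z_0]$ gives $\Phi_2^\epsilon$ with $\|\Phi_2^\epsilon\|\le C\delta/\eta$. Invariance of the graphs under the flow is then automatic from the way the Riccati equation was derived, and the identification of the graphs with the exponentially growing (resp. decaying) data uses that on the graph $V_1=\Phi_1 V_2$ the variable $V_2$ satisfies $V_2' = (M_- + \Theta_{22} + \Theta_{21}\Phi_1)V_2$ with $\Re(M_- + \Theta_{22}+\Theta_{21}\Phi_1) < -\eta + C\delta < 0$, hence genuine decay, and a complementary-dimension count plus hyperbolicity of $\tilde\cG_{\rm p}$ shows there are no other decaying solutions.

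For part (ii), specialize $z=z_0=0$: the decaying subspace is exactly the graph $E_-(p,\epsilon) = \{(\Phi_1^\epsilon(0,p)v_-, v_-) : v_-\in\bC^{\dim M_-}\}$, and since $\|\Phi_1^\epsilon(0,p)\|\le C\delta(\epsilon)/\eta(\epsilon)\to 0$ as $\epsilon\to 0$ by \eqref{bb2}, the graph converges (in the gap metric on subspaces, uniformly in $p$) to $\tilde E_- = \{(0,v_-)\}$, the stable subspace of $\tilde\cG_{\rm p}(0)$. The main obstacle I anticipate is bookkeeping the uniformity: one must ensure all constants $C$, and the threshold $\epsilon_0$, depend only on the gap constant $c_0$ and the bound on $\delta/\eta$, not on $(z,p,\epsilon)$ individually — in particular handling the fact that $\eta$ and $|\check\cG|$ may themselves tend to $0$ or $\infty$, so the estimates must be phrased purely in terms of the dimensionless ratio $\delta/\eta$. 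Everything else is the routine Riccati/contraction machinery of \cite{MaZ3}.
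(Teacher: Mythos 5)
Your proposal is correct and follows essentially the same route as the paper, which itself defers to precisely this contraction-mapping argument on the ``lifted'' Riccati equations governing the conjugators $\Phi_j^\eps$ (Appendix C of \cite{MaZ3}), with part (ii) then read off from the smallness $|\Phi_1^\eps|\le C\delta/\eta\to 0$. The only slip is the sign in your dichotomy bound: for $s\ge z$ it should read $\|\cS_z\cS_s^{-1}\|\le Ce^{-2\eta(s-z)}$ (backward contraction at rate $\sim\eta$), which is what your subsequent estimate $\int_z^\infty e^{-2\eta(s-z)}\,\delta\,ds\le C\delta/\eta$ in fact uses.
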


\medskip
{\bf Proof.} This can be proved by a contraction mapping argument
carried out on the ``lifted'' equations governing the flow of the
conjugating matrices $\Phi^\eps_j$; see Appendix C, \cite{MaZ3} for
details.  Proposition \ref{cc10} provides an alternative proof of part(ii),
which is the only part we use in this paper,
in the case that $\tilde \cG$ exponentially approaches a limit
as $z\to +\infty$ for each fixed $p$, $\eps$ (not necessarily
uniformly), which holds always in our applications.
A related proof based on energy estimates/invariant cones
appears in \cite{ZH,Z1}.
\medskip

\subsection{Symmetrizers}

With the same initial preparations (i.e., reduction to form
\eqref{blockdiag}), one may also obtain directly bounds on the
inhomogeneous resolvent equation by the method of Kreiss
symmetrizers \cite{K,MZ1}. Consider \eqref{geneq} on $[0,+\infty)$,
augmented with some specified boundary condition
\begin{equation}\label{eBC}
\Gamma(p, \eps)U=G, \text{ where }|\Gamma(p,\eps)|\leq C,
\end{equation}
uniformly for $p\in P$, $\eps\in (0,1]$. The corresponding boundary
condition for the conjugated problem \eqref{blockdiag} is then
\begin{align}\label{cc1}
\tilde\Gamma(p,\eps)V:=\Gamma(p,\eps)T^{-1}(0,p,\eps)V=G.
\end{align}
Defining $\tilde E_-(p,\eps)$ as the stable subspace of the
principal part $\blockdiag\{M_+,M_-\}$ evaluated at $z_0=0$ (i.e.,
$\tilde E_-:=\{(0,v):v\in\bC^{dim M_-}\}$) and defining the
``frozen-coefficients Evans function''
\begin{equation}\label{tildeD}
\tilde D(p,\eps):=\det( \tilde E_-, \ker \tilde\Gamma),
\end{equation}
 we have the following  result.

\begin{prop}\label{gensymm}
Consider the problem \eqref{geneq}, \eqref{eBC} under the
assumptions of Proposition \ref{reduction}. Assume this problem is
 ``frozen-coefficients stable'' in the sense that
$|\tilde D|\ge c_0>0$ for $\eps>0$ sufficiently small, uniformly
with respect to $p\in P$. Then for some $C>0$
that can be taken independent of $p\in P$ and $\eps$ sufficiently
small,
\begin{equation}\label{genbds}
\sqrt{\eta} \|U\|_{L^2(\RR^+)} + |U(0)|\le C(\|\tilde
F\|_{L^2(\RR^+)}/\sqrt{\eta} + |G|).
\end{equation}
\end{prop}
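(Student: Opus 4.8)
The plan is to reduce the inhomogeneous resolvent estimate \eqref{genbds} to two kinds of ingredients: (i) a priori \emph{incoming/outgoing estimates} for the nearly block-diagonal system \eqref{blockdiag} treating the off-diagonal perturbation $\Theta$ as a lower-order forcing term, and (ii) a \emph{boundary control} estimate extracted from the frozen-coefficients nonvanishing hypothesis $|\tilde D|\ge c_0$ via Lemma \ref{ZZZ}. First I would work in the conjugated variable $V=TU$; by \eqref{bb4}(a),(b) the norms $\|U\|_{L^2}$, $|U(0)|$, $\|\tilde F\|_{L^2}$ are comparable to their tilded counterparts, so it suffices to prove \eqref{genbds} for $V$, $\tilde F$, $\tilde\Gamma$, $\tilde E_-$. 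Decompose $V=(V_+,V_-)$ according to the block structure of $\tilde\cG_{\rm p}=\blockdiag\{M_+,M_-\}$, where $\Re M_+\ge\eta$, $\Re M_-\le-\eta$.

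The core step is the pair of energy estimates for the decoupled principal parts. For the outgoing block, pairing $V_+'-M_+V_+=\tilde F_+ +\Theta_{++}V_+ +\Theta_{+-}V_-$ with $V_+$ in $L^2(\RR_+)$ and taking real parts, the dissipativity $\Re M_+\ge\eta$ gives
\begin{align}\label{plan1}
\eta\|V_+\|_{L^2}^2 + |V_+(0)|^2 \le C\big(\|\tilde F_+\|_{L^2}\|V_+\|_{L^2} + |\Theta|_\infty(\|V_+\|_{L^2}+\|V_-\|_{L^2})\|V_+\|_{L^2}\big),
\end{align}
whence, absorbing and using $|\Theta|_\infty\le\delta$,
\begin{align}\label{plan2}
\sqrt\eta\|V_+\|_{L^2} + |V_+(0)| \le C\Big(\frac{\|\tilde F_+\|_{L^2}}{\sqrt\eta} + \frac{\delta}{\sqrt\eta}\,(\|V_+\|_{L^2}+\|V_-\|_{L^2})\Big).
\end{align}
For the incoming block, the same computation on $[0,\infty)$ produces the boundary term with the unfavorable sign, so one obtains instead
\begin{align}\label{plan3}
\sqrt\eta\|V_-\|_{L^2} \le C\Big(\frac{\|\tilde F_-\|_{L^2}}{\sqrt\eta} + |V_-(0)| + \frac{\delta}{\sqrt\eta}\,(\|V_+\|_{L^2}+\|V_-\|_{L^2})\Big).
\end{align}
By \eqref{bb2}, $\delta/\eta\to0$, so for $\eps$ small the $\delta/\sqrt\eta$ terms on the right of \eqref{plan2}, \eqref{plan3} can be absorbed into $\sqrt\eta\|V\|_{L^2}$ on the left, yielding $\sqrt\eta\|V\|_{L^2}+|V_+(0)|\le C(\|\tilde F\|_{L^2}/\sqrt\eta + |V_-(0)|)$.

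It remains to control $|V_-(0)|$ by the data. Here I would invoke Proposition \ref{reduction}(ii): the subspace $E_-(p,\eps)$ of initial data of decaying solutions of the homogeneous equation is a graph over the $V_-$-coordinate with slope $O(\delta/\eta)\to0$, hence converges to $\tilde E_-$; by continuity of determinants and the hypothesis $|\tilde D|=|\det(\tilde E_-,\ker\tilde\Gamma)|\ge c_0$, we get $|\det(E_-(p,\eps),\ker\tilde\Gamma)|\ge c_0/2$ uniformly for $\eps$ small. By Lemma \ref{ZZZ} (with the boundedness of $\tilde\Gamma$ and its pseudo-inverse following from \eqref{eBC} and \eqref{bb4}), this gives $|W|\le C|\tilde\Gamma W|$ for all $W\in E_-(p,\eps)$. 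Writing the solution $V$ of the inhomogeneous problem as (decaying homogeneous solution with data in $E_-$) plus (a particular bounded solution built by Duhamel against the growing/decaying splitting, which satisfies the Duhamel bound $|V^{\rm part}(0)|\le C\|\tilde F\|_{L^2}/\sqrt\eta$ from the exponential dichotomy with rate $\eta$), one gets $V(0)-V^{\rm part}(0)\in E_-$, so $|V(0)|\le C(|\tilde\Gamma V(0)| + \|\tilde F\|_{L^2}/\sqrt\eta) = C(|G| + \|\tilde F\|_{L^2}/\sqrt\eta)$; in particular $|V_-(0)|$ is bounded this way. Feeding this back into the combined energy estimate and translating back to $U$ via \eqref{bb4} completes the proof.

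The main obstacle I anticipate is making the absorption of the $\Theta$-terms fully rigorous with constants uniform in $p$ and $\eps$: one must be careful that the single hypothesis \eqref{bb2} ($\delta/\eta\to0$ uniformly in $(z,p)$) genuinely suffices, since $\eta=\eta(z,p,\eps)$ varies in $z$ and the integration-by-parts identities involve $\int\eta(z)|V(z)|^2\,dz$ rather than a constant multiple of $\|V\|_{L^2}^2$ — this is exactly why the statement carries the weight $\sqrt\eta$ on both sides, and one should keep $\eta$ inside the integrals throughout, only at the end using $\eta\ge c_0|\check\cG|$ and $\delta\le C|\partial_z\check\cG|/|\check\cG|$ to pass to \eqref{bb2}. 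A secondary technical point is justifying the Duhamel/dichotomy bound $|V^{\rm part}(0)|\lesssim\|\tilde F\|_{L^2}/\sqrt\eta$ with the correct power of $\eta$; this follows from the invariant-graph description of Proposition \ref{reduction}(i) together with Young's inequality applied to the convolution with $e^{-\eta|z-s|}$, but the $z$-dependence of $\eta$ again requires a little care, handled by the same ``keep $\eta$ under the integral'' device.
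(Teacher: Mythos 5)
Your proof is correct, but the route through the boundary term is genuinely different from the paper's. Your interior estimates \eqref{plan2}--\eqref{plan3} are essentially the paper's symmetrizer computation with $S_1=\blockdiag\{\Id,-\Id\}$ split into blocks, and your caveat about keeping $\eta(z,p,\eps)$ under the integral is exactly the right one. The divergence is in how $|V_-(0)|$ is controlled. The paper uses a $k$-family $S_k=\blockdiag\{k\Id,-\Id\}$ and applies the transversality hypothesis \emph{directly to the frozen subspace} $\tilde E_-$: since $\Pi_\pm$ are just the coordinate projections of $\tilde\cG_{\rm p}(0)$, the bound $|v|\le C|\tilde\Gamma v|$ on $\tilde E_-$ gives $|\Pi_-V(0)|\le|G|+|\tilde\Gamma||\Pi_+V(0)|$ for \emph{any} solution, homogeneous or not, and taking $k$ large absorbs the $|\Pi_+V(0)|$ term. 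No tracking, no Duhamel. You instead invoke Proposition \ref{reduction}(ii) to transfer transversality from $\tilde E_-$ to the exact decaying subspace $E_-(p,\eps)$, and then need a particular solution $V^{\rm part}$ with the dichotomy bound $|V^{\rm part}(0)|\lesssim\|\tilde F\|/\sqrt\eta$ so that $V(0)-V^{\rm part}(0)\in E_-$. This works and yields the clean trace bound $|V(0)|\le C(|G|+\|\tilde F\|/\sqrt\eta)$ directly, but it imports the full strength of the tracking lemma (invariant graphs plus an exponential dichotomy for the perturbed system) where the paper needs only elementary algebra with $S_k$; the dichotomy bound for the $\Theta$-perturbed flow is precisely the technical point you flag and is not free. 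One logical remark: the paper deliberately runs the implication the other way — Proposition \ref{cc10} \emph{derives} the tracking conclusion from Proposition \ref{gensymm} — so your argument, while not circular as a standalone proof (Proposition \ref{reduction} is proved independently in \cite{MaZ3} by contraction mapping), would forfeit that alternative derivation; the paper's symmetrizer-only proof is what makes Corollary \ref{fullsymm} and Proposition \ref{cc10} self-contained.
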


\begin{proof}
\textbf{1. }In view of the properties \eqref{simple2}, \eqref{bb4}
of the conjugating transformation $T$, it suffices to prove the
estimate \eqref{genbds} for $V$ satisfying the conjugated problem
\eqref{blockdiag} and the boundary condition \eqref{cc1}.

\textbf{2. }Defining $S_k:=\blockdiag\{k\Id, -\Id\}$, $k\ge 1$, we
have for $\eps>0$ sufficiently small,
$$
\begin{aligned}
&(i) S_k=S_k^*,\quad  |S_k|\le k;\\
&(ii) \Re S_k \tilde \cG_p \ge \eta(\eps);\\
&(iii) S_k v\cdot v \ge k| \Pi_+v|^2-| \Pi_-v|^2,\\
\end{aligned}
$$
Here $ \Pi_\pm$ are the projections onto the unstable and stable
subspaces of $\tilde \cG_p(0,p,\eps)$.   Writing $v=(v_+,v_-)$ we
have
\begin{align}\label{cc2}
\Pi_+v=(v_+,0), \;\;\Pi_-=(0,v_-).
\end{align}
Thus $S_k$ is a $k$-family of symmetrizers in the sense of
\cite{GMWZ6}.

Taking the real part of the $L^2$ inner product of $-S_k V$ with
\eqref{blockdiag} and integrating by parts, we obtain
$$
\begin{aligned}
(1/2)S_k V(0)\cdot V(0) + \langle V, \Re (S_k\tilde \cG)V\rangle &=
\Re\langle -S_k V, \Theta V\rangle +
\Re\langle -S_k V, \tilde F\rangle \\
&\le |S_k|( \delta \|V\|^2 + \|V \| \|\tilde F\|).
\end{aligned}
$$
>From this we may deduce using (i)--(iii) and $\delta/\eta\to 0$
that, for $\eps>0$ sufficiently small,
\begin{equation}\label{semifinal}
k| \Pi_+ V(0)|^2 + (\eta/2)\|V\|^2 \le k \|V \| \|\tilde F\| + |
\Pi_- V(0)|^2.
\end{equation}
But $|\det (\tilde E_-,\ker \tilde\Gamma)|\ge c_0$ implies that
$|\tilde\Gamma v|\ge c|v|$ for $v\in \tilde E_-$, where
$c=c(c_0,|\tilde\Gamma|)>0$. Thus,
$$
\begin{aligned}
|\Pi_- V(0)|\le |\tilde\Gamma  \Pi_-V(0)| &\le |\tilde\Gamma
V(0)|+|\tilde\Gamma \Pi_+ V(0)|
\leq |G| + |\tilde\Gamma| | \Pi_+ V(0)|.\\
\end{aligned}
$$
Combining with \eqref{semifinal}, taking $k$ sufficiently large
relative to $|\tilde \Gamma|$, and using Young's inequality to bound
$k\|\tilde F\| \|V\|\le (\eta/4)\|V\|^2 + (|k|^2/\eta)\|\tilde
F\|^2$,
 we obtain the result.
\end{proof}

The next Proposition (more precisely, its proof)
formulates the new observation that the
conclusion of part (ii) of Proposition \ref{reduction} is a
consequence of the estimate \eqref{genbds}; that is, information
provided by the tracking lemma may be deduced as a consequence of
the basic symmetrizer construction.

\begin{prop}\label{cc10}
Consider an $M\times M$ system \eqref{geneq} satisfying the
assumptions of Proposition \ref{reduction}.   Let $\bE_-(p,\eps)$ denote
the subspace of initial data at $z=0$ for which the solution of
\begin{align}\label{cc4}
U'-\check\cG(z,p,\eps)U=0
\end{align}
decays to $0$ as $z\to\infty$.  Let $\tilde \bE_-(p,\eps)$ denote
the stable subspace of $\check\cG(0,p,\eps)$.  Assuming
\begin{align}\label{cc5}
\dim \bE_-(p,\eps)=\dim \tilde \bE_-(p,\eps)\text{ for }p\in P,
\;\eps\text{ small}.
\end{align}
we have
\begin{align}\label{cc6}
\bE_-(p,\eps)\to \tilde \bE_-(p,\eps)\text{ as }\eps\to 0.
\end{align}

\end{prop}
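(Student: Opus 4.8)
The plan is to reduce the statement to an application of the symmetrizer estimate \eqref{genbds} of Proposition \ref{gensymm}, using the freedom to choose the boundary operator $\Gamma$ in that estimate. First I would fix $(p,\eps)$ with $\eps$ small and conjugate \eqref{cc4} to block-diagonal form $V' = \tilde\cG V$, $\tilde\cG = \tilde\cG_{\rm p} + \Theta$, via $U = T^{-1}V$ as in Proposition \ref{bb1}(b) (or, in the general slowly-varying setting, taking the existence of such a transformation as the hypothesis, exactly as discussed after Proposition \ref{bb1}). Under this conjugation $\bE_-(p,\eps)$ becomes $T(0,p,\eps)\bE_-(p,\eps)$ and $\tilde\bE_-(p,\eps)$ becomes the stable subspace of $\tilde\cG_{\rm p}(0,p,\eps) = \blockdiag\{M_+,M_-\}$, namely $\{(0,v_-)\}$; since $T$ and $T^{-1}$ are uniformly bounded by \eqref{bb4}, it suffices to prove the convergence for the conjugated subspaces. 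So I may assume from the start that $\check\cG = \tilde\cG_{\rm p} + \Theta$ is already in the form \eqref{blockdiag} with $|\Theta|\le\delta$ and $\delta/\eta\to0$, and that $\tilde\bE_- = \{(0,v_-)\}$.

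Next I would set up the Evans/determinant characterization. Choose $\Gamma$ to be the projection onto the $v_+$-component, i.e. $\Gamma(v_+,v_-) = v_+$, with $\ker\Gamma = \{(0,v_-)\} = \tilde\bE_-$. Then $\ker\Gamma$ has the right dimension (by \eqref{cc5} and dimension counting, $\dim\bE_- = \dim\tilde\bE_- = \dim M_-$), and the frozen-coefficients Evans function $\tilde D(p,\eps) = \det(\tilde\bE_-,\ker\Gamma) = \det(\ker\Gamma,\ker\Gamma)$ — but this is degenerate, so instead I would pick $\Gamma$ with $\ker\Gamma$ a \emph{complement} of $\tilde\bE_-$: take $\Gamma(v_+,v_-) = v_-$, so $\ker\Gamma = \{(v_+,0)\}$, which is transverse to $\tilde\bE_-$, giving $|\tilde D(p,\eps)| = 1$ bounded below uniformly. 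This is the "frozen-coefficients stable" hypothesis of Proposition \ref{gensymm}, so \eqref{genbds} applies with a constant $C$ uniform in $p$ and small $\eps$: $\sqrt{\eta}\|U\|_{L^2} + |U(0)| \le C(\|F\|_{L^2}/\sqrt\eta + |G|)$.

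Now the key step: take $U$ to be any decaying solution of the homogeneous equation \eqref{cc4} with $U(0)\in\bE_-(p,\eps)$, so $F = 0$ and $G = \Gamma U(0) = \Pi_- U(0)$ (the $v_-$-component of $U(0)$). The estimate gives $|U(0)|\le C|\Pi_- U(0)|$. Applying this instead with $U$ a decaying solution and unpacking: I actually want to show the $v_+$-component of vectors in $\bE_-$ is small. I would re-run the symmetrizer energy estimate itself rather than invoke the black box: pairing $-S_k V$ with $V' - \tilde\cG V = 0$ and integrating by parts yields, as in \eqref{semifinal}, $k|\Pi_+ V(0)|^2 + (\eta/2)\|V\|^2 \le k\|V\|\cdot 0 + |\Pi_- V(0)|^2$, hence $|\Pi_+ V(0)|^2 \le k^{-1}|\Pi_- V(0)|^2$. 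Letting $k\to\infty$ — which is legitimate since $k$ may be taken as large as we like once $\eps$ is small enough relative to $k$ — forces $\Pi_+ V(0) = 0$ in the limit, i.e. the $v_+$-component of any vector in $\bE_-(p,\eps)$ tends to $0$. More carefully: for each fixed large $k$, and $\eps$ small enough (depending on $k$), every $V(0)\in\bE_-(p,\eps)$ satisfies $|\Pi_+ V(0)| \le k^{-1/2}|\Pi_- V(0)| \le k^{-1/2}|V(0)|$; combined with $\dim\bE_- = \dim\tilde\bE_-$, this says the $\dim M_-$-dimensional subspace $\bE_-(p,\eps)$ lies within angle $O(k^{-1/2})$ of $\{(0,v_-)\} = \tilde\bE_-$, which is precisely the content of \eqref{cc6} after a standard argument that a subspace of the correct dimension making a small angle with $\tilde\bE_-$ (in the sense that every unit vector is within $k^{-1/2}$ of it) converges to $\tilde\bE_-$ in the Grassmannian.

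\textbf{Main obstacle.} The delicate point is that the symmetrizer constant $C$ in \eqref{genbds} and the threshold for "$\eps$ sufficiently small" depend on $k$ — larger $k$ requires smaller $\eps$ — so the $k\to\infty$ limit must be interleaved with $\eps\to0$ correctly; one cannot simply send $k\to\infty$ at fixed $\eps$. The clean way is: given any target angle $\theta>0$, choose $k$ with $k^{-1/2} < \theta/2$, then choose $\eps_0(k)$ from Proposition \ref{gensymm} so that the estimate holds for $\eps<\eps_0(k)$, concluding that $\bE_-(p,\eps)$ is within $\theta$ of $\tilde\bE_-$ for all such $\eps$ and all $p\in P$ (uniformity in $p$ being inherited from the uniformity in Proposition \ref{gensymm}). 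A secondary technical point: verifying that one genuinely may run the energy estimate with a test function $V$ that is a decaying homogeneous solution — i.e. that $V\in L^2(\RR^+)$ with $V(0)$ in the decaying subspace — which is where the hypothesis that $\tilde\cG$ exponentially approaches a limit as $z\to+\infty$ (stated in the excerpt right after Proposition \ref{reduction}) is used, to guarantee a well-defined stable subspace and genuine exponential decay of $V$ so that all integrations by parts are justified.
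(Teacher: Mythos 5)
Your proof is correct, and it reaches the conclusion by the same basic strategy as the paper (conjugate to block-diagonal form, then let the symmetrizer machinery of Proposition \ref{gensymm} do the work), but the final convergence step is executed differently. The paper keeps Proposition \ref{gensymm} as a black box and instead varies the boundary condition: for \emph{every} subspace $W$ transverse to $\tilde E_-$ it applies \eqref{genbds} with $\Gamma_W$ equal to orthogonal projection onto $W^\perp$, deduces via Lemma \ref{ZZZ} a uniform lower bound $|\det(E_-(p,\eps),W)|\ge c>0$, and concludes \eqref{cc6} from the fact that $E_-(p,\eps)$ is eventually transverse to every complement of $\tilde E_-$ (a soft duality/compactness argument on the Grassmannian). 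You instead fix a single boundary operator $\Gamma=\Pi_-$ and reopen the proof of Proposition \ref{gensymm}, tracking the $k$-dependence in \eqref{semifinal} to get the quantitative bound $|\Pi_+V(0)|\le k^{-1/2}|\Pi_-V(0)|$ for $V(0)\in E_-(p,\eps)$, then interleave $k\to\infty$ with $\eps\to 0$. Your route buys an explicit rate (the angle between $E_-$ and $\tilde E_-$ is controlled by $\sqrt{\delta/\eta}$, since the constraint is $k\delta\lesssim\eta$), and it makes transparent that the convergence is encoded in the existence of the full $k$-family of symmetrizers rather than any single one; the paper's route is shorter and reuses the stated estimate verbatim. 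Your handling of the two technical points you flag — the interleaving of $k$ and $\eps$, and the justification of the integration by parts for decaying homogeneous solutions via the exponential-limit hypothesis (Remark \ref{ccc6}) — is at the same level of rigor as the paper's own treatment, and your observation that $T(0)\tilde\bE_-$ is \emph{exactly} the stable subspace of $\tilde\cG_{\rm p}(0)=\blockdiag\{M_+,M_-\}$ (since $\Theta$ arises only from $T_z$, not from the pointwise conjugation) is correct.
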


\begin{rem}\label{ccc6}
Condition \eqref{cc5} holds, for example, if
$\tilde \cG$ exponentially approaches a limit
as $z\to +\infty$ for each fixed $p$, $\eps$ (not necessarily
as $z\to +\infty$, by the conjugation lemma,
Lemma \ref{conjugation}.
This is always the case for our applications here.
\end{rem}

\begin{proof}

\textbf{1. } Let $E_-(p,\eps)$ and $\tilde E_-(p,\eps)$ be the
spaces appearing in Proposition \ref{reduction}.  They are the exact
analogues of $\bE_-$ and $\tilde \bE_-$ for the conjugated system
$V'-\tilde\cG V=0$.  From \eqref{simple2} and the properties of the
conjugator $T$ we have
\begin{align}\label{cc7}
\begin{split}
& \bE_-(p,\eps)=T^{-1}(0,p,\eps)E_-(p,\eps)\\
&\tilde \bE_-(p,\eps)=T^{-1}(0,p,\eps)\tilde E_-(p,\eps),
\end{split}
\end{align}
so it is equivalent to show
\begin{align}\label{cc8}
E_-(p,\eps)\to \tilde E_-(p,\eps)=\{(0,v_-):v_-\in\bC^{dim
M_-}\}\text{ as }\eps\to 0.
\end{align}

\textbf{2. }The proof depends on the fact that the estimate
\eqref{genbds} holds for \emph{any} boundary condition satisfying
the hypotheses of Proposition \ref{gensymm}.   Let $W\subset\bC^M$
be any subspace \emph{transverse} to the fixed space $\tilde
E_-(p,\eps)$ and such that $\dim W^\perp=\dim\tilde E_-(p,\eps)$.
Defining $\Gamma_W$ to be orthogonal projection onto $W^\perp$, we
have
\begin{align}\label{cc11}
\ker\Gamma_W=W,\;\; \rank \;\Gamma_W = \dim\tilde E_-(p,\eps).
\end{align}
Since $|\det(\tilde E_-, \ker \Gamma_W)|\ge c_0>0$, the  hypotheses
of Proposition \ref{gensymm} are satisfied by the conjugated system
\eqref{blockdiag} with the boundary condition $\Gamma_W$. Assuming
that $V$ is any decaying solution of $V'-\tilde\cG V=0$, we have
$V(0)\in E_-(p,\eps)$.  From the estimate \eqref{genbds} for the
conjugated problem with $\tilde F=0$ we deduce
\begin{align}\label{cc12}
|V(0)|\leq C|\Gamma_W V(0)|\text{ for }\eps\text{ sufficiently
small}.
\end{align}
By \eqref{cc5} and Lemma \ref{ZZZ} this implies
\begin{align}\label{cc13}
|\det(E_-(p,\eps),\ker\Gamma_W|\geq c>0,
\end{align}
where $c$ depends on $C$ and $|\Gamma_W|=1$. Therefore, $W$ is also
transverse to $E_-(p,\eps)$ for $\eps$ sufficiently small.  Since we
are free to make different choices of $W$ tranverse to $\tilde
E(p,\eps)$, this can only be true if \eqref{cc6} holds.

\end{proof}

As an immediate consequence of Proposition \ref{gensymm} and
Proposition \ref{reduction}, part (ii), we obtain
\begin{cor}\label{fullsymm}
Suppose \eqref{geneq} satisfies the assumptions of Proposition
\ref{reduction}, and let
\begin{align}\label{cc15}
\bD(p,\eps):=\det (\bE_-, \ker \Gamma).
\end{align}
Then \eqref{geneq}, \eqref{eBC} is ``uniformly Evans
stable'' in the sense that $|\bD|\ge c_0>0$ for $\eps>0$
sufficiently small if and only if \eqref{geneq}, \eqref{eBC} is
``frozen coefficients" stable in the sense that
\begin{align}\label{cc14}
\tilde\bD(p,\eps):=\det (\tilde\bE_-, \ker \Gamma)\geq c_1>0
\end{align}
for $\eps$ sufficiently small.   In either case the estimate
\eqref{genbds} holds.

\end{cor}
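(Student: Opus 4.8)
The plan is to derive Corollary \ref{fullsymm} directly from the two preceding results, essentially by transporting the ``frozen-coefficients'' statements for the conjugated system back to the original system via the conjugating transformation $T$. First I would recall, as in the proof of Proposition \ref{cc10}, that under the standing assumptions the coefficient $\check\cG(z,p,\eps)$ can be put in the approximately block-diagonal form \eqref{blockdiag} with $\delta/\eta\to 0$, and that the associated conjugator $T(0,p,\eps)$ and its inverse are uniformly bounded by \eqref{bb4}(a),(b). Consequently $\bE_-(p,\eps)=T^{-1}(0,p,\eps)E_-(p,\eps)$ and $\tilde\bE_-(p,\eps)=T^{-1}(0,p,\eps)\tilde E_-(p,\eps)$ (cf.\ \eqref{cc7}), and the boundary operator transforms as $\tilde\Gamma(p,\eps)=\Gamma(p,\eps)T^{-1}(0,p,\eps)$. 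Since $|T|,|T^{-1}|\le C$ uniformly, the determinants $\bD(p,\eps)=\det(\bE_-,\ker\Gamma)$ and $\tilde D(p,\eps)=\det(\tilde E_-,\ker\tilde\Gamma)$ differ only by factors bounded above and below (after, as usual, choosing orthonormal bases, which only changes the determinant by a unimodular factor times a bounded nonvanishing factor); hence $|\bD|\ge c_0>0$ uniformly for small $\eps$ iff $|\tilde D|\ge c_1>0$ uniformly for small $\eps$.

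Next I would use Proposition \ref{reduction}(ii): the space $E_-(p,\eps)$ of data at $z=0$ of decaying solutions of the homogeneous conjugated equation converges to $\tilde E_-(p,\eps)=\{(0,v_-)\}$ as $\eps\to 0$, and this is the stable subspace of the principal part $\tilde\cG_{\rm p}(0,p,\eps)$. Pulling back by $T^{-1}(0,p,\eps)$, which is uniformly invertible, gives $\bE_-(p,\eps)\to\tilde\bE_-(p,\eps)$ as $\eps\to 0$ (this is exactly the content of Proposition \ref{cc10}, whose hypothesis \eqref{cc5} holds in our applications by Remark \ref{ccc6} via the conjugation lemma). Since subspaces close in the Grassmannian have close determinants against the fixed kernel $\ker\Gamma$, the convergence $\bE_-\to\tilde\bE_-$ shows that $|\bD|\ge c_0>0$ for small $\eps$ is equivalent to the condition $|\tilde\bD|=|\det(\tilde\bE_-,\ker\Gamma)|\ge c_1>0$ for small $\eps$, which is \eqref{cc14}. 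This establishes the first equivalence in the corollary.

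For the final clause, the estimate \eqref{genbds}, I would invoke Proposition \ref{gensymm} directly: under either of the equivalent stability hypotheses we have $|\tilde D(p,\eps)|\ge c_0>0$ uniformly for small $\eps$ (by the first paragraph, $|\bD|\ge c_0$ forces $|\tilde D|$ bounded below), so the frozen-coefficients-stability hypothesis of Proposition \ref{gensymm} is met, and \eqref{genbds} follows at once for $V$, hence for $U=T^{-1}V$ using $|T|,|T^{-1}|\le C$ and $|\Gamma|,|\tilde\Gamma|\le C$. The only genuine content here is bookkeeping; the substantive work has already been done in Propositions \ref{reduction} and \ref{gensymm}.

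The main obstacle, such as it is, is making precise the passage ``close subspaces $\Rightarrow$ close (or simultaneously-bounded-away-from-zero) Evans determinants'' in a way that is uniform in the parameter $p\in P$ and robust as $\eps\to 0$; this rests on the uniform rank condition on $\Gamma$ (so that $\ker\Gamma$ has constant dimension) and on the uniform bounds for $T^{\pm 1}$, both of which are in hand. No new estimate is required: the corollary is a formal consequence of the $k$-family symmetrizer bound \eqref{genbds} together with the tracking statement that $\bE_-$ converges to the frozen stable subspace, and the proof is essentially a one-paragraph assembly of these facts.
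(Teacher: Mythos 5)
Your proposal is correct and follows exactly the route the paper intends: the corollary is stated there as an immediate consequence of Proposition \ref{gensymm} (for the estimate \eqref{genbds} under frozen-coefficients stability) and Proposition \ref{reduction}(ii) (tracking, i.e.\ convergence of the decaying subspace to the frozen stable subspace, transported back through the uniformly bounded conjugator $T^{\pm1}$), and your write-up simply supplies the routine bookkeeping — comparability of determinants under bounded conjugation and continuity of $\det(\cdot,\ker\Gamma)$ on the Grassmannian with orthonormal bases — that the paper leaves implicit. No gap.
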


\begin{rem}
In this presentation, we have subsumed large parts of the
usual symmetrizer construction into the preparatory transformations
to the approximately block-diagonal form \eqref{blockdiag}.
One of the main points of this development is to demonstrate
that {\it tracking and symmetrizer estimates are essentially automatic
once we can reduce a system to form \eqref{blockdiag}--\eqref{bb2}}.
Moreover, all high-frequency estimates derived in \cite{ZH,Z3,GMWZ4,GMWZ6}
were either obtained originally in this way, or may be rephrased in this form.
\end{rem}

\begin{rem}\label{dd15}
Another consequence of Corollary \ref{fullsymm} is that uniform
high-frequency Evans stability implies the maximal estimate
\eqref{maxesthf}
for $\zeta\in\cE$.  Indeed, the estimate \eqref{genbds} is
equivalent to \eqref{maxesthf} in this application.
\end{rem}

\subsection{Application to High Frequencies: Proof of Theorem \ref{aa3}}\label{apps}

We conclude by illustrating through explicit computations the
application of these methods to the high-frequency analysis of
Section \ref{HF}.

Recall from \eqref{linq3} the linearized eigenvalue equation $Lu=f$,
where
\begin{align}
L=-\cB(z)\partial^2_z+\cA(z,\zeta)\partial_z+\cM(z,\zeta)
\end{align}
with coefficients given by
\begin{equation}
\label{coefflin}
\begin{aligned}
\cB (z) &= B_{dd} (w(z))\\
  \cA (z, \zeta) &= A_d(w(z) ) -  \sum_{j=1}^{d-1} i\eta_j \big( B_{j d} +  B_{d, j} \big)(w(z))   +
   E_d (z)
   \\
   \cM(z, \zeta) &=  (i \tau + \gamma) A_0(w(z) ) +  \sum_{j=1}^{d-1} i
    \eta_j  \big( A_j(w(z)) + E_j(z) \big)
   \\
   & \qquad   \qquad \qquad \qquad   \quad +  \sum_{j, k = 1}^{d-1} \eta_j \eta_k B_{j, k}(w(z) )   +   E_0
   (z).
   \end{aligned}
\end{equation}
The $E_k $ are functions independent of $\zeta$ which involve
derivatives of $w$ and thus converge to $0$ at an exponential rate
when $z$ tends to infinity. Moreover, we note that
\begin{equation}
\label{Einfty}
 E^{11}_k = 0, \quad     E^{12}_k = 0 \quad   \mathrm{ for }\  k > 0.
\end{equation}
With \eqref{struc1}, we also remark that $\cM^{12}$ does not depend
on $\tau$ and $\gamma$.

As in \eqref{linq4} we rewrite the linearized problem as a
first-order system
 \begin{equation}
\label{Blinq4} \D_z U  - \cG(z, \zeta) U = F  , \quad   \Gamma
(\zeta)  U _{| z = 0} = G,
\end{equation}
\begin{equation}\label{cG}
\cG  = \begin{pmatrix}
     \cG^{11} & \cG^{12}  & \cG^{13}    \\  0 & 0&  \Id \\
      \cG^{31} & \cG^{32}  & \cG^{33}
\end{pmatrix},
\end{equation}
where $U =  ( u , \D_z u^2)  = (u^1, u^2, \D_z u^2)   \in \CC^{N+
N'}$ and $\zeta=(\gamma,\tau, \eta)$.

Here,
\begin{eqnarray*}
\cG^{11} = -  (\cA^{11})^{-1} \cM^{11}, && \cG^{31} =
(\cB^{22})^{-1} ( \cA^{21}\cG^{11} + \cM^{21})  ,
\\  \cG^{12} = -  (\cA^{11})^{-1} \cM^{12}, &
 &
\cG^{32} =   (\cB^{22})^{-1} ( \cA^{21}\cG^{12} + \cM^{22}),
\\
 \cG^{13} = -  (\cA^{11})^{-1} \cA^{12},  &&
\cG^{33} =   (\cB^{22})^{-1} ( \cA^{21}\cG^{13} + \cA^{22}),
\end{eqnarray*}

Note that $\cG^{11}$, $\cG^{12}$, $\cG^{31}$ and $\cG^{33}$ are
first order (linear or affine  in $\zeta$), that  $\cG^{32}$ is
second order (at most quadratic in $\zeta$) and that $\cG^{13} $ is
of order zero (independent of $\zeta$). We denote by
$\cG^{ab}_{\mathrm{p}} $ their principal part (leading order part as
polynomials). We note that
\begin{equation}
\label{princcG} \cG^{ab}_{\rmp} (z, \zeta) =  G^{ab}_\rmp (w(z),
\zeta)
 \quad  \mathrm{when} \  (a, b) \ne (3, 1) ,
\end{equation}
with
\begin{equation*}
\begin{array}{lll}
G^{11}_{\mathrm{p}}(u, \zeta)  & = -  (A_d^{11}(u))^{-1}  \big((
\gamma+ i\tau) A_0^{11}(u) +
\sum_{j=1}^{d-1} i \eta_j A_j^{11}(u) \big),  \\
G^{12}_{\mathrm{p}}(u, \zeta)  & = -  (A_d^{11}(u))^{-1}
\sum_{j=1}^{d-1} i \eta_j A_j^{12}(u) \\
G^{13}_{\mathrm{p}}(u)  & = -  (A_d^{11}(u))^{-1}    A_d^{12}(u)
\\
G^{32}_{\mathrm{p}}(u, \zeta)  & =   (B^{22}(u))^{-1}
\sum_{j, k =1}^{d-1}   \eta_j  \eta_k B_{j,k}^{22}(u)   \big), \\
G^{33}_{\mathrm{p}}(u, \zeta)  & =  -  (B^{22}(u))^{-1}
\sum_{j=1}^{d-1} i \eta_j \big(B_{j, d}^{22}(u)  + B^{22}_{d,j} (u)
\big) .
 \end{array}
\end{equation*}
The principal term of $\cG^{3, 1}$ involves derivatives of the
profile $w$. Denoting by $p = \lim_{z \to + \infty}  w(z) =
w(\infty)$ the end state of the profile $w$, we note that the end
state of $\cG^{31}_\rmp$ is
$$
\cG^{31}_{\mathrm{p}}(\infty, \zeta)   =  (B^{22}(p))^{-1} \big((
\gamma+ i\tau) A_0^{21}(p) + \sum_{j=1}^{d-1} i \eta_j A_j^{21}(p)
+ A^{21}_d(p) G^{11}_\rmp(p, \zeta) \big).
$$
There are similar formulas using  the matrices $\overline A_j$ and
$\overline B_{j, k}$ of \eqref{y2}.

\subsubsection{The elliptic zone}\label{ell}

We now prove Theorem \ref{aa3} in the easiest case, which is for
frequencies $\zeta$ lying in the  {\it elliptic zone}
\begin{equation}\label{ellipticzone}
\cE:=\{  (\tau,\gamma,\eta): \gamma \ge \delta  | \zeta |\
\mathrm{and} \ | \eta | \ge \delta | \zeta | \}\ \mathrm{with} \
\delta > 0,
\end{equation}
with $|\zeta|$ sufficiently large.
(Recall that this is the final remaining case not treated in
Section \ref{HF}.)

In this case, the factors
$(1+\gamma)$ and $
 \Lambda(\zeta)  = \big(  \tau^2 + \gamma^2 + | \eta |^4 \big) ^{1/4}
$ appearing in scaling \eqref{reschf} are both of order $|\zeta|$,
and so we may replace \eqref{reschf} by the simpler rescaling
\begin{equation}
\label{Bscale}
\begin{aligned}
J_\zeta (u^1, u^2, u^3) &  := \big(|\zeta|^{\frac{1}{2}}   u^1,
 |\zeta|^\mez  u^2 ,   |\zeta|^{-\mez} u^3 \big):=\check U  \\
 J_\zeta (g^1, g^2, g^3) & := \big( |\zeta|^{\frac{1}{2}}   g^1,
 |\zeta|^\mez  g^2 ,   |\zeta|^{-\mez} g^3 \big)  :=\check G.
 \end{aligned}
\end{equation}
If we define
\begin{align}\label{dd1}
\Gamma^{sc}_e(u^1,u^2,u^3):=(\Gamma_1u^1,\Gamma_2u^2,
K_du^3+|\zeta|^{-1}K_T(\eta)u^2),
\end{align}
then \eqref{Blinq4} may be written equivalently as
\begin{align}\label{dd1a}
\D_z \check U  - \check \cG(z, \zeta) \check U = \check F  , \quad
\Gamma^{sc}_e(\zeta)\check U=\check G
\end{align}
where
\begin{equation}
\label{BdectG} \check \cG =
\begin{pmatrix}
     \cG^{11} &   \cG^{12}  &  | \zeta |  \cG^{13}    \\  0 & 0&  | \zeta | \Id \\
       | \zeta |^{-1}  \cG^{31} &  | \zeta |^{-1}  \cG^{32}  & \cG^{33}
\end{pmatrix} := \begin{pmatrix}
   \cG^{11}    &    \cP^{12}\\
   \cP^{21}   &   \cP^{22}
\end{pmatrix}
\end{equation}
with obvious definitions of $\cP^{a b}$. Note that $\check \cG$ is
of order one, while $\cP^{21}$ is of order zero. Thus
\begin{equation}
\label{dectG1} \check \cG (z, \zeta)   = \check \cG_{\mathrm{p}} (z,
\zeta)   + O(1) , \quad \check \cG_{\mathrm{p}}= \begin{pmatrix}
   \cG_{\mathrm{p}}^{11}    &  \cP^{12}_\rmp  \\    0
      &     \cP^{22}_{\mathrm{p}}
\end{pmatrix}  = O(| \zeta |),
\end{equation}
where the principal part $\check \cG_{\mathrm{p}}$ is in fact
homogeneous degree one in $|\zeta|$, and depends on $z$ only through
the profile $w(z)$ and not its derivatives.

Consider now the Fourier-Laplace transform of the upper triangular
system
\begin{align}\label{trilinprinc}
\begin{split}
&u^1_t + \sum_j \overline A_j^{11}(w(0)) \D_j u^1
+\sum_j \overline A_j^{12}(w(0)) \D_j u^2=0,\\
&u^2_t - \sum_{j,k} \overline B_{jk}^{22}(w(0)) \D_j\D_k u^2=0
\end{split}
\end{align}
written as a first-order system:
\begin{align}\label{dd2}
\partial_zU-\cG_{ut}(0,\zeta)U=0.
\end{align}
With $\check U$ as above observe that we can write \eqref{dd2}
equivalently as
\begin{align}\label{dd3}
\partial_z\check U-\check\cG_\mrp(0,\zeta)\check U=0.
\end{align}

\begin{prop}\label{dd4}
Suppose $\zeta$ lies in the elliptic zone. Let $\check\bE_-(\zeta)$
denote the space of initial data at $z=0$ of decaying solutions of
$\partial_z\check U-\check\cG(z,\zeta)\check U=0$, and let
$\tilde\bE_-(\zeta)$ be the stable subspace of
$\check\cG_p(0,\zeta)$.  Then
\begin{align}\label{dd5}
\check\bE_-(\zeta)\to\tilde\bE_-(\zeta)\text{ as }|\zeta|\to\infty.
\end{align}
\end{prop}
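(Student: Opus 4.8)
The plan is to deduce Proposition \ref{dd4} directly from the abstract machinery just developed in this appendix, namely Corollary \ref{fullsymm} together with Proposition \ref{cc10}, by verifying that the rescaled system \eqref{dd1a} in the elliptic zone fits the framework of Proposition \ref{bb1}. Concretely, I would set $\eps := |\zeta|^{-1}$ and regard $p := \hat\zeta = \zeta/|\zeta|$ (restricted to the compact slice of the elliptic cone $\cE$ with $|\hat\zeta|=1$) as the model parameter; the matrix $\check\cG(z,\zeta)$ of \eqref{BdectG} then plays the role of $\check\cG(z,p,\eps)$. The first step is to check the three hypotheses of Proposition \ref{bb1}(a): that $|\check\cG|\sim|\zeta|$ is bounded below (immediate from \eqref{dectG1}, since the homogeneous degree-one principal part $\check\cG_\mrp$ is elliptic — nonvanishing on the compact sphere in $\cE$ — using (H2), (H3)); that $|\partial_z\check\cG|/|\check\cG|^2 = O(|\zeta|/|\zeta|^2)=O(|\zeta|^{-1})\to 0$ (here the exponential decay \eqref{b37y} of $w$, hence of all $z$-derivatives of the coefficients, and the fact that $\check\cG_\mrp$ depends on $z$ only through $w(z)$, give $|\partial_z\check\cG|\lesssim|\zeta|$); and the uniform spectral gap condition $\gap(\check\cG)/|\check\cG|\ge c_0>0$. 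This last is the only genuinely substantive point and is exactly the elliptic-zone separation: I would invoke the hyperbolic--parabolic dissipativity hypothesis (H5), which says the eigenvalues of $i\oA(u,\xi)+\oB(u,\xi)$ have real part $\ge c|\xi|^2/(1+|\xi|^2)$, to conclude that $\check\cG_\mrp(z,\hat\zeta)$ — which is, up to the rescaling $J_\zeta$, a homogenization of the resolvent symbol — has no purely imaginary eigenvalues on $\cE$, hence (by compactness and homogeneity) a gap comparable to $|\zeta|$. This is the analogue of Lemma 7.3 of \cite{GMWZ6} in the elliptic rather than coupling zone, and is where the bulk of the work lies; I expect it to follow from a now-standard computation relating the block structure \eqref{struc1} and the form of $\check\cG_\mrp$ in \eqref{dectG1} to the symbol $i\oA+\oB$.

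Once Proposition \ref{bb1} applies, it produces the conjugator $T$ bringing $\check\cG$ to the approximately block-diagonal form \eqref{blockdiag}--\eqref{bb2} with $\eta\sim|\zeta|$ and $\delta = O(|\check\cG_z|/|\check\cG|) = O(1)$, so $\delta/\eta = O(|\zeta|^{-1})\to 0$. The second step is then to apply Proposition \ref{cc10}: its hypothesis \eqref{cc5}, equality of $\dim\check\bE_-$ and $\dim\tilde\bE_-$, holds because $\check\cG(z,\zeta)$ converges exponentially as $z\to+\infty$ to its limit $\check\cG(\infty,\zeta)$ (again by \eqref{b37yyy}), so by the conjugation lemma (Lemma \ref{conjugation}, and Remark \ref{ccc6}) the decaying subspace has the same dimension as the stable subspace of the limiting matrix — and I should note that the relevant stable dimension is also constant and equal to the one computed in Lemma \ref{count}/the count in \cite{GMWZ6}, Lemma 2.12. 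The conclusion \eqref{cc6} of Proposition \ref{cc10} is precisely $\check\bE_-(\zeta)\to\tilde\bE_-(\zeta)$ as $\eps\to0$, i.e. as $|\zeta|\to\infty$, which is \eqref{dd5}. One bookkeeping subtlety to address: $\tilde\bE_-(\zeta)$ in the statement of Proposition \ref{dd4} is the stable subspace of $\check\cG_\mrp(0,\zeta)$, whereas Proposition \ref{cc10} delivers convergence to the stable subspace of the full $\check\cG(0,\zeta)$; but since $\check\cG - \check\cG_\mrp = O(1)$ while both have gap $\sim|\zeta|$, these two subspaces themselves differ by $O(|\zeta|^{-1})$ by standard matrix perturbation theory \cite{Kat}, so replacing one by the other is harmless in the limit.

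Finally, I would close the loop back to Theorem \ref{aa3} in the elliptic zone. Having \eqref{dd5}, and observing that the rescaled boundary operator $\Gamma^{sc}_e$ of \eqref{dd1} is bounded with bounded inverse on its range and that $\ker\Gamma^{sc}_e(\zeta)$ likewise converges (indeed it agrees, in the limit $|\zeta|\to\infty$, with $\ker\Gamma_s$ from \eqref{gg9} — note $|\zeta|^{-1}K_T(\eta) = O(|\eta|^2/|\zeta|)\to 0$ on the relevant sphere is \emph{not} automatic, so I would instead keep the exact $\Gamma^{sc}_e$ and track its limit), the Evans determinant $D^{sc}(\zeta) = |\det(J_\zeta\bE^-(\zeta),\ker\Gamma^{sc}(\zeta))| = |\det(\check\bE_-(\zeta),\ker\Gamma^{sc}_e(\zeta))|$ (up to the equivalence of the two rescalings \eqref{reschf} and \eqref{Bscale}, valid on $\cE$) converges to $|\det(\tilde\bE_-(\zeta),\ker\Gamma^{sc}_e(\zeta))|$. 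The latter is exactly the frozen-coefficient Evans quantity for the decoupled upper-triangular system \eqref{trilinprinc}/\eqref{dd3}, which by the block structure \eqref{struc1} and \eqref{Einfty} splits as in \eqref{gg8}--\eqref{gg9} into the hyperbolic factor (trivially nonvanishing since $\oA^{11}_d$ is definite, (H3)) and the parabolic factor $d^2$. Hence on $\cE$, $|D^{sc}(\zeta)|\ge c>0$ for $|\zeta|\ge R$ if and only if the frozen-coefficient quantities $D^{sc}_1, D^{sc}_2$ are bounded below, which is the asserted equivalence \eqref{aaa3}$\Leftrightarrow$\eqref{aaa4} restricted to the elliptic zone. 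Combined with the coupling-zone argument already given in the main text (Steps 1--8 of the proof of Theorem \ref{aa3}), this completes the proof. The main obstacle throughout, as flagged, is establishing the uniform spectral gap \eqref{bb7} for $\check\cG_\mrp$ on $\cE$ from (H2)--(H3) and (H5); everything else is an application of the now-assembled abstract lemmas plus routine perturbation estimates.
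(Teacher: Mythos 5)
Your proposal follows essentially the same route as the paper's proof: rescale with $\eps=|\zeta|^{-1}$, verify the hypotheses of Proposition \ref{bb1} (with $\eta\sim|\zeta|$, $\delta\sim 1$), invoke Proposition \ref{cc10} via Remark \ref{ccc6}, and absorb the $O(1)$ discrepancy between $\check\cG(0,\zeta)$ and $\check\cG_\mrp(0,\zeta)$ by standard perturbation theory. The one point you leave as an expected computation — the uniform spectral gap $\gap(\check\cG)\gtrsim|\zeta|$ on $\cE$ — is exactly where the paper simply cites Lemma 7.3(i) of \cite{GMWZ6}, so you have correctly isolated the sole substantive input.
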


\begin{proof}
With $p:=|\zeta|^{-1}\zeta$ and $\eps:=|\zeta|^{-1}$ we can write
(with slight abuse)
\begin{align}\label{dd6}
\check\cG(z,\zeta)=\check\cG(z,p,\eps)\text{ and
}\check\cG_\mrp(z,\zeta)=\check\cG_\mrp(z,p,\eps).
\end{align}
The system  \eqref{dd1a} thus has the form \eqref{geneq}.  By Lemma
7.3(i) of \cite{GMWZ6}) the matrix $\check\cG(z,p,\eps)$ has a
spectral gap $\eta(\eps)\sim |\zeta|=\frac{1}{\eps}$.  Since
\begin{align}\label{dd7}
|\check\cG(z,\zeta)|\sim|\zeta| \text{ and
}|\check\cG_z(z,\zeta)|\sim|\zeta|,
\end{align}
Proposition \ref{bb1} implies there is a conjugator $T$ satisfying
\eqref{simple2},\eqref{bb4}.  Hence $\delta(\eps)\sim 1$ by
\eqref{bb9},\eqref{bb10}, and thus $\delta/\eta\to 0$ as $\eps\to
0$.

By Remark \ref{ccc6} we can now apply Proposition \ref{cc10} to
conclude  that $\check\bE_-(\zeta)$ approaches the stable subspace
of $\check\cG(0,p,\eps)$ as $\eps\to 0$.   The $O(1)$ term in
\eqref{dectG1} introduces an $O(\eps)$ difference between the stable
subspace of $\check\cG(0,p,\eps)$ and that of
$\check\cG_\mrp(0,p,\eps)$ (there is an $O(\eps)$ difference between
the corresponding conjugators $T$), so we conclude \eqref{dd5}.
\end{proof}

\begin{proof}[Proof of Theorem \ref{aa3} for the elliptic zone.]

Here we use the notation of the previous Proposition.

 \textbf{1.
}For $\zeta\in\cE$ and $\Gamma^{sc}_e$ as in \eqref{dd1a} define a
slightly modified rescaled Evans function
\begin{align}\label{dd8}
D^{sc}_e(\zeta)=\det(\check\bE_-(\zeta),\ker\Gamma^{sc}_e).
\end{align}
Using Lemma \ref{ZZZ} in the same argument that showed the
equivalence of estimate \eqref{stabcondhf} and the uniform Evans
stability condition Definition \ref{hfevans}, we see that for
$\zeta\in\cE$,
\begin{align}\label{dd9}
\begin{split}
&\text{ there exist }c_0, R \text{ such that }|D^{sc}(\zeta)|\geq
c_0
\text{ for }|\zeta|\geq R\Leftrightarrow\\
&\text{ there exist }c_1, R' \text{ such that }|D^{sc}_e(\zeta)|\geq
c_1 \text{ for }|\zeta|\geq R'.
\end{split}
\end{align}

\textbf{2. }Defining a frozen-coefficient Evans function
\begin{align}\label{dd10}
\tilde D^{sc}(\zeta)=\det(\tilde\bE_-(\zeta),\ker\Gamma^{sc}_e),
\end{align}
we conclude from Corollary \ref{fullsymm} and Proposition \ref{dd4}
that the conditions \eqref{dd9} are equivalent to
\begin{align}\label{dd11}
\text{ there exist }c_2, R'' \text{ such that }\tilde
|D^{sc}(\zeta)|\geq c_2\text{ for }|\zeta|\geq R''.
\end{align}
Recall that $\tilde\bE_-(\zeta)$ is the stable subspace of
$\check\cG_\mrp(0,\zeta)$, which defines the problem \eqref{dd3}
equivalent to the upper triangular system \eqref{trilinprinc}.

\textbf{3. }To complete the proof  we write
\begin{align}\label{dd12}
\Gamma^{sc}_e=(\Gamma_1,\Gamma^{sc}_*)
\end{align}
and observe that
$$
\ker \Gamma^{sc}_e=
\begin{pmatrix}\ker \Gamma_1 \\ 0 \end{pmatrix}
\oplus
\begin{pmatrix} 0\\ \ker \Gamma_*^{sc} \end{pmatrix}
$$
>From the form of $\check\cG_\mrp$ we see that $\tilde\bE_-(\zeta)$
consists of vectors of the form
\begin{align}\label{dd13}
\begin{pmatrix}v_a\\0\end{pmatrix}, \;v_a\in S(\cG^{11}_\mrp)\text{
and }\begin{pmatrix}*\\v_b\end{pmatrix},\;v_b\in S(\cP^{22}_\mrp),
\end{align}
where $S(M)$ denotes the stable subspace of $M$. After performing
obvious column operations on $\tilde D^{sc}(\zeta)$ to express the
determinant in upper block triangular form, we thus obtain (up to a
sign)
\begin{align}\label{dd14}
\tilde D^{sc}(\zeta)=\det \Big( S( \cG_{\mathrm{p}}^{11}), \ker
\Gamma_1\Big) \times \det \Big( S( \cP^{22}_{\mathrm{p}}), \ker
\Gamma_*^{sc}\Big).
\end{align}
Although $\Gamma^{sc}_*(\zeta)$ here differs slightly from
$\Gamma^{sc}_*(\zeta)$ in \eqref{aa2}, the argument that established
the equivalence \eqref{dd9} allows us to conclude that $D^2(\zeta)$
\eqref{aa2} is bounded away from $0$ for $|\zeta|$ large if and only
if the second factor on the right in \eqref{dd14} is. The first
factor equals $D^1(\zeta)$ \eqref{aa2}, so this completes the proof.

\end{proof}

\begin{rem}\label{Bprincpartrem}
For coupled boundary conditions, the
above analysis shows that the high-frequency Evans condition
is more complicated,
involving at least the upper triangular system \eqref{trilinprinc}
rather than the decoupled \eqref{princpart}.
\end{rem}

\subsubsection{The general case}\label{general}
In the remaining frequency regimes
$$
C_\delta:=
\{\zeta: 0\le \gamma \le \delta |\zeta|\}\cup
\{\zeta: |\eta|\le \delta |\zeta|\},
$$
$\delta>0$ sufficiently small, the reduction to form
\eqref{blockdiag} is considerably more complicated, in particular
involving multiplication of hyperbolic modes $u^1$ by an exponential
weighting function to obtain definiteness of $M_\pm$. Moreover, the
resulting terms $M_\pm$ depend in hyperbolic modes both on
derivatives of the profile and on the chosen exponential weight.
Thus, it is no longer true that the effective principal part $\tilde
\cG$ after rescaling/appropriate coordinate transformations involves
only frozen coefficients of the triangular system
\eqref{trilinprinc} as in the previous case. {\it However, it is
still true that the associated stable (resp. unstable) subspaces of
$\tilde \cG$ depend only on those coefficients}: indeed, only on the
coefficients of the fully decoupled system \eqref{linprinc}. Thus,
the above conclusions about the rescaled Evans function and uniform
high-frequency stability (based on the form of the stable subspaces
and not on $M_\pm$) hold true in this case as well. The maximal
estimates follow likewise from Proposition \ref{gensymm} as before,
once form \eqref{blockdiag} has been achieved.
For detailed treatments, including in particular an implicit
reduction to form \eqref{blockdiag}, see Section \ref{HF}
of this paper and Sections 7.2--7.4 of \cite{GMWZ6}.

\bigbreak


\begin{thebibliography}{GMWZ6}


\bibitem[Br]{Br}
Braslow, A.L., {\it A history of suction-type laminar-flow control
with emphasis on flight research}, NSA History Division, Monographs
in aerospace history, number 13 (1999).







\bibitem[BRa]{BRa}
Bardos, C. and Rauch, J., {\it Maximal positive boundary value
problems as limits of singular perturbation problems,} Trans. Amer.
Math. Soc. 270 (1982), no. 2, 377--408.




\bibitem[BSZ]{BSZ}
Benzoni--Gavage, S., Serre, D.,  and Zumbrun, K., {\it Alternate
Evans functions and viscous shock waves}, SIAM J. Math. Anal. 32
(2001), no. 5, 929--962 (electronic).






\bibitem[CHNZ]{CHNZ}
N.~Costanzino, J.~Humpherys, T.~Nguyen, and K.~Zumbrun, {\it
Spectral stability of noncharacteristic boundary layers of
isentropic Navier--Stokes equations,} Preprint, 2007.



\bibitem[Co]{Co}
Coppel, W. A., \emph{Stability and asymptotic behavior of
differential equations}, D.C. Heath, Boston.


\bibitem[CP]{CP}
Chazarain J. and Piriou, A., \emph{Introduction to the Theory of
Linear Partial Differential Equations}, North Holland, Amsterdam,
1982.










\bibitem[FS1]{FS1}
H.~Freist{\"u}hler and P.~Szmolyan. \emph{Spectral stability of
small shock waves}, Arch. Ration. Mech. Anal. 164 (2002), no. 4,
287--309.

\bibitem[FS2]{FS2}
H.~Freist{\"u}hler and P.~Szmolyan. \emph{Spectral stability of
small-amplitude viscous shock waves in several dimensions},
Preprint, 2007.




\bibitem[GG]{GG}
Grenier, E. and Gu\`es, O., \emph{Boundary layers for viscous
perturbations of noncharacteristic quasilinear hyperbolic problems},
J. Differential Eqns. 143 (1998), 110-146.

\bibitem[GMWZ3]{GMWZ3}
Gues, O., Metivier, G., Williams, M., and Zumbrun, K., {\it
Existence and stability of multidimensional shock fronts in the
vanishing viscosity limit}, Arch. Ration. Mech. Anal.  175  (2005),
no. 2, 151--244.


\bibitem[GMWZ4]{GMWZ4}
Gues, O., Metivier, G., Williams, M., and Zumbrun, K., \emph{Paper
4}, {\it Navier-Stokes regularization of multidimensional Euler
shocks}, Ann. Sci. \'Ecole Norm. Sup. (4)  39  (2006),  no. 1,
75--175.


\bibitem[GMWZ6]{GMWZ6}
Gues, O., Metivier, G., Williams, M., and Zumbrun, K., {\it Viscous
boundary value problems for symmetric systems with variable
multiplicities}, to appear, J. Diff. Eq.


\bibitem[GMWZ7]{GMWZ7}
Gu\`es, O., M\'etivier, G., Williams, M., and Zumbrun, K.,
\emph{Nonclassical multidimensional viscous and inviscid shocks},
 Duke Math. Journal, 142, (2008), 1-110.

\bibitem[GR]{GR}
Grenier, E. and Rousset, F., \emph{Stability of one dimensional
boundary layers by using Green's functions}, Comm. Pure Appl. Math.
54 (2001), 1343-1385.



\bibitem[GS]{GS} Gisclon, M. and Serre, D.,
\textit{Conditions aux limites pour un syst\`eme strictement
hyperbolique fournies par le sch\'ema de Godunov}. RAIRO Mod\'el.
Math. Anal. Num\'er. 31 (1997), 359--380.


\bibitem[GZ]{GZ}
Gardner, R. and Zumbrun, K., \emph{The gap lemma and geometric
criteria instability of viscous shock profiles}, CPAM 51. 1998,
797-855.





\bibitem[HLZ]{HLZ}
Humpherys, J., Lafitte, O., and Zumbrun, K.,
\emph{Stability of isentropic viscous shock profiles in the high-mach
  number limit,} Preprint (2007).

\bibitem[HLyZ1]{HLyZ1} Humpherys, J., Lyng, G., and Zumbrun, K.,
\emph{Spectral stability of ideal-gas shock layers}, Preprint
(2007).

\bibitem[HLyZ2]{HLyZ2} Humpherys, J., Lyng, G., and Zumbrun, K.,
\emph{Multidimensional spectral stability of large-amplitude
Navier-Stokes shocks}, in preparation.


\bibitem[IS]{IS} Iftimie, D. and Sueur, F.
{\it Viscous boundary layers for the Navier--Stokes equations
with the Navier slip condition,}
preprint (2008).


\bibitem[K]{K}
Kreiss, H.-O., \emph{Initial boundary value problems for hyperbolic
systems}, Comm. Pure Appl. Math. 23 (1970), 277-298.










\bibitem[KaS1]{KaS1}
Kawashima, S. and Shizuta, Y., \emph{Systems of equations of
hyperbolic-parabolic type with applications to the discrete
Boltzmann equation}, Hokkaido Math. J., 14 (1985), 249-275.

\bibitem[KaS2]{KaS2}
Kawashima, S. and  Shizuta, Y., \emph{On the normal form of the
symmetric hyperbolic-parabolic systems associated with the
conservation laws}, Tohoku Math. J. (1988), 449-464.


\bibitem[Kat]{Kat} Kato, T.,
{\it Perturbation theory for linear operators}. Classics in
Mathematics. Springer-Verlag, Berlin (1995), Reprint of the 1980
edition.

\bibitem[KNZ]{KNZ} S. Kawashima, S. Nishibata, and P. Zhu,
\emph{Asymptotic stability of the stationary solution to the
compressible
  Navier-Stokes equations in the half space,}
Comm. Math. Phys. 240 (2003), no. 3, 483--500.


\bibitem[M2]{M2}
Majda, A., \emph{The stability of multidimensional shock fronts},
Mem. Amer. Math. Soc. No. 275, AMS, Providence, 1983.


\bibitem[MaZ2]{MaZ2}  Mascia, C. and Zumbrun, K.,
{\it Stability of small-amplitude
viscous shock profiles for dissipative hyperbolic-parabolic
systems}, Comm. Pure Appl. Math.  57  (2004),  no. 7, 841--876.

\bibitem[MaZ3]{MaZ3}  Mascia, C. and Zumbrun, K.,
{\it Pointwise {G}reen function bounds for shock profiles of systems
with real viscosity,} Arch. Ration. Mech. Anal. 169 (2003), no. 3,
177--263.



\bibitem[Met2]{Met2} Metivier, G.,
\textit{The Block Structure Condition for Symmetric Hyperbolic
Problems}, Bull. London Math.Soc., 32 (2000), 689--702


\bibitem[Met3]{Met3} Metivier, G.,
\emph{Stability of multidimensional shocks}, Advances in the theory
of shock waves,  Progress in Nonlinear PDE, 47, Birkh\"auser,
Boston, 2001.


\bibitem[Met4]{Met4} Metivier, G.,
\emph{Small Viscosity and Boundary Layer Methods,}
 Birkh\"{a}user, Boston 2004.


\bibitem[MN]{MN}  Matsumura, A. and Nishihara, K.,
{\it Large-time behaviors of solutions to an inflow problem in the
half
  space for a one-dimensional system of compressible viscous gas,}
Comm. Math. Phys., 222 (2001), no. 3, 449--474.


\bibitem[MP]{MP} Majda, A. and Pego, R.L,
{\it Stable viscosity matrices for systems of conservation laws}, J.
Diff. Eqs. 56 (1985) 229--262.







\bibitem[MZ1]{MZ1}
M\'etivier, G. and Zumbrun, K., \textit{Viscous Boundary Layers for
Noncharacteristic Nonlinear Hyperbolic Problems}, Memoirs AMS,   826
(2005).





\bibitem[MZ2]{MZ2} M\'etivier, G. and Zumbrun, K.,
\textit{Hyperbolic Boundary Value Problems for Symmetric Systems
with Variable Multiplicities},    {J.  Diff.  Equ.}, 211 (2005)
61-134.

\bibitem[MZ3]{MZ3} M\'etivier, G. and Zumbrun, K.,
\textit{Symmetrizers and continuity of stable subspaces for
parabolic--hyperbolic boundary value problems.}
 Discrete Contin. Dyn. Syst.  11  (2004),  no. 1, 205--220.

\bibitem[NZ]{NZ} T. Nguyen and K. Zumbrun,
{\it Long-time stability of large-amplitude noncharacteristic
boundary layers for hyperbolic--parabolic systems},
preprint (2008).

\bibitem[PZ]{PZ}
Plaza, R. and Zumbrun, K., \emph{An Evans function approach to
spectral stability of small-amplitude shock profiles}, Discrete
Contin. Dyn. Syst. 10 (2004) 885--924.

\bibitem[Ra]{Ra} Rauch, J.,
{\it Symmetric positive systems with boundary characteristic of
constant multiplicity,} Trans. Amer. Math. Soc. 291 (1985), no. 1,
167--187.

\bibitem[R2]{R2} Rousset, F.,
{\it Inviscid boundary conditions and stability of viscous boundary
layers,}  (English summary) Asymptot. Anal. 26 (2001), no. 3-4,
285--306.

\bibitem[R3]{R3} Rousset, F.,
{\it  Stability of small amplitude boundary layers for mixed
hyperbolic-parabolic systems,} Trans. Amer. Math. Soc.  355  (2003),
no. 7, 2991--3008.

\bibitem[S]{S} H. Schlichting,
{\em Boundary layer theory}, Translated by J. Kestin. 4th ed.
McGraw-Hill Series in Mechanical
 Engineering. McGraw-Hill Book Co., Inc., New York, 1960.

\bibitem[Se]{Se} D. Serre,
{\em Systems of Conservation Laws 2}, Translated by I.N. Sneddon,
Cambridge University Press, Cambridge, 2000.


\bibitem[Su]{Su} Sueur, F.,
{\it A few remarks on a theorem by J. Rauch,}  Indiana Univ. Math.
J. 54 (2005), no.4, 1107--1143.

\bibitem[SZ]{SZ}
Serre, D. and Zumbrun, K., {\it Boundary layer stability in real
vanishing-viscosity limit}, Comm. Math. Phys. 221 (2001), no. 2,
267--292.



\bibitem[TW]{TW} Temam, R. and Wang, X.,
{\it Boundary layers associated with incompressible Navier--Stokes
equations: the noncharacteristic boundary case}, J. Diff. Eq. 179
(2002) 647--686.

\bibitem[YZ]{YZ} Yarahmadian, S. and Zumbrun, K.,
{\it Pointwise Green function bounds and long-time stability of
large-amplitude noncharacteristic boundary layers}, Preprint (2008).

\bibitem[Z1]{Z1}
Zumbrun, K., \emph{Multidimensional stability of planar viscous
shock waves}, Advances in the theory of shock waves, 304-516.
Progress in Nonlinear PDE, 47, Birkh\"auser, Boston, 2001.

\bibitem[Z2]{Zlax}
Zumbrun, K., \emph{Refined Wave--tracking and Nonlinear Stability of
Viscous Lax Shocks}, Methods Appl. Anal. 7 (2000), 747--768.

\bibitem[Z3]{Z3} Zumbrun, K.,
\textit{Stability of large-amplitude shock waves of compressible
Navier--Stokes equations}. Handbook of Fluid Mechanics III,
S.Friedlander, D.Serre ed., Elsevier North Holland (2004).


\bibitem[ZH]{ZH}
Zumbrun, K. and Howard, P., \emph{Pointwise semigroup methods and
stability of viscous shock waves}, Indiana Univ. Math. J. 47. 1998,
741-871.

\bibitem[ZS]{ZS}
Zumbrun, K. and Serre, D. \emph{Viscous and inviscid stability of
multidimensional planar shock fronts}, Indiana Univ. Math. J. 48.
1999, 937-992.


\end{thebibliography}
\end{document}